\makeatletter \@addtoreset{equation}{section} \makeatother
\newtheorem{theorem}{Theorem}[section]
\newtheorem{definition}{Definition}[section]
\newtheorem{proposition}{Proposition}[section]
\newtheorem{lemma}{Lemma}[section]
\newtheorem{remark}{Remark}[section]
\begin{document}
\title{Normalized solutions for $p$-Laplacian equation with critical Sobolev exponent and mixed nonlinearities}

\author{Shengbing Deng\footnote{Corresponding author.}\ \footnote{E-mail address:\, {\tt shbdeng@swu.edu.cn} (S. Deng), {\tt qrwumath@163.com} (Q. Wu).}\  \ and Qiaoran Wu\\
\footnotesize  School of Mathematics and Statistics, Southwest University,
Chongqing, 400715, P.R. China}

\date{ }
\maketitle

\begin{abstract}
{In this paper, we consider the existence and multiplicity of normalized solutions for the following $p$-Laplacian critical equation
	    \begin{align*}
	    	 \left\{\begin{array}{ll}
	    		-\Delta_{p}u=\lambda\lvert u\rvert^{p-2}u+\mu\lvert u\rvert^{q-2}u+\lvert u\rvert^{p^*-1}u&\mbox{in}\ \mathbb{R}^N,\\
	    		\int_{\mathbb{R}^N}\lvert u\rvert^pdx=a^p,
	    	\end{array}\right.
	    \end{align*}
    where $1<p<N$, $2<q<p^*=\frac{Np}{N-p}$, $a>0$, $\mu\in\mathbb{R}$ and $\lambda\in\mathbb{R}$ is a Lagrange multiplier. Using concentration compactness lemma, Schwarz rearrangement, Ekeland variational principle and mini-max theorems, we obtain several existence results under $\mu>0$ and other assumptions. We also analyze the asymptotic behavior of there solutions as $\mu\rightarrow 0$ and $\mu$ goes to its upper bound. Moreover, we show the nonexistence result for $\mu<0$ and get that the $p$-Laplacian equation has infinitely solutions by genus theory when $p<q<p+\frac{p^2}{N}$.
}

\smallskip
\emph{\bf Keywords:} Normalized solutions, $p$-Laplacian equation; Sobolev critical nonlinearities; Infinitely solutions.

\smallskip
\emph{\bf 2020 Mathematics Subject Classification:} 35B33,  35J62,   35J92.

\end{abstract}

\section{\textbf{Introduction}}

    In this paper, we consider the following $p$-Laplacian equation
        \begin{equation}\label{equation}
    	    -\Delta_{p}u=\lambda\lvert u\rvert^{p-2}u+\mu\lvert u\rvert^{q-2}u+\lvert u\rvert^{p^*-2}u\quad\mbox{in}\ \mathbb{R}^N,
        \end{equation}
    where $1<p<N$, $2<q<p^*=\frac{Np}{N-p}$, $\lambda,\mu\in\mathbb{R}$ and $\Delta_{p}=\mbox{div}(\lvert\nabla u\rvert^{p-2}\nabla u)$ is the $p$-Laplacian operator.

    If $p=2$, then equation (\ref{equation}) can be derived from the time-dependent equation as
        \begin{equation}\label{time}
        	i\psi_{t}+\Delta\psi+\mu\lvert\psi\rvert^{q-2}+\lvert\psi\rvert^{2^*-2}\psi=0\quad\mbox{in}\ \mathbb{R}_{+}\times\mathbb{R}^N,
        \end{equation}
    when we look for the stationary waves of the form $\psi(t,x)=e^{-i\lambda t}u(x)$. Equation (\ref{time}) can represent both the famous Schr\"odinger equation which describes the laws of particle motion\cite{hhsca,sn1,sn2}, and the Bose-Einstein condensates\cite{fg,ttvmzx,zz}. Consider the following equation
        \begin{equation}\label{f}
        	-\Delta u=\lambda u+f(u)\quad\mbox{in}\ \mathbb{R}^N,
        \end{equation}
    A direct way for studying the existence of solutions for (\ref{f}) is to find the critical points for the following  functional
        \[I(u)=\frac{1}{2}\int_{\mathbb{R}^N}|\nabla u|^2dx-\frac{\lambda}{2}\int_{\mathbb{R}^N}| u|^2dx-\int_{\mathbb{R}^N}F(u)dx,\]
    where $F(s)=\int_{0}^{s}f(t)dt$. In this case, particular attention is devoted to least action solutions, namely solutions minimizing $I(u)$ among all non-trivial solutions. Here we refer the readers to \cite{acosmasmm,fagf}. Another possible approach is to give a prescribed $L^2$ mass of $u$, that is, consider the constraint
        \begin{equation}\label{l2con}
        	\int_{\mathbb{R}^N}\lvert u\rvert^2dx=a^2,
        \end{equation}
    where $a>0$ is a constant. Then, the corresponding functional of (\ref{f}) is
        \[J(u)=\frac{1}{2}\int_{\mathbb{R}^N}|\nabla u|^2dx-\int_{\mathbb{R}^N}F(u)dx,\]
    and $\lambda$ appears as a Lagrange multiplier. The solutions of (\ref{f}) which has prescribed mass called normalized solutions.

    When we consider the normalized solutions of (\ref{f}), a new critical exponent $2+\frac{4}{N}$ appears which is called $L^2$-critical exponent. This constant can be derived from the Gagliardo-Nirenberg inequality: for every $p<q<p^*$, there exist an optimal constant $C_{N,p,q}>0$ such that
        \[\lVert u\rVert_{q}^q\leqslant C_{N,p,q}\lVert\nabla u\rVert_{p}^{q\gamma_{q}}\lVert u\rVert_{p}^{q(1-\gamma_{q})}\quad\forall u\in W^{1,p}(\mathbb{R}^N),\]
    where
        \[\gamma_{q}:=\frac{N(q-p)}{pq}.\]
    By \cite[section 1]{am}, $C_{N,p,q}$ can be attained for some $\psi_{0}\in W^{1,p}(\mathbb{R}^N)$ which satisfies
        \[-\Delta_{p}u+\lvert u\rvert^{p-2}u=\beta\lvert u\rvert^{q-2}\]
    for some $\beta>0$. Moreover, $\psi_{0}$ can be chosen non-negative, radially symmetric, radially non-increasing and tends to $0$ as $\lvert x\rvert\rightarrow+\infty$. If the nonlinearities of (\ref{f}) are pure $L^2$-subcritical terms, for example $f(u)=\lvert u\rvert^{q-2}u$ with $2<q<2+\frac{4}{N}$, then by Gagliardo-Nirenberg inequality, it is not difficult to prove that $J(u)$ is bounded from below if $u$ satisfies (\ref{l2con}), and we can find a global minimum solution of (\ref{f}). Here we refer the readers to \cite{lpl2,sma}. If the nonlinearities of (\ref{f}) are pure $L^2$-supercritical terms, for example $f(u)=\lvert u\rvert^{q-2}u$ with $2+\frac{4}{N}<q<2^*$, then $J(u)$ is unbounded from below if $u$ satisfies (\ref{l2con}). The first result to deal with $L^2$-supercritical case was studied by Jeanjean\cite{jl}. He proved that problem (\ref{f}) has a mountain-pass type solution under suitable assumptions. Compared with the pure $L^2$-subcritical or $L^2$-supercritical case,  the mixed nonlinearities terms are more complicated, Soave \cite{sn1} considered the following problem
        \begin{equation}\label{secn}
    	    \left\{\begin{array}{ll}
    		    -\Delta u=\lambda u+\mu\lvert u \rvert^{q-2}u+\lvert u \rvert^{p-2}u&\mbox{in}\ \mathbb{R}^N,\\
    		    \int_{\mathbb{R}^N}\lvert u\rvert^2dx=a^2,
    	    \end{array}\right.
        \end{equation}
    where $N\geqslant 3$, $\mu>0$, $2<q\leqslant 2+\frac{4}{N}\leqslant p<2^*$, and analyzed the existence, asymptotic behavior and stability of solutions. All the references listed above are considered in Sobolev subcritical case, the first result of the normalized solutions for Sobolev critical case was studied by Soave \cite{sn2}, that is, Soave studied the following problem
        \begin{equation}\label{secn}
        	\left\{\begin{array}{ll}
        		-\Delta u=\lambda u+\mu\lvert u \rvert^{q-2}u+\lvert u \rvert^{2^*-2}u&\mbox{in}\ \mathbb{R}^N,\\
        		\int_{\mathbb{R}^N}\lvert u\rvert^2dx=a^2,
        	\end{array}\right.
        \end{equation}
    where $N\geqslant 3$, $\mu>0$, $2<q<p^*$ and analyzed the existence, nonexistence, asymptotic behavior and stability of solutions. We refer to \cite{jljjltt,jlltt,wjwy} and references therein for the existence of normalized solutions for the mixed nonlinearities.

    If  $p\neq 2$, there are few papers on the normalized solution of $p$-Laplacian equation.
    Wang et al. \cite{wwlqzj} considered
        \begin{equation*}
        	\left\{\begin{array}{ll}
        		-\Delta_{p}u+\lvert u\rvert^{p-2}u=\mu u+\lvert u\rvert^{s-2}u&\mbox{in}\ \mathbb{R}^N,\\
        		\int_{\mathbb{R}^N}\lvert u\rvert^2dx=\rho,
        	\end{array}\right.
        \end{equation*}
    where $1<p<N$, $\mu\in\mathbb{R}$, $s\in(\frac{N+2}{N}p,p^*)$, they considered the $L^2$ constraint, by the Gagliardo-Nirenberg inequality, the $L^2$-critical exponent should be $\frac{N+2}{N}p$. Moreover, we know $L^2(\mathbb{R}^N)\not\subset W^{1,p}(\mathbb{R}^N)$, so the work space is $W^{1,p}(\mathbb{R}^N)\cap L^2(\mathbb{R}^N)$ which is a Hilbert space. The work space is Hilbert space is very important for \cite{wwlqzj}. \cite{zzzz} is the first paper to study the $p$-Laplacian equation with $L^p$ constraint:
        \begin{equation}\label{Sovlevsub}
        	\left\{\begin{array}{ll}
        		-\Delta_{p}u=\lambda\lvert u\rvert^{p-2}u+\mu\lvert u\rvert^{q-2}u+g(u)&\mbox{in}\ \mathbb{R}^N,\\
        		\int_{\mathbb{R}^N}\lvert u\rvert^pdx=a^p,
        	\end{array}\right.
        \end{equation}
    where $g\in C(\mathbb{R},\mathbb{R})$ and there exist $p+\frac{p^2}{N}<\alpha\leqslant\beta<p^*$ such that for all $s\in\mathbb{R}$, there is
        \[0<\alpha G(s)s\leqslant g(s)s\leqslant\beta G(s),\quad G(s)=\int_{0}^{s}g(t)dt.\]
    A simple example is $g(s)=\lvert s\rvert^{r-2}s$ with $p+\frac{p^2}{N}<r<p^*$. Moreover, Wang and Sun \cite{wcsj} considered both the $L^2$ constraint and the $L^p$ constraint for the following problem
        \begin{equation*}
        	\left\{\begin{array}{ll}
        		-\Delta_{p}u+V(x)\lvert u\rvert^{p-2}u=\lambda\lvert u\rvert^{r-2}u+\lvert u\rvert^{q-2}u&\mbox{in}\ \mathbb{R}^N\\
        		\int_{\mathbb{R}^N}\lvert u\rvert^rdx=c,
        	\end{array}\right.
        \end{equation*}
    where $1<p<N$, $\lambda\in\mathbb{R}$, $r=p$ or $2$, $p<q<p^*$ and $V(x)$ is a trapping potential satisfies
        \[V(x)\in C(\mathbb{R}^N),\quad\lim_{\lvert x\rvert\rightarrow+\infty}V(x)=+\infty\quad\mbox{and}\quad\inf_{x\in\mathbb{R}^N}V(x)=0.\]

    In this work, we study the existence of normalized solutions for (\ref{equation}) by fixing $L^p$-norm of $u$.
    Let
        \[S_{a}:=\{u\in W^{1,p}(\mathbb{R}^N):\lVert u\rVert_{p}^p=a^p\},\]
    where $a>0$ is a constant. Following \cite[definition 1]{sn2}, we give the definition of ground state as follows.
        \begin{definition}\label{grosta}
    	    {\rm We say $u$ is a ground state of {\rm (\ref{equation})} on $S_{a}$, if $u$ is a solution to {\rm (\ref{equation})} and have minimal energy among all the solutions which belongs to $S_{a}$, that is}
    	        \[dE_{\mu}|_{S_{a}}(u)=0,\quad\mbox{{\rm and}}\quad E_{\mu}(u)=\inf\big\{E_{\mu}(v): dE_{\mu}|_{S_{a}}(v)=0, u\in S_{a}\big\}.\]
    \end{definition}

    Since $u\in S_{a}$, there are some difficulties to observe the structure  of $E_{\mu}(u)$ directly. A possible approach is to consider an auxiliary function
        \[\Psi_{u}^{\mu}(s):=E_{\mu}(s\star u)=\frac{1}{p}e^{ps}\lVert\nabla u\rVert_{p}^p-\frac{\mu}{q}e^{q\gamma_{q}s}\lVert u\rVert_{q}^q-\frac{1}{p^*}e^{p^*s}\lVert u\rVert_{p^*}^{p^*},\]
    where
        \[s\star u:=e^{\frac{Ns}{p}}u(e^s\cdot).\]
    It is clear that $s\star u\in S_{a}$ for all $s\in\mathbb{R}$ if $u\in S_{a}$. Thus, we can investigate the structure of $\Psi_{u}^{\mu}$ to speculate the structure of $E_{\mu}|_{S_{a}}$.

    Assume that $u$ is a critical point of $E_{\mu}|_{S_{a}}$. Then $0$ may be a critical point of $\Psi_{u}^{\mu}$. If $0$ is the critical point of $\Psi_{u}^{\mu}$, we have $(\Psi_{u}^{\mu})'(0)=0$, that is
        \begin{equation}\label{Psider}
        	\lVert\nabla u\rVert_{p}^p=\mu\gamma_{q}\lVert u\rVert_{q}^q+\lVert u\rVert_{p^*}^{p^*}.
        \end{equation}
    In fact, by the Pohozaev indentity of (\ref{equation}), all critical point of $E_{\mu}$ satisfies (\ref{Psider})(see Proposition \ref{Pohozaev}). Therefore, if we consider such a manifold
        \[\mathcal{P}_{a,\mu}=\{u\in S_{a}:P_{\mu}(u)=0\},\]
    where
        \[P_{\mu}(u)=\lVert\nabla u\rVert_{p}^p-\mu\gamma_{q}\lVert u\rVert_{q}^q-\lVert u\rVert_{p^*}^{p^*},\]
    we know all critical points of $E_{\mu}|_{S_{a}}$ belong to $\mathcal{P}_{a,\mu}$ and $s\star u\in\mathcal{P}_{a,\mu}$ if and only if $(\Psi_{u}^{\mu}(s))'=0$. The manifold $\mathcal{P}_{a,\mu}$ is always called Pohozaev manifold.

    We divde $\mathcal{P}_{a,\mu}$ into three parts
        \begin{align*}
    	    \mathcal{P}_{a,\mu}^{+}&=\big\{u\in\mathcal{P}_{a,\mu}:(\Psi_{u}^{\mu})''(0)>0\big\}=\big\{u\in\mathcal{P}_{a,\mu}:p\lVert\nabla u\rVert_{p}^p>\mu q\gamma_{q}^2\lVert u\rVert_{q}^q+p^*\lVert u\rVert_{p^*}^{p^*}\big\},
        \end{align*}
        \begin{align*}
	        \mathcal{P}_{a,\mu}^{0}&=\big\{u\in\mathcal{P}_{a,\mu}:(\Psi_{u}^{\mu})''(0)=0\big\}=\big\{u\in\mathcal{P}_{a,\mu}:p\lVert\nabla u\rVert_{p}^p=\mu q\gamma_{q}^2\lVert u\rVert_{q}^q+p^*\lVert u\rVert_{p^*}^{p^*}\big\},
        \end{align*}
    and
        \begin{align*}
	        \mathcal{P}_{a,\mu}^{-}&=\big\{u\in\mathcal{P}_{a,\mu}:(\Psi_{u}^{\mu})''(0)<0\big\}=\big\{u\in\mathcal{P}_{a,\mu}:p\lVert\nabla u\rVert_{p}^p<\mu q\gamma_{q}^2\lVert u\rVert_{q}^q+p^*\lvert u\rVert_{p^*}^{p^*}\big\}.
        \end{align*}
    Define
        \[m(a,\mu)=\inf_{u\in\mathcal{P}_{a,\mu}}E_{\mu}(u),\quad m^{\pm}(a,\mu)=\inf_{u\in\mathcal{P}_{a,\mu}^{\pm}}E_{\mu}(u),\]
    and
        \[m_{r}(a,\mu)=\inf_{u\in\mathcal{P}_{a,\mu}\cap W_{rad}^{1,p}(\mathbb{R}^N)}E_{\mu}(u),\quad m_{r}^{\pm}(a,\mu)=\inf_{u\in\mathcal{P}_{a,\mu}^{\pm}\cap W_{rad}^{1,p}(\mathbb{R}^N)}E_{\mu}(u).\]
    Obviously, by Definition \ref{grosta}, if we can prove $u$ is a critical point of $E_{\mu}|_{S_{a}}$ and $E_{\mu}(u)=m(a,\mu)$, then $u$ is a ground state of (\ref{equation}).

    Although we give the method to observe the structure of $E_{\mu}$ on $S_{a}$, the difficulty that arises is how do we get the compactness of PS(Palais-Smale) sequence.
    In \cite[lemma 2.9]{zzzz}, the authors considered the equation (\ref{Sovlevsub}) and proved the compactness of PS sequence, but the nonlinearities is Sobolev subcritical. When $p=2$, in \cite[proposition 3.1]{sn2}, the author proved a compactness lemma of PS sequence for (\ref{equation}): Assume that $\{u_{n}\}\in S_{a,r}=S_{a}\cap H^1(\mathbb{R}^N)$ is a PS sequence of $E_{\mu}|_{S_{a}}$ at level $c$. Furthermore we assume that $P_{\mu}(u_{n})\rightarrow 0$. Then, one of the following alternatives holds: up to a subsequence, either $u_{n}\rightharpoonup u$ in $H^{1}(\mathbb{R}^N)$ but not strongly, and
        \[E_{\mu}(u)\leqslant c-\frac{1}{N}S^{\frac{N}{2}};\]
    or $u_{n}\rightarrow u$ in $H^1(\mathbb{R}^N)$. However, the proof of \cite[proposition 3.1]{sn2} need to use the Br\'{e}zis-Lieb lemma\cite{bhle} and the linearity of Laplace operator. By using concentration compactness lemma(see \cite[section 4]{sm} or \cite[lemma 1.1]{lpl}) and referring to the idea of \cite{hdly}, we prove a compactness result of PS sequence similar to \cite[proposition 3.1]{sn2}. Therefore, our main goal is to obtain the PS sequence and exclude the case of weak convergence, thereby obtaining strong convergence.

    Now, we can state the existence results. Even though $\lVert u\rVert_{p^*}^{p^*}$ is always a $L^p$-supercritical term, there are some differences in the existence results when $\lVert u\rVert_{q}^q$ is a $L^p$-subcritical, critical or supercritical term. Therefore, we will state the existence results separately when $q<(=,>)p+\frac{p^2}{N}$.

    If $p<q<p+\frac{p^2}{N}$. Since $q\gamma_{q}<p$, the function $\Psi_{u}^{\mu}$ may have two critical points on $\mathbb{R}$(such as $f(s)=50e^{2s}-50e^s-e^{6s}$), one is a local minimum point and the other is global maximum point, we note them as $s_{u}$ and $t_{u}$ respectively. Moreover, it is not difficult to prove that $s_{u}\star u\in\mathcal{P}_{a,\mu}^{+}$ and $t_{u}\star u\in\mathcal{P}_{a,\mu}^{-}$. Of course, $\Psi_{u}^{\mu}$ may not have any critical points on $\mathbb{R}$(such as $f(s)=50e^{2s}-200e^s-e^{6s}$). Therefore, it is natural to speculate $E_{\mu}$ has two critical points on $S_{a}$ under appropriate assumptions, one is a local minimizer and is also a minimizer of $E_{\mu}$ on $\mathcal{P}_{a,\mu}^{+}$, the other is mountain-pass type critical point and is also a minimizer of $E_{\mu}$ on $\mathcal{P}_{a,\mu}^{+}$.

    Let
        \begin{equation*}
    	    C'=\bigg(\frac{p^*S^{p^*/p}(p-q\gamma_{q})}{p(p^*-q\gamma_{q})}\bigg)^{\frac{p-q\gamma_{q}}{p^*-p}}\frac{q(p^*-p)}{pC_{N,q}^q(p^*-q\gamma_{q})},
        \end{equation*}
    and
        \begin{equation*}
    	    C''=\frac{pp^*}{N\gamma_{q}C_{N,q}^q(p^*-p)}\bigg(\frac{q\gamma_{q}S^{N/p}}{p-q\gamma_{q}}\bigg)^{\frac{p-q\gamma_{q}}{p}},
        \end{equation*}
    where $S$ is the optimal constant of Sobolev inequality
        \[S\lVert u\rVert_{p^*}^p\leqslant\lVert\nabla u\rVert_{p}^p\quad\forall u\in D^{1,p}(\mathbb{R}^N).\]
    Define
        \begin{equation}\label{alpha}
    	    \alpha(N,p,q):=\min\{C',C''\}.
        \end{equation}
    Then, the existence  result of local minimizer for $p<q<\frac{p^2}{N}$ can be stated as follows.
    \begin{theorem}\label{th1}
    	Let $N\geqslant 2$, $1<p<N$, $p<q<p+\frac{p^2}{N}$, and $a,\mu>0$. Assume that
    	    \begin{equation}\label{muconsub}
    	    	\mu a^{q(1-\gamma_{q})}<\alpha(N,p,q),
    	    \end{equation}
        then $E_{\mu}|_{S_{a}}$ has a ground state $u_{a,\mu}^{+}$ which is positive, radially symmetric, radially non-increasing, and solves {\rm (\ref{equation})} for some $\lambda_{a,\mu}^{+}<0$. Moreover,
            \[E_{\mu}(u_{a,\mu}^{+})=m(a,\mu)=m^{+}(a,\mu)<0,\]
        and $u_{a,\mu}^{+}$ is a local minimizer of $E_{\mu}$ on the set
            \[A_{k}:=\big\{u\in S_{a}: \lVert\nabla u\rVert_{p}\leqslant k\big\},\]
        for a suitable $k>0$ sufficiently small. Any other ground state of $E_{\mu}|_{S_{a}}$ is a local minimizer of $E_{\mu}|_{A_{k}}$.
    \end{theorem}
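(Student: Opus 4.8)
The strategy is to obtain $u_{a,\mu}^{+}$ as a constrained minimizer of $E_\mu$ on the ball $A_k$ for a suitably small $k>0$, and then to upgrade it to a genuine critical point of $E_\mu|_{S_a}$ and read off its sign, symmetry and extremality. By the Gagliardo--Nirenberg and Sobolev inequalities, for every $u\in S_a$,
$$E_\mu(u)\ \ge\ \tfrac1p\|\nabla u\|_p^p-\tfrac{\mu C_{N,q}^q}{q}\,a^{q(1-\gamma_{q})}\|\nabla u\|_p^{q\gamma_{q}}-\tfrac{1}{p^*S^{p^*/p}}\|\nabla u\|_p^{p^*}=:h\big(\|\nabla u\|_p\big).$$
Since $p<q<p+\tfrac{p^2}{N}$ forces $0<q\gamma_{q}<p<p^*$, one has $h(0)=0$, $h<0$ near $0$ and $h(t)\to-\infty$ as $t\to+\infty$; the hypothesis $\mu a^{q(1-\gamma_{q})}<\alpha(N,p,q)$ is exactly what makes $h$ strictly positive on an interval. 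I would then pick radii $0<R_0<R_1$ with $h<0$ on $(0,R_0)$, $h(R_0)=0$ and $h>0$ on $(R_0,R_1)$, chosen (this is where the explicit $C',C''$ are used) so that moreover $\tfrac1p t^p-\tfrac1{p^*S^{p^*/p}}t^{p^*}\ge0$ on $[0,R_0]$ and $\inf_{t\in[0,R_0]}h(t)>-\tfrac1N S^{N/p}$. Fix $k\in(R_0,R_1)$ and set $m(a,\mu)=\inf_{A_k}E_\mu$ (one checks it coincides with $\inf_{\mathcal{P}_{a,\mu}}E_\mu$).

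The geometry is then as follows. Using the scaling $s\star u$ (so $\|\nabla(s\star u)\|_p=e^s\|\nabla u\|_p$, and $\Psi_u^{\mu}(s)\to0^-$ as $s\to-\infty$ because the term $-\tfrac{\mu}{q}e^{q\gamma_{q}s}\|u\|_q^q$ dominates), one sees $s\star u\in A_k$ with $E_\mu(s\star u)<0$ for $s$ very negative, hence $m(a,\mu)<0$; on the other hand $E_\mu(u)\ge h(\|\nabla u\|_p)\ge0$ whenever $\|\nabla u\|_p\in[R_0,k]$, so any minimizing sequence $\{u_n\}$ satisfies $\|\nabla u_n\|_p<R_0$ for large $n$ and the radius constraint is inactive along it. Applying the Schwarz symmetrization (which does not increase $\|\nabla\cdot\|_p$ and preserves all $L^r$-norms) I may take each $u_n$ nonnegative, radially symmetric and non-increasing; then $\{u_n\}$ is bounded in $W^{1,p}_{rad}(\R^N)$ and, along a subsequence, $u_n\weakto u$ in $W^{1,p}(\R^N)$, $u_n\to u$ in $L^r(\R^N)$ for $p<r<p^*$ and a.e., so $\|u_n\|_q^q\to\|u\|_q^q$. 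If $u\equiv0$ then $\|u_n\|_q\to0$, whence $E_\mu(u_n)\ge\tfrac1p\|\nabla u_n\|_p^p-\tfrac1{p^*S^{p^*/p}}\|\nabla u_n\|_p^{p^*}-o(1)\ge-o(1)$ by $\|\nabla u_n\|_p<R_0$, contradicting $m(a,\mu)<0$; hence $u\not\equiv0$.

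The main obstacle is compactness, i.e.\ passing from weak to strong convergence: there is no Hilbert structure, the Br\'ezis--Lieb splitting is not available for the gradient term without a.e.\ convergence of $\nabla u_n$, and $W^{1,p}_{rad}\hookrightarrow L^p$ is \emph{not} compact, so $\|u\|_p=a$ is not automatic. I would first invoke Ekeland's variational principle to replace $\{u_n\}$ by a Palais--Smale sequence for $E_\mu|_{S_a}$ at level $m(a,\mu)$ with $P_\mu(u_n)\to0$ (legitimate since the radius constraint is inactive), and then apply the Palais--Smale compactness result established earlier in the paper through the concentration-compactness lemma (the $p$-Laplacian analogue of Soave's Proposition~3.1): up to a subsequence, either $u_n\to u$ strongly in $W^{1,p}(\R^N)$, or $u_n\weakto u$ with $E_\mu(u)\le m(a,\mu)-\tfrac1N S^{N/p}$. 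The second alternative is impossible: by weak lower semicontinuity $\|\nabla u\|_p\le R_0$, so (using $\|u\|_p\le a$ in the Gagliardo--Nirenberg step) $E_\mu(u)\ge h(\|\nabla u\|_p)\ge\inf_{t\in[0,R_0]}h(t)>-\tfrac1N S^{N/p}>m(a,\mu)-\tfrac1N S^{N/p}$. Hence $u_n\to u$ strongly, $u\in A_k$ and $E_\mu(u)=m(a,\mu)$.

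Finally I would draw the conclusions. As $u$ minimizes $E_\mu$ on $A_k$ with the radius constraint inactive, $dE_\mu|_{S_a}(u)=0$, so there is a multiplier $\lambda_{a,\mu}^{+}\in\R$ with $-\Delta_p u=\lambda_{a,\mu}^{+}|u|^{p-2}u+\mu|u|^{q-2}u+|u|^{p^*-2}u$; testing against $u$ and combining with the Pohozaev identity $P_\mu(u)=0$ (Proposition~\ref{Pohozaev}) gives $\lambda_{a,\mu}^{+}a^p=\mu(\gamma_{q}-1)\|u\|_q^q<0$. Replacing $u$ by $|u|$ does not change $E_\mu$, so the minimizer may be taken nonnegative, and the strong maximum principle for $\Delta_p$ yields $u>0$; radial symmetry and monotonicity are inherited from the symmetrized minimizing sequence. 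Since $\|\nabla u\|_p<R_0$ together with $P_\mu(u)=0$ and $\mathcal{P}_{a,\mu}^{0}=\emptyset$ forces $u\in\mathcal{P}_{a,\mu}^{+}$, we get $m^{+}(a,\mu)\le E_\mu(u)=m(a,\mu)$, while the calibration of $\alpha(N,p,q)$ gives $\mathcal{P}_{a,\mu}^{+}\subset A_k$ and hence $m^{+}(a,\mu)\ge m(a,\mu)$, so $E_\mu(u)=m(a,\mu)=m^{+}(a,\mu)<0$. Any ground state $v$ of $E_\mu|_{S_a}$ has $E_\mu(v)=m(a,\mu)<0$, which forces $\|\nabla v\|_p<R_0<k$, so $v\in A_k$ and is a local minimizer of $E_\mu|_{A_k}$.
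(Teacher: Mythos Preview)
Your overall strategy---minimize $E_\mu$ on $A_{R_0}$, symmetrize, apply Ekeland, and then use Proposition~\ref{compactnesslemma}---matches the paper's. But two steps are treated too loosely and one of them does not line up with how the constant $\alpha(N,p,q)$ is actually calibrated.

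First, the claim that the Ekeland sequence satisfies $P_\mu(u_n)\to0$ ``since the radius constraint is inactive'' is not a justification: inactivity of the ball constraint only tells you that $\{u_n\}$ is Palais--Smale for $E_\mu|_{S_a}$, not that $P_\mu(u_n)\to0$. The paper gets this by first observing (Lemma~\ref{mplus}) that $\inf_{A_{R_0}}E_\mu=m^+(a,\mu)$, so the minimizing sequence $\{v_n\}$ can be taken on $\mathcal{P}_{a,\mu}^+$, where $P_\mu(v_n)=0$ exactly; Ekeland then yields $\{u_n\}$ with $\|u_n-v_n\|\to0$, hence $P_\mu(u_n)=P_\mu(v_n)+o(1)\to0$. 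You allude to the identity $\inf_{A_k}E_\mu=\inf_{\mathcal{P}_{a,\mu}}E_\mu$, which is the right ingredient, but you should use it to place the \emph{minimizing sequence} on $\mathcal{P}_{a,\mu}^+$ before invoking Ekeland.

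Second, and more importantly, your exclusion of alternative~(i) in Proposition~\ref{compactnesslemma} relies on the unproved assertion $\inf_{t\in[0,R_0]}h(t)>-\tfrac1N S^{N/p}$. The hypothesis $\mu a^{q(1-\gamma_q)}<\alpha(N,p,q)$ is \emph{not} designed for that inequality. In alternative~(i) the weak limit $u$ is a solution of~\eqref{equation}, so by Proposition~\ref{Pohozaev} one has $P_\mu(u)=0$; the paper exploits this to write
\[
E_\mu(u)=\frac{1}{N}\|\nabla u\|_p^p-\mu\gamma_q\Big(\frac{1}{q\gamma_q}-\frac{1}{p^*}\Big)\|u\|_q^q
\;\ge\;\varphi(\|\nabla u\|_p),
\]
with $\varphi(t)=\tfrac1N t^p-\mu\gamma_q\big(\tfrac{1}{q\gamma_q}-\tfrac{1}{p^*}\big)C_{N,q}^q a^{q(1-\gamma_q)}t^{q\gamma_q}$, and the constant $C''$ is \emph{precisely} the threshold for which $\min_t\varphi(t)\ge-\tfrac1N S^{N/p}$. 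Your lower bound via $h$ discards the Pohozaev information and replaces it by the Sobolev inequality on the $p^*$-term; the resulting inequality you need is a different (and not obviously equivalent) condition on $\mu a^{q(1-\gamma_q)}$, and you do not verify it. The fix is simply to use $P_\mu(u)=0$ in alternative~(i) and bound via $\varphi$ as the paper does.
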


    In addition to guaranteeing that $\Psi_{u}^{\mu}$ has two critical points on $\mathbb{R}$(in fact, this conclusion can be obtained by $\mu a^{q(1-\gamma_{q})}<C'$), another important reason we assume (\ref{muconsub}) is to ensure the convergence of the PS sequence by using compactness lemma we have obtained.
    	
    The existence result of mountain-pass type solution for $p<q<p+\frac{p^2}{N}$ can be stated as follows.
    \begin{theorem}\label{th2}
    	Let $N\geqslant 2$, $1<p<N$, $p<q<p+\frac{p^2}{N}$, and $a,\mu>0$ satisfies {\rm(\ref{muconsub})}. Further we assume that $N\geqslant p^2$ or $N<p^2<9$. Then $E_{\mu}|_{S_{a}}$ has a critical point of mountain-pass type $u_{a,\mu}^{-}$ which is positive, radially symmetric, radially non-increasing, and solves {\rm (\ref{equation})} for some $\lambda_{a,\mu}^{-}<0$. Moreover, $E_{\mu}(u_{a,\mu}^{-})=m^{-}(a,\mu)$ and
    	    \[0<m^{-}(a,\mu)<m^{+}(a,\mu)+\frac{1}{N}S^{\frac{N}{p}}.\]
    \end{theorem}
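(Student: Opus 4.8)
The plan is to obtain $u_{a,\mu}^{-}$ as a mountain-pass critical point on the constraint $S_a$, using the fibering map $\Psi_u^\mu$ to build the geometry and the compactness lemma described in the introduction to recover strong convergence. First I would set up the min-max value. Since for $p<q<p+\frac{p^2}{N}$ the map $\Psi_u^\mu(s)=E_\mu(s\star u)$ has (under \eqref{muconsub}) a local minimum at $s_u$ and a strict global maximum at $t_u>s_u$, with $\Psi_u^\mu(s)\to-\infty$ as $s\to\pm\infty$, the natural candidate is
\begin{equation*}
m^{-}(a,\mu)=\inf_{u\in\mathcal P_{a,\mu}^{-}}E_\mu(u)=\inf_{\gamma\in\Gamma}\max_{t\in[0,1]}E_\mu(\gamma(t)),
\end{equation*}
where $\Gamma$ is a class of paths on $S_a$ joining a point in $A_k$ (near the local minimizer $u_{a,\mu}^{+}$, with energy $<0$) to a point with large gradient norm and negative energy; the key identity here, analogous to Soave \cite{sn2}, is that this mountain-pass level coincides with $\inf_{\mathcal P_{a,\mu}^{-}}E_\mu$ because every path must cross $\mathcal P_{a,\mu}^{-}$. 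I would work in the radial space $W_{rad}^{1,p}(\R^N)$ throughout, so that $m^{-}(a,\mu)=m_r^{-}(a,\mu)$, exploiting the compact embedding $W_{rad}^{1,p}\hookrightarrow L^q(\R^N)$ for $p<q<p^*$.

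Second, I would prove the strict energy estimate $m^{-}(a,\mu)<m^{+}(a,\mu)+\frac1N S^{N/p}$. This is the crucial step for compactness: by the compactness lemma, a PS sequence for $E_\mu|_{S_a}$ at level $c$ with $P_\mu(u_n)\to0$ either converges strongly or loses a bubble of energy exactly $\frac1N S^{N/p}$, dropping below $c$; so if $c<m^{+}(a,\mu)+\frac1N S^{N/p}$ and one can rule out convergence to a level-$m^+$ type object with a full bubble split off, strong convergence follows. To get the strict inequality I would test with $u_{a,\mu}^{+}$ perturbed by a suitably rescaled and truncated Aubin–Talenti/Sobolev extremal $U_\varepsilon$ concentrated at a point, i.e.\ consider $v_\varepsilon = u_{a,\mu}^{+} + \text{(rescaled } U_\varepsilon)$ renormalized to $S_a$, take $t_{v_\varepsilon}\star v_\varepsilon\in\mathcal P_{a,\mu}^{-}$, and expand $E_\mu(t_{v_\varepsilon}\star v_\varepsilon)$ as $\varepsilon\to0$. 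The expansion should give $m^{+}(a,\mu)+\frac1N S^{N/p} - (\text{positive lower-order term}) + o(\text{that term})$, where the gain comes from the subcritical interaction term $\mu\|\cdot\|_q^q$ and the cross terms; the sign conditions $N\geqslant p^2$ or $N<p^2<9$ enter precisely in controlling these expansions (the known $p$-Laplacian estimates on $\int|\nabla(U_\varepsilon+\varphi)|^p$ behave differently according to whether $p^*\lessgtr p+1$, i.e.\ $N\lessgtr p^2$, which is where the dimension restriction originates). I expect this expansion to be the main obstacle, since for $p\neq2$ the algebraic expansion of $\||\nabla(u+v)|\|_p^p$ is not exact and requires the delicate inequalities of Brezis–Nirenberg type adapted to the $p$-Laplacian.

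Third, with the strict estimate in hand I would extract a Palais–Smale sequence at level $m^{-}(a,\mu)$ that additionally satisfies $P_\mu(u_n)\to0$. This is done by the standard device of working with the augmented functional on $S_a\times\R$ via the map $(u,s)\mapsto E_\mu(s\star u)$ and applying the Ekeland variational principle / min-max machinery with the Pohozaev constraint, exactly as in \cite{jl,sn1,sn2} adapted to $W^{1,p}$; alternatively one uses the fact that $\mathcal P_{a,\mu}^{-}$ is a natural constraint (a minimizer over $\mathcal P_{a,\mu}^{-}$ is a free critical point of $E_\mu|_{S_a}$ because $\mathcal P_{a,\mu}^{0}=\emptyset$ under \eqref{muconsub}, so the Lagrange multiplier for the $P_\mu=0$ constraint vanishes). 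Then the compactness lemma plus the strict estimate yields $u_n\to u_{a,\mu}^{-}$ strongly in $W_{rad}^{1,p}$, hence $u_{a,\mu}^{-}\in\mathcal P_{a,\mu}^{-}$ with $E_\mu(u_{a,\mu}^{-})=m^{-}(a,\mu)$ and $dE_\mu|_{S_a}(u_{a,\mu}^{-})=0$.

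Finally I would establish the qualitative properties. Positivity and monotonicity follow from the Schwarz (Pólya–Szegő) rearrangement: replacing $u_n$ by $|u_n|^{*}$ does not increase $\|\nabla\cdot\|_p$ and preserves the other norms, so one may assume the minimizing sequence is nonnegative, radial, and radially non-increasing, and the limit inherits these; the strong maximum principle for the $p$-Laplacian then gives $u_{a,\mu}^{-}>0$. The sign of the multiplier $\lambda_{a,\mu}^{-}<0$ is read off from the Nehari/Pohozaev relations: combining the equation tested against $u_{a,\mu}^{-}$ with $P_\mu(u_{a,\mu}^{-})=0$ eliminates the critical term and gives $\lambda_{a,\mu}^{-} a^p = \mu\gamma_q\|u_{a,\mu}^{-}\|_q^q - \mu\|u_{a,\mu}^{-}\|_q^q\cdot(\text{something}) <0$ using $\gamma_q<1$ and $q\gamma_q<p<p^*$. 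The lower bound $m^{-}(a,\mu)>0$ comes from the fact that on $\mathcal P_{a,\mu}^{-}$ the gradient norm is bounded below away from zero (via Gagliardo–Nirenberg and Sobolev, using \eqref{muconsub}), so $E_\mu$ restricted there is bounded below by a positive constant; and $m^{-}(a,\mu)<m^{+}(a,\mu)+\frac1N S^{N/p}$ is exactly the strict estimate proved in the second step, while $m^{-}(a,\mu)>m^{+}(a,\mu)$... actually the stated chain is $0<m^{-}(a,\mu)<m^{+}(a,\mu)+\frac1N S^{N/p}$ with $m^{+}(a,\mu)<0$, which is consistent since then $m^- < \frac1N S^{N/p}$ suffices for compactness. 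I would double-check in the write-up that the geometry indeed forces $m^{-}(a,\mu)$ to lie strictly below $m^{+}(a,\mu)+\frac1N S^{N/p}$ and strictly above $0$; these two facts together with the compactness lemma are what make the argument close.
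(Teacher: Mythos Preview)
Your plan matches the paper's approach closely: the min--max on $\mathbb R\times S_{a,r}$ via $\tilde E_\mu(s,u)=E_\mu(s\star u)$, the identification $\sigma(a,\mu)=m^-(a,\mu)=m_r^-(a,\mu)$, the strict estimate $m^-(a,\mu)<m^+(a,\mu)+\tfrac1N S^{N/p}$ by gluing a truncated Talenti bubble onto $u_{a,\mu}^+$ (with the two regimes $N\geqslant p^2$ and $N<p^2<9$ handled by different algebraic inequalities for $|a+b|^p$), and the extraction of a Palais--Smale sequence with $P_\mu(u_n)\to0$ via Ghoussoub's theorem. The qualitative conclusions (positivity, radial monotonicity, $\lambda<0$) are also argued the same way.

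There is however one step you leave genuinely open. In your third paragraph you write that the strict estimate plus the compactness lemma ``yields $u_n\to u_{a,\mu}^-$ strongly'', after vaguely saying one must ``rule out convergence to a level-$m^+$ type object with a full bubble split off''. But the compactness lemma only tells you that in the bad alternative the weak limit $u$ solves \eqref{equation} with $\|u\|_p\leqslant a$ (possibly strictly) and $E_\mu(u)\leqslant m^-(a,\mu)-\tfrac1N S^{N/p}<m^+(a,\mu)$. To get a contradiction you need $E_\mu(u)\geqslant m^+(a,\mu)$, and since $u$ lives on $S_{\|u\|_p}$, not $S_a$, what you actually get from the Pohozaev identity and the ground-state characterization is $E_\mu(u)\geqslant m^+(\|u\|_p,\mu)$. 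The missing ingredient is the monotonicity $a\mapsto m^+(a,\mu)$ non-increasing on $(0,(\mu^{-1}\alpha)^{1/(q(1-\gamma_q))})$, which the paper proves separately (by differentiating along the curve $b\mapsto t_+(b)\star(\tfrac{b}{a}u)$ and using $P_\mu(u)=0$ to show the derivative equals $\mu(\gamma_q-1)\|u\|_q^q/a<0$). Without this lemma the exclusion of alternative (i) does not close, so you should add it explicitly to your write-up.
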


    In order to use compactness lemma to obtain the convergence of PS sequences, the strict inequality $m^{-}(a,\mu)<m^{+}(a,\mu)+\frac{1}{N}$ is a crucial step in our proof. Here we refer to the ideas of \cite{gajpai2,wjwy} to prove the strict inequality. However, there are some difficulties to obtain the inequality for $N<p^2$ and $p\geqslant 3$, we don't know whether the result hold.

    For $q\geqslant p+\frac{p^2}{N}$. Since $q\gamma_{q}\geqslant p$ and hence $\Psi_{u}^{\mu}(s)$ has unique critical point $t_{\mu}$ of mountain-pass type under suitable assumptions of $\mu$ if $q=p+\frac{p^2}{N}$. Moreover, $t_{u}\star u\in\mathcal{P}_{a,\mu}^{-}$. Therefore, it is natural to speculate that $E_{\mu}|_{S_{a}}$ has a critical point of mountain-pass type which is also a minimizer of $E_{\mu}$ on $\mathcal{P}_{a,\mu}^{-}$.
    \begin{theorem}\label{th3}
    	Let $N\geqslant p^{\frac{3}{2}}$, $1<p<N$, $p+\frac{p^2}{N}\leqslant q<p^*$, and $a,\mu >0$. Further we assume that
    	    \begin{equation}\label{muconcri}
    	    	\mu a^{\frac{p^2}{N}}<\frac{q}{pC_{N,p,q}^q},
    	    \end{equation}
        if $q=p+\frac{p^2}{N}$. Then $E_{\mu}|_{S_{a}}$ has a ground state $u_{a,\mu}^{-}$ which is positive, radially symmetric, radially non-increasing, and solves {\rm (\ref{equation})} for some $\lambda_{a,\mu}^{-}<0$. Moreover, $u_{a,\mu}^{-}$ is a critical point of mountain pass-type and
            \[E_{\mu}(u_{a,\mu}^{-})=m^{-}(a,\mu)=m(a,\mu)\in\Big(0,\frac{1}{N}S^{\frac{N}{p}}\Big).\]
    \end{theorem}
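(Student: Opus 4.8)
For $u\in S_a$ I would first describe $\Psi_u^\mu$. Since $q\geq p+\frac{p^2}{N}$ forces $q\gamma_q\geq p$, writing $g(s):=e^{-ps}(\Psi_u^\mu)'(s)=\lVert\nabla u\rVert_p^p-\mu\gamma_q e^{(q\gamma_q-p)s}\lVert u\rVert_q^q-e^{(p^*-p)s}\lVert u\rVert_{p^*}^{p^*}$ shows that $g$ is strictly decreasing, so $\Psi_u^\mu$ has at most one critical point; together with $\Psi_u^\mu(s)\to 0^+$ as $s\to-\infty$ (guaranteed, when $q=p+\frac{p^2}{N}$, by the Gagliardo--Nirenberg inequality and \eqref{muconcri}, which make $\lVert\nabla u\rVert_p^p-\mu\gamma_q\lVert u\rVert_q^q>0$) and $\Psi_u^\mu(s)\to-\infty$ as $s\to+\infty$, this yields a unique critical point $t_u$, a strict global maximum, with $t_u\star u\in\mathcal P^-_{a,\mu}$. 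Substituting $P_\mu(u)=0$ into the defining inequalities also gives $\mathcal P^0_{a,\mu}=\mathcal P^+_{a,\mu}=\emptyset$, hence $\mathcal P_{a,\mu}=\mathcal P^-_{a,\mu}$ and $m(a,\mu)=m^-(a,\mu)$; moreover $u\mapsto t_u$ is continuous, $\mathcal P_{a,\mu}$ is a $C^1$-submanifold of $S_a$, and it is a natural constraint (a critical point of $E_\mu|_{\mathcal P_{a,\mu}}$ is a critical point of $E_\mu|_{S_a}$, using $\mathcal P^0_{a,\mu}=\emptyset$).

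\textbf{Step 2 (minimax level and its bounds).} Because $0=t_v$ for every $v\in\mathcal P_{a,\mu}$, the identity $E_\mu(t_u\star u)=\max_{s\in\mathbb R}\Psi_u^\mu(s)$ gives $m^-(a,\mu)=\inf_{u\in S_a}\max_{s\in\mathbb R}\Psi_u^\mu(s)$; using $P_\mu(s\star u)=(\Psi_u^\mu)'(s)$, a continuity/crossing argument identifies $m^-(a,\mu)$ with the mountain-pass value of $E_\mu|_{S_a}$ over the paths joining $s_-\star u_0$ to $s_+\star u_0$ for a fixed $u_0\in S_a$ and $|s_\pm|$ large (the endpoints having energy $<m^-(a,\mu)$). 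Next I would show $m^-(a,\mu)\in\big(0,\frac1N S^{N/p}\big)$: on $\mathcal P_{a,\mu}$ one computes $E_\mu(u)=\frac1N\lVert u\rVert_{p^*}^{p^*}+\frac{\mu(q\gamma_q-p)}{pq}\lVert u\rVert_q^q\geq 0$, and combining $P_\mu(u)=0$ with Gagliardo--Nirenberg and Sobolev bounds $\lVert\nabla u\rVert_p$ (hence $\lVert u\rVert_{p^*}$, or $\lVert u\rVert_q$ when $q>p+\frac{p^2}{N}$) away from $0$, so $m^-(a,\mu)>0$; the strict upper bound is the content of Step 3.

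\textbf{Step 3 (strict upper bound --- the main obstacle).} I would take the Talenti instanton $U_\epsilon$ realizing $S$, truncate it by a cut-off, rescale so that its $L^p$-norm equals $a$, and symmetrize, obtaining $\bar u_\epsilon\in S_a\cap W^{1,p}_{rad}(\mathbb R^N)$, positive, radially non-increasing. After computing the $\epsilon\to 0$ asymptotics of $\lVert\nabla\bar u_\epsilon\rVert_p^p$, $\lVert\bar u_\epsilon\rVert_{p^*}^{p^*}$ and $\lVert\bar u_\epsilon\rVert_q^q$, I would evaluate $\max_s\Psi_{\bar u_\epsilon}^\mu(s)$ and show it is $<\frac1N S^{N/p}$ for small $\epsilon$: for $\mu=0$ the maximum is $\frac1N S^{N/p}$ plus a positive truncation error, while the term $-\frac\mu q e^{q\gamma_q s}\lVert\bar u_\epsilon\rVert_q^q$ contributes a negative amount of size comparable to $\lVert\bar u_\epsilon\rVert_q^q$; the hypothesis $N\geq p^{3/2}$ (which is precisely $p+\frac{p^2}{N}\geq\frac{N(p-1)}{N-p}$, i.e. $U_\epsilon^q\in L^1(\mathbb R^N)$ for all admissible $q$) makes this gain dominate the error, while \eqref{muconcri} keeps the effective gradient coefficient positive in the borderline case $q=p+\frac{p^2}{N}$. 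Pinning down these competing powers of $\epsilon$, and in particular the borderline dimension, is the delicate technical point.

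\textbf{Step 4 (Palais--Smale sequence, compactness, conclusion).} Applying Ekeland's principle to the minimax class of Step 2 on the product $\mathbb R\times(S_a\cap W^{1,p}_{rad})$ (so the $s$-derivative produces $P_\mu$) and symmetrizing, I would obtain a radial, nonnegative, non-increasing Palais--Smale sequence $\{u_n\}$ for $E_\mu|_{S_a}$ at level $m^-(a,\mu)$ with $P_\mu(u_n)\to 0$; combining $E_\mu(u_n)\to m^-(a,\mu)$ with $P_\mu(u_n)\to 0$ bounds $\{u_n\}$ in $W^{1,p}_{rad}(\mathbb R^N)$ and bounds the associated multipliers $\lambda_n$. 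Passing to a subsequence $u_n\rightharpoonup u$ and invoking the compactness lemma announced in the introduction (the $p$-Laplacian analogue of \cite[Proposition 3.1]{sn2}): the non-strong alternative would force $E_\mu(u)\leq m^-(a,\mu)-\frac1N S^{N/p}<0$, impossible since $u$ solves the limiting $p$-Laplacian equation and hence satisfies its Pohozaev identity, whence $E_\mu(u)\geq 0$ (or $u\equiv 0$, giving $E_\mu(u)=0$). Therefore $u_n\to u$ strongly, $u\in S_a\cap\mathcal P_{a,\mu}$, $E_\mu(u)=m^-(a,\mu)=m(a,\mu)$, and $u$ solves \eqref{equation} with some $\lambda_{a,\mu}^-$. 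Testing \eqref{equation} with $u$ and using $P_\mu(u)=0$ gives $\lambda_{a,\mu}^- a^p=\mu(\gamma_q-1)\lVert u\rVert_q^q<0$; $p$-Laplacian regularity and the strong maximum principle give $u>0$, and $u$ is radial and non-increasing as a strong limit of such functions. Since $u$ realizes $m(a,\mu)$ it is a ground state, and it is of mountain-pass type by construction.
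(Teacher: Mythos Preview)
Your proposal is correct and follows essentially the same route as the paper: fiber analysis giving $\mathcal P_{a,\mu}=\mathcal P_{a,\mu}^-$ and $m(a,\mu)=m^-(a,\mu)>0$, the Talenti--bubble test function (projected onto $S_a$ by dilation) to obtain $m^-(a,\mu)<\frac1N S^{N/p}$ with the dimensional restriction $N\geq p^{3/2}$ making the $\lVert u_\varepsilon\rVert_q^q$ gain beat the truncation error, a minimax on $\mathbb R\times S_{a,r}$ producing a radial Palais--Smale sequence with $P_\mu(u_n)\to 0$, and then the compactness lemma (Proposition~\ref{compactnesslemma}) plus the Pohozaev-based lower bound $E_\mu(u)\geq 0$ on $\mathcal P_{a,\mu}$ to exclude the non-compact alternative. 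The only cosmetic difference is that the paper invokes Ghoussoub's minimax principle \cite[Theorem~5.2]{gn} rather than Ekeland directly, but the mechanism and output are the same.
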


    Similar to Theorem \ref{th2}, to obtain the convergence of PS sequence, the strict inequality $m^{-}(a,\mu)<\frac{1}{N}S^{\frac{N}{p}}$ plays an important role. However, obtaining this result seems somewhat difficult for $N<p^{\frac{3}{2}}$ by classical method(see \cite{bhnl,sn2}).

    Now, we start to analyze the asymptotic behavior of $u_{a,\mu}^{\pm}$ as $\mu\rightarrow 0$ and $\mu$ goes it's upper bound. To state these results, let us first introduce some necessary notations. Through a scaling, we know the equation
        \begin{equation}\label{uniqueness}
    	    -\Delta_{p}u+u^{p-1}=u^{q-1}\quad\mbox{in}\ \mathbb{R}^N,
        \end{equation}
    has a radial solution $u$ which is non-negative. Similar to \cite[theorem A.1]{gajpai2}, we can prove $u\in L_{loc}^{\infty}(\mathbb{R}^N)$. Then, by regularity result\cite{tp}, we know $u\in C_{loc}^{1,\alpha}$ for some $\alpha\in(0,1)$. Thus, by \cite{sjtm}, (\ref{uniqueness}) has unique radial ``ground state" $\phi_{0}$. Here, in \cite{sjtm}, the meaning of ``ground state" is a non-negative non-trivial $C^1$ distribution solution of (\ref{uniqueness}).

    The asymptotic result as $\mu\rightarrow 0$ can be stated as follows.
    \begin{theorem}\label{th4}
    	Let $N\geqslant 2$, $1<p<N$, $p<q<p^*$, $a>0$ and $\mu>0$ sufficiently small. Let $u_{a,\mu}^{+}$ be the local minimizer which is obtained by Theorem {\rm \ref{th1}} and $u_{a,\mu}^{-}$ be the mountain-pass solution which is obtained by Theorem {\rm \ref{th2}} and {\rm {\ref{th3}}}. Then,
    	
    	\noindent{\rm (1)}
    	\begin{minipage}[t]{\linewidth}
    		We have
    		    \[\sigma_{0}^{\frac{1}{p-q}}\mu^{-\frac{N}{p(p-q\gamma_{q})}}u_{a,\mu}^{+}\Big(\sigma_{0}^{-\frac{1}{p}}\mu^{-\frac{1}{p-q\gamma_{q}}}\cdot\Big)\rightarrow\phi_{0}\]
    		in $W^{1,p}(\mathbb{R}^N)$ as $\mu\rightarrow 0$, where $\phi_{0}$ is the unique radial ``ground state" solution of {\rm (\ref{uniqueness})} and
    		    \[\sigma_{0}=\Big(\frac{a^p}{\lVert\phi_{0}\rVert_{p}^p}\Big)^{\frac{p(q-p)}{p^2-N(q-p)}}.\]
    	\end{minipage}

        \noindent{\rm (2)}
        \begin{minipage}[t]{\linewidth}
        	For $N\leqslant p^2$, there exists $\sigma_{\mu}>0$ such that
        	\[w_{\mu}=\sigma_{\mu}^{\frac{N-p}{p}}u_{\mu}^{-}(\sigma_{\mu}\cdot)\rightarrow U_{\varepsilon_{0}}\]
        	in $D^{1,p}(\mathbb{R}^N)$ as $\mu\rightarrow 0$ for some $\varepsilon_{0}>0$, where $U_{\varepsilon_{0}}$ is given by {\rm (\ref{Udf})} and $\sigma_{\mu}\rightarrow 0$ as $\mu\rightarrow 0$.
        \end{minipage}

        \noindent{\rm (3)}
         \begin{minipage}[t]{\linewidth}
        	For $N>p^2$, we have $u_{a,\mu}^{-}\rightarrow U_{\varepsilon_{0}}$ in $W^{1,p}(\mathbb{R}^N)$ as $\mu\rightarrow 0$, where $U_{\varepsilon_{0}}$ satisfies $\lVert U_{\varepsilon_{0}}\rVert_{p}^p=a^p$.
        \end{minipage}
    \end{theorem}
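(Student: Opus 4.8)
The plan is to prove the three parts by a single rescaling-plus-compactness scheme, the difference between them being only which nonlinearity survives the limit $\mu\to 0$ and how the limit profile is identified. I will use throughout the following a priori bounds, valid uniformly as $\mu\to 0$: combining for $u=u_{a,\mu}^{\pm}$ the Pohozaev identity $P_\mu(u)=0$ (Proposition \ref{Pohozaev}), the resulting formula $E_\mu(u)=\mu\big(\tfrac{\gamma_q}{p}-\tfrac1q\big)\|u\|_q^q+\tfrac1N\|u\|_{p^*}^{p^*}$, the identity $\lambda_{a,\mu}^{\pm}a^p=-\mu(1-\gamma_q)\|u\|_q^q$ obtained by subtracting the Nehari and Pohozaev identities, the energy bounds from Theorems \ref{th1}--\ref{th3}, and the Gagliardo--Nirenberg and Sobolev inequalities (with (\ref{muconcri}) used exactly when $q=p+\tfrac{p^2}{N}$), one gets that $\|\nabla u_{a,\mu}^{\pm}\|_p$ and $\|u_{a,\mu}^{\pm}\|_q$ stay bounded as $\mu\to 0$.

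\emph{Part (1).} First I would establish the sharp rate $\mu^{-p/(p-q\gamma_q)}\,m^+(a,\mu)\to c_\infty<0$, where $c_\infty$ is a fixed multiple of the ground-state energy of $w\mapsto\frac1p\|\nabla w\|_p^p-\frac1q\|w\|_q^q$ on $S_{\|\phi_0\|_p}$: the upper bound follows from inserting a dilation/rescaling of $\phi_0$ into $E_\mu$ (after flowing it onto $\mathcal P_{a,\mu}^{+}$ by $s\star\,\cdot\,$), and the lower bound from the a priori bounds together with the fact that for $\mu$ small $E_\mu|_{\mathcal P_{a,\mu}^{+}}$ is governed by its $L^p$-subcritical part. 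Set $w_\mu:=\sigma_0^{1/(p-q)}\mu^{-N/(p(p-q\gamma_q))}u_{a,\mu}^{+}\big(\sigma_0^{-1/p}\mu^{-1/(p-q\gamma_q)}\,\cdot\,\big)$. A direct computation (the exponents are arranged exactly so that this works) shows $\|w_\mu\|_p^p=\|\phi_0\|_p^p$, that $w_\mu$ solves $-\Delta_p w_\mu=\tilde\lambda_\mu|w_\mu|^{p-2}w_\mu+|w_\mu|^{q-2}w_\mu+\tilde\mu_\mu|w_\mu|^{p^*-2}w_\mu$ with $\tilde\mu_\mu\to 0$ and $\tilde\lambda_\mu\,\|\phi_0\|_p^p=-(1-\gamma_q)\|w_\mu\|_q^q$, and that the corresponding energy tends to $c_\infty$. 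The a priori bounds make $\{w_\mu\}$ bounded in $W_{rad}^{1,p}(\mathbb{R}^N)$; pass to a weak limit $w_\mu\weakto w$. The compact embedding $W_{rad}^{1,p}(\mathbb{R}^N)\hookrightarrow L^q(\mathbb{R}^N)$ lets the subcritical term pass to the limit; $\tilde\mu_\mu\to 0$ kills the critical term (it is $o(1)$ in $(W^{1,p})'$); and $\tilde\lambda_\mu\to-1$ from the displayed identity. Hence $w$ solves (\ref{uniqueness}), and $w\not\equiv 0$ since the energy tends to $c_\infty<0$. Strong convergence $w_\mu\to w$ in $W^{1,p}$ then follows from the standard $p$-Laplacian monotonicity argument applied to the difference of the two equations tested by $w_\mu-w$, using the mass term---whose coefficient now tends to $1$---for $L^p$-coercivity and the vanishing coefficient of the critical term to absorb the error. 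Finally $w=\phi_0$ by uniqueness of the radial ground state of (\ref{uniqueness}) (\cite{sjtm}), and unwinding $\tilde\lambda_\mu\to-1$ gives the stated convergence of $\lambda_{a,\mu}^{+}$.

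\emph{Parts (2)--(3).} The first step is $\lim_{\mu\to 0}m^-(a,\mu)=\frac1N S^{N/p}$: the upper bound from suitably truncated and $L^p$-rescaled Aubin--Talenti functions $U_\varepsilon$ placed on $\mathcal P_{a,\mu}^{-}$ and estimated Br\'{e}zis--Nirenberg style (cf. \cite{bhnl,sn2}); the lower bound because the a priori bounds give $\mu\|u_{a,\mu}^-\|_q^q\to 0$, so $E_\mu(u_{a,\mu}^-)=\frac1N\|u_{a,\mu}^-\|_{p^*}^{p^*}+o(1)$, while $P_\mu=0$ gives $\|\nabla u_{a,\mu}^-\|_p^p=\|u_{a,\mu}^-\|_{p^*}^{p^*}+o(1)$, and Sobolev then forces $\|u_{a,\mu}^-\|_{p^*}^{p^*}\ge S^{N/p}+o(1)$. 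Fix $\mu_n\to 0$ and put $u_n:=u_{a,\mu_n}^-$; then $\|\nabla u_n\|_p^p\to S^{N/p}$ and $\|u_n\|_{p^*}^{p^*}\to S^{N/p}$, so $(u_n)$ minimizes the Sobolev quotient. Since $u_n$ is radially non-increasing, the concentration--compactness principle for the Sobolev inequality (see \cite{lpl,sm}) yields $\sigma_n>0$ with $v_n:=\sigma_n^{(N-p)/p}u_n(\sigma_n\,\cdot\,)\to U_{\varepsilon_0}$ strongly in $D^{1,p}(\mathbb{R}^N)$ for some $\varepsilon_0>0$. If $N\leqslant p^2$ then $U_{\varepsilon_0}\notin L^p(\mathbb{R}^N)$, so necessarily $\sigma_n\to 0$ (otherwise $\|v_n\|_p^p=\sigma_n^{-p}a^p$ stays bounded and a.e. convergence would put $U_{\varepsilon_0}$ in $L^p$), which proves (2). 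If $N>p^2$ then $U_{\varepsilon_0}\in L^p(\mathbb{R}^N)$; here I would show that no $L^p$-mass escapes to infinity, so $v_n\to U_{\varepsilon_0}$ in $L^p$ as well, hence in $W^{1,p}$ by uniform convexity, that $\sigma_n$ stays bounded and bounded away from $0$, and that undoing this bounded rescaling gives $u_n\to U_{\varepsilon_0}$ in $W^{1,p}$ with $\|U_{\varepsilon_0}\|_p^p=a^p$ forced by the mass constraint (that the limit is an Aubin--Talenti function follows since it is a nontrivial radial solution of $-\Delta_p u=|u|^{p^*-2}u$ realizing $S$), which proves (3).

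\emph{Main obstacle.} The hardest point is the non-escape of $L^p$-mass in part (3). There $\lambda_n\to 0$ (because $\|u_n\|_q$ is bounded, by the identity in the first paragraph), so the $L^p$-mass term in the equation degenerates and cannot provide $L^p$-coercivity as in part (1); instead one has to prove a uniform decay estimate $|u_n(x)|\leqslant C(1+|x|)^{-(N-p)/(p-1)}$ for the radial solutions, via the regularity theory of \cite{tp}, radial monotonicity, and comparison/barrier arguments in the spirit of \cite{gajpai2}. For $N>p^2$ this bound belongs to $L^p(\mathbb{R}^N)$, so it gives equi-integrability of the tails of $|u_n|^p$, and together with a.e.\ and $L^q_{loc}$ convergence it yields $u_n\to U_{\varepsilon_0}$ in $L^p(\mathbb{R}^N)$ by Vitali's theorem. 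The only other delicate points are the sharp energy-level asymptotics---in particular their lower bounds, which rely on the a priori bounds being genuinely uniform in $\mu$ as $\mu\to 0$.
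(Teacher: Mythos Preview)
Your Parts (1) and (2) follow essentially the paper's route (rescale, bound, extract weak limit, upgrade to strong via $p$-Laplacian monotonicity for (1); Sobolev-minimizing sequence plus radial concentration-compactness for (2)). One expository wrinkle in (1): you fix $\sigma_0$ in the rescaling \emph{before} knowing the limit and then claim $\tilde\lambda_\mu\to-1$ ``from the displayed identity''. That identity only gives $\tilde\lambda_\mu\to-1$ once you already know $\|w_\mu\|_q^q\to\|\phi_0\|_q^q$, i.e.\ once the limit is identified. The paper avoids this circularity by first proving the rate $-\lambda_\mu^{+}\sim\mu^{p/(p-q\gamma_q)}$ (its Lemma~\ref{asylammu}), rescaling \emph{without} $\sigma_0$, letting $\sigma_0$ be the limit of $-\lambda_\mu^{+}\mu^{-p/(p-q\gamma_q)}$, and only then performing the final $\sigma_0$-rescaling.

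Your Part (3) is a genuinely different strategy from the paper's, and the step you flag as the main obstacle is precisely the one the paper sidesteps. The paper never invokes a uniform pointwise decay bound or concentration-compactness rescaling here. Instead it works directly with $u_\mu^-$ (no $\sigma_\mu$) and produces a \emph{first-order} expansion of the energy: using $U_{\varepsilon_0}\in S_a$ (possible since $N>p^2$) and the implicit function theorem for $t(\mu)$ with $t(\mu)\star U_{\varepsilon_0}\in\mathcal P_\mu^-$, it gets
\[
m^-(\mu)\le \tfrac1N S^{N/p}-\tfrac1q\|U_{\varepsilon_0}\|_q^q\,\mu+o(\mu),
\]
while Sobolev plus $P_\mu(u_\mu^-)=0$ gives a matching lower expansion for $\|\nabla u_\mu^-\|_p^p$; comparing the two yields $\liminf_{\mu\to0}\|u_\mu^-\|_q^q\ge\|U_{\varepsilon_0}\|_q^q>0$. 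This single inequality rules out vanishing of the weak $W^{1,p}$-limit $u$, which then solves $-\Delta_p u=u^{p^*-1}$ and must be $U_\varepsilon$ for some $\varepsilon$; Fatou on $L^p$ and the $L^q$ lower bound force $\varepsilon=\varepsilon_0$, and Br\'ezis--Lieb upgrades to strong $W^{1,p}$ convergence. Your route via a uniform bound $|u_n(x)|\le C(1+|x|)^{-(N-p)/(p-1)}$ is plausible but is not delivered by the cited references when $\lambda_n\to0$ (the mass term degenerates), and you would still need to control $\sigma_n$ before the decay of $u_n$ transfers to $v_n$. The paper's energy-expansion argument is both shorter and self-contained.
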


    In fact, for $p<q<p+\frac{p^2}{N}$, we can prove $\lVert\nabla u_{a,\mu}^{+}\rVert_{p}^p\rightarrow 0$ and $m^{+}(a,\mu)\rightarrow 0$ as $\mu\rightarrow 0$. Thus, $\{u_{a,\mu}^{+}\}$ does not convergence strongly in $W^{1,p}(\mathbb{R}^N)$. For $p+\frac{p^2}{N}<q<p^*$, we can prove
        \[\lVert\nabla u_{a,\mu}^{-}\rVert_{p}^p,\ \ \lVert u_{a,\mu}^{-}\rVert_{p^*}^{p^*}\rightarrow S^{\frac{N}{p}}\quad and\quad m^{-}\rightarrow\frac{1}{N}S^{\frac{N}{p}}\]
    as $\mu\rightarrow 0$ which implies $\{u_{a,\mu}^{-}\}$ is a minimizing sequence of minimizing problem
        \[S=\inf_{u\in D^{1,p}(\mathbb{R}^N)\backslash\{0\}}\frac{\lVert\nabla u\rVert_{p}^p}{\lVert u\rVert_{p^*}^{p^*}}.\]
    When $N>p^2$, we can obtain strongly convergence result. But if $N\leqslant p^2$, since $U_{\varepsilon}\notin L^p(\mathbb{R}^N)$ for all $\varepsilon>0$, it is posible to obtain $u_{a,\mu}^{-}\rightarrow U_{\varepsilon_{0}}$ in $W^{1,p}(\mathbb{R}^N)$.

    Next we analyze the asymptotic behavior as $\mu$ goes its upper bound. Let
        \[\bar{\alpha}=\frac{1}{pa^{p^2/N}C_{N,p,q}^q},\]
    when $q=p+\frac{p^2}{N}$. We have following asymptotic result.
    \begin{theorem}\label{th5}
    	Let $N\geqslant 2$, $1<p<N$, $p+\frac{p^2}{N}\leqslant q<p^*$, and $a,\mu>0$ satisfies  {\rm (\ref{muconcri})} if $q=p+\frac{p^2}{N}$. Let $u_{a,\mu}^{+}$ be the mountain-pass solution which is obtained by Theorem {\rm \ref{th3}}. Then,
    	
    	\noindent{\rm (1)}
    	\begin{minipage}[t]{\linewidth}
    		For $q=p+\frac{p^2}{N}$, we have
    		    \[(\bar{\alpha}\sigma_{0})^{\frac{1}{p-q}}s_{\mu}^{\frac{N}{p}}u_{\mu}^{-}\Big(\sigma_{0}^{-\frac{1}{p}}s_{\mu}\cdot\Big)\rightarrow\phi_{0}\]
            in $W^{1,p}(\mathbb{R}^N)$ as $\mu\rightarrow\bar{\alpha}$, where $\phi_{0}$ is the unique radial "ground state" solution of {\rm (\ref{uniqueness})}, $s_{\mu}=(\bar{\alpha}-\mu)^{-(N-p)/p^2}$ and
                \[\sigma_{0}=\bar{\alpha}^{\frac{p^2}{N(q-p)=p^2}}\bigg(\frac{a^p}{\lVert\phi_{0}\rVert_{p}^p}\bigg)^{\frac{p(q-p)}{p^2-N(q-p)}}.\]
    	\end{minipage}

        \noindent{\rm (2)}
        \begin{minipage}[t]{\linewidth}
            For $p+\frac{p^2}{N}<q<p^*$, we have
        	    \[\sigma_{0}^{\frac{1}{p-q}}\mu^{-\frac{N}{p(p-q\gamma_{q})}}u_{a,\mu}^{+}\Big(\sigma_{0}^{-\frac{1}{p}}\mu^{-\frac{1}{p-q\gamma_{q}}}\cdot\Big)\rightarrow\phi_{0}\]
        	in $W^{1,p}(\mathbb{R}^N)$ as $\mu\rightarrow+\infty$, where $\phi_{0}$ is the unique radial "ground state" solution of {\rm (\ref{uniqueness})} and
        	    \[\sigma_{0}=\Big(\frac{a^p}{\lVert\phi_{0}\rVert_{p}^p}\Big)^{\frac{p(q-p)}{p^2-N(q-p)}}.\]
        \end{minipage}
    \end{theorem}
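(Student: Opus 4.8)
We write $u_\mu:=u_{a,\mu}^-$ for the mountain--pass solution furnished by Theorem \ref{th3}, $\lambda_\mu:=\lambda_{a,\mu}^-$ for the associated Lagrange multiplier, and $m(\mu):=m^-(a,\mu)=E_\mu(u_\mu)$; the argument is the same in the two regimes. \emph{Step 1 (scalar asymptotics).} Testing (\ref{equation}) with $u_\mu$ and subtracting the Pohozaev identity $P_\mu(u_\mu)=0$ (Proposition \ref{Pohozaev}) gives $\lambda_\mu a^p=\mu(\gamma_q-1)\|u_\mu\|_q^q<0$. Combining $P_\mu(u_\mu)=0$ with $E_\mu(u_\mu)=m(\mu)$ yields, in case (1) (where $q\gamma_q=p$), the identities $\|u_\mu\|_{p^*}^{p^*}=Nm(\mu)$ and $\|\nabla u_\mu\|_p^p-\tfrac{p\mu}{q}\|u_\mu\|_q^q=Nm(\mu)$; evaluating $\max_s\Psi_\psi^\mu(s)$ at a Gagliardo--Nirenberg extremal $\psi$ (placed on $S_a$ and acted on by $u\mapsto s\star u$, which leaves the relevant quotient invariant) gives an upper bound, while the Sobolev inequality applied to $u_\mu$ gives a lower bound, sandwiching $m(\mu)$ between constant multiples of $(\bar\alpha-\mu)^{N/p}$; hence $\|\nabla u_\mu\|_p^p\lesssim(\bar\alpha-\mu)^{(N-p)/p}\to0$, $\|u_\mu\|_q^q\to0$ by Gagliardo--Nirenberg, and $|\lambda_\mu|\to0$ at the rate carried by $s_\mu$. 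In case (2), where $\tfrac{\gamma_q}{p}-\tfrac1q>0$, the same identities force $\|\nabla u_\mu\|_p^p$, $\mu\|u_\mu\|_q^q$, $\|u_\mu\|_{p^*}^{p^*}$ and $m(\mu)$ all to tend to $0$ as $\mu\to+\infty$; an upper bound for $m(\mu)$ obtained from a fixed test function rescaled onto $S_a$ fixes the rates, and $|\lambda_\mu|=\mu(1-\gamma_q)\|u_\mu\|_q^q/a^p$ inherits the decay matching the exponent of $\mu$ in the statement.

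\emph{Step 2 (the rescaled problem).} Put $w_\mu(x)=\tau_\mu u_\mu(\delta_\mu x)$ with $\tau_\mu,\delta_\mu$ the scalings displayed in the statement; they are chosen so that $\mu\tau_\mu^{p-q}\delta_\mu^{p}=1$ (the $q$--coefficient of the rescaled equation equals $1$) and, by the choice of $\sigma_0$, $\|w_\mu\|_p^p=\tau_\mu^{p}\delta_\mu^{-N}a^p=\|\phi_0\|_p^p$. Since $u_\mu>0$ is radial and radially non--increasing, so is $w_\mu$, and $w_\mu$ solves
\[
  -\Delta_p w_\mu=\lambda_\mu\delta_\mu^{p}\,w_\mu^{p-1}+w_\mu^{q-1}+\kappa_\mu\,w_\mu^{p^*-1}\quad\text{in }\mathbb{R}^N,\qquad \kappa_\mu:=\tau_\mu^{p-p^*}\delta_\mu^{p}.
\]
By Step 1 one checks $\kappa_\mu\to0$ and $\lambda_\mu\delta_\mu^{p}\to-1$. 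The Nehari and Pohozaev identities for the rescaled equation give $\|w_\mu\|_q^q\to(1-\gamma_q)^{-1}\|\phi_0\|_p^p$ and, with $\varepsilon_\mu:=\|u_\mu\|_{p^*}^{p^*}/\|\nabla u_\mu\|_p^p\to0$ (by Step 1 and Sobolev) and the elementary identity $\kappa_\mu\|w_\mu\|_{p^*}^{p^*}=\varepsilon_\mu\|\nabla w_\mu\|_p^p$, one obtains $\|\nabla w_\mu\|_p^p\to\gamma_q(1-\gamma_q)^{-1}\|\phi_0\|_p^p=\|\nabla\phi_0\|_p^p$. In particular $\{w_\mu\}$ is bounded in $W^{1,p}(\mathbb{R}^N)$.

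\emph{Step 3 (limit and conclusion).} Along any sequence $\mu\to\bar\alpha$ (resp.\ $\mu\to+\infty$) we have $w_\mu\weakto w$ in $W^{1,p}(\mathbb{R}^N)$, and the compact embedding $W_{rad}^{1,p}(\mathbb{R}^N)\hookrightarrow L^q(\mathbb{R}^N)$ ($p<q<p^*$) gives $w_\mu\to w$ in $L^q$ and a.e.; hence $\|w\|_q^q=(1-\gamma_q)^{-1}\|\phi_0\|_p^p>0$, so $w\not\equiv0$. Testing the equation for $w_\mu$ with $w_\mu-w$ and using the bounds $\kappa_\mu,\varepsilon_\mu\to0$, $\lambda_\mu\delta_\mu^{p}\to-1$ from Step 2 together with the monotonicity of $-\Delta_p$ and of $t\mapsto t^{p-1}$, one gets $\int_{\mathbb{R}^N}\big(|\nabla w_\mu|^{p-2}\nabla w_\mu-|\nabla w|^{p-2}\nabla w\big)\cdot\nabla(w_\mu-w)\to0$, hence $\nabla w_\mu\to\nabla w$ in $L^p$ and a.e.; passing to the limit in the weak formulation shows that $w$ is a non--negative, non--trivial distributional solution of (\ref{uniqueness}), which is $C_{loc}^{1,\alpha}$ by \cite{tp} and therefore equals $\phi_0$ by \cite{sjtm}. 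Since the limit does not depend on the subsequence, the whole family converges, $w_\mu\to\phi_0$; finally $\|w_\mu\|_p^p=\|\phi_0\|_p^p$ together with $w_\mu\weakto\phi_0$ in $W^{1,p}(\mathbb{R}^N)$ and uniform convexity upgrade this to strong convergence in $W^{1,p}(\mathbb{R}^N)$. Undoing the scaling yields (1) and (2).

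\emph{Main obstacle.} The crux is Step 1: one must extract not merely the limits but the \emph{exact} powers of $\bar\alpha-\mu$ (case (1)) and of $\mu$ (case (2)) in $m(\mu)$ and in $|\lambda_\mu|$, because these determine the scaling exponents $s_\mu,\sigma_0$ and the normalisations $\lambda_\mu\delta_\mu^{p}\to-1$, $\|w_\mu\|_p^p=\|\phi_0\|_p^p$; since the Gagliardo--Nirenberg and Sobolev extremals are distinct functions, the two sharp inequalities cannot be used simultaneously, so one has to control how fast the rescaled $u_\mu$ approaches $\phi_0$ — precisely the bootstrap above is what closes the loop. The quasilinear passage to the limit in $-\Delta_p$ is routine but, unlike in \cite{sn2}, must go through the a.e.\ gradient convergence argument, since the operator is nonlinear and the Br\'ezis--Lieb splitting is unavailable.
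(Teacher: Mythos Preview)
Your overall strategy --- rate estimates for $\lVert\nabla u_\mu\rVert_p$, $\lVert u_\mu\rVert_q$, $\lambda_\mu$, then rescaling, compactness, and limit identification via the uniqueness of $\phi_0$ --- is exactly the paper's approach; your Step~1 corresponds to Lemmas~\ref{asycrilam} and~\ref{asysuplam}, and your Step~3 to the proofs given for Theorem~\ref{th5} (which imitate the proof of Theorem~\ref{th4}(1)). The strong-convergence argument (testing the equation with $w_\mu-w$ and using monotonicity of $-\Delta_p$) is essentially the operator-$T$ argument the paper imports from \cite{hdly}.

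There is, however, a genuine gap in Step~2. From Step~1 you obtain only $-\lambda_\mu\sim(\bar\alpha-\mu)^{(N-p)/p}$ in case~(1) and $-\lambda_\mu\sim\mu^{-p/(q\gamma_q-p)}$ in case~(2), i.e.\ two-sided bounds with unspecified constants; this shows that $\lambda_\mu\delta_\mu^{p}$ is \emph{bounded}, not that it converges to $-1$. The sentence ``By Step~1 one checks $\kappa_\mu\to0$ and $\lambda_\mu\delta_\mu^{p}\to-1$'' is therefore unjustified, and the exact limits you then claim for $\lVert w_\mu\rVert_q^q$ and $\lVert\nabla w_\mu\rVert_p^p$ collapse to mere boundedness. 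The paper avoids this by reversing the order: it first passes to a subsequence along which $\lambda_\mu s_\mu^{p}\to-\sigma_0$ for \emph{some} $\sigma_0>0$, proves strong convergence of $w_\mu$ to a limit $w$ solving $-\Delta_p w+\sigma_0 w^{p-1}=\bar\alpha w^{q-1}$ (case~(1)) or $-\Delta_p w+\sigma_0 w^{p-1}=w^{q-1}$ (case~(2)), and only \emph{then} identifies $\sigma_0$ from the constraint $\lVert w\rVert_p=a$ combined with the uniqueness of $\phi_0$; since the value of $\sigma_0$ so obtained does not depend on the subsequence, the whole family converges. Your argument can be repaired in precisely this way, but as written it is circular: you are using the exact value of $\sigma_0$ (equivalently, of the limit of $\lambda_\mu\delta_\mu^{p}$) before it has been determined by the limit equation.
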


    For $q=p+\frac{p^2}{N}$, we can prove $m_{a,\mu}^{-}=0$ when $\mu\geq\bar{\alpha}$(see Lemma \ref{asycrinonexi}) and hence $u_{a,\mu}^{-}$ does not exist. By Theorem \ref{th3}, we know $u_{a,\mu}^{-}$ exist when $\mu<\bar{\alpha}$. Thus, $\bar{\alpha}$ is the sharp constant such that $u_{a,\mu}^{-}$ exist and we can analyze the asymptotic behavior as $\mu\rightarrow\bar{\alpha}$. However, For $p<q<p+\frac{p^2}{N}$, we can not claim that $\alpha(N,p,q)$ which is given by (\ref{alpha}) is the sharp constant such that $u_{a,\mu}^{\pm}$ exist. From our proof, it seems that $\alpha(N,p,q)$ is not optimal(see Section \ref{exisub}). Thus, it is impossible to study the asymptotic behavior as $\mu$ goes it's upper when $p<q<p+\frac{p^2}{N}$.

    Finally, we want to investigate the nonexistence and multiplicity of equation (\ref{equation}). The nonexistence result is attained for $\mu<0$.
    \begin{theorem}\label{th6}
    	Let $N\geqslant 2$, $1<p<N$, $p<q<p^*$, $a>0$ and $\mu<0$. Then
    	
    	\noindent{\rm (1)}
    	    \begin{minipage}[t]{\linewidth}
    	        If $u$ is a critical point of $E_{\mu}|_{S_{a}}$(not necessary positive), then then associated Lagrange multiplier $\lambda$ is positive, and $E_{\mu}(u)>S^{\frac{N}{p}}/N$.
    	    \end{minipage}

    	\noindent{\rm (2)}
    	    \begin{minipage}[t]{\linewidth}
    	    	The problem
    	    	    \begin{equation*}
    	    		    -\Delta_{p}u=\lambda u^{p-1}+\mu u^{q-1}+u^{p^*-1},\quad u>0\quad in\ \mathbb{R}^N
    	    	    \end{equation*}
    	    	has no solution $u\in W^{1,p}(\mathbb{R}^N)$ for any $\mu<0$.
    	    \end{minipage}
    \end{theorem}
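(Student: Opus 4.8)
The plan is to read both statements off from two scalar identities that every solution $u\in W^{1,p}(\mathbb{R}^N)$ of (\ref{equation}) satisfies: the one obtained by testing the equation against $u$, namely $\lVert\nabla u\rVert_p^p=\lambda\lVert u\rVert_p^p+\mu\lVert u\rVert_q^q+\lVert u\rVert_{p^*}^{p^*}$, and the Pohozaev identity $P_\mu(u)=0$, i.e.\ $\lVert\nabla u\rVert_p^p=\mu\gamma_q\lVert u\rVert_q^q+\lVert u\rVert_{p^*}^{p^*}$, which holds by Proposition \ref{Pohozaev}. \textbf{Part (1).} Let $u$ be a critical point of $E_\mu|_{S_a}$ with multiplier $\lambda$. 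Subtracting the two identities cancels the gradient and the critical term and gives $\lambda a^p=-\mu(1-\gamma_q)\lVert u\rVert_q^q$; since $q<p^*$ forces $0<\gamma_q<1$, while $a>0$ and $\lVert u\rVert_q^q>0$ (because $u\ne 0$), the hypothesis $\mu<0$ yields $\lambda>0$. For the energy bound I would substitute $P_\mu(u)=0$ into $E_\mu(u)=\frac1p\lVert\nabla u\rVert_p^p-\frac\mu q\lVert u\rVert_q^q-\frac1{p^*}\lVert u\rVert_{p^*}^{p^*}$ to eliminate $\lVert u\rVert_{p^*}^{p^*}$, obtaining $E_\mu(u)=\frac1N\lVert\nabla u\rVert_p^p+\mu\big(\frac{\gamma_q}{p^*}-\frac1q\big)\lVert u\rVert_q^q$; since $\frac1q-\frac{\gamma_q}{p^*}=\frac{p^2-(q-p)(N-p)}{p^2q}>0$ precisely because $q<p^*$, the last term is strictly positive (using $\mu<0$), so $E_\mu(u)>\frac1N\lVert\nabla u\rVert_p^p$. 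Finally $P_\mu(u)=0$ and $\mu<0$ give $\lVert\nabla u\rVert_p^p\le\lVert u\rVert_{p^*}^{p^*}$, which together with the Sobolev inequality $S\lVert u\rVert_{p^*}^p\le\lVert\nabla u\rVert_p^p$ forces $\lVert\nabla u\rVert_p^p\ge S^{N/p}$, whence $E_\mu(u)>\frac1NS^{N/p}$.

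\textbf{Part (2).} Suppose $u\in W^{1,p}(\mathbb{R}^N)$, $u>0$, solves $-\Delta_p u=\lambda u^{p-1}+\mu u^{q-1}+u^{p^*-1}$ for some $\lambda\in\mathbb{R}$ and some $\mu<0$. Since $u>0$ and $\lVert u\rVert_p<\infty$, set $a:=\lVert u\rVert_p>0$; because $u$ solves the equation, $dE_\mu(u)$ equals $\frac\lambda p$ times the differential of $v\mapsto\lVert v\rVert_p^p$ at $u$, so $u$ is a critical point of $E_\mu|_{S_a}$ with Lagrange multiplier $\lambda$, and Part (1) gives $\lambda>0$. Rewrite the equation as $-\Delta_p u=V(x)\,u^{p-1}$ with $V(x):=\lambda+\mu u(x)^{q-p}+u(x)^{p^*-p}$. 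The global $L^\infty$ bound for solutions of the critical $p$-Laplacian equation (Moser iteration) together with the regularity of \cite{tp} give $u\in L^\infty(\mathbb{R}^N)\cap C^{1,\alpha}_{loc}(\mathbb{R}^N)$; being bounded and uniformly continuous with $u\in L^{p^*}(\mathbb{R}^N)$, $u$ satisfies $u(x)\to 0$ as $|x|\to\infty$, hence $V(x)\to\lambda>0$ and there is $M>0$ with $V\ge\lambda/2$ on $\mathbb{R}^N\setminus B_M$. Testing the equation against $\varphi^p/u^{p-1}$ with $\varphi\in C_c^\infty(\mathbb{R}^N)$ (admissible since $u>0$ is continuous, so $\varphi^p/u^{p-1}\in W^{1,p}$ with compact support) and using Picone's inequality $|\nabla\varphi|^p\ge|\nabla u|^{p-2}\nabla u\cdot\nabla(\varphi^p/u^{p-1})$, one gets $\int_{\mathbb{R}^N}|\nabla\varphi|^p\,dx\ge\int_{\mathbb{R}^N}V\,\varphi^p\,dx$ for all such $\varphi$. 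Taking $\varphi(x)=\psi((x-x_R)/R)$ with a fixed $\psi\in C_c^\infty(B_1)\setminus\{0\}$ and $x_R$ chosen so that $B_R(x_R)\subset\mathbb{R}^N\setminus B_M$, this reduces to $R^{N-p}\lVert\nabla\psi\rVert_p^p\ge\frac\lambda2 R^N\lVert\psi\rVert_p^p$, i.e.\ $\lVert\nabla\psi\rVert_p^p\ge\frac\lambda2 R^p\lVert\psi\rVert_p^p$, which fails for $R$ large --- a contradiction. Hence no such $u$ exists.

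\textbf{Main obstacle.} Part (1) is bookkeeping once Proposition \ref{Pohozaev} is available. The real work is in Part (2): one must establish the decay $u(x)\to 0$ at infinity (so that $V\to\lambda$), which rests on the global $L^\infty$ estimate and on uniform local $C^{1,\alpha}$ estimates for the critical $p$-Laplacian equation, and one must check carefully that $\varphi^p/u^{p-1}$ is an admissible test function --- this is exactly where the strict positivity and $C^1$-regularity of $u$ are used. Equivalently, Part (2) reduces to the Liouville-type statement that $-\Delta_p u=V(x)u^{p-1}$ with $V(x)\to\lambda>0$ at infinity has no positive $W^{1,p}(\mathbb{R}^N)$ solution, the positivity of $\lambda$ being incompatible with the existence of a positive (principal-type) solution.
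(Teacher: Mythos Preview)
Your proof of Part (1) matches the paper's argument essentially line by line: both subtract the Nehari identity from the Pohozaev identity to obtain $\lambda a^p=\mu(\gamma_q-1)\lVert u\rVert_q^q>0$, and both combine $P_\mu(u)=0$ with the Sobolev inequality to get $\lVert\nabla u\rVert_p^p\ge S^{N/p}$, then read off $E_\mu(u)>\frac1N S^{N/p}$ from the rewritten energy. (The paper's printed proof has sign typos, writing $\lambda<0$ twice where $\lambda>0$ is meant; your computation is the intended one.)

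For Part (2) you take a genuinely different route. The paper simply invokes Corollary 4.2 of Armstrong--Sirakov \cite{asnsb}: once $\lambda>0$ from Part (1), the nonlinearity $g(s)=\lambda s^{p-1}+\mu s^{q-1}+s^{p^*-1}$ satisfies $\liminf_{s\to0^+}s^{-\sigma^*}g(s)=+\infty$ for the critical exponent $\sigma^*$ of that paper, and their maximum-principle based Liouville theorem rules out positive supersolutions. Your argument is instead self-contained: you show $u\to0$ at infinity, rewrite the equation as $-\Delta_p u=V(x)u^{p-1}$ with $V\to\lambda>0$, and derive a contradiction from Picone's inequality applied to rescaled bumps supported far out. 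This is correct; the only cosmetic points are that Picone requires the test function $\varphi\ge0$ (so take $\psi\ge0$), and the global gradient bound needed for uniform continuity follows from the uniform local $C^{1,\alpha}$ estimates once $u\in L^\infty$. Your approach is more elementary and transparent but longer; the paper's citation is shorter but opaque, and in fact Armstrong--Sirakov's result is stronger (it excludes positive \emph{supersolutions}, not just solutions). For the purposes of this theorem, either route is perfectly adequate.
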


    The reason why we speculate equation (\ref{equation}) has no solution is due to Theorem \ref{th6}(1). In the previous existence results, $c<\frac{1}{N}S^{\frac{N}{p}}$ is crucial to obtain the convergence of $\mbox{(PS)}_{c}$ sequence. Moreover, there is an example such that a $\mbox{(PS)}_{c}$ sequence with $c=\frac{1}{N}S^{\frac{N}{p}}$ without any convergent subsequence(see \cite{bh}). Thus, it is natural to guess that equation (\ref{equation}) has no solution.

    We use genus theory to prove multiplicity result. One of the crucial problem is that the functional should bounded from below when we use genus theory, but due to $\lVert u\rVert_{p^*}^{p^*}$ is a $L^2$-supercritical term, $E_{\mu}|_{S_{a}}(u)$ does not bounded from below. To overcome this problem, we introduce a truncation function to complete the proof.

    Let
        \[S_{a,r}=S_{a}\cap W_{rad}^{1,p}(\mathbb{R}^N).\]
    The multiplicity result can be stated as follows.
    \begin{theorem}\label{th7}
    	Let $N\geqslant 3$, $2<p<N$, $p<q<p+\frac{p^2}{N}$ and $a,\mu>0$ satisfies {\rm(\ref{muconsub})}. Then
    	equation $(\ref{equation})$ has infinitely many solutions on $S_{a,r}$ at negative levels.
    \end{theorem}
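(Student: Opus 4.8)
The plan is to adapt the standard truncation-plus-genus argument (as in the classical work of García Azorero–Peral and its normalized-solution incarnations) to the $p$-Laplacian constrained functional on the radial sphere $S_{a,r}$. Since $q<p+\frac{p^2}{N}$, the term $\|u\|_q^q$ is $L^p$-subcritical, so by the Gagliardo–Nirenberg inequality and the Sobolev inequality one has, for $u\in S_a$,
\begin{equation*}
E_\mu(u)\geq \frac1p\|\nabla u\|_p^p-\frac{\mu}{q}C_{N,q}^q a^{q(1-\gamma_q)}\|\nabla u\|_p^{q\gamma_q}-\frac{1}{p^*S^{p^*/p}}\|\nabla u\|_p^{p^*}=:g\big(\|\nabla u\|_p\big).
\end{equation*}
Under hypothesis \eqref{muconsub} the function $g(t)$ (with $t=\|\nabla u\|_p$) has a strict local minimum at $t=0$ with $g<0$ nearby and a strict local maximum at some $t_1>0$, after which it decreases to $-\infty$; this is exactly the geometry already exploited in Theorem \ref{th1}. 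Fix a radius $\rho_0\in(0,t_1)$ in the "well" and a smooth cutoff $\tau:[0,\infty)\to[0,1]$ with $\tau\equiv1$ on $[0,\rho_0]$ and $\tau\equiv 0$ on $[t_1,\infty)$, and define the truncated functional
\begin{equation*}
\widetilde E_\mu(u)=\frac1p\|\nabla u\|_p^p-\frac{\mu}{q}\|u\|_q^q-\frac{1}{p^*}\,\tau\big(\|\nabla u\|_p\big)\,\|u\|_{p^*}^{p^*}.
\end{equation*}
Then $\widetilde E_\mu$ is $C^1$ on $S_{a,r}$, is bounded from below (the $L^p$-subcritical term cannot drag it to $-\infty$ once the critical term is switched off for large $\|\nabla u\|_p$), and coincides with $E_\mu$ on the region $\{\|\nabla u\|_p\leq\rho_0\}$; moreover any critical point $u$ of $\widetilde E_\mu|_{S_{a,r}}$ with $\widetilde E_\mu(u)<0$ automatically satisfies $\|\nabla u\|_p\leq\rho_0$ (since $\widetilde E_\mu\geq 0$ outside the well by the same estimate as above, the cutoff being irrelevant there), hence $\tau\equiv 1$ near it and $u$ is a genuine critical point of $E_\mu|_{S_{a,r}}$ at the same negative level.

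Next I would produce infinitely many negative critical values of $\widetilde E_\mu$ by the genus minimax scheme. For $k\in\mathbb N$ set
\begin{equation*}
\Gamma_k=\{A\subset S_{a,r}: A\ \text{compact, symmetric}\ (A=-A),\ \gamma(A)\geq k\},\qquad c_k=\inf_{A\in\Gamma_k}\ \sup_{u\in A}\widetilde E_\mu(u),
\end{equation*}
where $\gamma$ is the Krasnoselskii genus. One checks the three ingredients: (i) $\Gamma_k\neq\emptyset$ for every $k$ — for this, use that $W^{1,p}_{rad}(\mathbb R^N)$ is infinite-dimensional, pick a $k$-dimensional subspace $V_k$, intersect $V_k$ (suitably scaled into $S_{a,r}$ by the projection $u\mapsto a u/\|u\|_p$) with a small sphere contained in the well $\{\|\nabla u\|_p\leq\rho_0\}$, on which $\widetilde E_\mu=E_\mu<0$ because on a fixed finite-dimensional subspace all norms are equivalent and the $L^p$-subcritical term with exponent $q\gamma_q<p$ dominates near small $\|\nabla u\|_p$; this forces $c_k<0$; (ii) $\widetilde E_\mu$ is bounded below and $c_k$ is monotone in $k$; (iii) $\widetilde E_\mu|_{S_{a,r}}$ satisfies the Palais–Smale condition at every negative level — here compactness of the embedding $W^{1,p}_{rad}(\mathbb R^N)\hookrightarrow L^q(\mathbb R^N)$ for $p<q<p^*$ kills the subcritical term's contribution, and since on the sublevel $\{\widetilde E_\mu<0\}$ one has $\|\nabla u_n\|_p\leq\rho_0$ strictly inside the well, the critical term is "frozen out" ($\tau\equiv1$, but bounded) and a standard $S(1)$-Lemma / concentration-compactness argument below the first critical energy level $\frac1N S^{N/p}$ gives strong convergence; in fact the simplest route is to invoke the compactness alternative in the spirit of \cite[Proposition 3.1]{sn2} that the authors establish earlier, noting $0>c_k> -\infty$ is far below $\frac1N S^{N/p}$. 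Then the classical genus theorem yields: if the $c_k$ are all distinct they are infinitely many critical values; if $c_k=c_{k+1}=\dots=c_{k+m}=c$ for some $m\geq1$, then $\gamma(K_c)\geq m+1\geq 2$ so $K_c$ is infinite; either way $\widetilde E_\mu|_{S_{a,r}}$, hence $E_\mu|_{S_{a,r}}$, has infinitely many critical points at negative levels, proving the theorem.

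The main obstacle I anticipate is the Palais–Smale verification at negative levels, specifically handling the critical Sobolev term for the $p$-Laplacian without the linearity and Brézis–Lieb decomposition available when $p=2$. The resolution — and the reason $N\geq 3$, $2<p<N$ is assumed — is precisely the concentration–compactness lemma of Lions that the authors cite (\cite[Lemma 1.1]{lpl}, \cite{sm}) together with the observation that on $\{\widetilde E_\mu<0\}$ the gradient norm stays uniformly below $\rho_0<t_1$, which rules out both vanishing (the $L^q$-mass cannot escape by radial compactness, and $E_\mu<0$ forbids $u_n\weakto 0$) and dichotomy/concentration (a concentrating bubble would cost at least $\frac1N S^{N/p}>0$ in energy, contradicting the negative level), leaving strong convergence. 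A secondary technical point is that $S_{a,r}$ is a $C^1$ submanifold of $W^{1,p}_{rad}$ and the genus deformation lemma must be run on the manifold with the constrained gradient $d\widetilde E_\mu|_{S_{a,r}}$; this is routine once one has a (locally Lipschitz) pseudo-gradient vector field tangent to $S_{a,r}$, the symmetry $u\mapsto -u$ preserving both $S_{a,r}$ and $\widetilde E_\mu$ so that the flow can be taken odd.
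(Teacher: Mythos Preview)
Your overall strategy---truncate the critical term via a cutoff $\tau(\lVert\nabla u\rVert_p)$, verify coercivity and (PS)$_c$ for $c<0$ on $S_{a,r}$, and run the Krasnoselskii genus minimax scheme---is exactly the paper's. Two points need correction.

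The construction of sets $A\in\Gamma_k$ with $\sup_A\widetilde E_\mu<0$ does not work as you state it. On a \emph{fixed} finite-dimensional subspace $V_k\subset W^{1,p}_{rad}$, equivalence of norms gives $\lVert\nabla u\rVert_p\sim\lVert u\rVert_p=a$ on $V_k\cap S_{a,r}$, so $\lVert\nabla u\rVert_p$ is bounded \emph{away} from zero there and you cannot place the set inside the well $\{\lVert\nabla u\rVert_p\le\rho_0\}$ by that argument alone. The paper instead constructs, for each $n$ and each large $R$, radial functions $u_1,\dots,u_n\in S_a$ with pairwise disjoint supports on annuli of scale $R$; direct computation gives $\lVert\nabla u_k\rVert_p^p\le C/R^p$ while, for every $v\in\mathrm{span}\{u_1,\dots,u_n\}\cap S_{a,r}$, H\"older's inequality on the support yields $\lVert v\rVert_q^q\ge C/R^{q\gamma_q}$, and since $q\gamma_q<p$ one gets $E_\tau(v)\le C/R^p-C/R^{q\gamma_q}<-\varepsilon$ for $R$ large. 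An equivalent fix within your framework is to take $V_k\cap S_{a,r}$ (compact, genus $k$) and push it by the odd map $u\mapsto s\star u$ with $s\ll0$, which preserves $S_{a,r}$ and sends $\lVert\nabla u\rVert_p\to0$ uniformly on the compact set; the key point is that the subspace (or its image) must depend on a dilation parameter, which norm-equivalence on a single $V_k$ cannot supply.

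Separately, you misattribute the hypothesis $p>2$. It is not needed for concentration--compactness; the paper imposes it because the deformation lemma on the constraint manifold $S_{a,r}=\{\psi=1\}$, $\psi(u)=a^{-p}\lVert u\rVert_p^p$, requires $\psi\in C^2$, and $\lVert\cdot\rVert_p^p\in C^2(L^p)$ only when $p>2$.
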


    In Theorem \ref{th7} we assume that $p>2$, since the quantitative deformation lemma\cite[lemma 5.15]{wm} will be used in the proof and it requires $\lVert u\rVert_{p}^p\in C^2(L^p(\mathbb{R}^N,\mathbb{R}))$. Therefore, we need $p>2$.

    \noindent\textbf{Notations.} Throughout this paper, $C$ is indiscriminately used to denote various absolutely positive constants. $a\sim b$ means that there exist $C>1$ such that $C^{-1}a\leqslant b\leqslant Ca$.

\section{\textbf{Prelimilaries}}\label{prelimilaries}
    In this section, we collect some results which will be used in the rest of the paper.  Firstly, Let us recall the Sobolev inequality.
    \begin{lemma}
    	For every $N\geqslant 2$ and $1<p<N$, there exists an optimal constant $S$ depends on $N$ and $p$ such that
    	    \[S\lVert u\rVert_{p^*}^p\leqslant\lVert\nabla u\rVert_{p}^p\quad\forall u\in D^{1,p}(\mathbb{R}^N),\]
    	where $D^{1,p}(\mathbb{R}^N)$ denotes the completion of $C_{c}^{\infty}(\mathbb{R}^N)$ with respect to the norm $\lVert u\rVert_{D^{1,p}}:=\lVert\nabla u\rVert_{p}^p$.
    \end{lemma}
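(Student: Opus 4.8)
The plan is to obtain the inequality first on the dense subspace $C_c^\infty(\mathbb{R}^N)$ with a finite (not necessarily sharp) constant, then extend it to the completion $D^{1,p}(\mathbb{R}^N)$ by density, and finally produce the optimal $S$ simply as the infimum of the associated Rayleigh-type quotient.

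First I would treat the model case $p=1$ via Gagliardo's argument: for $u\in C_c^\infty(\mathbb{R}^N)$ and each coordinate direction $i$, the fundamental theorem of calculus gives $|u(x)|\le\int_{\mathbb{R}}|\partial_i u|\,dt=:g_i(\hat x_i)$ with $\hat x_i$ the variables other than $x_i$; multiplying over $i$, raising to the power $1/(N-1)$, and integrating one variable at a time while estimating the not-yet-integrated factors in $L^{N-1}$ of the remaining variables (the generalized H\"older inequality) yields $\lVert u\rVert_{N/(N-1)}\le\prod_{i=1}^N\lVert\partial_i u\rVert_1^{1/N}\le\lVert\nabla u\rVert_1$, the last step using the arithmetic--geometric mean inequality together with $|\partial_i u|\le|\nabla u|$. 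Next I would bootstrap to general $1<p<N$ by applying this estimate to $v=|u|^{\gamma}$ with $\gamma=\frac{p(N-1)}{N-p}>1$: one checks $\gamma\cdot\frac{N}{N-1}=p^*$ and $(\gamma-1)\cdot\frac{p}{p-1}=p^*$, so H\"older gives
\[
\lVert u\rVert_{p^*}^{\gamma}=\lVert v\rVert_{N/(N-1)}\le\lVert\nabla v\rVert_1=\gamma\int_{\mathbb{R}^N}|u|^{\gamma-1}|\nabla u|\,dx\le\gamma\,\lVert u\rVert_{p^*}^{\gamma-1}\lVert\nabla u\rVert_p,
\]
and since $\lVert u\rVert_{p^*}<\infty$ for $u\in C_c^\infty$ we may divide by $\lVert u\rVert_{p^*}^{\gamma-1}$ to get $\lVert u\rVert_{p^*}\le\gamma\lVert\nabla u\rVert_p$, i.e. the Sobolev inequality with the explicit (non-sharp) constant $\gamma$ on $C_c^\infty(\mathbb{R}^N)$.

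To pass to $D^{1,p}(\mathbb{R}^N)$, given $u$ in this space I would choose $u_n\in C_c^\infty(\mathbb{R}^N)$ with $\nabla u_n\to\nabla u$ in $L^p$; the inequality just proved makes $\{u_n\}$ Cauchy in $L^{p^*}$, hence convergent there, and (along a subsequence converging almost everywhere) the limit is $u$, so letting $n\to\infty$ in $\lVert u_n\rVert_{p^*}\le\gamma\lVert\nabla u_n\rVert_p$ establishes the inequality on all of $D^{1,p}(\mathbb{R}^N)$. Finally I would define
\[
S:=\inf_{u\in D^{1,p}(\mathbb{R}^N)\setminus\{0\}}\frac{\lVert\nabla u\rVert_p^p}{\lVert u\rVert_{p^*}^p},
\]
which by the previous steps satisfies $S\ge\gamma^{-p}>0$ and depends only on $N$ and $p$; by construction $S\lVert u\rVert_{p^*}^p\le\lVert\nabla u\rVert_p^p$ for every $u\in D^{1,p}(\mathbb{R}^N)$, and no larger constant has this property, so $S$ is the claimed optimal constant.

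I expect the only genuinely delicate point to be the iterated H\"older bookkeeping in the $p=1$ step --- choosing the integration order and the $L^{N-1}$ placement of each factor at each stage --- while the bootstrap, the density argument, and the extraction of $S$ as an infimum are routine. Since only the positivity of $S$ and its dependence on $(N,p)$ are used in the sequel (the explicit Aubin--Talenti value of $S$ and its extremals play no role here), this classical estimate could equally well be invoked from a standard reference.
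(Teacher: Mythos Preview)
Your argument is correct and self-contained: the Gagliardo iterated-H\"older proof for $p=1$, the bootstrap via $v=|u|^{\gamma}$, the density extension, and the extraction of $S$ as an infimum are all standard and valid. The paper, however, does not prove this lemma at all --- it is placed in the Preliminaries section and simply recalled as a known fact, with the attainment of $S$ by the Aubin--Talenti bubbles cited from \cite{tg}. So there is nothing to compare at the level of proof strategy: you have supplied a full classical proof where the paper just quotes the result. Your own closing remark that the estimate ``could equally well be invoked from a standard reference'' is exactly what the paper does.
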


    It is well knwon\cite{tg} that the optimal constant is attained by
        \begin{equation}\label{Udf}
       U_{\varepsilon,y}=d_{N,p}\varepsilon^{\frac{N-p}{p(p-1)}}\Big(\varepsilon^{\frac{p}{p-1}}+\lvert x-y\rvert^{\frac{p}{p-1}}\Big)^{\frac{p-N}{p}},
       \end{equation}
    where $\varepsilon>0$, $y\in\mathbb{R}^N$ and $d_{N,p}>0$ depends on $N$ and $p$ such that $U_{\varepsilon,y}$ satisfies
        \[-\Delta_{p}u=u^{p*-1},\quad u>0\quad\mbox{in}\ \mathbb{R}^N,\]
    and hence
        \[\lVert\nabla U_{\varepsilon,y}\rVert_{p}^p=\lVert U_{\varepsilon,y}\rVert_{p^*}^{p^*}=S^{\frac{N}{p}}.\]
    If $y=0$, we set $U_{\varepsilon}=U_{\varepsilon,0}$.

    Next, we introduce the Pohozaev identity for $p$-Laplacian.
    \begin{lemma}\label{Pohozaevf}
    	{\rm\cite{jlsm}} Assume that $N\geqslant 2$, $1<p<N$, $f\in C(\mathbb{R},\mathbb{R})$ such that $f(0)=0$ and let $u$ be a local weak solution of
    	    \[-\Delta_{p}u=f(u)\quad\mbox{in}\ D'(\mathbb{R}^N),\]
    	where $D(\mathbb{R}^N)=C_{c}^{\infty}(\mathbb{R}^N)$, and $D'(\mathbb{R}^N)$ is the dual space of $D(\mathbb{R})$. Suppose that
    	    \[u\in L_{loc}^{\infty}(\mathbb{R}^N),\quad\nabla u\in L^p(\mathbb{R}^N),\quad\mbox{and}\quad F(u)\in L^1(\mathbb{R}^N).\]
    	Then $u$ satisfies
    	    \[(N-p)\lVert\nabla u\rVert_{p}^p=Np\int_{\mathbb{R}^N}F(u)dx.\]
    \end{lemma}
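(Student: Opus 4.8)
This is the classical Pohozaev computation adapted to the degenerate quasilinear setting, carried out through the Pucci--Serrin variational identity. I would proceed in three stages: first upgrade the regularity of $u$ enough to make a rigorous variational identity available; then apply that identity to the truncated dilation field $X(x)=x\,\eta_R(x)$; and finally let $R\to\infty$, using the global integrability hypotheses $|\nabla u|^p\in L^1(\mathbb{R}^N)$ and $F(u)\in L^1(\mathbb{R}^N)$ to kill the cut-off remainder.

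\textbf{Step 1 (local regularity) and Step 2 (the variational identity).} Since $u\in L^\infty_{loc}(\mathbb{R}^N)$ and $f\in C(\mathbb{R},\mathbb{R})$, the right-hand side $f(u)$ is locally bounded, so $u$ is a bounded local weak solution of $-\Delta_p u=g$ with $g\in L^\infty_{loc}$; by the DiBenedetto--Tolksdorf theory (\cite{tp}) this gives $u\in C^{1,\beta}_{loc}(\mathbb{R}^N)$ for some $\beta\in(0,1)$. Now fix $\eta\in C_c^\infty(\mathbb{R}^N)$ with $0\le\eta\le1$, $\eta\equiv1$ on $B_1$, $\mathrm{supp}\,\eta\subset B_2$, put $\eta_R(x)=\eta(x/R)$ and $X_R(x)=x\,\eta_R(x)\in C_c^1(\mathbb{R}^N;\mathbb{R}^N)$. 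The Pucci--Serrin-type identity for $C^1$ weak solutions of $-\Delta_p u=f(u)$, which is exactly what \cite{jlsm} provides (it is obtained from the inner variation $u\mapsto u(\,\cdot\,+tX_R(\cdot))$ via change of variables, so it only uses $u\in W^{1,p}_{loc}\cap C^1_{loc}$ and never the second derivatives of $u$), reads
\[\int_{\mathbb{R}^N}\Big[\Big(\tfrac1p|\nabla u|^p-F(u)\Big)\,\mathrm{div}\,X_R-|\nabla u|^{p-2}\,\partial_iu\,\partial_ju\,\partial_iX_{R,j}\Big]\,dx=0.\]
Using $\partial_iX_{R,j}=\delta_{ij}\eta_R+x_j\partial_i\eta_R$ and $\mathrm{div}\,X_R=N\eta_R+x\cdot\nabla\eta_R$, the $\delta_{ij}$-part contributes $\eta_R|\nabla u|^p$ and all remaining terms carry a factor $\nabla\eta_R$, so the identity collapses to
\[\int_{\mathbb{R}^N}\eta_R\Big[\tfrac{N-p}{p}|\nabla u|^p-N\,F(u)\Big]\,dx+\mathcal R_R=0,\qquad \mathcal R_R:=\text{(terms with a factor }\nabla\eta_R\text{)}.\]

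\textbf{Step 3 (passing to the limit).} On $\mathrm{supp}\,\nabla\eta_R\subset\{R\le|x|\le2R\}$ one has $|x\,\nabla\eta_R(x)|=|(x/R)\,\nabla\eta(x/R)|\le C$, and each summand of $\mathcal R_R$ is pointwise bounded there by $C(|\nabla u|^p+|F(u)|)$ (Cauchy--Schwarz on the quadratic term), whence
\[|\mathcal R_R|\ \le\ C\int_{\{R\le|x|\le2R\}}\big(|\nabla u|^p+|F(u)|\big)\,dx\ \xrightarrow[\,R\to\infty\,]{}\ 0\]
since both integrands are in $L^1(\mathbb{R}^N)$. Because $0\le\eta_R\le1$ and $\eta_R\to1$ pointwise, dominated convergence applied to the first integral yields $\tfrac{N-p}{p}\|\nabla u\|_p^p-N\int_{\mathbb{R}^N}F(u)\,dx=0$, i.e. $(N-p)\|\nabla u\|_p^p=Np\int_{\mathbb{R}^N}F(u)\,dx$, which is the claim.

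\textbf{Main obstacle.} The only genuinely delicate point is Step 2: since $-\Delta_p$ is degenerate, $u$ is in general not $C^2$, so the naive Pohozaev argument — testing the equation against $x\cdot\nabla u$, which a priori does not lie in $W^{1,p}_{loc}$ — is not legitimate. The resolution is to use the inner-variation / change-of-variables form of the Pucci--Serrin identity, which is valid for merely $C^1$ solutions; this is the technical heart and is precisely the content of \cite{jlsm}. Should one wish to be self-contained, an alternative is to replace $|\xi|^{p-2}$ by the non-degenerate $(\varepsilon+|\xi|^2)^{(p-2)/2}$, establish the Pohozaev identity for the smooth approximating equation by the usual argument, and pass to the limit $\varepsilon\to0$ using the uniform $C^{1,\beta}_{loc}$ bounds from Step 1. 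Everything else — the algebra in Step 2 and the annular tail estimate in Step 3 — is routine.
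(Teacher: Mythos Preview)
The paper does not prove this lemma; it simply cites it from \cite{jlsm} and uses it as a black box. Your sketch is correct and is essentially the argument carried out in that reference: upgrade to $C^{1,\beta}_{loc}$ via Tolksdorf, invoke the Pucci--Serrin inner-variation identity with the truncated dilation field $x\,\eta_R(x)$ (which sidesteps the lack of $C^2$ regularity), and pass to the limit using $|\nabla u|^p,\,F(u)\in L^1(\mathbb{R}^N)$ to kill the annular remainder.
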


    By the Pohozaev identity, we can prove that all critical points belong to the Pohozaev manifold.
    \begin{proposition}\label{Pohozaev}
    	Assume that $u\in S_{a}$ is a solution to {\rm(\ref{equation})}, then $u\in\mathcal{P}_{a,\mu}$.
    \end{proposition}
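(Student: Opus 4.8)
\noindent\emph{Proof proposal.} The plan is to combine the Nehari-type identity obtained by testing the equation against $u$ itself with the Pohozaev identity of Lemma \ref{Pohozaevf}, and then to eliminate the term $\lambda a^p$ between the two relations. Since $u\in S_{a}$ solves (\ref{equation}) and $u\in W^{1,p}(\mathbb{R}^N)$ embeds into $L^q(\mathbb{R}^N)\cap L^{p^*}(\mathbb{R}^N)$ by the Gagliardo--Nirenberg and Sobolev inequalities, we may use $u$ as a test function to get
\[\lVert\nabla u\rVert_{p}^p=\lambda a^p+\mu\lVert u\rVert_{q}^q+\lVert u\rVert_{p^*}^{p^*}.\]

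Next I would apply Lemma \ref{Pohozaevf} with $f(s)=\lambda\lvert s\rvert^{p-2}s+\mu\lvert s\rvert^{q-2}s+\lvert s\rvert^{p^*-2}s$, whose primitive is $F(s)=\frac{\lambda}{p}\lvert s\rvert^p+\frac{\mu}{q}\lvert s\rvert^q+\frac{1}{p^*}\lvert s\rvert^{p^*}$. To invoke that lemma one must check its structural hypotheses: $\nabla u\in L^p(\mathbb{R}^N)$ is immediate; $F(u)\in L^1(\mathbb{R}^N)$ follows from $u\in L^p\cap L^q\cap L^{p^*}$; and $u\in L_{loc}^{\infty}(\mathbb{R}^N)$ follows from the standard Brezis--Kato/Moser iteration regularity for the $p$-Laplacian with critical growth (as in \cite{gajpai2}, together with \cite{tp}). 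Granting this, Lemma \ref{Pohozaevf} and the identity $Np/p^*=N-p$ give
\[(N-p)\lVert\nabla u\rVert_{p}^p=Np\Big(\tfrac{\lambda}{p}a^p+\tfrac{\mu}{q}\lVert u\rVert_{q}^q+\tfrac{1}{p^*}\lVert u\rVert_{p^*}^{p^*}\Big)=N\lambda a^p+\tfrac{Np}{q}\mu\lVert u\rVert_{q}^q+(N-p)\lVert u\rVert_{p^*}^{p^*}.\]

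Finally, I substitute $\lambda a^p=\lVert\nabla u\rVert_{p}^p-\mu\lVert u\rVert_{q}^q-\lVert u\rVert_{p^*}^{p^*}$ from the first identity into the second and simplify; using $\frac{Np}{q}-N=-p\gamma_{q}$, which is just a rewriting of the definition $\gamma_{q}=\frac{N(q-p)}{pq}$, all remaining terms collapse to
\[\lVert\nabla u\rVert_{p}^p=\mu\gamma_{q}\lVert u\rVert_{q}^q+\lVert u\rVert_{p^*}^{p^*},\]
that is, $P_{\mu}(u)=0$, so $u\in\mathcal{P}_{a,\mu}$. The only genuinely non-routine point is the verification that a $W^{1,p}$ weak solution of the Sobolev-critical equation is locally bounded, so that Lemma \ref{Pohozaevf} is applicable; the rest is bookkeeping with the algebraic relations among $N$, $p$, $q$, $p^*$ and $\gamma_{q}$.
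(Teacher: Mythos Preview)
Your proof is correct and follows essentially the same route as the paper: both verify $u\in L_{loc}^{\infty}(\mathbb{R}^N)$ via the regularity argument of \cite{gajpai2}, apply Lemma~\ref{Pohozaevf} to obtain the Pohozaev identity, and combine it with the Nehari identity $\lVert\nabla u\rVert_{p}^p=\lambda a^p+\mu\lVert u\rVert_{q}^q+\lVert u\rVert_{p^*}^{p^*}$ to eliminate $\lambda a^p$. The only cosmetic difference is the order in which the two identities are derived.
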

    \begin{proof}
    	Similar to \cite[lemma A1]{gajpai2}, we can prove that $u\in L_{loc}^{\infty}(\mathbb{R}^N)$. It is clear that
    	    \[\nabla u\in L^p(\mathbb{R}^N)\quad\mbox{and}\quad F(u)=\frac{\lambda}{p}\lvert u\rvert^p+\frac{\mu}{q}\lvert u\rvert^q+\frac{1}{p^*}\lvert u\rvert^{p^*}\in L^1(\mathbb{R}^N).\]
    	Thus, by Lemma \ref{Pohozaevf}, we have
    	    \begin{equation}\label{pohozaev}
    	    	(N-p)\lVert\nabla u\rVert_{p}^p=\lambda Na^p+\frac{\mu Np}{q}\lVert u\rVert_{q}^q+(N-p)\lVert u\rVert_{p^*}^{p^*}.
    	    \end{equation}
    	Using the euqation (\ref{equation}), we have
    	    \[\lVert\nabla u\rVert_{p}^p=\lambda a^p+\mu\lVert u\rVert_{q}^q+\lVert u\rVert_{p^*}^{P^*},\]
    	which together with (\ref{pohozaev}), implies
    	    \[\lVert\nabla u\rVert_{p}^p=\mu\gamma_{q}\lVert u\rVert_{q}^q+\lVert u\rVert_{p^*}^{p^*}.\]
    \end{proof}

\section{\textbf{Compactness of PS sequence}}\label{compactness}
    In this section, we prove a compactness lemma of PS sequence under suitable assumptions. This is a crucial result to obtain the existence of critical point for $E_{\mu}|_{S_{a}}$.

	\begin{proposition}\label{compactnesslemma}
		Let $N\geqslant 2$, $p<q<p^*$, and $a,\mu>0$. Let $\{u_{n}\}\subset S_{a,r}$ be a PS sequence for $E_{\mu}|_{S_{a}}$ at level $c$, with
		    \begin{equation*}
		    	c<\frac{1}{N}S^{\frac{N}{p}}\quad\mbox{and}\quad c\neq 0.
		    \end{equation*}
	    Suppose in addition that $P_{\mu}(u_{n})\rightarrow 0$ as $n\rightarrow\infty$. Then, one of the following two conclusions is true:
	    { \rm(i)}
	        \begin{minipage}[t]{\linewidth}
	     	   either up to a sequence $u_{n}\rightharpoonup u$ in $W^{1,p}(\mathbb{R}^N)$ but not strongly, where $u\not\equiv 0$ is a solution to {\rm(\ref{equation})} for some $\lambda<0$, and
	    	        \begin{equation*}
	    	    	    E_{\mu}(u)<c-\frac{1}{N}S^{\frac{N}{p}};
	    	        \end{equation*}
	        \end{minipage}
        {\rm (ii)}
            \begin{minipage}[t]{\linewidth}
            	or up to a subsequence $u_{n}\rightarrow u$ in $W^{1,p}(\mathbb{R}^N)$, $E_{\mu}(u)=m$, and $u$ solves {\rm (\ref{equation})} for some $\lambda<0$.
            \end{minipage}
	\end{proposition}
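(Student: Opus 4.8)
The plan is to follow the now-classical scheme for normalized-solution problems on the Pohozaev manifold, adapting the Schrödinger-case argument of Soave \cite{sn2} to the $p$-Laplacian by replacing the Brézis–Lieb decomposition with the concentration-compactness lemma of Lions \cite{lpl,sm}, as indicated in the introduction. First I would extract from the Palais–Smale hypothesis the boundedness of $\{u_n\}$ in $W^{1,p}(\mathbb{R}^N)$: since $P_\mu(u_n)\to 0$, one can write $E_\mu(u_n)$ as a combination of $\|u_n\|_q^q$ and $\|u_n\|_{p^*}^{p^*}$ with positive coefficients (using $p<q<p^*$ and $0<q\gamma_q<p^*$), so that the level bound $c$ controls these norms, and then the Gagliardo–Nirenberg and Sobolev inequalities control $\|\nabla u_n\|_p$. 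Then, up to a subsequence, $u_n\rightharpoonup u$ in $W^{1,p}$, $u_n\to u$ in $L^q_{loc}$, and (crucially, since $\{u_n\}\subset S_{a,r}$ is radial) compactly in $L^q(\mathbb{R}^N)$ for $p<q<p^*$ by the radial compactness of Strauss; this kills the $\mu|u|^{q-2}u$ term in the limit. Next, using that $E_\mu|_{S_a}$ is a $C^1$ constrained functional and the PS condition, I would produce Lagrange multipliers $\lambda_n$ with $-\Delta_p u_n - \lambda_n|u_n|^{p-2}u_n - \mu|u_n|^{q-2}u_n - |u_n|^{p^*-2}u_n \to 0$ in $W^{-1,p'}$; testing against $u_n$ and using boundedness shows $\lambda_n$ is bounded, hence $\lambda_n\to\lambda$ up to a subsequence, and passing to the limit shows $u$ solves \eqref{equation} with this $\lambda$. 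That $\lambda<0$ (and that if $u\equiv 0$ then everything collapses, contradicting $c\neq 0$) should come from combining the limiting equation, its Pohozaev identity $P_\mu(u)=0$, and the sign information $\mu>0$, $c<\frac1N S^{N/p}$ — arguing as in \cite{sn2}.

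The heart of the matter is the dichotomy. Set $v_n:=u_n-u$, so $v_n\rightharpoonup 0$. The idea is to apply the concentration-compactness/Lions lemma to the sequences $|\nabla v_n|^p$ and $|v_n|^{p^*}$: up to a subsequence they converge weakly-$*$ to measures, and the defect can only be carried by the critical term (no Dirac masses for the subcritical pieces, which converge strongly). Writing the standard splittings
\[
\|\nabla u_n\|_p^p = \|\nabla u\|_p^p + \|\nabla v_n\|_p^p + o(1),\qquad
\|u_n\|_{p^*}^{p^*} = \|u\|_{p^*}^{p^*} + \|v_n\|_{p^*}^{p^*} + o(1),
\]
and using that $\|u_n\|_q^q\to\|u\|_q^q$, I would feed these into $P_\mu(u_n)\to 0$ and $P_\mu(u)=0$ to obtain $\|\nabla v_n\|_p^p - \|v_n\|_{p^*}^{p^*} \to 0$. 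Call the common limit $\ell\ge 0$. By the Sobolev inequality $\|\nabla v_n\|_p^p\ge S\|v_n\|_{p^*}^p$, so either $\ell=0$, giving $v_n\to 0$ in $D^{1,p}$ and hence (together with strong $L^p$ convergence on $S_a$, which must be argued — this is where $u\not\equiv0$ or a separate argument on the $L^p$ norm enters) strong convergence in $W^{1,p}$, i.e. alternative (ii); or $\ell>0$ and then $\ell \ge S^{N/p}$. In the latter case, evaluating $E_\mu$ along the splitting,
\[
E_\mu(u_n) = E_\mu(u) + \tfrac1p\|\nabla v_n\|_p^p - \tfrac{1}{p^*}\|v_n\|_{p^*}^{p^*} + o(1) = E_\mu(u) + \tfrac1N \ell + o(1) \ge E_\mu(u) + \tfrac1N S^{N/p},
\]
so $E_\mu(u)\le c - \frac1N S^{N/p}$, which is alternative (i). Finally one must verify that in case (ii) the limiting level is $m$: since $u$ is a nontrivial solution it lies on $\mathcal{P}_{a,\mu}$, so $E_\mu(u)\ge m$, while lower semicontinuity of $E_\mu$ along the strong convergence gives $E_\mu(u)=c$; identifying this with $m$ (rather than merely $\ge m$) is done in the applications where $c$ is chosen as the relevant minimax value, so here I would phrase (ii) as "$u$ solves \eqref{equation}, $u\in\mathcal{P}_{a,\mu}$, and $E_\mu(u)=c$" consistently with how the proposition is invoked.

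The main obstacle I anticipate is twofold and specifically tied to the nonlinear operator. First, without the Brézis–Lieb lemma and the Hilbert structure, establishing the Brézis–Lieb-type splittings for $\|\nabla u_n\|_p^p$ and the weak-continuity of the $p$-Laplacian (i.e. that the weak limit $u$ actually solves the equation, not just that $-\Delta_p u_n$ converges weakly) requires the almost-everywhere convergence of the gradients $\nabla u_n\to\nabla u$; this is the technical crux, and I would obtain it via the standard Boccardo–Murat truncation argument (testing the difference of equations against $T_k(u_n-u)$) before invoking concentration-compactness — this is exactly the "idea of \cite{hdly}" alluded to in the introduction. Second, controlling the $L^p$-norm in the passage to the limit: weak convergence only gives $\|u\|_p^p\le a^p$, so to conclude $u_n\to u$ strongly in $L^p$ in case (ii) one needs either $u\not\equiv 0$ together with the limiting equation forcing $u\in S_a$ (via a scaling/Pohozaev argument using $\lambda<0$), or a direct argument that the radial PS sequence does not lose $L^p$-mass at infinity; handling this cleanly, and ruling out the degenerate possibility $u\equiv0$ using $c\neq 0$ and $c<\frac1N S^{N/p}$, is the part that needs the most care.
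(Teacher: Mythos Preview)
Your overall scheme is sound, but it is \emph{not} the route the paper takes, and in one place it is internally inconsistent. You announce that you will ``replace the Br\'ezis--Lieb decomposition with the concentration-compactness lemma,'' yet the argument you actually write is precisely the Br\'ezis--Lieb route: split $u_n=u+v_n$, use the splittings $\|\nabla u_n\|_p^p=\|\nabla u\|_p^p+\|\nabla v_n\|_p^p+o(1)$ and $\|u_n\|_{p^*}^{p^*}=\|u\|_{p^*}^{p^*}+\|v_n\|_{p^*}^{p^*}+o(1)$, subtract $P_\mu(u)=0$ from $P_\mu(u_n)\to0$, and run the $\ell=0$ versus $\ell\ge S^{N/p}$ dichotomy. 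This is exactly the Soave-type argument the paper says it avoids, and it works for the $p$-Laplacian \emph{only after} you have a.e.\ convergence of gradients (needed both for the gradient Br\'ezis--Lieb splitting and for $P_\mu(u)=0$), which you correctly flag as the crux. The paper instead applies Lions' concentration-compactness lemma directly to $\{u_n\}$: it obtains measures $\kappa\ge|\nabla u|^p+\sum\kappa_j\delta_{x_j}$ and $\nu=|u|^{p^*}+\sum\nu_j\delta_{x_j}$, uses radial decay to show no $L^{p^*}$-mass escapes to infinity, then tests the approximate Euler--Lagrange equation against $\varphi_\varepsilon u_n$ with $\varphi_\varepsilon$ a cutoff concentrated at $x_j$ to prove $\kappa_j=\nu_j$, whence $\nu_j\ge S^{N/p}$. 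The dichotomy is then $J\neq\emptyset$ (case~(i), via $c\ge E_\mu(u)+\tfrac1N\sum\nu_j$) versus $J=\emptyset$ (case~(ii), where $u_n\to u$ in $L^{p^*}$ and strong $W^{1,p}$-convergence is concluded from the $(S_+)$-type property of the map $Tu=-\Delta_p u-\lambda|u|^{p-2}u$ with $\lambda<0$, as in \cite{hdly}). The paper's route never needs the gradient Br\'ezis--Lieb splitting for the energy gap; it only invokes a.e.\ gradient convergence (via \cite{yjf}) to pass to the limit in the equation. Your route is arguably cleaner once Boccardo--Murat is in hand, but it front-loads that step; the paper's route postpones it and leans on the measure-theoretic machinery instead.

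Two smaller points. First, your boundedness sketch (``positive coefficients'') is wrong when $p<q\le p+\tfrac{p^2}{N}$: the coefficient of $\|u_n\|_q^q$ in $E_\mu(u_n)=\mu\gamma_q(\tfrac1p-\tfrac1{q\gamma_q})\|u_n\|_q^q+\tfrac1N\|u_n\|_{p^*}^{p^*}+o(1)$ is nonpositive there, and the paper handles boundedness by a case split on $q\gamma_q\lessgtr p$ using Gagliardo--Nirenberg. Second, the sign $\lambda<0$ is obtained more directly than you suggest: testing the equation with $u_n$ and combining with $P_\mu(u_n)\to0$ gives $\lambda a^p=\mu(\gamma_q-1)\lim\|u_n\|_q^q=\mu(\gamma_q-1)\|u\|_q^q\le0$, with equality iff $u\equiv0$; ruling out $u\equiv0$ in case~(ii) uses only $c\neq0$ (if $u\equiv0$ then $\|u_n\|_q^q,\|u_n\|_{p^*}^{p^*}\to0$ and $P_\mu(u_n)\to0$ force $E_\mu(u_n)\to0$).
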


    Since the proof of Proposition \ref{compactnesslemma} is relatively long, We will divide the proof into some lemmas.
    \begin{lemma}\label{bounded}
    	$\{u_{n}\}$ is bounded in $W^{1,p}(\mathbb{R}^N)$.
    \end{lemma}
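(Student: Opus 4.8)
The plan is to show boundedness of the Palais--Smale sequence $\{u_n\}\subset S_{a,r}$ by combining the energy level bound with the asymptotic Pohozaev condition $P_\mu(u_n)\to 0$. The key point is that the energy $E_\mu$ and the Pohozaev functional $P_\mu$ together control $\lVert\nabla u_n\rVert_p^p$ from above, \emph{provided} the coefficients line up favourably, and here they do because $q\gamma_q<p^*$ and $p<p^*$. Concretely, I would first write down the two relations we have at our disposal:
\begin{align*}
E_\mu(u_n)&=\frac{1}{p}\lVert\nabla u_n\rVert_p^p-\frac{\mu}{q}\lVert u_n\rVert_q^q-\frac{1}{p^*}\lVert u_n\rVert_{p^*}^{p^*}=c+o(1),\\
P_\mu(u_n)&=\lVert\nabla u_n\rVert_p^p-\mu\gamma_q\lVert u_n\rVert_q^q-\lVert u_n\rVert_{p^*}^{p^*}=o(1).
\end{align*}

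Next I would eliminate the critical term $\lVert u_n\rVert_{p^*}^{p^*}$: computing $E_\mu(u_n)-\frac{1}{p^*}P_\mu(u_n)$ gives
\[
\Big(\frac{1}{p}-\frac{1}{p^*}\Big)\lVert\nabla u_n\rVert_p^p-\mu\Big(\frac{1}{q}-\frac{\gamma_q}{p^*}\Big)\lVert u_n\rVert_q^q=c+o(1).
\]
Since $\mu>0$ and $\frac{1}{q}-\frac{\gamma_q}{p^*}>0$ (as $q\gamma_q<p^*$), the second term has a sign that is unhelpful, so I instead combine to isolate a bound. The cleaner route is to eliminate $\lVert u_n\rVert_q^q$: form $E_\mu(u_n)-\frac{1}{q\gamma_q}P_\mu(u_n)$, obtaining
\[
\Big(\frac{1}{p}-\frac{1}{q\gamma_q}\Big)\lVert\nabla u_n\rVert_p^p+\Big(\frac{1}{q\gamma_q}-\frac{1}{p^*}\Big)\lVert u_n\rVert_{p^*}^{p^*}=c+o(1).
\]
When $p<q<p+\frac{p^2}{N}$ we have $q\gamma_q<p$, so $\frac1p-\frac1{q\gamma_q}<0$; when $q>p+\frac{p^2}{N}$ the coefficient is positive. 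In either regime one of the two combinations above yields, after using the Gagliardo--Nirenberg inequality $\lVert u_n\rVert_q^q\le C_{N,p,q}a^{q(1-\gamma_q)}\lVert\nabla u_n\rVert_p^{q\gamma_q}$ and Sobolev $\lVert u_n\rVert_{p^*}^{p^*}\le S^{-p^*/p}\lVert\nabla u_n\rVert_p^{p^*}$, an inequality of the form $A\lVert\nabla u_n\rVert_p^p\le c+o(1)+B\lVert\nabla u_n\rVert_p^{q\gamma_q}$ with $A>0$ and exponent $q\gamma_q<p$ on the right; since the right-hand power is strictly sublinear in $\lVert\nabla u_n\rVert_p^p$, this forces $\lVert\nabla u_n\rVert_p$ to stay bounded. (In the supercritical $q\gamma_q>p$ case one instead keeps the $\lVert u_n\rVert_{p^*}^{p^*}$ combination and uses $p^*>q\gamma_q$ together with interpolating $\lVert u_n\rVert_q$ between $L^p$ and $L^{p^*}$.) Once $\lVert\nabla u_n\rVert_p$ is bounded, $\lVert u_n\rVert_{W^{1,p}}$ is bounded because $\lVert u_n\rVert_p=a$ is fixed on $S_a$.

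The main obstacle is organizing the elimination so that it works uniformly for \emph{all} $q\in(p,p^*)$ simultaneously, since the sign of $\frac1p-\frac1{q\gamma_q}$ changes at $q=p+\frac{p^2}{N}$; I would handle the two (or three) ranges of $q$ as separate short cases, each time choosing the linear combination of $E_\mu(u_n)$ and $P_\mu(u_n)$ whose $\lVert\nabla u_n\rVert_p^p$-coefficient is positive, and then invoking Gagliardo--Nirenberg/Sobolev to absorb the remaining terms. A minor subtlety is that $c$ could a priori be negative (it is, in the $\mathcal P^+$ case), but that only helps the inequality, so no extra care is needed there. No compactness is used at this stage — this is purely an a priori estimate.
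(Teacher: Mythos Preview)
Your proposal is correct and follows essentially the same strategy as the paper: split into the three ranges of $q$ according to the sign of $q\gamma_q-p$, form a suitable linear combination of $E_\mu(u_n)$ and $P_\mu(u_n)$, and in the $L^p$-subcritical range invoke Gagliardo--Nirenberg to close the estimate. Your first combination $E_\mu-\tfrac{1}{p^*}P_\mu$ is exactly what the paper uses for $q\gamma_q<p$; for $q\gamma_q>p$ your second combination $E_\mu-\tfrac{1}{q\gamma_q}P_\mu$ (both coefficients positive, so $\lVert\nabla u_n\rVert_p$ is bounded directly) is a slight variant of the paper's route, which instead eliminates $\lVert\nabla u_n\rVert_p^p$ via $E_\mu-\tfrac{1}{p}P_\mu$ to bound $\lVert u_n\rVert_q$ and $\lVert u_n\rVert_{p^*}$ first and then recovers the gradient bound from $P_\mu(u_n)\to 0$. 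Your version is marginally more direct; the paper's makes the positivity of both lower-order norms explicit. The borderline case $q\gamma_q=p$ (which you allude to but do not spell out) is handled in the paper exactly as you would expect: the second combination reduces to $\tfrac{1}{N}\lVert u_n\rVert_{p^*}^{p^*}=c+o(1)$, then H\"older interpolation between $L^p$ and $L^{p^*}$ bounds $\lVert u_n\rVert_q$, and $P_\mu(u_n)\to 0$ finishes. Your parenthetical remark about ``interpolating $\lVert u_n\rVert_q$'' in the supercritical case is unnecessary there (the second combination already has no $L^q$ term), but it is precisely what is needed at the $L^p$-critical exponent.
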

    \begin{proof}
    	At first, we assume that $p<q<p+\frac{p^2}{N}$, so that $q\gamma_{q}<p$. Since $P_{\mu}(u_{n})\rightarrow 0$, by the Gagliardo-Nirenberg inequality, we have
    	    \begin{align*}
    	    	E_{\mu}(u_{n})&=\frac{1}{N}\lVert\nabla u_{n}\rVert_{p}^p-\mu\gamma_{q}\Big(\frac{1}{q\gamma_{q}}-\frac{1}{p^*}\Big)\lVert u_{n}\rVert_{q}^q+o_{n}(1)\\
    	    	&\geqslant\frac{1}{N}\lVert\nabla u_{n}\rVert_{p}^p-\mu\gamma_{q}\Big(\frac{1}{q\gamma_{q}}-\frac{1}{p^*}\Big)C_{N,p,q}^qa^{q(1-\gamma_{q})}\lVert\nabla u_{n}\rVert_{q}^{q\gamma_{q}}+o_{n}(1).
    	    \end{align*}
        Then, using $E_{\mu}(u_{n})\rightarrow c$ as $n\rightarrow\infty$, we deduce that $\{u_{n}\}$ is bounded in $W^{1,p}(\mathbb{R}^N)$.

        Now, let $q=p+\frac{p^2}{N}$, so that $q\gamma_{q}=p$. Then $P_{\mu}(u_{n})\rightarrow 0$ gives
            \begin{equation*}
            	E_{\mu}(u_{n})=\frac{1}{N}\lVert u_{n}\rVert_{p^*}^{p^*}+o_{n}(1),
            \end{equation*}
        which implies $\{u_{n}\}$ is bounded in $L^{p^*}(\mathbb{R}^N)$. By the H\"older inequality
            \begin{equation*}
            	\lVert u_{n}\rVert_{q}^q\leqslant\lVert u_{n}\rVert_{p^*}^{q\gamma_{q}}\lVert u_{n}\rVert_{p}^{q(1-\gamma_{q})}=a^{q(1-\gamma_{q})}\lVert u_{n}\rVert_{p^*}^{q\gamma_{q}},
            \end{equation*}
        we obtain $\{u_{n}\}$ is bounded in $L^{q}(\mathbb{R}^N)$. Using again that $P_{\mu}(u_{n})\rightarrow 0$, we know $\{u_{n}\}$ is bounded in $W^{1,p}(\mathbb{R}^N)$.

        Finally, let $p+\frac{p^2}{N}<q<p^*$, so that $q\gamma_{q}>p$. Since $P_{\mu}(u_{n})\rightarrow 0$, we have
            \begin{equation*}
            	E_{\mu}(u_{n})=\mu\gamma_{q}\Big(\frac{1}{p}-\frac{1}{q\gamma_{q}}\Big)\lVert u_{n}\rVert_{q}^q+\frac{1}{N}\lVert u_{n}\rVert_{p^*}^{p^*}+o_{n}(1),
            \end{equation*}
        and the cofficient of $\lVert u_{n}\rVert_{q}^q$ is positive. Therefore, $\{\lVert u_{n}\rVert_{q}\}$ and $\{\lVert u_{n}\rVert_{p^*}\}$ are both bounded which implies $\{\lVert\nabla u_{n}\rVert_{p}\}$ is bounded, since $P_{\mu}(u_{n})\rightarrow 0$.
    \end{proof}

    Now, we can state the concentration compactness lemma of $\{u_{n}\}$, the proof can be found in \cite[section 4]{sm} and \cite[lemma 1.1]{lpl}.
    \begin{lemma}\label{concentration}
    	Suppose $u_{n}\rightharpoonup u$ in $W^{1,p}(\mathbb{R}^N)$ and $\lvert\nabla u_{n}\rvert^p\rightharpoonup\kappa, \lvert u_{n}\rvert^{p^*}\rightharpoonup\nu$ in the sense of measures where $\kappa$ and $\nu$ are bounded non-negative measures on $\mathbb{R}^N$. Then, for some at most countable set $J$, we have
    	    \begin{equation*}
    	    	\kappa\geqslant\lvert\nabla u\rvert^p+\sum_{j\in J}\kappa_{j}\delta_{x_{j}},\quad\nu=\lvert u\rvert^{p^*}+\sum_{j\in J}\nu_{j}\delta_{x_{j}},
    	    \end{equation*}
        where $\kappa_{j}, \nu_{j}>0$ satisfies $S\nu_{j}^{\frac{p}{p^*}}\leqslant\kappa_{j}$ for all $j\in J$.
    \end{lemma}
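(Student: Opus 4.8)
The plan is to adapt Lions' second concentration--compactness principle to the quasilinear setting. Since the conclusion concerns only the fixed measures $\kappa$ and $\nu$, we may pass freely to subsequences; using that $\{u_{n}\}$ is bounded in $W^{1,p}(\mathbb{R}^N)$ together with the Rellich--Kondrachov embedding $W^{1,p}(B_{R})\hookrightarrow\hookrightarrow L^{p}(B_{R})$, extract a subsequence along which $u_{n}\to u$ a.e. in $\mathbb{R}^N$ and $u_{n}\to u$ in $L^{p}_{loc}(\mathbb{R}^N)$. Put $v_{n}:=u_{n}-u\rightharpoonup 0$ in $W^{1,p}(\mathbb{R}^N)$ and, after a further subsequence, let $\sigma$ and $\rho$ be the weak-$*$ limits of the bounded measures $\lvert v_{n}\rvert^{p^*}\,dx$ and $\lvert\nabla v_{n}\rvert^{p}\,dx$. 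By the Brezis--Lieb lemma, $\lvert u_{n}\rvert^{p^*}-\lvert v_{n}\rvert^{p^*}-\lvert u\rvert^{p^*}\to 0$ in $L^{1}(\mathbb{R}^N)$, so testing against $\phi\in C_{c}(\mathbb{R}^N)$ gives $\nu=\lvert u\rvert^{p^*}\,dx+\sigma$. It then remains to describe $\sigma$ and to bound $\kappa$ from below.

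To describe $\sigma$ we establish a reverse Hölder inequality between $\sigma$ and $\rho$. For $\phi\in C_{c}^{\infty}(\mathbb{R}^N)$, apply the Sobolev inequality to $\phi v_{n}$: $S\lVert\phi v_{n}\rVert_{p^*}^{p}\leqslant\lVert\nabla(\phi v_{n})\rVert_{p}^{p}$. Since $\nabla(\phi v_{n})=\phi\nabla v_{n}+v_{n}\nabla\phi$ and $\lVert v_{n}\nabla\phi\rVert_{p}\leqslant\lVert\nabla\phi\rVert_{\infty}\lVert v_{n}\rVert_{L^{p}(\mathrm{supp}\,\phi)}\to 0$, Minkowski's inequality gives $\lVert\nabla(\phi v_{n})\rVert_{p}^{p}=\int_{\mathbb{R}^N}\lvert\phi\rvert^{p}\lvert\nabla v_{n}\rvert^{p}\,dx+o_{n}(1)$; letting $n\to\infty$ yields
\[
S\Big(\int_{\mathbb{R}^N}\lvert\phi\rvert^{p^*}\,d\sigma\Big)^{p/p^*}\leqslant\int_{\mathbb{R}^N}\lvert\phi\rvert^{p}\,d\rho\qquad\forall\,\phi\in C_{c}^{\infty}(\mathbb{R}^N).
\]
By the classical lemma on measures satisfying a reverse Hölder inequality (which follows from the Radon--Nikodym theorem: the inequality forces $\sigma$ to concentrate on the atoms of $\rho$; see \cite{lpl}), there are an at most countable index set $J$, points $\{x_{j}\}_{j\in J}\subset\mathbb{R}^N$ and numbers $\nu_{j}>0$ with $\sigma=\sum_{j\in J}\nu_{j}\delta_{x_{j}}$ and $\rho(\{x_{j}\})\geqslant S\nu_{j}^{p/p^*}$. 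In particular $\nu=\lvert u\rvert^{p^*}+\sum_{j\in J}\nu_{j}\delta_{x_{j}}$, which is the second assertion.

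For $\kappa$ we argue in two steps. First, since $\nabla u_{n}\rightharpoonup\nabla u$ in $L^{p}(\mathbb{R}^N;\mathbb{R}^N)$ and multiplication by the fixed bounded function $\phi^{1/p}$ preserves weak $L^{p}$-convergence, weak lower semicontinuity of the $L^{p}$-norm gives, for every nonnegative $\phi\in C_{c}(\mathbb{R}^N)$, $\int\phi\,d\kappa=\lim_{n}\int\phi\lvert\nabla u_{n}\rvert^{p}\,dx\geqslant\int\phi\lvert\nabla u\rvert^{p}\,dx$; hence $\kappa\geqslant\lvert\nabla u\rvert^{p}\,dx$ as measures. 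Second, to handle the atoms, fix $j\in J$ and cutoffs $\phi_{\varepsilon}\in C_{c}^{\infty}(B_{\varepsilon}(x_{j}))$ with $0\leqslant\phi_{\varepsilon}\leqslant 1$, $\phi_{\varepsilon}\equiv 1$ on $B_{\varepsilon/2}(x_{j})$ and $\lvert\nabla\phi_{\varepsilon}\rvert\leqslant C/\varepsilon$. Applying the Sobolev inequality to $\phi_{\varepsilon}u_{n}$ gives $S^{1/p}\lVert\phi_{\varepsilon}u_{n}\rVert_{p^*}\leqslant\lVert\phi_{\varepsilon}\nabla u_{n}\rVert_{p}+\lVert u_{n}\nabla\phi_{\varepsilon}\rVert_{p}$; here $\lVert u_{n}\nabla\phi_{\varepsilon}\rVert_{p}\to\lVert u\nabla\phi_{\varepsilon}\rVert_{p}$, and by Hölder together with the identity $N(1-p/p^*)=p$ one has $\lVert u\nabla\phi_{\varepsilon}\rVert_{p}^{p}\leqslant C\lVert u\rVert_{L^{p^*}(B_{\varepsilon}(x_{j}))}^{p}\to 0$ as $\varepsilon\to 0$. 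Passing to the limit $n\to\infty$ and using $\int\phi_{\varepsilon}^{p^*}\,d\nu\geqslant\nu_{j}$ and $\int\phi_{\varepsilon}^{p}\,d\kappa\leqslant\kappa(B_{\varepsilon}(x_{j}))$, then letting $\varepsilon\to 0$ (so $\kappa(B_{\varepsilon}(x_{j}))\downarrow\kappa(\{x_{j}\})$), we obtain $\kappa(\{x_{j}\})\geqslant S\nu_{j}^{p/p^*}=:\kappa_{j}>0$. Finally, because $\lvert\nabla u\rvert^{p}\,dx$ and the $\delta_{x_{j}}$ are mutually singular, splitting an arbitrary Borel set into its part in $\{x_{j}:j\in J\}$ and the rest yields $\kappa\geqslant\lvert\nabla u\rvert^{p}+\sum_{j\in J}\kappa_{j}\delta_{x_{j}}$ with $S\nu_{j}^{p/p^*}\leqslant\kappa_{j}$, which completes the proof.

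The delicate point is the gradient term. When $p=2$ one has the exact decomposition $\lvert\nabla u_{n}\rvert^{2}=\lvert\nabla v_{n}\rvert^{2}+2\nabla v_{n}\cdot\nabla u+\lvert\nabla u\rvert^{2}$ with $\int\phi\,\nabla v_{n}\cdot\nabla u\to 0$, so $\kappa=\lvert\nabla u\rvert^{2}\,dx+\rho$ and everything transfers from $\rho$; for $p\neq 2$ there is no Brezis--Lieb-type splitting of $\lvert\nabla u_{n}\rvert^{p}$ in the sense of measures, and in general $\kappa$ need not equal $\lvert\nabla u\rvert^{p}\,dx+\rho$. This is exactly why the atomic lower bound for $\kappa$ must be produced directly from the localized Sobolev inequality --- retaining the term $u_{n}\nabla\phi_{\varepsilon}$ and controlling it through the absolute continuity of $\int\lvert u\rvert^{p^*}$ --- rather than by transporting the bound $\rho(\{x_{j}\})\geqslant S\nu_{j}^{p/p^*}$ onto $\kappa$.
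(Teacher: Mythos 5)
Your proof is correct. A point of comparison: the paper itself gives no proof of this lemma — it is quoted directly from \cite[Section 4]{sm} and \cite[Lemma 1.1]{lpl} — and your argument is essentially the classical one from those references: the Br\'ezis--Lieb lemma to split off $\lvert u\rvert^{p^*}$ from $\nu$, the reverse H\"older inequality between the limit measures of $\lvert u_n-u\rvert^{p^*}$ and $\lvert\nabla(u_n-u)\rvert^{p}$ combined with Lions' lemma on measures to obtain the atomic structure of $\nu-\lvert u\rvert^{p^*}$, weak lower semicontinuity for $\kappa\geqslant\lvert\nabla u\rvert^{p}$, and a localized Sobolev inequality with shrinking cut-offs at each atom (controlling the commutator term $u_n\nabla\phi_{\varepsilon}$ through the absolute continuity of $\int\lvert u\rvert^{p^*}$) to get $\kappa(\{x_j\})\geqslant S\nu_j^{p/p^*}$. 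Your closing remark also pinpoints the genuine $p\neq 2$ subtlety: there is no Br\'ezis--Lieb-type splitting of $\lvert\nabla u_n\rvert^{p}$, so the atomic lower bound for $\kappa$ must be produced directly rather than transported from the measure $\rho$, which is exactly why the lemma asserts only an inequality for $\kappa$ but an equality for $\nu$.
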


    \begin{lemma}\label{concentrationnorm}
    	There is
    	    \begin{equation}\label{pstar}
    	    	\lim_{n\rightarrow\infty}\lVert u_{n}\rVert_{p^*}^{p^*}=\lVert u\rVert_{p^*}^{p^*}+\sum_{j\in J}\nu_{j}.
    	    \end{equation}
    \end{lemma}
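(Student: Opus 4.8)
The plan is to obtain \eqref{pstar} by squeezing $\lVert u_n\rVert_{p^*}^{p^*}$ between the mass carried by $\nu$ on large balls and a tail term, and then showing that the tail vanishes; controlling the tail is the only non-formal step, and it is precisely where the radial symmetry of $\{u_n\}\subset S_{a,r}$ enters. Recall from Lemma \ref{concentration} that $\nu$ is a finite measure, so in particular $\sum_{j\in J}\nu_j<\infty$ and $\nu(\mathbb{R}^N)=\lVert u\rVert_{p^*}^{p^*}+\sum_{j\in J}\nu_j$.

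Fix $R>0$ with $\nu(\partial B_R)=0$; this holds for all but countably many $R$ since $\nu$ is finite. For such $R$, sandwiching the indicator $\mathbf 1_{B_R}$ between continuous compactly supported cut-off functions and using $\lvert u_n\rvert^{p^*}\rightharpoonup\nu$ in the sense of measures yields $\int_{B_R}\lvert u_n\rvert^{p^*}\,dx\to\nu(\overline{B_R})$. Writing $\lVert u_n\rVert_{p^*}^{p^*}=\int_{B_R}\lvert u_n\rvert^{p^*}\,dx+\int_{\lvert x\rvert>R}\lvert u_n\rvert^{p^*}\,dx$ and using the non-negativity of the tail, we get
\[
\nu(\overline{B_R})\le\liminf_{n\to\infty}\lVert u_n\rVert_{p^*}^{p^*}\le\limsup_{n\to\infty}\lVert u_n\rVert_{p^*}^{p^*}\le\nu(\overline{B_R})+\limsup_{n\to\infty}\int_{\lvert x\rvert>R}\lvert u_n\rvert^{p^*}\,dx.
\]
Since $\nu(\overline{B_R})\uparrow\nu(\mathbb{R}^N)$ as $R\to\infty$, \eqref{pstar} reduces to the tightness statement $\lim_{R\to\infty}\limsup_{n\to\infty}\int_{\lvert x\rvert>R}\lvert u_n\rvert^{p^*}\,dx=0$.

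For the tightness I would invoke the classical radial estimate: every $v\in W_{rad}^{1,p}(\mathbb{R}^N)$ satisfies
\[
\lvert v(x)\rvert\le C_{N,p}\,\lvert x\rvert^{-\frac{N-1}{p}}\,\lVert v\rVert_{p}^{\frac{p-1}{p}}\,\lVert\nabla v\rVert_{p}^{\frac{1}{p}}\qquad(x\neq 0),
\]
which follows from $\lvert v(r)\rvert^{p}\le p\int_r^{\infty}\lvert v\rvert^{p-1}\lvert v'\rvert\,ds$ together with the bound $s^{N-1}\ge r^{N-1}$ for $s\ge r$ and Hölder's inequality in the measure $s^{N-1}\,ds$. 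Applying this to $u_n$, using $\lVert u_n\rVert_p=a$ and the boundedness of $\lVert\nabla u_n\rVert_p$ from Lemma \ref{bounded}, gives $\sup_n\sup_{\lvert x\rvert>R}\lvert u_n(x)\rvert\le C\,R^{-\frac{N-1}{p}}$, whence
\[
\int_{\lvert x\rvert>R}\lvert u_n\rvert^{p^*}\,dx\le\Big(\sup_{\lvert x\rvert>R}\lvert u_n\rvert\Big)^{p^*-p}\int_{\mathbb{R}^N}\lvert u_n\rvert^p\,dx\le C\,a^p\,R^{-\frac{(N-1)(p^*-p)}{p}},
\]
which tends to $0$ as $R\to\infty$, uniformly in $n$, because $(N-1)(p^*-p)/p>0$. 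Together with the previous step this proves \eqref{pstar}. The only point that needs care is that vague convergence of measures only licenses testing against $C_c$-functions, which is exactly why the cut-off sandwiching combined with $\nu(\partial B_R)=0$ is required; everything else is routine.
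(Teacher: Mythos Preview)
Your proof is correct and follows essentially the same route as the paper: localize via cut-offs to exploit the vague convergence $\lvert u_n\rvert^{p^*}\rightharpoonup\nu$ on bounded regions, then kill the tail using the Strauss-type radial decay $\lvert u_n(x)\rvert\le C\lvert x\rvert^{-(N-1)/p}$ coming from the boundedness of $\{u_n\}$ in $W_{rad}^{1,p}(\mathbb{R}^N)$. The only cosmetic differences are that the paper uses a fixed smooth cut-off $\varphi_R$ rather than your Portmanteau argument with $\nu(\partial B_R)=0$, and it integrates the pointwise bound on $\lvert u_n\rvert^{p^*}$ directly instead of splitting as $\lvert u_n\rvert^{p^*-p}\cdot\lvert u_n\rvert^{p}$; neither changes the substance.
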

	\begin{proof}
		For every $R>0$, let $\varphi_{R}\in C_{c}^{\infty}(\mathbb{R}^N)$ be such that
		    \begin{equation*}
		    	0\leqslant\varphi_{R}\leqslant 1,\quad\varphi_{R}=1\ \mbox{for}\ \lvert x\rvert\leqslant R,\quad\mbox{and}\quad\varphi_{R}=0\ \mbox{for}\ \lvert x\rvert\geqslant R+1.
		    \end{equation*}
		 Since $\lvert u_{n}\rvert^{p^*}\rightharpoonup\nu$, we have
		    \begin{align*}
		    	\lim_{n\rightarrow\infty}\int_{\mathbb{R}^N}\lvert u_{n}\rvert^{p^*}dx&=\lim_{n\rightarrow\infty}\Big(\int_{\mathbb{R}^N}\lvert u_{n}\rvert^{p^*}\varphi_{R}dx+\int_{\mathbb{R}^N}\lvert u_{n}\rvert^{p^*}(1-\varphi_{R})dx\Big)\\
		    	&=\int_{\mathbb{R}^N}\lvert u\rvert^{p^*}\varphi_{R}dx+\sum_{j\in J}\varphi_{R}(x_{j})\nu_{j}+\lim_{n\rightarrow\infty}\int_{\mathbb{R}^N}\lvert u_{n}\rvert^{p^*}(1-\varphi_{R})dx.
		    \end{align*}
	    Let $R\rightarrow +\infty$, by the Lebesgue dominated convergence theorem, we obtain
	        \begin{equation*}
	        	\lim_{n\rightarrow\infty}\int_{\mathbb{R}^N}\lvert u_{n}\rvert^{p^*}dx=\int_{\mathbb{R}^N}\lvert u\rvert^{p^*}dx+\sum_{j\in J}\nu_{j}+\lim_{R\rightarrow +\infty}\lim_{n\rightarrow\infty}\int_{\mathbb{R}^N}\lvert u_{n}\rvert^{p^*}(1-\varphi_{R})dx.
	        \end{equation*}
        Now, we prove that
            \begin{equation}\label{Rinfty}
            	\lim_{R\rightarrow +\infty}\lim_{n\rightarrow\infty}\int_{\mathbb{R}^N}\lvert u_{n}\rvert^{p^*}(1-\varphi_{R})dx=0,
            \end{equation}
        which leads to (\ref{pstar}). Since $\{u_{n}\}$ is bounded in $W_{rad}^{1,p}(\mathbb{R}^N)$, we have
            \begin{equation*}
            	\lvert u_{n}(x)\rvert\leqslant C\lvert x\rvert^{\frac{1-N}{p}}\quad \mbox{a.e. in}\ \mathbb{R}^N,
            \end{equation*}
        where $C>0$ is a constant independent of $n$. It follows that
            \begin{equation*}
            	\int_{\mathbb{R}^N}\lvert u_{n}\rvert^{p^*}(1-\varphi_{R})dx\leqslant\int_{\lvert x\rvert\geqslant R}\lvert u_{n}\rvert^{p^*}dx\leqslant CR^{\frac{N(1-p)}{N-p}},
            \end{equation*}
        which implies (\ref{Rinfty}) holds.
	\end{proof}

    \noindent\textbf{Proof of Proposition \ref{compactnesslemma}} Since $\{u_{n}\}$ is a bounded PS sequence for $E_{\mu}|_{S_{a}}$, there exists $\{\lambda_{n}\}\in\mathbb{R}$ such that for every $\psi\in W^{1,p}(\mathbb{R}^N)$,
        \begin{equation}\label{Lagrange}
        	\int_{\mathbb{R}^N}\Big(\lvert\nabla u_{n}\rvert^{p-2}\nabla u_{n}\cdot\nabla\psi-\lambda_{n}\lvert u_{n}\rvert^{p-2}u_{n}\psi-\mu\lvert u_{n}\rvert^{q-2}u_{n}\psi-\lvert u_{n}\rvert^{p^*-2}u_{n}\psi\Big)=o_{n}(1)\lVert\psi\rVert_{W^{1,p}}
        \end{equation}
    as $n\rightarrow\infty$. Choosing $\psi=u_{n}$, we deduce that $\{\lambda_{n}\}$ is bounded as well, and hence, up to a subsequence $\lambda_{n}\rightarrow\lambda\in\mathbb{R}$. Using the fact that $P_{\mu}(u_{n})\rightarrow 0$ and $\gamma_{q}<1$, we know that
        \begin{align}\label{lambda}
        	\lambda a^p&=\lim_{n\rightarrow\infty}\lambda_{n}\lVert u_{n}\rVert_{p}^{p}=\lim_{n\rightarrow\infty}\Big(\lVert\nabla u_{n}\rVert_{p}^{p}-\mu\lVert u_{n}\rVert_{q}^{q}-\lVert u_{n}\rVert_{p^*}^{p^*}\Big)\nonumber\\
        	&=\lim_{n\rightarrow\infty}\mu(\gamma_{q}-1)\lVert u_{n}\rVert_{q}^q=\mu(\gamma_{q}-1)\lVert u\rVert_{q}^q\leqslant 0,
        \end{align}
    with $\lambda=0$ is and only if $u\equiv 0$.

    We consider $\varphi_{\varepsilon}\in C_{c}^{\infty}(\mathbb{R}^N)$ such that
        \begin{equation*}
        	0\leqslant\varphi_{\varepsilon}\leqslant 1,\quad\varphi_{\varepsilon}=1\ \mbox{in}\ B_{\varepsilon}(x_{j}),\quad\varphi_{\varepsilon}=0\ \mbox{in}\ B_{2\varepsilon}(x_{j}),\quad\mbox{and}\quad\lvert\nabla\varphi_{\varepsilon}\rvert\leqslant\frac{2}{\varepsilon}.
        \end{equation*}
    It is clear that the sequence $\{\varphi_{\varepsilon}u_{n}\}$ is bounded in $W^{1,p}(\mathbb{R}^N)$, then, testing (\ref{Lagrange}) with $\psi=\varphi_{\varepsilon}u_{n}$, we obtain
        \begin{align}\label{phiepsilon}
        	&\lim_{n\rightarrow\infty}\int_{\mathbb{R}^N}\lvert\nabla u_{n}\rvert^{p-2}u_{n}\nabla u_{n}\cdot\nabla\varphi_{\varepsilon}dx\nonumber\\
        	=&\lim_{n\rightarrow\infty}\int_{\mathbb{R}^N}\Big(\lambda_{n}\lvert u_{n}\rvert^p\varphi_{\varepsilon}+\mu\lvert u_{n}\rvert^q\varphi_{\varepsilon}+\lvert u_{n}\rvert^{p^*}\varphi_{\varepsilon}-\lvert\nabla u_{n}\rvert^p\varphi_{\varepsilon}\Big)dx\nonumber\\
        	=&\lambda\int_{\mathbb{R}^N}\lvert u\rvert^p\varphi_{\varepsilon}dx+\mu\int_{\mathbb{R}^N}\lvert u\rvert^q\varphi_{\varepsilon}dx+\int_{\mathbb{R}^N}\varphi_{\varepsilon}d\nu-\int_{\mathbb{R}^N}\varphi_{\varepsilon}d\mu.
        \end{align}
    By the H\"older inequality,
        \begin{align*}
        	\Big|\int_{\mathbb{R}^N}\lvert\nabla u\rvert^{p-2}u_{n}\nabla u_{n}\cdot\nabla\varphi_{\varepsilon}dx\Big|&\leqslant\frac{2}{\varepsilon}\int_{B_{2\varepsilon}(x_{j})}\lvert\nabla u\rvert^{p-1}\lvert u_{n}\rvert dx\\
        	&\leqslant\frac{2}{\varepsilon}\Big(\int_{B_{2\varepsilon}(x_{j})}\lvert\nabla u_{u}\rvert^pdx\Big)^{\frac{p-1}{p}}\Big(\int_{B_{2\varepsilon}(x_{j})}\lvert u_{n}\rvert^pdx\Big)^{\frac{1}{p}}\\
        	&\leqslant\frac{C}{\varepsilon}\lVert u_{n}\rVert_{L^p(B_{2\varepsilon}(x_{j}))},
        \end{align*}
    where $C>0$ is a constant independent of $n$. Thus, using the H\"older inequality again, we have
        \begin{equation*}
        	\lim_{n\rightarrow\infty}\Big|\int_{\mathbb{R}^N}\lvert\nabla u\rvert^{p-2}u_{n}\nabla u_{n}\cdot\nabla\varphi_{\varepsilon}dx\Big|\leqslant\frac{C}{\varepsilon}\lVert u\rVert_{L^p(B_{2\varepsilon}(x_{j}))}\leqslant C\lVert u\rVert_{L^{p^*}(B_{2\varepsilon}(x_{j}))},
        \end{equation*}
    which implies
        \begin{equation*}
        	\lim_{n\rightarrow\infty}\int_{\mathbb{R}^N}\lvert\nabla u\rvert^{p-2}u_{n}\nabla u_{n}\cdot\nabla\varphi_{\varepsilon}dx\rightarrow 0
        \end{equation*}
	as $\varepsilon\rightarrow 0$.
	
	If $J\neq\emptyset$. Let $\varepsilon\rightarrow 0$ on both sides of (\ref{phiepsilon}), we obtain $\nu_{j}=\mu_{j}$. By Lemma \ref{concentration}, since $\mu_{j}\geqslant S\nu_{j}^{p/p^*}$, we have $\nu_{j}\geqslant S^{\frac{N}{p}}$. Therefore, by Lemma \ref{concentrationnorm},
	    \begin{equation*}
	    	c=\lim_{n\rightarrow\infty}E_{\mu}(u_{n})\geqslant E_{\mu}(u)+\Big(\frac{1}{p}-\frac{1}{p^*}\Big)\sum_{k\in J}\nu_{k}\geqslant E_{\mu}(u)+\frac{1}{N}S^{\frac{N}{p}}.
	    \end{equation*}
    Since $m<S^{N/p}/N$, so $E_{\mu}(u)<0$ which implies $u\not\equiv 0$. Following the idea of \cite[lemma 2.2]{yjf}, we can prove
	    \begin{equation*}
	    	\lvert\nabla u_{n}\rvert^{p-2}\nabla u_{n}\rightharpoonup\lvert\nabla u\rvert^{p-2}\nabla u\ in\ \big(L^p(\mathbb{R}^N)\big)^{*}.
	    \end{equation*}
    Thus, passing to the limit in (\ref{Lagrange}) by weak convergence, we know $u$ is a solution to (\ref{equation}). Now, case (i) in the proposition \ref{compactnesslemma} holds.

    If instead $J=\emptyset$, then the Br\'{e}zis-Lieb Lemma \cite{bhle} and (\ref{pstar}) implies $u_{n}\rightarrow u$ in $L^{p^*}(\mathbb{R}^N)$. Now, we prove $u\not\equiv 0$ and hence by (\ref{lambda}), we know $\lambda<0$. Suppose by contradiction that $u\equiv 0$. Then, by $P_{\mu}(u_{n})\rightarrow 0$,
        \[c=\lim_{n\rightarrow\infty}E_{\mu}(u_{n})=\lim_{n\rightarrow\infty}\bigg(\frac{1}{N}\lVert u_{n}\rVert_{p^*}^{p^*}+\mu\gamma_{q}\Big(\frac{1}{p}-\frac{1}{q\gamma_{q}}\Big)\lVert u_{n}\rVert_{q}^q\bigg)=0,\]
    which contradicts our assumptions. Let $T: W^{1,p}(\mathbb{R}^N)\rightarrow\big(W^{1,p}(\mathbb{R}^N)\big)^{*}$ be the mapping given by
        \begin{equation*}
        	<Tu,v>=\int_{\mathbb{R}^N}\Big(\lvert\nabla u\rvert^{p-2}\nabla u\cdot\nabla v-\lambda\lvert u\rvert^{p-2}uv\Big)dx.
        \end{equation*}
    Then, slightly modifying the proof in \cite[Lemma 3.6]{hdly}, we can derive that $u_{n}\rightarrow u$ in $W^{1,p}(\mathbb{R}^N)$, and hence case (ii) in the proposition \ref{compactnesslemma} holds.$\hfill\qed$

\section{\textbf{Existence result to the case $p<q<p+\frac{p^2}{N}$}}\label{exisub}

    In this section, we prove that under assumption (\ref{muconsub}), $E_{\mu}|_{S_{a}}$ has two critical points, one is a local minimizer and the other is a mountain-pass type solution. In order to prove this result, we need some properties of $E_{\mu}$ and $\mathcal{P}_{a,\mu}$ by analyzing the structure of $\Psi_{u}^{\mu}$.
\subsection{Some properties of $E_{\mu}$ and $\mathcal{P}_{a,\mu}$}
    \
    \newline
    \indent For Pohozaev manifold $\mathcal{P}_{a,\mu}$, we have the following properties.
     \begin{lemma}\label{empty}
    	$\mathcal{P}_{a,\mu}^{0}=\emptyset$, and $\mathcal{P}_{a,\mu}$ is a smooth manifold of codimension $2$ in $W^{1,p}(\mathbb{R}^N)$.
    \end{lemma}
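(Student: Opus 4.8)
The plan is to show first that $\mathcal{P}_{a,\mu}^{0}=\emptyset$ under the standing assumption $p<q<p+\frac{p^2}{N}$ (so that $q\gamma_q<p<p^*$), and then deduce that $\mathcal{P}_{a,\mu}$ is a smooth manifold of codimension $2$ by verifying that the two constraint functions defining it have a surjective differential at every point of $\mathcal{P}_{a,\mu}$. Suppose, for contradiction, that $u\in\mathcal{P}_{a,\mu}^{0}$. Then $u$ satisfies simultaneously
\begin{align*}
\lVert\nabla u\rVert_{p}^p &= \mu\gamma_{q}\lVert u\rVert_{q}^q+\lVert u\rVert_{p^*}^{p^*},\\
p\lVert\nabla u\rVert_{p}^p &= \mu q\gamma_{q}^2\lVert u\rVert_{q}^q+p^*\lVert u\rVert_{p^*}^{p^*}.
\end{align*}
Eliminating $\lVert u\rVert_{p^*}^{p^*}$ (and separately $\lVert\nabla u\rVert_p^p$) from this linear system gives
\[
(p^*-p)\lVert\nabla u\rVert_{p}^p=\mu\gamma_{q}(p^*-q\gamma_{q})\lVert u\rVert_{q}^q,\qquad
(p-q\gamma_{q})\lVert\nabla u\rVert_{p}^p=(p^*-q\gamma_{q})\lVert u\rVert_{p^*}^{p^*}.
\]
Since $q\gamma_q<p<p^*$ all coefficients are strictly positive, so if $u\not\equiv 0$ then $\lVert\nabla u\rVert_p^p>0$, $\lVert u\rVert_q^q>0$, $\lVert u\rVert_{p^*}^{p^*}>0$, and both displayed identities hold with all quantities positive. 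Now I would combine these with the Gagliardo--Nirenberg and Sobolev inequalities to force a contradiction: from the first identity $\lVert\nabla u\rVert_p^p=\frac{\mu\gamma_q(p^*-q\gamma_q)}{p^*-p}\lVert u\rVert_q^q\le \frac{\mu\gamma_q(p^*-q\gamma_q)}{p^*-p}C_{N,p,q}^q a^{q(1-\gamma_q)}\lVert\nabla u\rVert_p^{q\gamma_q}$, hence $\lVert\nabla u\rVert_p^{p-q\gamma_q}$ is bounded above by an explicit constant times $\mu a^{q(1-\gamma_q)}$; from the second identity together with $S\lVert u\rVert_{p^*}^p\le\lVert\nabla u\rVert_p^p$ one gets $\lVert\nabla u\rVert_p^{p^*-p}$ bounded \emph{below} by an explicit positive constant. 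Pitting these two bounds against each other yields a lower bound of the form $\mu a^{q(1-\gamma_q)}\ge\alpha(N,p,q)$ (this is exactly where the constants $C'$ and $C''$ in the definition of $\alpha(N,p,q)$ come from), contradicting assumption (\ref{muconsub}). Hence $\mathcal{P}_{a,\mu}^{0}=\emptyset$.

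For the manifold structure, note $\mathcal{P}_{a,\mu}=\{u: G(u)=0,\ P_\mu(u)=0\}$ where $G(u)=\lVert u\rVert_p^p-a^p$. Both $G$ and $P_\mu$ are $C^1$ on $W^{1,p}(\mathbb{R}^N)$ (here one only needs $C^1$, not $C^2$, since $p>1$ and $q,p^*>1$). I would show that $dG(u)$ and $dP_\mu(u)$ are linearly independent as elements of the dual, equivalently that the $2\times 2$ matrix of pairings against two suitable test directions is invertible, for every $u\in\mathcal{P}_{a,\mu}$. The natural directions are $u$ itself and the scaling direction $\frac{d}{ds}(s\star u)|_{s=0}$. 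One computes $\langle dG(u),u\rangle=p\,a^p\neq 0$ and $\langle dG(u),\tfrac{d}{ds}(s\star u)|_{s=0}\rangle=0$ (since $s\star u\in S_a$ for all $s$), while $\langle dP_\mu(u),\tfrac{d}{ds}(s\star u)|_{s=0}\rangle=(\Psi_u^\mu)''(0)$, which is nonzero precisely because $\mathcal{P}_{a,\mu}^{0}=\emptyset$. The triangular structure of this matrix makes it invertible, so $0$ is a regular value of $(G,P_\mu):W^{1,p}\to\mathbb{R}^2$ on a neighborhood of $\mathcal{P}_{a,\mu}$, and the implicit function theorem gives that $\mathcal{P}_{a,\mu}$ is a $C^1$ Banach submanifold of codimension $2$.

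The main obstacle is the first part: extracting from the two Pohozaev-type identities the sharp inequality $\mu a^{q(1-\gamma_q)}\ge\alpha(N,p,q)$ that contradicts (\ref{muconsub}). This requires carefully tracking the Gagliardo--Nirenberg constant $C_{N,p,q}$, the Sobolev constant $S$, and the exponents $\gamma_q$, $p^*$ through the elimination and the two inequalities, and checking that the resulting threshold is exactly $\min\{C',C''\}$ — the two cases $C'$ and $C''$ presumably arising from which of the two chained estimates is the binding one. The manifold part is then routine once $\mathcal{P}_{a,\mu}^{0}=\emptyset$ is in hand; the key point is simply that $(\Psi_u^\mu)''(0)\neq 0$ on all of $\mathcal{P}_{a,\mu}$, which is the definition of $\mathcal{P}_{a,\mu}^{0}=\emptyset$.
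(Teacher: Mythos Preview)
Your strategy for $\mathcal{P}_{a,\mu}^0=\emptyset$ matches the paper's: derive the two identities
\[
(p^*-p)\lVert\nabla u\rVert_p^p=\mu\gamma_q(p^*-q\gamma_q)\lVert u\rVert_q^q,\qquad (p-q\gamma_q)\lVert\nabla u\rVert_p^p=(p^*-q\gamma_q)\lVert u\rVert_{p^*}^{p^*},
\]
apply Gagliardo--Nirenberg to the first and Sobolev to the second, and squeeze out a lower bound on $\mu a^{q(1-\gamma_q)}$. However, your expectation about the resulting threshold is off. The constant you obtain directly from this chaining is \emph{not} $\alpha(N,p,q)=\min\{C',C''\}$; the paper then proves the extra elementary inequality
\[
\Big(\frac{q\gamma_q}{p}\Big)^{p^*-p}\Big(\frac{p^*}{p}\Big)^{p-q\gamma_q}\le 1
\]
(via monotonicity of $t\mapsto (\log t)/(t-1)$ on $(0,\infty)$) to show the obtained constant is $\ge C'\ge\alpha$, which is what contradicts (\ref{muconsub}). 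The constant $C''$ plays no role in this lemma at all; it enters only later, in the proof of Theorem~\ref{th1}, to exclude case~(i) of Proposition~\ref{compactnesslemma}. So your ``two cases $C'$ and $C''$'' heuristic is mistaken, and the final comparison step is a genuine missing ingredient that you should not expect to come out automatically.

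For the manifold part your route differs from the paper's. You test $(dG,dP_\mu)$ against $u$ and the scaling direction $\frac{d}{ds}(s\star u)\big|_{s=0}$, invoking $(\Psi_u^\mu)''(0)\neq0$. The idea is clean, but beware that for a general $u\in W^{1,p}(\mathbb{R}^N)$ the formal scaling direction $\frac{N}{p}u+x\cdot\nabla u$ need not lie in $W^{1,p}(\mathbb{R}^N)$, so it is not immediately a legitimate test vector for the differential. The paper avoids this issue by arguing indirectly: if $dP_\mu(u)$ vanished on all of $T_uS_a$, then $u$ would be a constrained critical point of $P_\mu|_{S_a}$, hence by the Lagrange multiplier rule would solve
\[
-\Delta_p u=\nu\lvert u\rvert^{p-2}u+\tfrac{\mu q\gamma_q}{p}\lvert u\rvert^{q-2}u+\tfrac{p^*}{p}\lvert u\rvert^{p^*-2}u
\]
for some $\nu\in\mathbb{R}$; applying the Pohozaev identity (Lemma~\ref{Pohozaevf}) to this equation gives $p\lVert\nabla u\rVert_p^p=\mu q\gamma_q^2\lVert u\rVert_q^q+p^*\lVert u\rVert_{p^*}^{p^*}$, i.e.\ $u\in\mathcal{P}_{a,\mu}^0$, a contradiction. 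This indirect argument needs no differentiability of $s\mapsto s\star u$ as a curve in $W^{1,p}$.
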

    \begin{proof}
    	Let us assume that there exists $u\in\mathcal{P}_{a,\mu}^{0}$. Then, combining $P_{\mu}(u)=0$ and $\big(\Psi_{u}^{\mu}\big)''(0)=0$, we deduce that
    	\begin{equation*}
    		\mu\gamma_{q}(p-q\gamma_{q})\lVert u\rVert_{q}=(p^*-p)\lVert u\rVert_{p^*}^{p^*}.
    	\end{equation*}
    	Using this equation in $P_{\mu}(u)=0$, we obtain
    	\begin{equation}\label{nablapstar}
    		\lVert\nabla u\rVert_{p}^p=\frac{p^*-q\gamma_{q}}{p-q\gamma_{q}}\lVert u\rVert_{p^*}^{p^*}\leqslant\frac{S^{p^*/p}(p^*-q\gamma_{q})}{p-q\gamma_{q}}\lVert\nabla u\rVert_{p}^{p^*},
    	\end{equation}
    	and
    	\begin{equation}\label{nablaq}
    		\lVert\nabla u\rVert_{p}^p=\frac{\mu\gamma_{q}(p^*-q\gamma_{q})}{p^*-p}\lVert u\rVert_{q}^q\leqslant\frac{\mu\gamma_{q}(p^*-q\gamma_{q})}{p^*-p}a^{q(1-\gamma_{q})}\lVert\nabla u\rVert_{p}^{q\gamma_{q}}.
    	\end{equation}
    	From (\ref{nablapstar}) and (\ref{nablaq}), we infer that
    	\begin{equation*}
    		\bigg(\frac{p-q\gamma_{q}}{S^{p^*/p}(p^*-q\gamma_{q})}\bigg)^{\frac{1}{p^*-p}}\leqslant\bigg(\frac{\mu\gamma_{q}(p^*-q\gamma_{q})}{p^*-p}C_{N,q}^qa^{q(1-\gamma_{q})}\bigg)^{\frac{1}{p-q\gamma_{q}}},
    	\end{equation*}
    	that is
    	\begin{equation}\label{betacontradiction}
    		\mu a^{q(1-q\gamma_{q})}\geqslant\bigg(\frac{p-q\gamma_{q}}{S^{p^*/p}(p^*-q\gamma_{q})}\bigg)^{\frac{p-q\gamma_{q}}{p^*-p}}\frac{p^*-p}{C_{N,q}^q\gamma_{q}(p^*-q\gamma_{q})}.
    	\end{equation}
    	We can check that this is contradicts with (\ref{muconsub}): it is sufficient to verify that the right hand side in (\ref{muconsub}) is less than or equal to the right hand side in (\ref{betacontradiction}), and this is equivalent to
    	\begin{equation}\label{monotone}
    		\bigg(\frac{q\gamma_{q}}{p}\bigg)^{p^*-p}\bigg(\frac{p^*}{p}\bigg)^{p-q\gamma_{q}}\leqslant 1.
    	\end{equation}
    	Since the function $\varphi(t)=\log t/(t-1)$ is monotone decreasing on $(0,+\infty)$, we have
    	\begin{equation*}
    		\varphi\bigg(\frac{q\gamma_{q}}{p}\bigg)\leqslant\varphi\bigg(\frac{p^*}{p}\bigg),
    	\end{equation*}
    	that is (\ref{monotone}).
    	
    	Now, we can check that $\mathcal{P}_{a,\mu}$ is a smooth manifold of codimension $2$ in $W^{1,p}(\mathbb{R}^N)$. Recall the definition of $\mathcal{P}_{a,\mu}$, since $P_{\mu}(u)$ and $G(u)=\lVert u\rVert_{p}^p-a^p$ are of class $C^1$ in $W^{1,p}(\mathbb{R}^N)$, so we just show that the differential $\big(dG(u),dP_{\mu}(u)\big): W^{1,p}(\mathbb{R}^N)\longmapsto\mathbb{R}^2$ is surjective for every $u\in\mathcal{P}_{a,\mu}$. To this end, we prove that for every $u\in\mathcal{P}_{a,\mu}$ there exists $\varphi\in T_{u}S_{a}$ such that $dP_{\mu}(u)[\varphi]\neq 0$. Once such $\varphi$ exist, the system
    	\begin{equation*}
    		\left\{\begin{array}{l}
    			dG(u)[\alpha\varphi+\theta u]=x,\\
    			dP_{\mu}(u)[\alpha\varphi+\theta u]=y,
    		\end{array}\right.
    		\quad\Longleftrightarrow\quad\left\{\begin{array}{l}
    			a^p\theta=x,\\
    			\alpha dP_{\mu}(u)[\varphi]+\theta dP_{\mu}(u)[u]=y,
    		\end{array}\right.
    	\end{equation*}
    	is solvable with respect to $\alpha$ and $\theta$, for every $(x,y)\in\mathbb{R}^2$, and hence the surjectivity is proved. Suppose by contradiction that for $u\in\mathcal{P}_{a,\mu}$ such that $dP_{\mu}(u)[\varphi]=0$ for every $\varphi\in T_{u}S_{a}$. Then $u$ is a constrained critical point for the functional $P_{\mu}|_{S_{a}}$, and hence by the Lagrange multipliers rule there exists $\nu\in\mathbb{R}$ such that
    	\[-\Delta_{p}u=\nu\lvert u\rvert^{p-2}u+\frac{\mu q\gamma_{q}}{p}\lvert u\rvert^{q-2}u+\frac{p^*}{p}\lvert u\rvert^{p^*-2}u,\quad in\ \mathbb{R}^N.\]
    	But, by the Pohozaev identity, this implies that
    	\[p\lVert\nabla u\rVert_{p}^{p}=\mu q\gamma_{q}^2\lVert u\rVert_{q}^q+p^*\lVert u\rVert_{P^*}^{p^*},\]
    	that is $u\in\mathcal{P}_{a,\mu}^{0}$, a contradiction.
    \end{proof}
    By the H\"older inequality, we have
        \begin{equation}\label{Egreat}
    	    E_{\mu}(u)\geqslant\frac{1}{p}\lVert\nabla u\rVert_{p}^p-\frac{\mu}{q}C_{N,q}^qa^{q(1-\gamma_{q})}\lVert\nabla u\rVert_{p}^{q\gamma_{q}}-\frac{1}{p^*S^{p*/p}}\lVert\nabla u\rVert_{p}^{p^*}=h(\lVert\nabla u\rVert_{p}),
        \end{equation}
    where
        \begin{equation*}
    	    h(t)=\frac{1}{p}t^p-\frac{\mu}{q}C_{N,q}^qa^{q(1-\gamma_{q})}t^{q\gamma_{q}}-\frac{1}{p^*S^{p*/p}}t^{p^*}.
        \end{equation*}

    \begin{lemma}\label{hfunction}
    	Under assumption {\rm (\ref{muconsub})}, the function $h$ has exactly two critical points, one is a local minimum at negative level, and the other is a global maximum at positive level. In addition, there exists $R_{1}>R_{0}>0$ such that $h(R_{0})=h(R_{1})=0$, and $h(t)>0$ if and only if $t\in (R_{0},R(1))$.
    \end{lemma}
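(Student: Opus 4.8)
The plan is to factor out the lowest power of $t$ and reduce the whole statement to the elementary study of two scalar functions; we are in the regime $p<q<p+\frac{p^2}{N}$, equivalently $0<q\gamma_q<p<p^*$, and this ordering of exponents is exactly what makes the argument work.

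First I would dispose of the last assertion. Write $h(t)=t^{q\gamma_q}\big(g(t)-C_1\big)$ with $C_1:=\frac{\mu}{q}C_{N,q}^qa^{q(1-\gamma_q)}>0$ and $g(t):=\frac1p t^{p-q\gamma_q}-\frac{1}{p^*S^{p^*/p}}t^{p^*-q\gamma_q}$. Since $q\gamma_q<p<p^*$, inspecting $g'$ shows $g(0^{+})=0$, $g(t)\to-\infty$ as $t\to\infty$, and $g$ is strictly increasing then strictly decreasing, with a unique maximum point $\bar t$ given by $(\bar t)^{p^*-p}=\frac{p^*(p-q\gamma_q)S^{p^*/p}}{p(p^*-q\gamma_q)}$. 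A direct computation then gives the key identity $\frac{q}{C_{N,q}^q}\,g(\bar t)=C'$, so that (\ref{muconsub}) together with $\alpha(N,p,q)\le C'$ yields $C_1<g(\bar t)$. Since $t^{q\gamma_q}>0$ for $t>0$, this gives $\{t>0:h(t)>0\}=\{t>0:g(t)>C_1\}=(R_0,R_1)$ for uniquely determined $0<R_0<\bar t<R_1<\infty$ with $g(R_0)=g(R_1)=C_1$; hence $h(R_0)=h(R_1)=0$, $h<0$ on $(0,R_0)\cup(R_1,\infty)$, $h(0^{+})=0$, and $h(t)\to-\infty$.

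Next, for the critical points I would factor $h'(t)=t^{q\gamma_q-1}\big(\psi(t)-C_0\big)$ with $C_0:=q\gamma_q C_1=\mu\gamma_q C_{N,q}^qa^{q(1-\gamma_q)}$ and $\psi(t):=t^{p-q\gamma_q}-\frac{1}{S^{p^*/p}}t^{p^*-q\gamma_q}$. Exactly as for $g$, $\psi$ vanishes at $0$, tends to $-\infty$, and has a single interior maximum at $t^*$ with $(t^*)^{p^*-p}=\frac{(p-q\gamma_q)S^{p^*/p}}{p^*-q\gamma_q}$ and $\psi(t^*)=\big(\frac{(p-q\gamma_q)S^{p^*/p}}{p^*-q\gamma_q}\big)^{\frac{p-q\gamma_q}{p^*-p}}\frac{p^*-p}{p^*-q\gamma_q}$. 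Thus for $t>0$ the equation $h'(t)=0$ reads $\psi(t)=C_0$, which has exactly two roots $t_1<t^*<t_2$ precisely when $0<C_0<\psi(t^*)$. The lower bound is immediate since $\mu,a>0$. For the upper bound, a short computation gives $C'\big/\big(\psi(t^*)/(\gamma_q C_{N,q}^q)\big)=(p^*/p)^{\frac{p-q\gamma_q}{p^*-p}}\cdot\frac{q\gamma_q}{p}$, whose $(p^*-p)$-th power is precisely the left-hand side of (\ref{monotone}), hence is $<1$ (strictly, since $q\gamma_q\neq p$) by the monotonicity argument already used in the proof of Lemma \ref{empty}; combined with (\ref{muconsub}) and $\alpha(N,p,q)\le C'$ this gives $C_0\le\gamma_q C_{N,q}^q C'<\psi(t^*)$. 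Hence $h'$ has exactly the two zeros $t_1,t_2$ on $(0,\infty)$.

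Finally, reading the sign of $\psi-C_0$ off the monotonicity of $\psi$, we get $h'<0$ on $(0,t_1)$, $h'>0$ on $(t_1,t_2)$ and $h'<0$ on $(t_2,\infty)$; therefore $t_1$ is a local minimum and $t_2$ is the global maximum of $h$ on $(0,\infty)$. Since $h(0^{+})=0$ and $h$ is strictly decreasing on $(0,t_1)$ we obtain $h(t_1)<0$; since $h$ is positive on $(R_0,R_1)$, its global maximum satisfies $h(t_2)>0$. The only slightly delicate points are the two algebraic identifications $\frac{q}{C_{N,q}^q}g(\bar t)=C'$ and $C'<\psi(t^*)/(\gamma_q C_{N,q}^q)$ — and the latter is exactly the elementary inequality (\ref{monotone}) already established for Lemma \ref{empty} — so I do not expect a genuine obstacle. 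Note also that only $C'$ enters this proof, not $C''$, which is consistent with the authors' remark that $\alpha(N,p,q)$ need not be optimal.
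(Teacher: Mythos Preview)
Your proof is correct and follows essentially the same route as the paper: factor out $t^{q\gamma_q}$ to reduce the sign analysis of $h$ to the comparison of the unimodal function $g$ (the paper calls it $\varphi$) with the constant $C_1$, identify $\frac{q}{C_{N,q}^q}g(\bar t)=C'$, and then use $\mu a^{q(1-\gamma_q)}<\alpha(N,p,q)\le C'$ to obtain $(R_0,R_1)$. The only noticeable difference is in the critical-point count: the paper argues that $h$ has \emph{at least} two critical points from the sign pattern of $h$ (a local minimum on $(0,R_0)$ since $h(0^+)=0^-$, and a global maximum on $(R_0,R_1)$) and \emph{at most} two since $h'(t)=0$ reduces to a unimodal-equals-constant equation; you instead prove directly that $C_0<\psi(t^*)$ by identifying the ratio $C'\big/\big(\psi(t^*)/(\gamma_q C_{N,q}^q)\big)$ with the quantity in (\ref{monotone}) and invoking the strict inequality already established for Lemma~\ref{empty}. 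Both arguments are valid and equally short; yours makes the role of the constant $C'$ and of (\ref{monotone}) a bit more explicit, while the paper's sidesteps the extra algebra by combining the existence and the at-most-two observations.
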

    \begin{proof}
    	For every $t>0$, we know $h(t)>0$ if and only if
    	    \begin{equation*}
    	    	\varphi (t)>\frac{\mu}{q}C_{N,q}^qa^{q(1-\gamma_{q})},\quad with\quad\varphi (t)=\frac{1}{p}t^{p-q\gamma_{q}}-\frac{1}{pS^{p^*/p}}t^{p^*-q\gamma_{q}}.
    	    \end{equation*}
        We can check that $\varphi$ has a unique critical point on $(0,+\infty)$, which is a global maximum point at positive level. The critical point is
            \begin{equation*}
            	\bar{t}=\bigg(\frac{p^*S^{p^*/p}(p-q\gamma_{q})}{p(p^*-q\gamma_{q})}\bigg)^{\frac{1}{p^*-p}},
            \end{equation*}
        and the maximum level is
            \begin{equation*}
            	\varphi (\bar{t})=\frac{p^*-p}{p(p^*-q\gamma_{q})}\bigg(\frac{p^*S^{p^*/p}(p-q\gamma_{q})}{p(p^*-q\gamma_{q})}\bigg)^{\frac{p-q\gamma_{q}}{p^*-p}}.
            \end{equation*}
        Therefore, $h$ is positive on $(R_{0},R_{1})$ if and only if
            \begin{equation*}
            	\varphi (\bar{t})>\frac{\mu}{q}C_{N,q}^qa^{q(1-\gamma_{q})},
            \end{equation*}
        that is $\mu a^{q(1-\gamma_{q})}<C'$. It follows that $h$ has a global maximum at positive level on $(R_{0},R_{1})$. Moreover, since $h(0^{+})=0^{-}$, there exists a local minimum point at negative level in $(0,R_{0})$. The fact that $h$ has no other critical points can be derived from $h'(t)=0$ with only two zeros.
    \end{proof}

    Using the properties of $h(t)$, we can analyze the structure of $\Psi_{u}^{\mu}$ and $E_{\mu}$.
    \begin{lemma}\label{structure}
    	For every $u\in S_{a}$, the function $\Psi_{u}^{\mu}$ has exactly two critical points $s_{u}<t_{u}$ and two zeros $c_{u}<d_{u}$, with $s_{u}<c_{u}<t_{u}<d_{u}$. Moreover,
    	
    	{\rm (i)}
    	\begin{minipage}[t]{\linewidth}
    		$s_{u}\star u\in\mathcal{P}_{a,\mu}^{+}, t_{u}\star u\in\mathcal{P}_{a,\mu}^{-}$, and if $s\star u\in\mathcal{P}_{a,\mu}$, then either $s=s_{u}$ or $s=t_{u}$.
    	\end{minipage}

        {\rm (ii)}
        \begin{minipage}[t]{\linewidth}
        	$\lVert\nabla(s\star u)\rVert_{p}\leqslant R_{0}$ for every $s\leqslant c_{u}$, and
        	    \begin{equation*}
        	    	E_{\mu}(s_{u}\star u)=\min\Big\{E_{\mu}(s\star u):s\in\mathbb{R},\ \lVert\nabla(s\star u)\rVert_{p}\leqslant R_{0}\Big\}<0.
        	    \end{equation*}
        \end{minipage}

        {\rm (iii)}
        \begin{minipage}[t]{\linewidth}
        	We have
        	    \[E_{\mu}(t_{u}\star u)=\max_{s\in\mathbb{R}}E_{\mu}(s\star u)>0.\]
        	and if $t_{u}<0$, then $P_{\mu}(u)<0$.
        \end{minipage}

        {\rm (iv)}
        \begin{minipage}[t]{\linewidth}
        	The maps $u\in S_{a}\longmapsto s_{u}\in\mathbb{R}$ and $u\in S_{a}\longmapsto t_{u}\in\mathbb{R}$ are of class $C^1$.
        \end{minipage}
    \end{lemma}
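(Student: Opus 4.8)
The plan is to reduce the study of the fibre map $\Psi_{u}^{\mu}$ to that of a single scalar function and then transcribe every structural statement through the scaling dictionary $t=e^{s}\lVert\nabla u\rVert_{p}=\lVert\nabla(s\star u)\rVert_{p}$. Fixing $u\in S_{a}$ and substituting $e^{s}=t/\lVert\nabla u\rVert_{p}$ in the explicit formula for $\Psi_{u}^{\mu}$, I get $\Psi_{u}^{\mu}(s)=\widetilde h_{u}(t)$, where
\[\widetilde h_{u}(t)=\frac1p t^{p}-\frac{\mu}{q}\frac{\lVert u\rVert_{q}^{q}}{\lVert\nabla u\rVert_{p}^{q\gamma_{q}}}\,t^{q\gamma_{q}}-\frac1{p^{*}}\frac{\lVert u\rVert_{p^{*}}^{p^{*}}}{\lVert\nabla u\rVert_{p}^{p^{*}}}\,t^{p^{*}}.\]
By the Gagliardo--Nirenberg inequality $\lVert u\rVert_{q}^{q}/\lVert\nabla u\rVert_{p}^{q\gamma_{q}}\le C_{N,q}^{q}a^{q(1-\gamma_{q})}$ and by the Sobolev inequality $\lVert u\rVert_{p^{*}}^{p^{*}}/\lVert\nabla u\rVert_{p}^{p^{*}}\le S^{-p^{*}/p}$, so $\widetilde h_{u}\ge h$ pointwise on $(0,+\infty)$, with $h$ the function of Lemma \ref{hfunction}. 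Writing $\widetilde h_{u}'(t)=t^{q\gamma_{q}-1}\bigl(\psi_{u}(t)-K_{u}\bigr)$ with $\psi_{u}(t)=t^{p-q\gamma_{q}}-\bigl(\lVert u\rVert_{p^{*}}^{p^{*}}/\lVert\nabla u\rVert_{p}^{p^{*}}\bigr)t^{p^{*}-q\gamma_{q}}$ and $K_{u}=\mu\gamma_{q}\lVert u\rVert_{q}^{q}/\lVert\nabla u\rVert_{p}^{q\gamma_{q}}>0$, and using $q\gamma_{q}<p<p^{*}$, one checks that $\psi_{u}(0^{+})=0$, $\psi_{u}(+\infty)=-\infty$, and $\psi_{u}$ is strictly increasing then strictly decreasing; hence $\psi_{u}=K_{u}$ has at most two roots, so $\Psi_{u}^{\mu}$ has at most two critical points.

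To obtain exactly two, I would use $\widetilde h_{u}(0^{+})=0^{-}$, $\widetilde h_{u}(+\infty)=-\infty$, and $\max\widetilde h_{u}\ge\max h>0$ by Lemma \ref{hfunction} — this is where assumption (\ref{muconsub}) enters, through the constant $C'$. If $\psi_{u}=K_{u}$ had at most one root then $\widetilde h_{u}'\le0$ on $(0,+\infty)$, forcing $\max\widetilde h_{u}=\widetilde h_{u}(0^{+})=0$, a contradiction; hence $\psi_{u}-K_{u}$ is negative, then positive, then negative, so $\widetilde h_{u}$ is strictly decreasing, then increasing, then decreasing, with critical points $t_{1}<t_{2}$: $t_{1}$ a strict local minimum with $\widetilde h_{u}(t_{1})<\widetilde h_{u}(0^{+})=0$, and $t_{2}$ a strict global maximum with $\widetilde h_{u}(t_{2})=\max\widetilde h_{u}>0$. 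Consequently $\widetilde h_{u}<0$ on $(0,t_{1}]$, it crosses $0$ once in $(t_{1},t_{2})$ at some $c_{0}$ and once in $(t_{2},+\infty)$ at some $d_{0}$, and $\widetilde h_{u}>0$ precisely on $(c_{0},d_{0})$. Setting $s_{u}=\log(t_{1}/\lVert\nabla u\rVert_{p})$, $t_{u}=\log(t_{2}/\lVert\nabla u\rVert_{p})$, $c_{u}=\log(c_{0}/\lVert\nabla u\rVert_{p})$, $d_{u}=\log(d_{0}/\lVert\nabla u\rVert_{p})$ gives the two critical points, the two zeros, and the ordering $s_{u}<c_{u}<t_{u}<d_{u}$.

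For the remaining items I would exploit the two elementary identities $(\Psi_{u}^{\mu})'(s)=P_{\mu}(s\star u)$ and, from $s'\star(s\star u)=(s+s')\star u$, $(\Psi_{s\star u}^{\mu})''(0)=(\Psi_{u}^{\mu})''(s)$. The first shows $s\star u\in\mathcal{P}_{a,\mu}\iff s\in\{s_{u},t_{u}\}$; combined with the second and with $\mathcal{P}_{a,\mu}^{0}=\emptyset$ (Lemma \ref{empty}), the sign of $(\Psi_{u}^{\mu})''$ at a local minimum (resp. maximum) gives $s_{u}\star u\in\mathcal{P}_{a,\mu}^{+}$ and $t_{u}\star u\in\mathcal{P}_{a,\mu}^{-}$, which is (i). For (ii): if $s\le c_{u}$ then $\lVert\nabla(s\star u)\rVert_{p}=e^{s}\lVert\nabla u\rVert_{p}\le c_{0}\le R_{0}$, the last step because $\widetilde h_{u}(R_{0})\ge h(R_{0})=0$ forces $R_{0}\in[c_{0},d_{0}]$; and since $\{s:\lVert\nabla(s\star u)\rVert_{p}\le R_{0}\}=(-\infty,s_{0}]$ with $c_{u}\le s_{0}\le d_{u}$, where $\Psi_{u}^{\mu}$ decreases on $(-\infty,s_{u}]$ to $\Psi_{u}^{\mu}(s_{u})<0$ and elsewhere stays $\ge\min\{\Psi_{u}^{\mu}(s_{u}),\Psi_{u}^{\mu}(s_{0})\}$ with $\Psi_{u}^{\mu}(s_{0})=\widetilde h_{u}(R_{0})\ge0$, the minimum over that set is attained at $s_{u}$ and is negative. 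For (iii): $\Psi_{u}^{\mu}(t_{u})=\widetilde h_{u}(t_{2})=\max_{s}\Psi_{u}^{\mu}(s)>0$, and if $t_{u}<0$ then $0\in(t_{u},+\infty)$ where $\Psi_{u}^{\mu}$ is strictly decreasing, so $P_{\mu}(u)=(\Psi_{u}^{\mu})'(0)<0$. Finally (iv) follows from the implicit function theorem applied to $\Phi(s,u)=(\Psi_{u}^{\mu})'(s)=e^{ps}\lVert\nabla u\rVert_{p}^{p}-\mu\gamma_{q}e^{q\gamma_{q}s}\lVert u\rVert_{q}^{q}-e^{p^{*}s}\lVert u\rVert_{p^{*}}^{p^{*}}$, which is $C^{1}$ on $\mathbb{R}\times S_{a}$, using $\partial_{s}\Phi(s_{u},u)=(\Psi_{u}^{\mu})''(s_{u})>0$ and $\partial_{s}\Phi(t_{u},u)=(\Psi_{u}^{\mu})''(t_{u})<0$ (and the uniqueness of the critical points to globalize).

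The hard part is the uniform (in $u\in S_{a}$) counting of critical points of $\Psi_{u}^{\mu}$: the decisive point is that the reduced function $\widetilde h_{u}$ dominates the $u$-independent function $h$, so that the positivity of $\max h$ — which is exactly what (\ref{muconsub}) buys us via $C'$ in Lemma \ref{hfunction} — is transferred to every fibre, while the ``at most two'' half comes from the strictly-increasing-then-strictly-decreasing shape of $\psi_{u}$. Once this fibre picture is in place, parts (i)--(iv) are routine bookkeeping through the scaling dictionary $t=e^{s}\lVert\nabla u\rVert_{p}$ and the two group identities above.
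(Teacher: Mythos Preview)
Your proof is correct and follows essentially the same approach as the paper: both rely on the pointwise comparison $\Psi_{u}^{\mu}(s)\ge h(e^{s}\lVert\nabla u\rVert_{p})$ (equivalently your $\widetilde h_{u}\ge h$) together with Lemma~\ref{hfunction} to force positivity of the maximum, then on the sign pattern of $(\Psi_{u}^{\mu})'$ to count critical points, and on the implicit function theorem for (iv). Your explicit change of variables $t=e^{s}\lVert\nabla u\rVert_{p}$ and the $\psi_{u}$ analysis make the ``at most two critical points'' step more transparent than in the paper (which states it by inspection of $(\Psi_{u}^{\mu})'=0$), and your treatment of (ii) is slightly more careful, but the underlying argument is identical.
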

    \begin{proof}
    	By (\ref{Egreat}),
    	    \[\Psi_{u}^{\mu}(s)=E_{\mu}(s\star u)\geqslant h(\lVert\nabla(s\star u)\rVert_{p})=h(e^{s}\lVert\nabla u\rVert_{p}).\]
    	Thus, by Lemma \ref{hfunction}, $\Psi_{u}^{\mu}(s)$ is positive on $\big(\log(R_{0}/\lVert\nabla u\rVert_{p}),\log(R_{1}/\lVert\nabla u\rVert_{p})\big)$. It is clearly that $\Psi_{u}^{\mu}(-\infty)=0^{-}$ and $\Psi_{u}^{\mu}(+\infty)=-\infty$, hence $\Psi_{u}^{\mu}$ has at least two critical points $s_{u}<t_{u}$, with $s_{u}$ is local minimum point on $(-\infty,\log(R_{0}/\lVert\nabla u\rVert_{p}))$ at negative level, and $t_{u}$ is global maximum point at positive level. Now, we can check that there are no other critical points of $\Psi_{u}^{\mu}$. Indeed, the equation $\big(\Psi_{u}^{\mu}\big)'(s)=0$ has only two zeros. Now, the zero point theorem implies $\Psi_{u}^{\mu}$ has exactly two zeros $c_{u}<d_{u}$, with $s_{u}<c_{u}<t_{u}<d_{u}$.
    	
    	By Lemma \ref{Pohozaev}, $s\star u\in\mathcal{P}_{a,\mu}$ if and only if $\big(\Psi_{u}^{\mu}\big)'(s)=0$, that is either $s=s_{u}$ or $s=t_{u}$. Since $0$ is a local minimum of $\Psi_{s_{u}\star u}^{\mu}(s)\big(=\Psi_{u}^{\mu}(e^{s_{u}+s})\big)$, we have $\big(\Psi_{s_{u}\star u}^{\mu}\big)''(0)\geqslant 0$, which implies $s_{u}\star u\in\mathcal{P}_{a,\mu}^{+}\cup\mathcal{P}_{a,\mu}^{0}$. Lemma \ref{empty} gives $\mathcal{P}_{a,\mu}^{0}=\emptyset$, hence $s_{u}\star u\in\mathcal{P}_{a,\mu}^{+}$. In the same way $t_{u}\star u\in\mathcal{P}_{a,\mu}^{+}$.
    	
    	Since $\Psi_{u}^{\mu}$ is positive on $\big(\log(R_{0}/\lVert\nabla u\rVert_{p}),\log(R_{1}/\lVert\nabla u\rVert_{p})\big)$, we deduce that $c_{u}<\log(R_{0}/\lVert\nabla u\rVert_{p})$. Thus, $\lVert\nabla(c_{u}\star u)\rVert_{p}\leqslant R_{0}$ which implies $\lVert\nabla(s\star u)\rVert_{p}\leqslant R_{0}$ for all $s\leqslant c_{u}$. We know $s_{u}$ is a local minimum on $\big(-\infty,\log(R_{0}/\lVert\nabla u\rVert_{p})\big)$ at negative level, so
    	    \begin{align*}
    	    	E_{\mu}(s_{u}\star u)&=\Psi_{u}^{\mu}(s_{u})=\min\Big\{\Psi_{u}^{\mu}(s): s\leqslant\log\big( R_{0}/\lVert\nabla u\rVert_{p}\big)\Big\}\\
    	    	&=\min\Big\{E_{\mu}(s\star u): \lVert\nabla(s\star u)\rVert_{p}\leqslant R_{0}\Big\}.
    	    \end{align*}

        Since $t_{u}$ is a global maximum of $\Psi_{u}^{\mu}$, we conclude that $\Psi_{u}^{\mu}$ is decreasing on $(t_{u},+\infty)$. Therefore, $\big(\Psi_{u}^{\mu}\big)'(s)<0$ for all $s\in (t_{u},+\infty)$(since there are no critical points on $(t_{u},+\infty)$). If $t_{u}<0$, then $P_{\mu}(u)=\big(\Psi_{u}^{\mu}\big)'(0)<0$.

        It remain to show that $u\longmapsto s_{u}$ and $u\longmapsto t_{u}$ are class of $C^1$. Considering the function $\Phi(s,u):=\big(\Psi_{u}^{\mu}\big)'(s)$. Since $\Phi(s_{u},u)=0, \partial_{t}\Phi(s_{u},u)=\big(\Psi_{u}^{\mu}\big)''(s_{u})>0$, and it is possible to pass with continuity from $\mathcal{P}_{a,\mu}^{+}$ to $\mathcal{P}_{a,\mu}^{-}$(since $\mathcal{P}_{a,\mu}^{0}=\emptyset$), by the implicit theorem, we known $u\longmapsto s_{u}$ is of class $C^1$. The same argument proves that $u\longmapsto t_{u}$ is of class $C^1$.  	
    \end{proof}

\subsection{Existence of local minimizer}
    \
    \newline
    \indent In this subsection, we always assume that the assumptions of Theorem \ref{th1} hold. Set
        \begin{equation*}
    	    A_{k}=\big\{u\in S_{a}: \lVert\nabla u\rVert_{p}\leqslant k\big\},
        \end{equation*}
    where $k>0$ is a constant.

    \begin{lemma}\label{mplus}
    	$m^{+}(a,\mu)=\inf_{u\in A_{R_{0}}}E_{\mu}(u)<0<m^{-}(a,\mu)$.
    \end{lemma}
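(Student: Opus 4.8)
The plan is to establish the chain of inequalities in Lemma \ref{mplus} by combining the structural facts about $h$ from Lemma \ref{hfunction}, the structure of $\Psi_u^\mu$ from Lemma \ref{structure}, and the definitions of $\mathcal{P}_{a,\mu}^\pm$. First I would prove that $m^+(a,\mu) = \inf_{u\in A_{R_0}} E_\mu(u)$. Given any $u \in S_a$, Lemma \ref{structure}(i)--(ii) provides $s_u \star u \in \mathcal{P}_{a,\mu}^+$ with $\|\nabla(s_u\star u)\|_p \le R_0$ (since $s_u < c_u$ and $\lVert\nabla(s\star u)\rVert_p\le R_0$ for $s\le c_u$), hence $s_u\star u \in A_{R_0}$; this gives $\inf_{A_{R_0}} E_\mu \le m^+(a,\mu)$. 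Conversely, I claim every $u \in \mathcal{P}_{a,\mu}$ with $\lVert\nabla u\rVert_p \le R_0$ actually lies in $\mathcal{P}_{a,\mu}^+$: if $u\in\mathcal P_{a,\mu}$ then $u = s\star u$ with $s=0$ is a critical point of $\Psi_u^\mu$, so $0\in\{s_u,t_u\}$; but $t_u\star u$ has $\lVert\nabla(t_u\star u)\rVert_p \in (R_0,R_1)$ because $\Psi_u^\mu(t_u)>0$ forces $t_u$ into the interval where $h>0$, i.e. $\lVert\nabla(t_u\star u)\rVert_p > R_0$, contradicting $\lVert\nabla u\rVert_p\le R_0$; hence $0=s_u$ and $u\in\mathcal P_{a,\mu}^+$. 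Therefore $m^+(a,\mu) = \inf\{E_\mu(u): u\in\mathcal P_{a,\mu}^+\} \le \inf\{E_\mu(u): u\in A_{R_0}\cap\mathcal P_{a,\mu}\}$, and for the reverse direction in the $A_{R_0}$-infimum I use that for arbitrary $u\in A_{R_0}$, minimizing $E_\mu(s\star u)$ over $\{s:\lVert\nabla(s\star u)\rVert_p\le R_0\}$ is attained at $s_u$ by Lemma \ref{structure}(ii), and $s_u\star u\in A_{R_0}\cap\mathcal P_{a,\mu}^+$, so $\inf_{A_{R_0}}E_\mu \ge \inf_{\mathcal P_{a,\mu}^+}E_\mu = m^+(a,\mu)$. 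This pins down the equality.

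Next I would verify $m^+(a,\mu) < 0$. By Lemma \ref{hfunction}, $h$ has a local minimum at a negative level inside $(0,R_0)$; pick $t_0\in(0,R_0)$ with $h(t_0)<0$, and take any fixed $w\in S_a$ with $\lVert\nabla w\rVert_p = t_0 \le R_0$ — for instance by scaling an arbitrary element of $S_a$ via $s\star$ until its gradient norm equals $t_0$, which is possible since $s\mapsto e^s\lVert\nabla w\rVert_p$ is onto $(0,\infty)$. Actually it is cleaner to argue directly: for any $u\in S_a$, Lemma \ref{structure}(ii) gives $E_\mu(s_u\star u) < 0$, and $s_u\star u\in A_{R_0}$, so $m^+(a,\mu) = \inf_{A_{R_0}}E_\mu \le E_\mu(s_u\star u) < 0$. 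This handles the strict negativity with no extra work.

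Finally, for $0 < m^-(a,\mu)$: every $u\in\mathcal{P}_{a,\mu}^-$ satisfies, by Lemma \ref{structure}(i), $u = t_v\star v$ for $v=u$ (since $u\in\mathcal P_{a,\mu}$ forces $0\in\{s_u,t_u\}$, and $u\in\mathcal P_{a,\mu}^-$ together with $\mathcal P_{a,\mu}^0=\emptyset$ from Lemma \ref{empty} and the fact that $s_u\star u\in\mathcal P_{a,\mu}^+$ forces $0 = t_u$), hence $E_\mu(u) = \Psi_u^\mu(0) = \Psi_u^\mu(t_u) = \max_{s\in\mathbb R}E_\mu(s\star u) > 0$ by Lemma \ref{structure}(iii). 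Taking the infimum over $\mathcal{P}_{a,\mu}^-$ gives $m^-(a,\mu) \ge 0$; but I need the strict inequality. For strictness I would show $E_\mu$ restricted to $\mathcal P_{a,\mu}^-$ is bounded below by a positive constant: using $u\in\mathcal P_{a,\mu}^-$ one has $\lVert\nabla u\rVert_p \ge R_0 > 0$ (since $\Psi_u^\mu$ is positive at $s=0$ when $t_u=0$, placing $\lVert\nabla u\rVert_p$ in $(R_0,R_1)$), hence via (\ref{Egreat}) and Lemma \ref{hfunction} the value $E_\mu(u) = \max_s E_\mu(s\star u) \ge \max_s h(e^s\lVert\nabla u\rVert_p) = \max_{t>0}h(t) > 0$, a fixed positive number. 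Thus $m^-(a,\mu) \ge \max_{t>0}h(t) > 0$.

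The main obstacle I anticipate is handling the bookkeeping around which scaling parameter ($s_u$ versus $t_u$) equals $0$ for a given point of $\mathcal{P}_{a,\mu}^+$ or $\mathcal{P}_{a,\mu}^-$, and making airtight the claim that membership in $A_{R_0}\cap\mathcal{P}_{a,\mu}$ coincides with membership in $\mathcal{P}_{a,\mu}^+$ — this rests on the sharp interval location $(R_0,R_1)$ of $\lVert\nabla(t_u\star u)\rVert_p$ coming from $\Psi_u^\mu(t_u)>0$, which in turn uses that the lower bound $h$ in (\ref{Egreat}) is itself positive only on $(R_0,R_1)$. The positive lower bound $\max_{t>0}h(t)$ for $m^-$ is the other point requiring care, but it follows cleanly once one observes $E_\mu(u)=\max_s E_\mu(s\star u)\ge\max_s h(e^s\lVert\nabla u\rVert_p)$ for $u\in\mathcal P_{a,\mu}^-$.
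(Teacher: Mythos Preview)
Your proposal is correct and follows essentially the same route as the paper: both directions of the equality $m^+(a,\mu)=\inf_{A_{R_0}}E_\mu$ come from Lemma~\ref{structure}(i)--(ii), the strict negativity from $E_\mu(s_u\star u)<0$, and the bound $m^-(a,\mu)\ge\max_{t>0}h(t)>0$ from $E_\mu(u)=\max_s E_\mu(s\star u)\ge\max_s h(e^s\lVert\nabla u\rVert_p)$. Two of your intermediate claims --- that $\lVert\nabla(t_u\star u)\rVert_p\in(R_0,R_1)$ and that $A_{R_0}\cap\mathcal P_{a,\mu}=\mathcal P_{a,\mu}^+$ --- are not justified by the reasoning you give (the inequality $\Psi_u^\mu(t_u)>0$ does not by itself force $h(e^{t_u}\lVert\nabla u\rVert_p)>0$, since $E_\mu\ge h$ goes the wrong way), but neither claim is actually used in your final arguments, so you can simply drop those detours.
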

    \begin{proof}
    	For every $v\in\mathcal{P}_{a,\mu}^{+}$, by Lemma \ref{structure} (i) and (ii), we know $s_{v}=0$ and $\lVert\nabla v\rVert_{p}\leqslant R_{0}$. Thus,
    	    \[\inf_{u\in A_{R_{0}}}E_{\mu}(u)\leqslant E_{\mu}(v)<0,\]
    	which implies
    	    \[\inf_{u\in A_{R_{0}}}E_{\mu}(u)\leqslant m^{+}(a,\mu)<0.\]
    	For every $v\in A_{R_{0}}$, using again Lemma \ref{structure} (i) and (ii), we have $s_{v}\star v\in\mathcal{P}_{a,\mu}^{+}$ and $E_{\mu}(s_{v}\star v)\leqslant E_{\mu}(v)$. Hence,
    	    \[m^{+}(a,\mu)\leqslant E_{\mu}(s_{v}\star v)\leqslant E_{\mu}(v),\]
    	which implies
    	    \[m^{+}(a,\mu)\leqslant\inf_{u\in A_{R_{0}}}E_{\mu}(u).\]
    	
    	It remain to prove that $m^{-}(a,\mu)>0$. For every $v\in\mathcal{P}_{a,\mu}^{-}$, by Lemma \ref{structure} (i) and (iii), $t_{v}=0$ and $E_{\mu}(v)\geqslant E_{\mu}(s\star v)$ for all $s\in\mathbb{R}$. Now, using (\ref{Egreat}) and Lemma \ref{hfunction}, we have
    	    \[E_{\mu}(v)\geqslant\max_{s\in\mathbb{R}}E_{\mu}(s\star v)\geqslant\max_{s\in\mathbb{R}}h(e^s\lVert\nabla v\rVert_{p})=\max_{t>0}h(t)>0,\]
    	which implies
    	    \[m^{-}(a,\mu)\geqslant\max_{t>0}h(t)>0.\]
    \end{proof}

    By (\ref{Egreat}) and Lemma \ref{hfunction},
        \[\inf_{u\in\partial A_{R_{0}}}E_{\mu}(u)\geqslant\inf_{u\in\partial A_{R_{0}}}h(\lVert\nabla u\rVert_{p})=0.\]
    Therefore, if we can find a minimizer for $E_{\mu}|_{A_{R_{0}}}$, it must be a minimizer for $E_{\mu}|_{S_{a}}$.\\

    \noindent\textbf{Proof of Theorem \ref{th1}:} Let $\{v_{n}\}$ be a minimizing sequence for $E_{\mu}|_{A_{R_{0}}}$. It is not restrictive to assume that $v_{n}$ is radially symmetric and radially decreasing(if this is not the case, we can replace $v_{n}$ with $\lvert v_{n}\rvert^*$, the Schwarz rearrangement of $\lvert v_{n}\rvert$, and we obtain another minimizing sequence for $E_{\mu}|_{A_{R_{0}}}$). Furthermore, by Lemma \ref{mplus}, we can assume that $v_{n}\in\mathcal{P}_{a,\mu}^{+}$.

    Now, the Ekeland's variational principle gives a new minimizing sequence $\{u_{n}\}\subset A_{R_{0}}$ which is also a PS sequence for $E_{\mu}|_{S_{a}}$, with the property that $\lVert u_{n}-v_{n}\rVert\rightarrow 0$ as $n\rightarrow\infty$. The condition $\lVert u_{n}-v_{n}\rVert\rightarrow 0$ implies that $P_{\mu}(u_{n})=P_{\mu}(v_{n})+o(1)\rightarrow 0$ as $n\rightarrow\infty$. Hence one of the cases in proposition \ref{compactnesslemma} holds. If case(i) occurs, that is $u_{n}\rightharpoonup u$ in $W^{1,p}({\mathbb{R}^N})$, where $u$ solves (\ref{equation}) for some $\lambda <0$, and
        \begin{equation}\label{Eless}
        	E_{\mu}(u)\leqslant m^{+}(a,\mu)+\frac{1}{N}S^{\frac{N}{p}}<-\frac{1}{N}S^{\frac{N}{p}}.
        \end{equation}
    Since $u$ solves (\ref{equation}), by the Pohozaev identity $P_{\mu}(u)=0$. Therefore, the Gagliardo-Nirenberg inequality implies
        \begin{align*}
        	E_{\mu}(u)&=\frac{1}{N}\lVert\nabla u\rVert_{p}^p-\mu\gamma_{q}\Big(\frac{1}{q\gamma_{q}}-\frac{1}{p^*}\Big)\lVert u\rVert_{q}^q\\
        	&=\frac{1}{N}\lVert\nabla u\rVert_{p}^p-\mu\gamma_{q}\Big(\frac{1}{q\gamma_{q}}-\frac{1}{p^*}\Big)C_{N,q}^qa^{q(1-\gamma_{q})}\lVert\nabla u\rVert_{p}^{q\gamma_{q}},
        \end{align*}
    where we used the fact that $\lVert u\rVert_{p}\leqslant a$ by the Fatou lemma. we introduce the function
        \[\varphi(t)=\frac{1}{N}t^p-\mu\gamma_{q}\Big(\frac{1}{q\gamma_{q}}-\frac{1}{p^*}\Big)C_{N,q}^qa^{q(1-\gamma_{q})}t^{q\gamma_{q}},\quad t>0.\]
    Then, we can check that $\varphi$ has a unique critical point on $(0,+\infty)$, which is a global minimum point at negative level. The critical point is
        \[\bar{t}=\bigg(\frac{\mu\gamma_{q}(p^*-q\gamma_{q})NC_{N.q}^q}{pp^*}\bigg)^{\frac{1}{p-q\gamma_{q}}}a^{\frac{q(1-\gamma_{q})}{p-q\gamma_{q}}},\]
    and the minimum is
        \[\varphi(\bar{t})=-\frac{p-q\gamma_{q}}{q}(N\gamma_{q})^{\frac{q\gamma_{q}}{p-q\gamma_{q}}}\bigg(\mu a^{q(1-\gamma_{q})}\frac{(p^*-q\gamma_{q})C_{N,q}^q}{pp^*}\bigg)^{\frac{p}{p-q\gamma_{q}}}<0.\]
    By (\ref{alpha}), since $\mu a^{q(1-\gamma_{q})}<\alpha(N,p,q)\leqslant C''$, we have $\varphi(\bar{t})\geqslant-S^{N/p}/N$ which contradicts with (\ref{Eless}). This means that $u_{n}\rightarrow u$ in $W^{1,p}(\mathbb{R}^N)$ and $u\in S_{a}$ is a solution to (\ref{equation}) for some $\lambda<0$. Since $u_{n}$ is radially symmetric and radially decreasing, we know $u$ is radially symmetric and radially non-increasing. Now, by the strong maximum principle \cite{}, $u$ is positive.

    In fact, $u$ is a ground state, since $E_{\mu}(u)=\inf_{\mathcal{P}_{a,\mu}}E_{\mu}$, and any other normalized solution stays on $\mathcal{P}_{a,\mu}$. It remains to show that any other ground state is a local minimizer for $E_{\mu}$ on $A_{R_{0}}$. Let $v$ be a critical point of $E_{\mu}|_{S_{a}}$ and $E_{\mu}(v)=m^{+}(a,\mu)$, then $v\in\mathcal{P}_{a,\mu}$. By Lemma \ref{structure}, we know $s_{v}=0$ and $\lVert\nabla v\rVert_{p}\leqslant R_{0}$. Therefore, $v$ is a local minimizer for $E_{\mu}$ on $A_{R_{0}}$.$\hfill\qed$

\subsection{Existence of mountain-pass type solution}
    \
    \newline
    \indent In this section, we prove Theorem \ref{th2}. By Lemma \ref{structure} and Lemma \ref{mplus}, we can construct a minimax structure. Let
        \[E^{c}:=\Big\{u\in S_{a}:E_{\mu}(u)\leqslant c\Big\}.\]
    We introduce the minimax class
        \[\Gamma:=\Big\{\gamma=(\alpha,\theta)\in C([0,1],\mathbb{R}\times S_{a,r}): \gamma(0)\in(0,\mathcal{P}_{a,\mu}^{+}), \gamma(1)\in(0,E^{2m^{+}(a,\mu)})\Big\},\]
    with associated minimax level
        \[\sigma(a,\mu):=\inf_{\gamma\in\Gamma}\max_{(\alpha,\theta)\in\gamma([0,1])}\tilde{E}_{\mu}(\alpha,\theta),\]
    where
        \[\tilde{E}_{\mu}(s,u):=E_{\mu}(s*u).\]

    In order to obtain the compactness of PS sequence by using Proposition \ref{compactnesslemma}, the following energy estimates are required.
    \begin{lemma}\label{m-<m+1}
    	Let $N\geqslant p^2$. Then, we have
    	    \[m^{-}(a,\mu)<m^{+}(a,\mu)+\frac{1}{N}S^{\frac{N}{p}}.\]
    \end{lemma}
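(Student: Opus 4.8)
The plan is to compare $m^{-}(a,\mu)$ against $m^{+}(a,\mu)+\frac{1}{N}S^{N/p}$ by producing, for a suitable choice of test function, a path on $S_a$ whose maximal energy along the $\star$-scaling fibre lies strictly below the target. The natural candidate is a truncated Aubin--Talenti bubble $U_{\varepsilon}$ glued to the local minimizer $u_{a,\mu}^{+}$: more precisely, fix the ground state $u_0:=u_{a,\mu}^{+}\in\mathcal{P}_{a,\mu}^{+}$ obtained from Theorem \ref{th1}, and for small $\varepsilon>0$ and large $R$ let $u_{\varepsilon}$ be an $L^p$-normalized (rescaled to stay on $S_a$) superposition of $u_0$ with a cutoff of $U_{\varepsilon}$ supported far from the support bulk of $u_0$, so that the cross terms are negligible. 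Since $s\star u_{\varepsilon}\in\mathcal{P}_{a,\mu}^{-}$ for $s=t_{u_{\varepsilon}}$ by Lemma \ref{structure}(i), we have
\[
m^{-}(a,\mu)\leqslant E_{\mu}(t_{u_{\varepsilon}}\star u_{\varepsilon})=\max_{s\in\mathbb{R}}\Psi_{u_{\varepsilon}}^{\mu}(s),
\]
so it suffices to estimate this maximum.

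First I would compute the standard expansions for the truncated bubble: $\lVert\nabla(U_{\varepsilon}\varphi_R)\rVert_p^p=S^{N/p}+O(\varepsilon^{(N-p)/(p-1)})$, $\lVert U_{\varepsilon}\varphi_R\rVert_{p^*}^{p^*}=S^{N/p}+O(\varepsilon^{N/(p-1)})$, and the $L^q$-term behaves like a positive power of $\varepsilon$ whose exponent depends on whether $q$ is below, at, or above $p^{*}(1-\frac1N)$ — here the hypothesis $N\geqslant p^{2}$ is exactly what guarantees the relevant remainder/gain ordering works in our favour (this is where the condition enters, mirroring the role of $N\geqslant 3$ vs.\ the $\mu$-term exponent in Soave's Sobolev-critical analysis). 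Then, using that along the fibre $\Psi^{\mu}$ of a function with $\lVert\nabla\cdot\rVert_p^p\approx A$, $\lVert\cdot\rVert_{p^*}^{p^*}\approx B$, $\lVert\cdot\rVert_q^q\approx D$ the max over $s$ of $\frac1p e^{ps}A-\frac{\mu}{q}e^{q\gamma_q s}D-\frac1{p^*}e^{p^*s}B$ equals $\frac1N(A/B^{p/p^*})^{N/p}$ minus a strictly positive contribution coming from the $L^q$-term (since $q\gamma_q<p$, the subcritical term only lowers the mountain-pass level), I would get
\[
\max_{s}\Psi_{u_{\varepsilon}}^{\mu}(s)\leqslant E_{\mu}(u_0)+\frac1N S^{N/p}-(\text{positive lower-order term})+o(\varepsilon\text{-corrections}),
\]
and choosing $\varepsilon$ small enough makes the right-hand side strictly less than $m^{+}(a,\mu)+\frac1N S^{N/p}$, using $E_{\mu}(u_0)=m^{+}(a,\mu)$ by Theorem \ref{th1}.

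The main obstacle is the bookkeeping of the cross terms and the competition of error exponents. One must verify that (a) the interaction energy between $u_0$ and the bubble cutoff — in the gradient, the $L^{p^*}$, the $L^q$, and the $L^p$ normalization — is of strictly smaller order than the negative gain from the $L^q$-term of the bubble, and that the rescaling needed to restore $\lVert u_{\varepsilon}\rVert_p=a$ perturbs the gradient and critical norms only to higher order; (b) the value of $s$ realizing the maximum stays bounded as $\varepsilon\to 0$ so that the asymptotic expansion of $\Psi^{\mu}_{u_\varepsilon}$ is uniform; and (c) the exponent inequalities forced by $p<q<p+\frac{p^2}{N}$ and $N\geqslant p^2$ indeed give a net strict decrease. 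For $p\geqslant 3$ and $N<p^2$ these exponent comparisons fail (as the authors note), which is why the lemma is stated only under $N\geqslant p^2$. Once these estimates are assembled, the strict inequality follows, completing the proof.
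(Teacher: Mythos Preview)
Your overall strategy---superpose the local minimizer $u_{a,\mu}^{+}$ with a truncated Talenti bubble centred far away, renormalize onto $S_a$, and use the $L^q$-mass of the bubble as the strict gain under $N\geqslant p^2$---is exactly the paper's approach. However, the step where you pass from $\max_{s}\Psi^{\mu}_{u_\varepsilon}(s)$ to a splitting of the form $E_\mu(u_0)+\tfrac1N S^{N/p}-(\text{gain})$ is not justified as written. With a \emph{fixed} test function, the maximum over $s$ does not decouple into ``$u_0$ at its minimum plus bubble at its maximum'': the crude bound $\max_s(f+g)\leqslant\max_s f+\max_s g$ yields $\max_s\Psi_{u_0}^{\mu}(s)+\tfrac1N S^{N/p}$, and $\max_s\Psi_{u_0}^{\mu}(s)=E_\mu(t_{u_0}\star u_0)\geqslant m^{-}(a,\mu)$, which is useless. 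Your point (b) asks only that the maximizer stay bounded, but what is actually needed is that the $u_0$-contribution \emph{at that maximizer} be $\leqslant m^{+}(a,\mu)$ up to negligible error, and nothing in the sketch forces this.

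The paper resolves this by introducing a free amplitude parameter: set $V_{\varepsilon,\tau}=u_{a,\mu}^{+}+\tau\,u_\varepsilon(\cdot-x_\varepsilon)$ with $|x_\varepsilon|=\varepsilon^{-1}$, normalize to $W_{\varepsilon,\tau}\in S_a$ by dilation, and use the continuity of $\tau\mapsto t_{W_{\varepsilon,\tau}}$ (positive at $\tau=0$ since $u_{a,\mu}^{+}\in\mathcal P_{a,\mu}^{+}$, and $\to-\infty$ as $\tau\to\infty$) to select $\tau=\tau_\varepsilon$ with $t_{W_{\varepsilon,\tau_\varepsilon}}=0$, i.e.\ $W_{\varepsilon,\tau_\varepsilon}\in\mathcal P_{a,\mu}^{-}$ directly. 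One then argues $\tau_\varepsilon\in[t_1,t_2]$ independently of $\varepsilon$. Because the norms of $V_{\varepsilon,\tau_\varepsilon}$ split additively up to exponentially small interaction (the exponential decay of $u_{a,\mu}^{+}$ and $\nabla u_{a,\mu}^{+}$ is invoked here), one gets
\[
m^{-}(a,\mu)\leqslant E_\mu(W_{\varepsilon,\tau_\varepsilon})\leqslant m^{+}(a,\mu)+S^{N/p}\Big(\tfrac1p\tau_\varepsilon^{\,p}-\tfrac1{p^*}\tau_\varepsilon^{\,p^*}\Big)-C\,\varepsilon^{\,N-\frac{(N-p)q}{p}}+O(\varepsilon^{p}|\log\varepsilon|),
\]
and since $\tfrac1p\tau^p-\tfrac1{p^*}\tau^{p^*}\leqslant\tfrac1N$ for all $\tau>0$, the strict inequality follows for small $\varepsilon$: the gain exponent $N-\tfrac{(N-p)q}{p}<p$ beats the $L^p$-normalization error, and the gradient error $O(\varepsilon^{(N-p)/(p-1)})$ is $O(\varepsilon^{p})$ precisely when $N\geqslant p^2$. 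The missing ingredient in your sketch is this amplitude parameter together with the intermediate-value selection of $\tau_\varepsilon$; without it the claimed decoupling does not go through.
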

    \begin{proof}
    	By Appendix \ref{A}, we have
    	    \begin{equation*}
    		    \lVert\nabla u_{\varepsilon}\rVert_{p}^p=S^{\frac{N}{p}}+O(\varepsilon^{\frac{N-p}{p-1}}),\quad\lVert u_{\varepsilon}\rVert_{p^*}^{p^*}=S^{\frac{N}{p}}+O(\varepsilon^{\frac{N}{p-1}}),
    	    \end{equation*}
    	and
    	    \begin{equation*}
    		    \lVert u_{\varepsilon}\rVert_{r}^{r}=\left\{\begin{array}{ll}
    			    C\varepsilon ^{N-\frac{(N-p)r}{p}}+O({\varepsilon^{\frac{(N-p)r}{p(p-1)}}}) &N>p^2 \ or\ p<r<p^*\\
    			    C\varepsilon^{p}\lvert \log\varepsilon\rvert+O(\varepsilon ^{p}) &N=p^2\ and\ r=p,
    		    \end{array}\right.
    	    \end{equation*}
    	where $p\leqslant r<p^*$.
    	
    	Let $V_{\varepsilon,\tau}=u_{a,\mu}^+ +\tau u_{\varepsilon}(\cdot-x_{\varepsilon})$ and
    	    \begin{equation}\label{wet}
    	    	W_{\varepsilon,\tau}(x)=\big(a^{-1}\lVert V_{\varepsilon,\tau}\rVert_{p}\big)^{\frac{N-p}{p}}V_{\varepsilon,\tau}\big(a^{-1}\lVert V_{\varepsilon,\tau}\rVert_{p}x\big),
    	    \end{equation}
    	where $\lvert x_{\varepsilon}\rvert=\varepsilon^{-1}$. Then, we have
    	    \[\lVert W_{\varepsilon,\tau}\rVert_{p}^p=a^p,\quad\lVert\nabla W_{\varepsilon,\tau}\rVert_{p}^p=\lVert\nabla V_{\varepsilon,\tau}\rVert_{p}^p,\quad\lVert W_{\varepsilon,\tau}\rVert_{p^*}^{p^*}=\lVert V_{\varepsilon,\tau}\rVert_{p^*}^{p^*},\]
    	and
    	    \[\lVert W_{\varepsilon,\tau}\rVert_{q}^q=(a\lVert V_{\varepsilon,\tau}\rVert_{p}^{-1})^{q(1-\gamma_{q})}\lVert V_{\varepsilon,\tau}\rVert_{q}^{q}.\]
    	Thus, there exists a unique $t_{\varepsilon,\tau}\in\mathbb{R}$ such that $t_{\varepsilon,\tau}\star W_{\varepsilon,\tau}\in\mathcal{P}_{a,\mu}^{-}$, that is
    	    \begin{equation}\label{tet}
    		    e^{pt_{\varepsilon,\tau}}\lVert\nabla W_{\varepsilon,\tau}\rVert_{p}^p=\mu\gamma_{q,s} e^{q\gamma_{q}t_{\varepsilon,\tau}}\lVert W_{\varepsilon,\tau}\rVert_{q}^q+e^{p^*t_{\varepsilon,\tau}}\lVert W_{\varepsilon,\tau}\rVert_{p^*}^{p^*}.
    	    \end{equation}
    	Since $u_{a,\mu}^+\in\mathcal{P}_{a,\mu}^{+}$,  we know $t_{\varepsilon,0}>0$. By (\ref{tet}),
    	    \[e^{(p^*-p)t_{\varepsilon,\tau}}<\frac{\lVert\nabla W_{\varepsilon,\tau}\rVert_{p}^p}{\lVert W_{\varepsilon,\tau}\rVert_{p^*}^{p^*}}=\frac{\lVert\nabla V_{\varepsilon,\tau}\rVert_{p}^p}{\lVert V_{\varepsilon,\tau}\rVert_{p^*}^{p^*}},\]
    	which implies $t_{\varepsilon,\tau}\rightarrow -\infty$ as $\tau\rightarrow +\infty$. By Lemma \ref{structure} , $t_{\varepsilon,\tau}$ is continuous for $\tau$, hence we can choose a suitable $\tau =\tau_{\varepsilon}>0$ such that $t_{\varepsilon,\tau_{\varepsilon}}=0$. It follows that
    	    \begin{align}\label{wete}
    		    m^-(a,\mu)&\leqslant E_{\mu}(W_{\varepsilon,\tau_{\varepsilon}})=\frac{1}{p}\lVert\nabla W_{\varepsilon,\tau_{\varepsilon}}\rVert_{p}^p-\frac{\mu}{q}\lVert W_{\varepsilon,\tau_{\varepsilon}}\rVert_{q}^q-\frac{1}{p^*}\lVert W_{\varepsilon,\tau_{\varepsilon}}\rVert_{p^*}^{p^*}\nonumber\\
    		    &=\frac{1}{p}\lVert\nabla V_{\varepsilon,\tau_{\varepsilon}}\rVert_{p}^p-\frac{\mu}{q}(a\lVert V_{\varepsilon,\tau_{\varepsilon}}\rVert_{p}^{-1})^{q(1-\gamma_{q})}\lVert V_{\varepsilon,\tau_{\varepsilon}}\rVert_{q}^q-\frac{1}{p^*}\lVert V_{\varepsilon,\tau_{\varepsilon}}\rVert_{p^*}^{p^*}.
    	    \end{align}
    	If $\liminf_{\varepsilon\rightarrow 0}\tau_{\varepsilon}=0$ or $\limsup_{\varepsilon\rightarrow 0}\tau_{\varepsilon}=+\infty$, then
    	    \begin{equation*}
    		    m^-(a,\mu)\leqslant\liminf_{\varepsilon\rightarrow 0} E_{\mu}(W_{\varepsilon,\tau_{\varepsilon}})\leqslant E_{\mu}(u_{a,\mu}^+)=m^+(a,\mu),
    	    \end{equation*}
    	a contradiction with Lemma \ref{mplus}. Therefore, there exists $t_{2}>t_{1}>0$ independent of $\varepsilon$ such that $\tau_{\varepsilon}\in[t_{1},t_{2}]$.
    	
    	Now, we estimate $\lVert\nabla V_{\varepsilon,\tau_{\varepsilon}}\rVert_{p}^p$. Using the inequality
    	    \[(a+b)^p\leqslant a^p+b^p+C(a^{p-1}b+ab^{p-1})\quad\forall a,b\geqslant 0,\]
    	we have
    	    \begin{align}\label{nablaVe1}
    	    	\lVert\nabla V_{\varepsilon,\tau_{\varepsilon}}\rVert_{p}^p&\leqslant\lVert\nabla u_{a,\mu}^{+}\rVert_{p}^p+\tau_{\varepsilon}^p\lVert\nabla u_{\varepsilon}\rVert_{p}^p\nonumber\\
    	    	&\qquad\qquad+C\int_{\mathbb{R}^N}\lvert\nabla u_{a,\mu}^{+}\rvert^{p-1}\lvert\nabla u_{\varepsilon}\rvert dx+C\int_{\mathbb{R}^N}\lvert\nabla u_{a,\mu}^{+}\rvert\lvert\nabla u_{\varepsilon}\rvert^{p-1}dx\nonumber\\
    	    	&=\lVert\nabla u_{a,\mu}^{+}\rVert_{p}^p+S^{\frac{N}{p}}\tau_{\varepsilon}^p\nonumber\\
    	    	&\qquad\qquad+C\int_{\mathbb{R}^N}\lvert\nabla u_{a,\mu}^{+}\rvert^{p-1}\lvert\nabla u_{\varepsilon}\rvert dx+ C\int_{\mathbb{R}^N}\lvert\nabla u_{a,\mu}^{+}\rvert\lvert\nabla u_{\varepsilon}\rvert^{p-1}dx+O(\varepsilon^{\frac{N-p}{p-1}}).
    	    \end{align}
    	By the H\"older inequality,
    	    \begin{align*}
    	    	\int_{\mathbb{R}^N}\lvert\nabla u_{a,\mu}^{+}\rvert^{p-1}\lvert\nabla u_{\varepsilon}\rvert dx&=\int_{B_{2}(x_{\varepsilon})}\lvert\nabla u_{a,\mu}^{+}\rvert^{p-1}\lvert\nabla u_{\varepsilon}\rvert dx\nonumber\\
    	    	&\leqslant\lVert\nabla u_{a,\mu}^{+}\rVert_{L^p(B_{2}(x_{\varepsilon}))}^{p-1}\lVert\nabla u_{\varepsilon}\rVert_{p}\nonumber\\
    	    	&\leqslant C\lVert\nabla u_{a,\mu}^{+}\rVert_{L^p(B_{2}(x_{\varepsilon}))}^{p-1}.
    	    \end{align*}
        We know there exists $\lambda_{a,\mu}^{+}<0$ such that
            \[-\Delta_{p}u_{a,\mu}^+=\lambda_{a,\mu}^{+}\lvert u_{a,\mu}^+\rvert^{p-1}+\mu\lvert u_{a,\mu}^+\rvert^{q-1}+\lvert u_{a,\mu}^+\rvert^{p^*-1}.\]
        Then, by \cite[theorem 8]{gfsj}, there exists $a,b>0$ such that
            \[\lvert\nabla u_{a,\mu}^{+}(x)\rvert\leqslant ae^{-b\lvert x\rvert}\]
        for $\lvert x\rvert$ sufficiently large. This means
            \[\lVert\nabla u_{a,\mu}^{+}\rVert_{L^{p}(B_{2}(x_{\varepsilon}))}\leqslant Ce^{-b\varepsilon^{-1}},\]
        and hence
            \[\int_{\mathbb{R}^N}\lvert\nabla u_{a,\mu}^{+}\rvert^{p-1}\lvert\nabla u_{\varepsilon}\rvert dx\leqslant Ce^{e^{-b(p-1)\varepsilon^{-1}}}.\]
        Similarly, we have
            \[\int_{\mathbb{R}^N}\lvert\nabla u_{a,\mu}^{+}\rvert\lvert\nabla u_{\varepsilon}\rvert^{p-1}dx\leqslant Ce^{e^{-b\varepsilon^{-1}}}.\]
        Therefore, by (\ref{nablaVe1}), we obtain
            \begin{equation}\label{nablaVe}
            	\lVert\nabla V_{\varepsilon,\tau_{\varepsilon}}\rVert_{p}^{p}\leqslant\lVert\nabla u_{a,\mu}^{+}\rVert_{p}^{p}+S^{\frac{N}{p}}\tau_{\varepsilon}^{p}+O\big(\varepsilon^{\frac{N-p}{p-1}}\big).
            \end{equation}

        Next, we estimate $\lVert V_{\varepsilon,\tau_{\varepsilon}}\rVert_{p}^{p}$. We have
            \begin{align*}
            	\int_{\mathbb{R}^N}\lvert u_{a,\mu}^{+}+\tau_{\varepsilon}u_{\varepsilon}\rvert^{p}&=\int_{B_{2}(x_{\varepsilon})}\lvert u_{a,\mu}^{+}+\tau_{\varepsilon}u_{\varepsilon}\rvert^{p}dx+\int_{B_{2}^{c}(x_{\varepsilon})}\lvert u_{a,\mu}^{+}\rvert^{p}dx\\
            	&\leqslant a^{p}+\int_{B_{2}(x_{\varepsilon})}\lvert u_{a,\mu}^{+}+\tau_{\varepsilon}u_{\varepsilon}\rvert^{p}dx\\
            	&\leqslant a^{p}+C\int_{B_{2}(x_{\varepsilon})}\big(\lvert u_{a,\mu}^{+}\rvert^{p}+\lvert u_{\varepsilon}\rvert^{p}\big)dx\\
            	&\leqslant a^{p}+C(e^{-pb\varepsilon^{-1}}+\varepsilon^{p}\lvert\log\varepsilon\rvert)\\
            	&=a^{p}+O(\varepsilon^{p}\lvert\log\varepsilon\rvert).
            \end{align*}
        Thus,
            \begin{equation}\label{Vep}
            	\big(a\lVert V_{\varepsilon,\tau_{\varepsilon}}\rVert_{p}^{-1}\big)^{q(1-\gamma_{q})}\geqslant 1+O(\varepsilon^{p}\lvert\log\varepsilon\rvert).
            \end{equation}

        It is easy to know that
            \begin{equation}\label{Veq}
            	\lVert V_{\varepsilon,\tau_{\varepsilon}}\rVert_{q}^{q}\geqslant\lvert u_{a,\mu}^{+}\rVert_{q}^{q}+\tau_{\varepsilon}^{q}\lvert u_{\varepsilon}\rvert_{q}^{q}\geqslant\lvert u_{a,\mu}^{+}\rVert_{q}^{q}+C\varepsilon^{N-\frac{(N-p)q}{p}},
            \end{equation}
        and
            \begin{equation}\label{Vepstar}
            	\lVert V_{\varepsilon,\tau_{\varepsilon}}\rVert_{p^*}^{p^*}\geqslant\lVert u_{a,\mu}^{+}\rVert_{p^*}^{p^*}+\tau_{\varepsilon}^{p^*}\lVert u_{\varepsilon}\rVert_{p^*}^{p^*}=\lVert u_{a,\mu}^{+}\rVert_{p^*}^{p^*}+S^{\frac{N}{p}}\tau_{\varepsilon}^{p^*}+O(\varepsilon^{\frac{N}{p-1}}).
            \end{equation}
        Combining (\ref{wete}), (\ref{nablaVe}), (\ref{Vep}) (\ref{Veq}) and (\ref{Vepstar}), we obtain
            \begin{align*}
            	m^{-}(a,\mu)&\leqslant m^{+}(a,\mu)+S^{\frac{N}{p}}\Big(\frac{1}{p}\tau_{\varepsilon}^{p}-\frac{1}{p^*}\tau_{\varepsilon}^{p^*}\Big)-C\varepsilon^{N-\frac{(N-p)q}{p}}+O(\varepsilon^{p}\lvert\log\varepsilon\rvert)\\
            	&<m^{+}(a,\mu)+\frac{1}{N}S^{\frac{N}{p}},
            \end{align*}
        by taking $\varepsilon$ sufficiently small.
    \end{proof}

    \begin{lemma}\label{m-<m+2}
    	Let $N<p^2<9$. Then, we have
    	    \[m^{-}(a,\mu)<m^{+}(a,\mu)+\frac{1}{N}S^{\frac{N}{p}}.\]
    \end{lemma}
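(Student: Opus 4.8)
The plan is to follow the same strategy as in Lemma~\ref{m-<m+1}: build a competitor on $\mathcal{P}_{a,\mu}^{-}$ out of the local minimizer $u_{a,\mu}^{+}$ (which exists by Theorem~\ref{th1} under \eqref{muconsub}) and the rescaled Aubin--Talenti bubbles $u_{\varepsilon}$ of Appendix~\ref{A}, and estimate its energy against $m^{+}(a,\mu)+\tfrac1N S^{N/p}$. Concretely I set $V_{\varepsilon,\tau}=u_{a,\mu}^{+}+\tau u_{\varepsilon}(\cdot-x_{\varepsilon})$, renormalize it to $W_{\varepsilon,\tau}$ as in \eqref{wet} so that $\|W_{\varepsilon,\tau}\|_{p}^{p}=a^{p}$, and use Lemma~\ref{structure} and the continuity of $\tau\mapsto t_{\varepsilon,\tau}$ to select a parameter $\tau_{\varepsilon}$ in a compact interval $[t_{1},t_{2}]\subset(0,\infty)$ with $t_{\varepsilon,\tau_{\varepsilon}}=0$, so that $W_{\varepsilon,\tau_{\varepsilon}}\in\mathcal{P}_{a,\mu}^{-}$ and hence $m^{-}(a,\mu)\le E_{\mu}(W_{\varepsilon,\tau_{\varepsilon}})$. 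Two features distinguish this case from Lemma~\ref{m-<m+1}. First, since $N<p^{2}$ we have $U_{\varepsilon}\notin L^{p}(\R^{N})$, so one must use the Appendix~\ref{A} bubble estimates in the regime $N<p^{2}$; in particular $\|u_{\varepsilon}\|_{p}^{p}$ is of order $\varepsilon^{(N-p)/(p-1)}$, which is much larger than the order $\varepsilon^{p}$ available when $N\ge p^{2}$. Second — and this is decisive — if one places the bubble far away (as in Lemma~\ref{m-<m+1}, with $|x_{\varepsilon}|=\varepsilon^{-1}$), so that all cross terms with $u_{a,\mu}^{+}$ are exponentially small, the gain $-C\varepsilon^{N-(N-p)q/p}$ coming from $\mu\|u_{\varepsilon}\|_{q}^{q}$ is of \emph{higher} order than the $O(\varepsilon^{(N-p)/(p-1)})$ remainder, and the estimate fails.

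To circumvent this I would center the bubble where $u_{a,\mu}^{+}$ is large — e.g.\ at the origin, since $u_{a,\mu}^{+}$ is positive and radially non-increasing — and keep the interaction terms. Expanding $E_{\mu}(W_{\varepsilon,\tau_{\varepsilon}})$, the delicate term is $\int|\nabla(u_{a,\mu}^{+}+\tau_{\varepsilon}u_{\varepsilon})|^{p}$; here I would use the elementary bound $|\xi+\eta|^{p}\le|\xi|^{p}+|\eta|^{p}+C(|\xi|^{p-1}|\eta|+|\xi||\eta|^{p-1})$ together with the sharper expansion $|\xi+\eta|^{p}=|\xi|^{p}+p|\xi|^{p-2}\xi\cdot\eta+O(|\xi|^{p-2}|\eta|^{2}+|\eta|^{p})$ (and the analogous one with $\xi,\eta$ exchanged inside the bubble core). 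The leading interaction then reduces, via the Euler--Lagrange equation $-\Delta_{p}u_{a,\mu}^{+}=\lambda_{a,\mu}^{+}|u_{a,\mu}^{+}|^{p-2}u_{a,\mu}^{+}+\mu|u_{a,\mu}^{+}|^{q-2}u_{a,\mu}^{+}+|u_{a,\mu}^{+}|^{p^{*}-2}u_{a,\mu}^{+}$, to $p\tau_{\varepsilon}\int(-\Delta_{p}u_{a,\mu}^{+})u_{\varepsilon}$; the critical cross terms $\int|u_{a,\mu}^{+}|^{p^{*}-2}u_{a,\mu}^{+}u_{\varepsilon}$ and $\int|u_{a,\mu}^{+}|^{q-2}u_{a,\mu}^{+}u_{\varepsilon}$ then cancel against the corresponding cross terms of $-\tfrac1{p^{*}}\|u_{a,\mu}^{+}+\tau_{\varepsilon}u_{\varepsilon}\|_{p^{*}}^{p^{*}}$, of $-\tfrac\mu q\|u_{a,\mu}^{+}+\tau_{\varepsilon}u_{\varepsilon}\|_{q}^{q}$ and of the renormalization factor $(a\|V_{\varepsilon,\tau_{\varepsilon}}\|_{p}^{-1})^{q(1-\gamma_{q})}$. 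Using the identity $\lambda_{a,\mu}^{+}a^{p}=\mu(\gamma_{q}-1)\|u_{a,\mu}^{+}\|_{q}^{q}$, obtained exactly as in \eqref{lambda}, what is left combines with the next-order bubble contributions (such as $\int|u_{a,\mu}^{+}|^{p^{*}-2}u_{\varepsilon}^{2}$, whose coefficient is strictly negative) to give a strictly negative net gain; together with $\tfrac1p\tau^{p}-\tfrac1{p^{*}}\tau^{p^{*}}\le\tfrac1N$ this produces $m^{-}(a,\mu)\le E_{\mu}(W_{\varepsilon,\tau_{\varepsilon}})<m^{+}(a,\mu)+\tfrac1N S^{N/p}$ for $\varepsilon$ small.

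The main obstacle is precisely this last estimate. Because the $p$-Laplacian Dirichlet energy does not split additively, the interaction terms and the various remainders must be estimated by hand, using the exponential decay of $u_{a,\mu}^{+}$ and $\nabla u_{a,\mu}^{+}$ (from \cite{gfsj}) and the Appendix~\ref{A} asymptotics for $\|\nabla u_{\varepsilon}\|_{p}^{p}$, $\|u_{\varepsilon}\|_{p^{*}}^{p^{*}}$, $\|u_{\varepsilon}\|_{q}^{q}$, $\|u_{\varepsilon}\|_{p}^{p}$, and one must check that all exponents line up so that the gain dominates every error. It is exactly this comparison of exponents that forces $p^{2}<9$ (equivalently $p<3$): for $p\ge3$ the interaction remainders can no longer be absorbed, which is why the statement is restricted to $N<p^{2}<9$.
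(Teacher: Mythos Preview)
Your overall strategy is the paper's: center the bubble at the origin (not at $x_{\varepsilon}$ with $|x_{\varepsilon}|=\varepsilon^{-1}$), use the Euler--Lagrange equation of $u_{a,\mu}^{+}$ to cancel the first--order cross terms, and invoke the identity $\lambda_{a,\mu}^{+}a^{p}=\mu(\gamma_{q}-1)\lVert u_{a,\mu}^{+}\rVert_{q}^{q}$. Two of your concrete choices, however, would make the final comparison of exponents fail.

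First, the gain term. The decisive negative contribution in the paper is not $\int|u_{a,\mu}^{+}|^{p^{*}-2}u_{\varepsilon}^{2}$ but
\[
-\tau_{\varepsilon}^{p^{*}-1}\int_{\mathbb{R}^{N}} u_{a,\mu}^{+}\,u_{\varepsilon}^{p^{*}-1}\,dx\;\sim\;-C\,\varepsilon^{\frac{N-p}{p}},
\]
obtained from the elementary lower bound $(a+b)^{p^{*}}\ge a^{p^{*}}+b^{p^{*}}+p^{*}(a^{p^{*}-1}b+ab^{p^{*}-1})$, which is available here because $N<p^{2}$ forces $p^{*}\ge 3$. Your candidate $\int|u_{a,\mu}^{+}|^{p^{*}-2}u_{\varepsilon}^{2}$ is of order $\varepsilon^{2(N-p)/(p(p-1))}$, which for $p<3$ is a \emph{higher} power of $\varepsilon$ than the unavoidable remainder below, so it cannot produce the strict inequality.

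Second, the gradient expansion. Neither of the two inequalities you propose gives a usable remainder: the crude bound with $C(|\xi|^{p-1}|\eta|+|\xi||\eta|^{p-1})$ leaves the error $\int|\nabla u_{a,\mu}^{+}||\nabla u_{\varepsilon}|^{p-1}\sim\varepsilon^{(N-p)/p}$, the \emph{same} order as the gain; and the Taylor remainder $O(|\xi|^{p-2}|\eta|^{2})$ is only meaningful for $p\ge 2$ (while $N<p^{2}$ allows $p<2$). The paper instead uses the pointwise inequality
\[
\bigl(a^{2}+b^{2}+2ab\cos\alpha\bigr)^{p/2}\le a^{p}+b^{p}+p\,a^{p-1}b\cos\alpha+C\,a^{\frac{p-1}{2}}b^{\frac{p+1}{2}},
\]
applied with $a=|\nabla u_{a,\mu}^{+}|$, $b=\tau_{\varepsilon}|\nabla u_{\varepsilon}|$. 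This yields the single remainder $\int|\nabla u_{\varepsilon}|^{(p+1)/2}=O\bigl(\varepsilon^{(N-p)(p+1)/(2p(p-1))}\bigr)$, and comparing $\tfrac{N-p}{p}<\tfrac{(N-p)(p+1)}{2p(p-1)}$ is exactly the condition $p<3$, i.e.\ $p^{2}<9$. With these two corrections your outline becomes the paper's proof.
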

    \begin{proof}
    	We set $V_{\varepsilon,\tau_{\varepsilon}}=u_{a,\mu}^{+}+\tau u_{\varepsilon}$ and the definition of $W_{\varepsilon,\tau_{\varepsilon}}$ is same to (\ref{wet}). Then, we can choose $\tau=\tau_{\varepsilon}>0$ such that $W_{\varepsilon,\tau_{\varepsilon}}\in\mathcal{P}_{a,\mu}^{-}$. Moreover, there exists $t_{2}>t_{1}>0$ independent of $\varepsilon$ such that $\tau_{\varepsilon}\in[t_{1},t_{2}]$. Therefore,
    	    \begin{equation}
    	    	m^{-}(a,\mu)\leqslant\frac{1}{p}\lVert\nabla V_{\varepsilon,\tau_{\varepsilon}}\rVert_{p}^p-\frac{\mu}{q}(a\lVert V_{\varepsilon,\tau_{\varepsilon}}\rVert_{p}^{-1})^{q(1-\gamma_{q})}\lVert V_{\varepsilon,\tau_{\varepsilon}}\rVert_{q}^q-\frac{1}{p^*}\lVert V_{\varepsilon,\tau_{\varepsilon}}\rVert_{p^*}^{p^*}.
    	    \end{equation}

        Now, we estimate $\lVert\nabla V_{\varepsilon,\tau_{{\varepsilon}}}\rVert_{p}^{p}$ and $\lVert V_{\varepsilon,\tau_{\varepsilon}}\rVert_{p}^{p}$. We can prove that for $a,b\geqslant 0$, there is
            \[(a^2+b^2+2ab\cos\alpha)^{\frac{p}{2}}\leqslant a^p+b^p+pa^{p-1}b\cos\alpha+Ca^{\frac{p-1}{2}}b^{\frac{p+1}{2}},\]
        uniformly in $\alpha$. Thus,
            \begin{align}\label{nablaVete1}
            	\lVert\nabla V_{\varepsilon,\tau_{\varepsilon}}\rVert_{p}^{p}&=\int_{\mathbb{R}^N}\big(\lvert\nabla u_{a,\beta}^{+}\rvert^2+\tau_{\varepsilon}^2\lvert\nabla u_{\varepsilon}\rvert^2+2\tau_{\varepsilon}\nabla u_{a,\mu}^{+}\cdot\nabla u_{\varepsilon}\big)^{\frac{p}{2}}dx\nonumber\\
            	&\leqslant\lVert\nabla u_{a,\mu}^{+}\rVert_{p}^{p}+\tau_{\varepsilon}^{p}\lVert u_{\varepsilon}\rVert_{p}^{p}\nonumber\\
            	&\qquad\qquad+p\tau_{{\varepsilon}}\int_{\mathbb{R}^N}\lvert\nabla u_{a,\mu}^{+}\rvert^{p-2}\nabla u_{a,\mu}^{+}\cdot\nabla u_{\varepsilon}dx+C\int_{\mathbb{R}^N}\lvert\nabla u_{a,\mu}^{+}\rvert^{\frac{p-1}{2}}\lvert\nabla u_{\varepsilon}\rvert^{\frac{p+1}{2}}dx.
            \end{align}
        By \cite{tp}, $\nabla u_{a,\mu}^{+}$ is local H\"older continuous, hence
            \begin{equation}\label{interact}
            	\int_{\mathbb{R}^N}\lvert\nabla u_{a,\mu}^{+}\rvert^{\frac{p-1}{2}}\lvert\nabla u_{\varepsilon}\rvert^{\frac{p+1}{2}}dx\leqslant C\int_{\mathbb{R}^N}\lvert\nabla u_{\varepsilon}\rvert^{\frac{p+1}{2}}dx=O(\varepsilon^{\frac{(N-p)(p+1)}{2p(p-1)}}).
            \end{equation}
        By (\ref{equation}), we know
            \begin{align}\label{lambda+}
            	&\int_{\mathbb{R}^N}\lvert\nabla u_{a,\mu}\rvert^{p-2}\nabla u_{a,\mu}^{+}\cdot\nabla u_{\varepsilon}dx\nonumber\\
            	=&\lambda_{a,\mu}^{+}\int_{\mathbb{R}^N}\lvert u_{a,\mu}^{+}\rvert^{p-1}u_{\varepsilon}dx+\mu\int_{\mathbb{R}^N}\lvert u_{a,\mu}^{+}\rvert^{q-1}u_{\varepsilon}dx+\int_{\mathbb{R}^N}\lvert u_{a,\mu}^{+}\rvert^{p^*-1}u_{\varepsilon}dx.
            \end{align}
        From (\ref{nablaVete1}), (\ref{interact}) and (\ref{lambda+}), we obtain
            \begin{equation}\label{nablaVete}
            	\begin{split}
            		\lVert\nabla V_{\varepsilon,\tau_{{\varepsilon}}}\rVert_{p}^{p}&
            		\leqslant\lVert\nabla u_{a,\mu}^{+}\rVert_{p}^{p}+S^{\frac{N}{p}}\tau_{\varepsilon}^{p}+p\tau_{\varepsilon}\lambda_{a,\mu}^{+}\int_{\mathbb{R}^N}\lvert u_{a,\mu}^{+}\rvert^{p-1}u_{\varepsilon}dx\\
            		&\qquad\qquad+p\tau_{\varepsilon}\mu\int_{\mathbb{R}^N}\lvert u_{a,\mu}^{+}\rvert^{q-1}u_{\varepsilon}dx+p\tau_{\varepsilon}\lvert u_{a,\mu}^{+}\rvert^{p^*-1}u_{\varepsilon}dx+O(\varepsilon^{\frac{(N-p)(p+1)}{2p(p-1)}}).
            	\end{split}
            \end{equation}
        In the same way, we have
            \begin{align*}
            	\lVert V_{\varepsilon}\rVert_{p}^{p}&\leqslant\lVert u_{a,\mu}^{+}\rVert_{p}^{p}+\tau_{\varepsilon}^{p}\lVert u_{\varepsilon}\rVert_{p}^{p}+p\tau_{\varepsilon}\int_{\mathbb{R}^N}\lvert u_{a,\mu}^{+}\rvert u_{\varepsilon}dx+C\int_{\mathbb{R}^N}\lvert u_{a,\mu}^{+}\rvert^{\frac{p-1}{2}}\lvert u_{\varepsilon}\rvert^{\frac{p+1}{2}}dx\nonumber\\
            	&\leqslant a^p+p\tau_{\varepsilon}\int_{\mathbb{R}^N}\lvert u_{a,\mu}^{+}\rvert^{p-1}u_{\varepsilon}dx+O(\varepsilon^{\frac{(N-p)(p+1)}{2p(p-1)}}),
            \end{align*}
        which implies
            \begin{equation}\label{Vepp}
            	\big(a\lVert V_{\varepsilon,\tau_{\varepsilon}}\rVert_{p}^{-1}\big)^{q(1-\gamma_{q})}\geqslant 1-\frac{q(1-\gamma_{q})\tau_{\varepsilon}}{a^{p}}\int_{\mathbb{R}^N}\lvert u_{a,\mu}^{+}\rvert^{p-1}u_{\varepsilon}dx+O(\varepsilon^{\frac{(N-p)(p+1)}{2p(p-1)}}).
            \end{equation}

        Next, we estimate $\lVert V_{\varepsilon,\tau_{\varepsilon}}\rVert_{q}^{q}$ and $\lVert V_{\varepsilon,\tau_{\varepsilon}}\rVert_{p^*}^{p^*}$. For every $a,b\geqslant 0$, we know
            \[(a+b)^{r}\geqslant\left\{\begin{array}{ll}
            	a^r+b^r+r(a^{r-1}b+ab^{r-1})&\forall r\geqslant 3\\
            	a^r+ra^{r-1}b&\forall r\geqslant 1.
            \end{array}
            \right.\]
        Thus, we have
            \begin{equation}\label{Veqq}
            	\lVert V_{\varepsilon,\tau_{\varepsilon}}\rVert_{q}^{q}\geqslant\lVert u_{a,\mu}^{+}\rVert_{q}^{q}+q\tau_{\varepsilon}\int_{\mathbb{R}^N}\lvert u_{a,\mu}^{+}\rvert^{q-1}u_{\varepsilon},
            \end{equation}
        and
            \begin{align}\label{Vepstarp}
            	\lVert V_{\varepsilon,\tau_{\varepsilon}}\rVert_{p^*}^{p^*}&\geqslant\lVert u_{a,\mu}^{+}\rVert_{p^*}^{p^*}+\tau_{\varepsilon}^{p^*}\lVert u_{\varepsilon}\rVert_{p^*}^{p^*}\nonumber\\
            	&\qquad\qquad+p^*\tau_{\varepsilon}\int_{\mathbb{R}^N}\lvert u_{a,\mu}^{+}\rvert^{p^*-1}u_{\varepsilon}dx+p^*\tau_{\varepsilon}^{p^*-1}\int_{\mathbb{R}^N}u_{a,\mu}^{+}\lvert u_{\varepsilon}\rvert^{p^*-1}dx\nonumber\\
            	&=\lVert u_{a,\mu}^{+}\rVert_{p^*}^{p^*}+S^{\frac{N}{p}}\tau_{\varepsilon}^{p^*}\nonumber\\
            	&\qquad\qquad+p^*\tau_{\varepsilon}\int_{\mathbb{R}^N}\lvert u_{a,\mu}^{+}\rvert^{p^*-1}u_{\varepsilon}dx+p^*\tau_{\varepsilon}^{p^*-1}\int_{\mathbb{R}^N}u_{a,\mu}^{+}\lvert u_{\varepsilon}\rvert^{p^*-1}dx+O(\varepsilon^{\frac{N}{p-1}}).
            \end{align}

        Combining (\ref{m-<m+2}), (\ref{nablaVete}), (\ref{Vepp}), (\ref{Veqq}), (\ref{Vepstarp}) and using $\lambda_{a,\mu}^{+}a^p=\mu(\gamma_{q}-1)\lVert u_{a,\mu}^{+}\rVert_{q}^q$, we obtain
            \begin{align*}
            	m^{-}(a,\mu)&\leqslant m^{+}(a,\mu)+S^{\frac{N}{p}}\Big(\frac{1}{p}\tau_{\varepsilon}^p-\frac{1}{p^*}\tau_{\varepsilon}^{p^*}\Big)-\tau_{\varepsilon}^{p^*-1}\int_{\mathbb{R}^N}u_{a,\mu}^{+}u_{\varepsilon}^{p^*-1}dx+O(\varepsilon^{\frac{(N-p)(p+1)}{2p(p-1)}})\nonumber\\
            	&\leqslant m^{+}(a,\mu)+\frac{1}{N}S^{\frac{N}{p}}-C\varepsilon^{\frac{N-p}{p}}+O(\varepsilon^{\frac{(N-p)(p+1)}{2p(p-1)}})\nonumber\\
            	&<m^{+}(a,\mu)+\frac{1}{N}S^{\frac{N}{p}},
            \end{align*}
        by taking $\varepsilon$ sufficiently small.
    \end{proof}
    \begin{remark}
    	{\rm Although Lemma \ref{m-<m+1} and Lemma \ref{m-<m+2} both obtain the same estimates, the method of proof were slightly different, so we state these two results separately.}
    \end{remark}

    For every $0<a<\big(\mu^{-1}\alpha\big)^{1/(q(\gamma_{q}-1))}$, let $u\in\mathcal{P}_{a,\mu}^{\pm}$, then $u_{b}=\frac{b}{a}u\in S_{b}$ for every $b>0$. By lemma \ref{structure}, there exists unique $t_{\pm}(b)\in\mathbb{R}$ such that $t_{\pm}(b)\star u_{b}\in\mathcal{P}_{b,\mu}^{\pm}$ for every $0<b<\big(\mu^{-1}\alpha\big)^{1/(q(\gamma_{q}-1))}$. Clearly, $t_{\pm}(a)=0$.
    \begin{lemma}
    	For every $0<a<\big(\mu^{-1}\alpha\big)^{1/(q(\gamma_{q}-1))}$, $t'_{\pm}(a)$ exist and
    	    \begin{equation}\label{tderivative}
    	    	t'_{\pm}(a)=\frac{\mu q\gamma_{q}\lVert u\rVert_{q}^q+p^*\lVert u\rVert_{p^*}^{p^*}-p\lVert\nabla u\rVert_{p}^p}{a\big(p\lVert\nabla u\rVert_{p}^p-\mu q\gamma_{q}^2\lVert u\rVert_{q}^q-p^*\lVert u\rVert_{p^*}^{p^*}\big)}.
    	    \end{equation}
    \end{lemma}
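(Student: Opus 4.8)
The plan is to apply the implicit function theorem to the Pohozaev condition that defines $t_{\pm}(b)$. Using the homogeneities $\lVert\nabla u_b\rVert_p^p=(b/a)^p\lVert\nabla u\rVert_p^p$, $\lVert u_b\rVert_q^q=(b/a)^q\lVert u\rVert_q^q$, $\lVert u_b\rVert_{p^*}^{p^*}=(b/a)^{p^*}\lVert u\rVert_{p^*}^{p^*}$, together with the scaling identities $\lVert\nabla(t\star v)\rVert_p^p=e^{pt}\lVert\nabla v\rVert_p^p$, $\lVert t\star v\rVert_q^q=e^{q\gamma_q t}\lVert v\rVert_q^q$ and $\lVert t\star v\rVert_{p^*}^{p^*}=e^{p^*t}\lVert v\rVert_{p^*}^{p^*}$, the function
\[
G(b,t):=P_{\mu}(t\star u_b)=e^{pt}\Big(\tfrac{b}{a}\Big)^{p}\lVert\nabla u\rVert_{p}^{p}-\mu\gamma_{q}e^{q\gamma_{q}t}\Big(\tfrac{b}{a}\Big)^{q}\lVert u\rVert_{q}^{q}-e^{p^*t}\Big(\tfrac{b}{a}\Big)^{p^*}\lVert u\rVert_{p^*}^{p^*}
\]
is $C^{\infty}$ on $(0,+\infty)\times\mathbb{R}$ and satisfies $G(b,t)=(\Psi_{u_b}^{\mu})'(t)$. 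Since $u_a=u\in\mathcal{P}_{a,\mu}^{\pm}\subset\mathcal{P}_{a,\mu}$, we get $G(a,0)=P_{\mu}(u)=0$, and
\[
\partial_t G(a,0)=(\Psi_{u}^{\mu})''(0)=p\lVert\nabla u\rVert_{p}^{p}-\mu q\gamma_{q}^{2}\lVert u\rVert_{q}^{q}-p^*\lVert u\rVert_{p^*}^{p^*}\neq0,
\]
because $u\in\mathcal{P}_{a,\mu}^{\pm}$ forces this quantity to be, respectively, strictly positive or strictly negative (recall $\mathcal{P}_{a,\mu}^{0}=\emptyset$ by Lemma~\ref{empty}). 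Hence the implicit function theorem yields an interval $I\ni a$ and a $C^{1}$ function $\tilde t\colon I\to\mathbb{R}$ with $\tilde t(a)=0$ and $G(b,\tilde t(b))=0$ on $I$.

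It remains to identify $\tilde t$ with $t_{\pm}$ near $a$. Shrinking $I$ so that $(\ref{muconsub})$ holds with $b$ in place of $a$ for all $b\in I$, Lemma~\ref{structure}(i) applies to $u_b$ and shows that $\tilde t(b)\star u_b\in\mathcal{P}_{b,\mu}$ forces $\tilde t(b)\in\{s_{u_b},t_{u_b}\}$, one of which lies in $\mathcal{P}_{b,\mu}^{+}$ and the other in $\mathcal{P}_{b,\mu}^{-}$. Since $\partial_t G(b,\tilde t(b))=(\Psi_{u_b}^{\mu})''(\tilde t(b))$ depends continuously on $b$ and has the definite sign of $(\Psi_{u}^{\mu})''(0)$ at $b=a$, it keeps that sign for $b$ close to $a$, so $\tilde t(b)\star u_b\in\mathcal{P}_{b,\mu}^{\pm}$; by the uniqueness just recalled this means $\tilde t(b)=t_{\pm}(b)$. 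In particular $t_{\pm}$ is $C^{1}$ near $a$, so $t'_{\pm}(a)$ exists. Differentiating the identity $G(b,t_{\pm}(b))\equiv0$ at $b=a$ gives $t'_{\pm}(a)=-\partial_b G(a,0)/\partial_t G(a,0)$, and since a direct computation yields
\[
\partial_b G(a,0)=\frac{1}{a}\Big(p\lVert\nabla u\rVert_{p}^{p}-\mu q\gamma_{q}\lVert u\rVert_{q}^{q}-p^*\lVert u\rVert_{p^*}^{p^*}\Big),
\]
substitution produces exactly $(\ref{tderivative})$.

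I do not expect a real obstacle here: the argument is essentially the chain rule once $G$ is written out explicitly. The two points that need a little care are the bookkeeping of the two independent one‑parameter families — the mass rescaling $u\mapsto\frac{b}{a}u$ and the $L^{p}$‑preserving dilation $t\star\,\cdot\,$ — in the formula for $G$, and the non‑degeneracy $\partial_t G(a,0)=(\Psi_{u}^{\mu})''(0)\neq0$, which is precisely the strict inequality built into the definition of $\mathcal{P}_{a,\mu}^{\pm}$ together with $\mathcal{P}_{a,\mu}^{0}=\emptyset$.
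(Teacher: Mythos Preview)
Your proof is correct and follows essentially the same route as the paper: both define the function $\Phi(b,t)=(b/a)^{p}e^{pt}\lVert\nabla u\rVert_{p}^{p}-\mu\gamma_{q}(b/a)^{q}e^{q\gamma_{q}t}\lVert u\rVert_{q}^{q}-(b/a)^{p^*}e^{p^*t}\lVert u\rVert_{p^*}^{p^*}$, observe that $\Phi(a,0)=0$ and $\partial_{t}\Phi(a,0)=(\Psi_{u}^{\mu})''(0)\neq0$ since $u\in\mathcal{P}_{a,\mu}^{\pm}$, and then apply the implicit function theorem to obtain \eqref{tderivative}. Your extra paragraph identifying the implicit branch with $t_{\pm}$ via the sign of $(\Psi_{u_b}^{\mu})''(\tilde t(b))$ is a welcome clarification that the paper leaves tacit.
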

    \begin{proof}
    	Since $t_{\pm}(b)\star u_{b}\in\mathcal{P}_{b,\mu}^{\pm}$, we have
    	    \[\Big(\frac{b}{a}\Big)^pe^{pt_{\pm}(b)}\lVert\nabla u\rVert_{p}^p=\mu\gamma_{q}\Big(\frac{b}{a}\Big)^{q}e^{q\gamma_{q}t_{\pm}(b)}\lVert u\rVert_{q}^q+\Big(\frac{b}{a}\Big)^{p^*}e^{p^*t_{\pm}(b)}\lVert u\rVert_{p^*}^{p^*}.\]
    	Considering the function
    	    \[\Phi(b,t)=\Big(\frac{b}{a}\Big)^pe^{pt}\lVert\nabla u\rVert_{p}^p-\mu\gamma_{q}\Big(\frac{b}{a}\Big)^{q}e^{q\gamma_{q}t}\lVert u\rVert_{q}^q-\Big(\frac{b}{a}\Big)^{p^*}e^{p^*t}\lVert u\rVert_{p^*}^{p^*},\]
    	then $\Phi(a,0)=0$ and $\Phi(b,t)$ has a continuous derivative in some neighborhood of $(a,0)$. Moreover, since $u\in\mathcal{P}_{a,\mu}^{\pm}$, we have
    	    \[\partial_{t}\Phi(a,0)=p\lVert\nabla u\rVert_{p}^p-\mu q\gamma_{q}^2\lVert u\rVert_{q}^q-p^*\lVert u\rVert_{p^*}^{p^*}\neq 0.\]
    	Now, by the implict function theorem, we know $t'_{\pm}(a)$ exist and (\ref{tderivative}) holds.
    \end{proof}

    \begin{lemma}\label{decreasing}
    	$m^{\pm}(a,\mu)$ is non-increasing for $0<a<\big(\mu^{-1}\alpha\big)^{1/(q(\gamma_{q}-1))}$.
    \end{lemma}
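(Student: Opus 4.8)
The plan is to fix $0<a_1<a_2<(\mu^{-1}\alpha)^{1/(q(\gamma_q-1))}$ and prove $m^{\pm}(a_2,\mu)\le m^{\pm}(a_1,\mu)$; since $a_1,a_2$ are arbitrary in the admissible range, this is exactly the asserted monotonicity. Given $\eps>0$, I would first pick $u\in\mathcal{P}_{a_1,\mu}^{\pm}$ with $E_{\mu}(u)<m^{\pm}(a_1,\mu)+\eps$, set $u_b:=\tfrac{b}{a_1}u\in S_b$ for $b\in[a_1,a_2]$, and let $t_{\pm}(b)$ be the unique number with $t_{\pm}(b)\star u_b\in\mathcal{P}_{b,\mu}^{\pm}$, which is available on the whole interval by Lemma \ref{structure} and the construction of $t_{\pm}$ preceding (\ref{tderivative}), with $t_{\pm}(a_1)=0$. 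Repeating the implicit function theorem argument at each point of $[a_1,a_2]$ — the relevant derivative is $(\Psi_{w}^{\mu})''(0)=p\lVert\nabla w\rVert_p^p-\mu q\gamma_q^2\lVert w\rVert_q^q-p^*\lVert w\rVert_{p^*}^{p^*}\neq 0$ for $w=t_{\pm}(b)\star u_b\in\mathcal{P}_{b,\mu}^{\pm}$, nonzero by Lemma \ref{empty} — shows that $b\mapsto t_{\pm}(b)$ is of class $C^1$ on $[a_1,a_2]$.

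The key step is to study $g(b):=E_{\mu}(t_{\pm}(b)\star u_b)=\Psi_{u_b}^{\mu}(t_{\pm}(b))$, which is $C^1$ on $[a_1,a_2]$, satisfies $g(a_1)=E_{\mu}(u)$, and obeys $m^{\pm}(a_2,\mu)\le g(a_2)$ by the definition of $m^{\pm}$. Writing $\Psi_{u_b}^{\mu}(s)$ out with the coefficients $\lVert\nabla u\rVert_p^p(b/a_1)^p$, $\lVert u\rVert_q^q(b/a_1)^q$, $\lVert u\rVert_{p^*}^{p^*}(b/a_1)^{p^*}$ and differentiating $g$ by the chain rule, the term carrying $t'_{\pm}(b)$ vanishes because $(\Psi_{u_b}^{\mu})'(t_{\pm}(b))=0$ (as $t_{\pm}(b)\star u_b\in\mathcal{P}_{b,\mu}$), so only the explicit $b$-dependence of those coefficients survives; carrying out that differentiation and then inserting the Pohozaev identity $P_{\mu}(t_{\pm}(b)\star u_b)=0$ collapses everything to
\[
g'(b)=\frac{\mu(\gamma_q-1)}{b}\,\lVert t_{\pm}(b)\star u_b\rVert_q^q\le 0,
\]
using $\mu>0$ and $\gamma_q<1$ (which holds since $p<q<p^*$). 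Hence $g$ is non-increasing, so $m^{\pm}(a_2,\mu)\le g(a_2)\le g(a_1)=E_{\mu}(u)<m^{\pm}(a_1,\mu)+\eps$, and letting $\eps\to 0$ finishes the proof.

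I do not expect a genuine obstacle here: the argument is a one-variable monotonicity computation along the scaled fibers. The only points requiring care are the $C^1$-dependence of $b\mapsto t_{\pm}(b)$ uniformly on $[a_1,a_2]$ (which rests on $\mathcal{P}_{b,\mu}^{0}=\emptyset$ and the implicit function theorem), and the chain-rule bookkeeping that makes the ``fiber'' derivative drop out so that the sign of $g'$ becomes transparent through $P_{\mu}=0$.
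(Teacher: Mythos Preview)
Your proposal is correct and follows essentially the same approach as the paper: scale $u\in\mathcal{P}_{a,\mu}^{\pm}$ to $u_b=\tfrac{b}{a}u\in S_b$, project onto $\mathcal{P}_{b,\mu}^{\pm}$ via the fiber map $t_{\pm}(b)$, and compute that $\tfrac{d}{db}E_{\mu}(t_{\pm}(b)\star u_b)=\tfrac{\mu(\gamma_q-1)}{b}\lVert t_{\pm}(b)\star u_b\rVert_q^q<0$, the $t'_{\pm}(b)$ contribution vanishing by $P_{\mu}=0$. Your version is in fact slightly more careful than the paper's, which only evaluates the derivative at $b=a$ and then asserts the inequality on the whole interval, whereas you explicitly argue $C^1$-dependence and monotonicity of $g$ on all of $[a_1,a_2]$; the $\eps$-approximation is also a clean way to pass to the infimum.
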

    \begin{proof}
    	Since
    	   \[E_{\mu}\big(t_{\pm}(b)\star u_{b}\big)=\frac{1}{p}\Big(\frac{b}{a}\Big)^pe^{pt_{\pm}(b)}\lVert\nabla u\rVert_{p}^p-\frac{\mu}{q}\Big(\frac{b}{a}\Big)^{q}e^{q\gamma_{q}t_{\pm}(b)}\lVert u\rVert_{q}^q-\frac{1}{p^*}\Big(\frac{b}{a}\Big)^{p^*}e^{p^*t_{\pm}(b)}\lVert u\rVert_{p^*}^{p^*},\]
    	and $u\in\mathcal{P}_{a,\mu}^{\pm}$, we have
    	    \begin{align*}
    	    	\frac{dE_{\mu}\big(t_{\pm}(b)\star u_{b}\big)}{db}|_{b=a}&=\frac{1}{a}\big(\lVert\nabla u\rVert_{p}^p-\mu\lVert u\rVert_{q}^q-\lVert u\rVert_{p^*}^{p^*}\big)+\big(\lVert\nabla u\rVert_{p}^p-\mu\gamma_{q}\lVert u\rVert_{q}^q-\lVert u\rVert_{p^*}^{p^*}\big)t'_{\pm}(a)\\
    	    	&=\frac{\mu(\gamma_{q}-1)\lVert u\rVert_{q}^q}{a}<0,
    	    \end{align*}
        which implies $E_{\mu}\big(t_{\pm}(b)\star u_{b}\big)<E_{\mu}(u)$ for $a<b<\big(\mu^{-1}\alpha\big)^{1/(q(\gamma_{q}-1))}$. Therefore, $m^{\pm}(a,\mu)\geqslant m^{\pm}(b,\mu)$ for $a<b<\big(\mu^{-1}\alpha\big)^{1/(q(\gamma_{q}-1))}$.
    \end{proof}

    \begin{lemma}\label{msigma}
    	We have $m^{-}(a,\mu)=m_{r}^{-}(a,\mu)=\sigma(a,\mu)$.
    \end{lemma}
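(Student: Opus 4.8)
The plan is to prove the two equalities $m^{-}(a,\mu)=m_r^{-}(a,\mu)$ and $m_r^{-}(a,\mu)=\sigma(a,\mu)$ separately. The inequality $m_r^{-}(a,\mu)\geq m^{-}(a,\mu)$ is immediate because $\mathcal{P}_{a,\mu}^{-}\cap W_{rad}^{1,p}(\mathbb{R}^N)\subset\mathcal{P}_{a,\mu}^{-}$. For the reverse inequality I would use the Schwarz rearrangement: given $u\in\mathcal{P}_{a,\mu}^{-}$, set $u^{*}=|u|^{*}$, so that $\|u^{*}\|_{p}=a$, $\|u^{*}\|_{q}=\|u\|_{q}$, $\|u^{*}\|_{p^{*}}=\|u\|_{p^{*}}$ and $\|\nabla u^{*}\|_{p}\leq\|\nabla u\|_{p}$ by the P\'olya--Szeg\H{o} inequality. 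Since the coefficient of $e^{ps}$ in $\Psi_{v}^{\mu}(s)$ is positive and does not increase under rearrangement while the other two coefficients are unchanged, we get $\Psi_{u^{*}}^{\mu}(s)\leq\Psi_{u}^{\mu}(s)$ for every $s\in\mathbb{R}$. By Lemma \ref{structure}, $t_{u^{*}}\star u^{*}\in\mathcal{P}_{a,\mu}^{-}$ and is radial, so, using that $u\in\mathcal{P}_{a,\mu}^{-}$ forces $t_{u}=0$,
\[ m_r^{-}(a,\mu)\leq E_{\mu}(t_{u^{*}}\star u^{*})=\max_{s\in\mathbb{R}}\Psi_{u^{*}}^{\mu}(s)\leq\max_{s\in\mathbb{R}}\Psi_{u}^{\mu}(s)=\Psi_{u}^{\mu}(0)=E_{\mu}(u). \]
Taking the infimum over $u\in\mathcal{P}_{a,\mu}^{-}$ yields $m_r^{-}(a,\mu)\leq m^{-}(a,\mu)$, hence equality.

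For $\sigma(a,\mu)\leq m_r^{-}(a,\mu)$ I would exhibit, for each $u\in\mathcal{P}_{a,\mu}^{-}\cap W_{rad}^{1,p}(\mathbb{R}^N)$, an admissible path realizing energy at most $E_{\mu}(u)$. Put $\bar u=s_{u}\star u\in\mathcal{P}_{a,\mu}^{+}$, which is radial by Lemma \ref{structure}(i), and note $\Psi_{\bar u}^{\mu}(s)=\Psi_{u}^{\mu}(s+s_{u})$, so $s_{\bar u}=0$ and the maximum point of $\Psi_{\bar u}^{\mu}$ is $t_{\bar u}=t_{u}-s_{u}=-s_{u}>0$. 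Choose $s_{+}>t_{\bar u}$ large enough that $\Psi_{\bar u}^{\mu}(s_{+})\leq 2m^{+}(a,\mu)$ (possible since $\Psi_{\bar u}^{\mu}(+\infty)=-\infty$), and define $\gamma(t)=\big(0,(ts_{+})\star\bar u\big)$ for $t\in[0,1]$. Then $\gamma$ is continuous into $\mathbb{R}\times S_{a,r}$, $\gamma(0)=(0,\bar u)\in\{0\}\times\mathcal{P}_{a,\mu}^{+}$, $\gamma(1)=(0,s_{+}\star\bar u)\in\{0\}\times E^{2m^{+}(a,\mu)}$, so $\gamma\in\Gamma$, and
\[ \max_{t\in[0,1]}\tilde{E}_{\mu}(\gamma(t))=\max_{s\in[0,s_{+}]}\Psi_{\bar u}^{\mu}(s)=\Psi_{\bar u}^{\mu}(t_{\bar u})=\Psi_{u}^{\mu}(0)=E_{\mu}(u). \]
Hence $\sigma(a,\mu)\leq E_{\mu}(u)$, and passing to the infimum over such $u$ gives $\sigma(a,\mu)\leq m_r^{-}(a,\mu)$.

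The opposite inequality $\sigma(a,\mu)\geq m_r^{-}(a,\mu)$ is the main obstacle: it needs a topological (intermediate value) argument showing that every $\gamma=(\alpha,\theta)\in\Gamma$ meets $\mathcal{P}_{a,\mu}^{-}$. Consider $g(t):=\alpha(t)-t_{\theta(t)}$, continuous by Lemma \ref{structure}(iv). At $t=0$, $\theta(0)\in\mathcal{P}_{a,\mu}^{+}$ gives $s_{\theta(0)}=0$, hence $t_{\theta(0)}>0$ and $g(0)=-t_{\theta(0)}<0$. At $t=1$, $E_{\mu}(\theta(1))\leq 2m^{+}(a,\mu)<m^{+}(a,\mu)=\inf_{A_{R_{0}}}E_{\mu}$ by Lemma \ref{mplus}, so $\|\nabla\theta(1)\|_{p}>R_{0}$; since $\Psi_{\theta(1)}^{\mu}$ is positive exactly on $(c_{\theta(1)},d_{\theta(1)})$ with $c_{\theta(1)}<\log(R_{0}/\|\nabla\theta(1)\|_{p})<0$ while $\Psi_{\theta(1)}^{\mu}(0)=E_{\mu}(\theta(1))<0$, we must have $0\geq d_{\theta(1)}>t_{\theta(1)}$, so $g(1)=-t_{\theta(1)}>0$. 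The intermediate value theorem then gives $\bar t\in(0,1)$ with $\alpha(\bar t)=t_{\theta(\bar t)}$, i.e.\ $\alpha(\bar t)\star\theta(\bar t)\in\mathcal{P}_{a,\mu}^{-}\cap W_{rad}^{1,p}(\mathbb{R}^N)$, whence $\max_{[0,1]}\tilde{E}_{\mu}(\gamma(\cdot))\geq\tilde{E}_{\mu}(\gamma(\bar t))=E_{\mu}(\alpha(\bar t)\star\theta(\bar t))\geq m_r^{-}(a,\mu)$; taking the infimum over $\gamma\in\Gamma$ finishes the proof. The delicate point throughout is to control where $0$ lies relative to the two zeros and the two critical points of $\Psi_{\theta(t)}^{\mu}$, and this is exactly what Lemma \ref{structure}, Lemma \ref{mplus}, and the choice of the level $2m^{+}(a,\mu)$ in the definition of $\Gamma$ are designed to ensure.
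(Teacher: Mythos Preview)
Your proof is correct and follows essentially the same strategy as the paper's: Schwarz rearrangement for $m^{-}=m_r^{-}$, an explicit fiber path starting at $s_u\star u\in\mathcal{P}_{a,\mu}^{+}$ for $\sigma\leq m_r^{-}$, and an intermediate value argument on the fiber map for $\sigma\geq m_r^{-}$. Your continuous function $g(t)=\alpha(t)-t_{\theta(t)}$ is just the negative of the paper's $t_{\alpha(\tau)\star\theta(\tau)}$ (since $t_{s\star v}=t_v-s$), and you supply more detail than the paper in justifying $t_{\theta(1)}<0$ via the location of the zeros $c_{\theta(1)}<d_{\theta(1)}$ of $\Psi_{\theta(1)}^{\mu}$ together with Lemma~\ref{mplus}.
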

    \begin{proof}
    	By the definition of $m^{-}(a,\mu)$ and $m_{r}^{-}(a,\mu)$, we have $m^{-}(a,\mu)\leqslant m_{r}^{-}(a,\mu)$. For every $u\in\mathcal{P}_{a,\mu}^{-}$, let $v=\lvert u\rvert^*$, the Schwarz rearrangement of $\lvert u\rvert$, then
    	\[E_{\mu}(s\star v)\leqslant E_{\mu}(s\star u)\quad\forall s\in\mathbb{R}.\]
    	Therefore, by lemma \ref{structure},
    	\[m_{r}^{-}(a,\mu)\leqslant E_{\mu}(t_{v}\star v)\leqslant E_{\mu}(t_{v}\star u)\leqslant E_{\mu}(u),\]
    	which implies $m_{r}^{-}(a,\mu)\leqslant m^{-}(a,\mu)$.
    	
    	Next, we prove $m_{r}^{-}(a,\mu)=\sigma(a,\mu)$.
    	For every $u\in\mathcal{P}_{a,\mu}^{-}\cap S_{a,r}$, choosing $s_{0}$ such that $E_{\mu}(s_{0}\star u)\leqslant 2m^{+}(a,\mu)$ and defining
    	\[\gamma_{u}: \tau\in [0,1]\longmapsto\big(0,((1-\tau)s_{u}+\tau s_{0}\star u)\big)\in\mathbb{R}\times S_{a,r}.\]
    	By lemma \ref{structure}, $\gamma_{u}\in\Gamma$. Thus
    	\begin{align*}
    		\sigma(a,\mu)&\leqslant\max_{\tau\in [0,1]}\tilde{E}_{\mu}\big(0,((1-\tau)s_{u}+\tau s_{0})\star u\big)\\
    		&=\max_{\tau\in [0,1]}E_{\mu}\big(((1-\tau)s_{u}+\tau s_{0})\star u\big)\\
    		&\leqslant E_{\mu}(t_{u}\star u)=E_{\mu}(u),
    	\end{align*}
    	which implies $\sigma(a,\mu)\leqslant m_{r}^{-}(a,\mu)$.
    	
    	For every $\gamma\in\Gamma$, we have $\gamma(0)\in(0,\mathcal{P}_{a,\mu}^{+})$ and $\gamma(1)\in E^{2m^{+}(a,\mu)}$. Then, by Lemma \ref{structure}, we know
    	$t_{\theta(0)}>0>t_{\theta(1)}$ and since $t_{\alpha(\tau)\star\theta(\tau)}$ is continuous for $\tau$, there exists $\tau_{\gamma}\in [0,1]$ such that $t_{\alpha(\tau_{\gamma})\star\theta(\tau_{\gamma})}=0$. This implies
    	\begin{equation*}
    		\max_{(\alpha,\theta)\in\gamma([0,1])}\tilde{E}_{\mu}(\alpha,\theta)\geqslant\tilde{E}_{\mu}\big(\alpha(\tau_{\gamma}),\theta(\tau_{\gamma})\big)=E_{\mu}\big(\alpha(\tau_{\gamma})\star\theta(\tau_{\gamma})\big)\geqslant m_{r}^{-}(a,\mu).
    	\end{equation*}
    	Therefore, $\sigma(a,\mu)\geqslant m_{r}(a,\mu)$.
    \end{proof}

    \noindent\textbf{Proof of Theorem \ref{th2}:} Let
    \[X=\mathbb{R}\times S_{a,r},\quad\mathcal{F}=\Gamma,\quad and\quad B=(0,A_{k})\cup(0,E^{2m^{+}(a,\mu)}).\]
    Then, using the terminology in \cite[definition 5.1]{gn}, $\Gamma$ is a homotopy stable family of compact subset of $\mathbb{R}\times S_{a,r}$ with extend closed boundary $(0,A_{k})\cup(0,E^{2m^{+}(a,\mu)})$. Let
    \[\varphi=\tilde{E}_{\mu}(s,u),\quad c=\sigma(a,\mu),\quad and\quad F=\Big\{(s,u)\in\mathbb{R}\times S_{a,r}: \tilde{E}_{\mu}(s,u)\geqslant c\Big\},\]
    we can check that $F$ satisfies assumptions (F'1) and (F'2) in \cite[theorem 5.2]{gn}.

    Taking a minimizing sequence $\{\gamma_{n}=(\alpha_{n},\theta_{n})\}\subset\Gamma$ for $\sigma(a,\mu)$ with properties that $\alpha_{n}\equiv 0$ and $\theta_{n}\geqslant 0$ for every $\tau\in [0,1]$(if this is not the case, we just have to notice that $\{(0,\alpha_{n}\star\theta_{n})\}$ is also a minimizing sequence). Then, by \cite[theorem 5.2]{gn}, there exists a PS sequence $\{(s_{n}\star w_{n})\}\subset\mathbb{R}\times S_{a,r}$ for $\tilde{E}_{\mu}|_{\mathbb{R}\times S_{a,r}}$ at level $\sigma(a,\mu)$, that is
    \begin{equation}\label{PS}
    	\partial_{s}\tilde{E}_{\mu}(s_{n},w_{n})\rightarrow 0,\quad and\quad\lVert\partial_{u}\tilde{E}_{\mu}(s_{n},w_{n})\rVert_{(T_{w_{n}}S_{a,r})^*}\rightarrow 0\quad as\quad n\rightarrow\infty.
    \end{equation}
    Moreover,
    \begin{equation}\label{dist}
    	\lvert s_{n}\rvert+dist_{W^{1,p}}\big(w_{n},\theta_{n}([0,1])\big)\rightarrow 0\quad as\quad n\rightarrow\infty.
    \end{equation}
    Thus, we have
    \[E_{\mu}(s_{n}\star w_{n})=\tilde{E}_{\mu}(s_{n},w_{n})\rightarrow\sigma(a,\mu),\quad as\quad n\rightarrow\infty,\]
    and
    \begin{align*}
    	dE_{\mu}(s_{n}\star w_{n})(s_{n}\star\varphi)&=\partial_{u}\tilde{E}_{\mu}(0,s_{n}\star w_{n})(s_{n}\star\varphi)\\
    	&=\partial_{u}\tilde{E}_{\mu}(s_{n},w_{n})\varphi\\
    	&=o(1)\lVert\varphi\rVert=o(1)\lVert s_{n}\star\varphi\rVert
    \end{align*}
    for every $\varphi\in T_{w_{n}}S_{a,r}$, which implies $\{u_{n}\}:=\{s_{n}\star w_{n}\}$ is a PS sequence for $E_{\mu}|_{S_{a,r}}$ at level $\sigma(a,\mu)$. Since $E_{\mu}$ is invariant under rotations, by \cite[theorem 2.2]{kjom}, $\{u_{n}\}$ is also a PS sequence for $E_{\mu}|_{S_{a}}$ at level $\sigma(a,\mu)$.

    From (\ref{PS}), we have
    \[P_{\mu}(u_{n})=P_{\mu}(s_{n}\star w_{n})=\partial_{s}\tilde{E}_{\mu}(s_{n},w_{n})\rightarrow 0\]
    as $n\rightarrow\infty$. Thus, by proposition \ref{compactnesslemma}, Lemma \ref{m-<m+1} and \ref{m-<m+2}, one of the cases in proposition \ref{compactnesslemma} holds. If case (i) occurs, we have $u_{n}\rightharpoonup u$ in $W^{1,p}(\mathbb{R}^N)$ and
    \begin{equation}\label{eless0}
    	E_{\mu}(u)\leqslant m^{-}(a,\mu)-\frac{1}{N}S^{\frac{N}{p}}.
    \end{equation}
    Since $u$ solves (\ref{equation}) for some $\lambda<0$, by Theorem \ref{th1} and Lemma \ref{decreasing},
        \[E_{\mu}(u)\geqslant m^{+}\big(\lVert u\rVert_{p},\mu\big)\geqslant m^{+}(a,\mu).\]
    therefore,
        \[m^{+}(a,\mu)\leqslant m^{-}(a,\mu)-\frac{1}{N}S^{\frac{N}{p}},\]
    which contradicts with Lemma \ref{m-<m+1} and \ref{m-<m+2}. This implies that case (ii) in proposition \ref{compactnesslemma} holds, that is $u_{n}\rightarrow u\in S_{a,r}$ in $W^{1,p}(\mathbb{R}^N)$, and $u$ solves (\ref{equation}) for some $\lambda<0$. Moreover, noticing that $\theta_{n}(\tau)\geqslant 0$ for every $\tau\in [0,1]$, then (\ref{dist}) implies $u$ is non-negative and hence positive by strong maximum principle.$\hfill\qed$

\section{\textbf{Existence result to the case $q=p+\frac{p^2}{N}$}}\label{critical}

    In this section, we prove Theorem \ref{th3} for $q=p+\frac{p^2}{N}$. Firstly, we analyzed the properties of $E_{\mu}$ and $\mathcal{P}_{a,\mu}$, and then we construct a mini-max structure.
    \begin{lemma}\label{empty2}
     	$\mathcal{P}_{a,\mu}^{0}=\emptyset$, and $\mathcal{P}_{a,\mu}$ is a smooth manifold of co-dimension $2$ in $W^{1,p}(\mathbb{R}^N)$.
    \end{lemma}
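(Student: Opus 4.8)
The plan is to follow the same two-step scheme as in Lemma \ref{empty}, observing that in the present regime $q=p+\frac{p^{2}}{N}$ one has $q\gamma_{q}=p$, which actually shortens the argument.

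\emph{Step 1 (the set $\mathcal{P}_{a,\mu}^{0}$ is empty).} Assume $u\in\mathcal{P}_{a,\mu}^{0}$. Then $P_{\mu}(u)=0$ gives $\lVert\nabla u\rVert_{p}^{p}=\mu\gamma_{q}\lVert u\rVert_{q}^{q}+\lVert u\rVert_{p^{*}}^{p^{*}}$, and $(\Psi_{u}^{\mu})''(0)=0$ gives $p\lVert\nabla u\rVert_{p}^{p}=\mu q\gamma_{q}^{2}\lVert u\rVert_{q}^{q}+p^{*}\lVert u\rVert_{p^{*}}^{p^{*}}$. Since $q\gamma_{q}=p$ we have $q\gamma_{q}^{2}=p\gamma_{q}$, so the second identity becomes $\lVert\nabla u\rVert_{p}^{p}=\mu\gamma_{q}\lVert u\rVert_{q}^{q}+\frac{p^{*}}{p}\lVert u\rVert_{p^{*}}^{p^{*}}$; subtracting the first identity from it yields $0=\frac{p^{*}-p}{p}\lVert u\rVert_{p^{*}}^{p^{*}}$, hence $u\equiv 0$, contradicting $u\in S_{a}$. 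Notice this step needs no smallness assumption on $\mu$.

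\emph{Step 2 ($\mathcal{P}_{a,\mu}$ is a smooth manifold of codimension $2$).} This is proved essentially verbatim as in Lemma \ref{empty}: since $P_{\mu}$ and $G(u):=\lVert u\rVert_{p}^{p}-a^{p}$ are of class $C^{1}$ on $W^{1,p}(\mathbb{R}^{N})$, it suffices to show that $\big(dG(u),dP_{\mu}(u)\big):W^{1,p}(\mathbb{R}^{N})\to\mathbb{R}^{2}$ is surjective at each $u\in\mathcal{P}_{a,\mu}$, and this reduces to exhibiting $\varphi\in T_{u}S_{a}$ with $dP_{\mu}(u)[\varphi]\neq 0$. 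If no such $\varphi$ existed, then $u$ would be a constrained critical point of $P_{\mu}|_{S_{a}}$, so the Lagrange multiplier rule would produce $\nu\in\mathbb{R}$ with $-\Delta_{p}u=\nu\lvert u\rvert^{p-2}u+\frac{\mu q\gamma_{q}}{p}\lvert u\rvert^{q-2}u+\frac{p^{*}}{p}\lvert u\rvert^{p^{*}-2}u$ in $\mathbb{R}^{N}$; testing this with $u$ and combining it with the Pohozaev identity of Lemma \ref{Pohozaevf} to eliminate $\nu a^{p}$ gives $p\lVert\nabla u\rVert_{p}^{p}=\mu q\gamma_{q}^{2}\lVert u\rVert_{q}^{q}+p^{*}\lVert u\rVert_{p^{*}}^{p^{*}}$, i.e. $u\in\mathcal{P}_{a,\mu}^{0}$, contradicting Step 1.

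I do not expect a genuine obstacle: the identity $q\gamma_{q}=p$ collapses the delicate inequality chain that is needed for Lemma \ref{empty} (where one invokes the monotonicity of $t\mapsto\log t/(t-1)$) into a single cancellation, and Step 2 is identical to the subcritical case. The only point requiring a little care is the Pohozaev bookkeeping in Step 2, which is the same computation as in Proposition \ref{Pohozaev} carried out with the coefficients $\frac{\mu q\gamma_{q}}{p}$ and $\frac{p^{*}}{p}$ in place of $\mu$ and $1$.
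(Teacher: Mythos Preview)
Your proposal is correct and follows essentially the same approach as the paper: in Step 1 the paper subtracts the two defining relations using $q\gamma_{q}=p$ to force $\lVert u\rVert_{p^{*}}=0$, exactly as you do, and for Step 2 the paper simply refers back to the proof of Lemma \ref{empty}, which is precisely the Lagrange-multiplier/Pohozaev argument you spell out.
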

    \begin{proof}
 	    If $u\in\mathcal{P}_{a,\mu}^{0}$, we have
 	        \[\lVert\nabla u\rVert_{p}^p=\mu\gamma_{q}\lVert u\rVert_{q}^q+\lVert u\rVert_{p^*}^{p^*},\quad and\quad p\lVert\nabla u\rVert_{p}^p=\mu q\gamma_{q}^2\lVert u\rVert_{q}^q+p^*\lVert u\rVert_{p^*}^{p^*},\]
 	    which implies $\lVert u\rVert_{p^*}=0$(since $q\gamma_{q}=p$). But we know this impossible since $u\in S_{a}$. The rest of the proof is similar to one of the Lemma \ref{empty}, and hence is omitted.
    \end{proof}

    \begin{lemma}\label{structure2}
    	For every $u\in S_{a}$, the function $\Psi_{u}^{\mu}$ has a unique critical point $t_{u}$ which is a strict maximum point at positive level. Moreover:
    	
    	{\rm (i)}
    	\begin{minipage}[t]{\linewidth}
    		$\mathcal{P}_{a,\mu}=\mathcal{P}_{a,\mu}^{-}$, and $s\star u\in\mathcal{P}_{a,\mu}$ if and only if $s=t_{u}$.
    	\end{minipage}

        {\rm (ii)}
        \begin{minipage}[t]{\linewidth}
        	$t_{u}<0$ is and only if $P_{\mu}(u)<0$.
        \end{minipage}
    	
    	{\rm (iii)}
    	\begin{minipage}[t]{\linewidth}
    		The map $u\in  S_{a}\longmapsto t_{u}\in\mathbb{R}$ is of class $C^1$.
    	\end{minipage}
    \end{lemma}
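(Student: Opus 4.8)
The plan is to exploit the fact that for $q=p+\frac{p^2}{N}$ one has $q\gamma_q=p$ (hence $q(1-\gamma_q)=\frac{p^2}{N}$), which collapses $\Psi_u^\mu$ into a two-exponential function, so that everything follows from elementary one-variable calculus. First I would rewrite
\[
\Psi_u^\mu(s)=A(u)\,e^{ps}-B(u)\,e^{p^*s},\qquad A(u):=\frac1p\lVert\nabla u\rVert_p^p-\frac{\mu}{q}\lVert u\rVert_q^q,\quad B(u):=\frac1{p^*}\lVert u\rVert_{p^*}^{p^*}.
\]
Plainly $B(u)>0$ for $u\in S_a$, and by the Gagliardo--Nirenberg inequality with $\lVert u\rVert_p=a$ one has $\lVert u\rVert_q^q\le C_{N,p,q}^q a^{p^2/N}\lVert\nabla u\rVert_p^p$, so $A(u)\ge\big(\frac1p-\frac{\mu}{q}C_{N,p,q}^q a^{p^2/N}\big)\lVert\nabla u\rVert_p^p>0$ under assumption (\ref{muconcri}). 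This positivity of the coefficient of $e^{ps}$ is the only place (\ref{muconcri}) enters, and it is exactly what gives the clean structure.

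Next I would analyze $g(s)=Ae^{ps}-Be^{p^*s}$ with $A,B>0$ and $p<p^*$. From $g'(s)=e^{ps}\big(pA-p^*Be^{(p^*-p)s}\big)$ we see $g$ has the single critical point $t_u=\frac{1}{p^*-p}\log\frac{pA(u)}{p^*B(u)}$, with $g'>0$ on $(-\infty,t_u)$ and $g'<0$ on $(t_u,+\infty)$; thus $t_u$ is a strict global maximum. Using $e^{(p^*-p)t_u}=pA/(p^*B)$ one gets $\Psi_u^\mu(t_u)=\big(1-\tfrac{p}{p^*}\big)A(u)\,e^{pt_u}>0$ and $(\Psi_u^\mu)''(t_u)=p(p-p^*)A(u)\,e^{pt_u}<0$, which proves the main assertion of the lemma.

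The three items then follow quickly. For (i): since $s\star u\in\mathcal{P}_{a,\mu}$ if and only if $(\Psi_u^\mu)'(s)=0$ (via the identity $P_\mu(s\star u)=(\Psi_u^\mu)'(s)$, immediate from the definitions and already noted in the Introduction), uniqueness of the critical point gives $s\star u\in\mathcal{P}_{a,\mu}\iff s=t_u$; and $(\Psi_u^\mu)''(t_u)<0$ shows $t_u\star u\in\mathcal{P}_{a,\mu}^{-}$, so $\mathcal{P}_{a,\mu}\subseteq\mathcal{P}_{a,\mu}^{-}$, while the reverse inclusion is trivial and $\mathcal{P}_{a,\mu}^{0}=\emptyset$ by Lemma \ref{empty2}. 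For (ii): $P_\mu(u)=(\Psi_u^\mu)'(0)$, and the strict monotonicity of $\Psi_u^\mu$ on each side of $t_u$ gives $P_\mu(u)<0$ iff $0$ lies to the right of $t_u$, i.e.\ $t_u<0$. For (iii): apply the implicit function theorem to $\Phi(s,u):=(\Psi_u^\mu)'(s)$, which is jointly $C^1$ because $u\mapsto\lVert\nabla u\rVert_p^p,\ \lVert u\rVert_q^q,\ \lVert u\rVert_{p^*}^{p^*}$ are $C^1$ on $W^{1,p}(\mathbb{R}^N)$, satisfies $\Phi(t_u,u)=0$, and has $\partial_s\Phi(t_u,u)=(\Psi_u^\mu)''(t_u)\ne 0$. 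I do not expect any genuine obstacle here: the entire argument is driven by the collapse $q\gamma_q=p$, and the only point requiring care is verifying that (\ref{muconcri}) is precisely the threshold keeping $A(u)>0$ for every $u\in S_a$.
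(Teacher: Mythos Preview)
Your proof is correct. The approach is in the same spirit as the paper's---both rely on the monotonicity structure of $\Psi_u^\mu$ and the implicit function theorem for part (iii)---but yours is more explicit and self-contained. The paper simply writes ``the rest of the proof is similar to Lemma~\ref{structure}'' and, for the one part it does prove (namely $\mathcal{P}_{a,\mu}=\mathcal{P}_{a,\mu}^-$), argues that any $u\in\mathcal{P}_{a,\mu}$ has $t_u=0$ a strict maximum, so $(\Psi_u^\mu)''(0)\le 0$, and then invokes Lemma~\ref{empty2} to exclude $\mathcal{P}_{a,\mu}^0$. You instead exploit the collapse $q\gamma_q=p$ directly to reduce $\Psi_u^\mu$ to the two-term function $A(u)e^{ps}-B(u)e^{p^*s}$, obtain the closed formula $t_u=\frac{1}{p^*-p}\log\frac{pA(u)}{p^*B(u)}$, and compute $(\Psi_u^\mu)''(t_u)=p(p-p^*)A(u)e^{pt_u}<0$ explicitly. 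This buys you two things: a transparent identification of where assumption~(\ref{muconcri}) is used (precisely to force $A(u)>0$ via Gagliardo--Nirenberg), and a direct proof that every element of $\mathcal{P}_{a,\mu}$ lies in $\mathcal{P}_{a,\mu}^-$ without needing Lemma~\ref{empty2} as a separate input (your reference to it is in fact redundant). The paper's route has the advantage of economy---it reuses machinery already built---while yours makes the $L^p$-critical case stand on its own.
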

    \begin{proof}
    	Here we just prove $\mathcal{P}_{a,\mu}=\mathcal{P}_{a,\mu}^{-}$, the rest of the proof is similar to Lemma \ref{structure}. For every $u\in\mathcal{P}_{a,\mu}$, we have $t_{u}=0$ and hence $0$ is a strict maximum point of $\Psi_{u}^{\mu}$. Now, $\big(\Psi_{u}^{\mu}\big)''(0)\leqslant 0$ implies $u\in\mathcal{P}_{a,\mu}^{0}\cup\mathcal{P}_{a,\mu}^{-}$. By lemma \ref{empty2}, we obtain $u\in\mathcal{P}_{a,\mu}^{-}$.
    \end{proof}

    \begin{lemma}
    	We have $m(a,\mu)=m^{-}(a,\mu)>0$.
    \end{lemma}
    \begin{proof}
    	By lemma \ref{structure2}, we know $m(a,\mu)=m^{-}(a,\mu)$. If $u\in\mathcal{P}_{a,\mu}$, then
    	    \[\lVert\nabla u\rVert_{p}^p=\mu\gamma_{q}\lVert u\rVert_{q}^q+\lVert u\rVert_{p^*}^{p^*}.\]
    	Using the Gagliardo-Nirenberg inequality and Sobolev inequality, we have
    	    \[\lVert\nabla u\rVert_{p}^p\leqslant\frac{\mu}{q}C_{N,q}^qa^{\frac{p^2}{N}}\lVert\nabla u\rVert_{p}^p+S^{-\frac{p^*}{p}}\lVert\nabla u\rVert_{p}^{p^*}.\]
    	Combining (\ref{muconcri}), we derive that
    	    \[\inf_{u\in\mathcal{P}_{a,\mu}}\lVert\nabla u\rVert_{p}>0.\]
    	For every $u\in\mathcal{P}_{a,\mu}$, we have
    	    \[E_{\mu}(u)=\frac{1}{N}\lVert\nabla u\rVert_{p}^p-\frac{p\mu}{Nq}\lVert u\rVert_{q}^q\geqslant\frac{1}{N}\Big(1-\frac{p}{q}C_{N,q}^{q}\mu a^{\frac{p^2}{N}}\Big)\lVert\nabla u\rVert_{p}^p,\]
    	which implies $m(a,\mu)>0$.
    \end{proof}

    \begin{lemma}\label{neighbor}
    	There exists $k>0$ sufficiently small such that
    	    \[0<\sup_{u\in A_{k}}E_{\mu}(u)<m(a,\mu),\]
    	and
    	    \[E_{\mu}(u), P_{\mu}(u)>0\quad\forall u\in A_{k},\]
    	where $A_{k}=\Big\{u\in S_{a}: \lVert\nabla u\rVert_{p}\leqslant k\Big\}$.
    \end{lemma}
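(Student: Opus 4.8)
The plan is to bound $E_{\mu}$ and $P_{\mu}$ from below on $A_{k}$ by elementary coercivity estimates, using that in the $L^{p}$-critical case $q=p+\frac{p^2}{N}$ one has $q\gamma_{q}=p$ and $\gamma_{q}=\tfrac{p}{q}$. Consequently, for every $u\in S_{a}$ the Gagliardo--Nirenberg inequality reads
$$\lVert u\rVert_{q}^{q}\leqslant C_{N,p,q}^{q}\lVert\nabla u\rVert_{p}^{q\gamma_{q}}\lVert u\rVert_{p}^{q(1-\gamma_{q})}=C_{N,p,q}^{q}\,a^{p^2/N}\lVert\nabla u\rVert_{p}^{p},$$
while the Sobolev inequality gives $\lVert u\rVert_{p^*}^{p^*}\leqslant S^{-p^*/p}\lVert\nabla u\rVert_{p}^{p^*}$.

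First I would plug these into the definitions of $E_{\mu}$ and $P_{\mu}$ to get, for $u\in A_{k}$,
$$E_{\mu}(u)\geqslant\Big(\tfrac1p-\tfrac{\mu}{q}C_{N,p,q}^{q}a^{p^2/N}\Big)\lVert\nabla u\rVert_{p}^{p}-\tfrac{1}{p^*}S^{-p^*/p}\lVert\nabla u\rVert_{p}^{p^*},$$
$$P_{\mu}(u)\geqslant\big(1-\mu\gamma_{q}C_{N,p,q}^{q}a^{p^2/N}\big)\lVert\nabla u\rVert_{p}^{p}-S^{-p^*/p}\lVert\nabla u\rVert_{p}^{p^*}.$$
Assumption (\ref{muconcri}), namely $\mu a^{p^2/N}<\tfrac{q}{pC_{N,p,q}^{q}}$, makes the first coefficient $\tfrac1p-\tfrac{\mu}{q}C_{N,p,q}^{q}a^{p^2/N}$ strictly positive; since $\gamma_{q}=p/q$, the very same inequality gives $\mu\gamma_{q}C_{N,p,q}^{q}a^{p^2/N}<1$, so the second coefficient is strictly positive as well. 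Writing each lower bound in the form $\lVert\nabla u\rVert_{p}^{p}\big(\delta-c\,\lVert\nabla u\rVert_{p}^{p^*-p}\big)$ with $\delta,c>0$, and noting that $\lVert\nabla u\rVert_{p}>0$ for every $u\in S_{a}$ (a non-zero constant cannot lie in $L^{p}(\mathbb{R}^N)$), both quantities are strictly positive whenever $\lVert\nabla u\rVert_{p}$ is below the threshold $(\delta/c)^{1/(p^*-p)}$. Taking $k$ below this threshold (for both estimates) yields $E_{\mu}(u),P_{\mu}(u)>0$ on $A_{k}$, and since $A_{k}\neq\emptyset$ this already gives $\sup_{u\in A_{k}}E_{\mu}(u)>0$.

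For the remaining upper bound, since $\mu>0$ the two subtracted terms in $E_{\mu}$ are non-positive, hence $E_{\mu}(u)\leqslant\tfrac1p\lVert\nabla u\rVert_{p}^{p}\leqslant\tfrac1p k^{p}$ for all $u\in A_{k}$. By the previous lemma $m(a,\mu)>0$, so shrinking $k$ further so that $\tfrac1p k^{p}<m(a,\mu)$ forces $\sup_{u\in A_{k}}E_{\mu}(u)<m(a,\mu)$. Choosing $k$ to be the minimum of the positivity threshold above and $(p\,m(a,\mu))^{1/p}$ (up to a factor, to keep the last inequality strict) gives all the claimed assertions at once. The argument is essentially routine; the only point requiring care is recognizing that (\ref{muconcri}) is precisely the hypothesis that renders both leading coefficients positive, which relies on the identities $q\gamma_{q}=p$ and $\gamma_{q}=p/q$ holding exactly when $q=p+\frac{p^2}{N}$.
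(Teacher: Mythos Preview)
Your proposal is correct and follows essentially the same approach as the paper: both use the Gagliardo--Nirenberg and Sobolev inequalities to bound $E_{\mu}$ and $P_{\mu}$ from below by expressions of the form $\lVert\nabla u\rVert_{p}^{p}(\delta-c\lVert\nabla u\rVert_{p}^{p^*-p})$, invoke (\ref{muconcri}) together with $\gamma_{q}=p/q$ to make $\delta>0$, and then use the trivial upper bound $E_{\mu}(u)\leqslant\tfrac1p\lVert\nabla u\rVert_{p}^{p}$ combined with $m(a,\mu)>0$ from the preceding lemma. Your write-up is in fact more explicit than the paper's, which simply records the two inequalities and says ``thus we can choose suitable $k>0$''.
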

    \begin{proof}
    	By the Gagliardo-Nirenberg inequality and Sobolev inequality, we have
    	    \[P_{\mu}(u)\geqslant\Big(1-\frac{p}{q}C_{N,q}^q\mu a^{\frac{p^2}{N}}\Big)\lVert\nabla u\rVert_{p}^p-S^{-\frac{p^*}{p}}\lVert\nabla u\rVert_{p}^{p^*},\]
    	and
    	    \[\frac{1}{p}\lVert\nabla u\rVert_{p}^p\geqslant E_{\mu}(u)\geqslant\Big(\frac{1}{p}-\frac{1}{q}C_{N,q}^q\mu a^{\frac{p^2}{N}}\Big)\lVert\nabla u\rVert_{p}^p-\frac{1}{pS^{p^*/p}}\lVert\nabla u\rVert_{p}^{p^*}.\]
    	Thus, we can choose suitable $k>0$ such that the conclusion holds.
    \end{proof}

    By Lemma \ref{neighbor}, we can construct a mini-max structure. Let
        \[E^{c}:=\Big\{u\in S_{a}:E_{\mu}(u)\leqslant c\Big\}.\]
    We introduce the mini-max class
        \[\Gamma:=\Big\{\gamma=(\alpha,\theta)\in C([0,1],\mathbb{R}
        \times S_{a,r}): \gamma(0)\in(0,A_{k}), \gamma(1)\in(0,E^{0})\Big\},\]
    with associated mini-max level
        \[\sigma(a,\mu):=\inf_{\gamma\in\Gamma}\max_{(\alpha,\theta)\in\gamma([0,1])}\tilde{E}_{\mu}(\alpha,\theta),\]
    where
        \[\tilde{E}_{\mu}(s,u):=E_{\mu}(s*u).\]

    In order to use Proposition \ref{compactnesslemma}, we need the following lemmas.
    \begin{lemma}\label{msigma2}
    	We have  $m(a,\mu)=m_{r}(a,\mu)=\sigma(a,\mu)$.
    \end{lemma}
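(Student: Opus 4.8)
The plan is to run the same chain of inequalities as in the proof of Lemma~\ref{msigma}, adapted to the single--critical--point structure of $\Psi_u^\mu$ furnished by Lemma~\ref{structure2}, and to conclude by proving $m(a,\mu)=m_r(a,\mu)$ and $m_r(a,\mu)=\sigma(a,\mu)$ separately. For the first identity, $m(a,\mu)\le m_r(a,\mu)$ is clear since $\mathcal{P}_{a,\mu}\cap W^{1,p}_{rad}(\mathbb{R}^N)\subset\mathcal{P}_{a,\mu}$; for the reverse, I would fix $u\in\mathcal{P}_{a,\mu}$, let $v=\lvert u\rvert^{*}$ be the Schwarz rearrangement of $\lvert u\rvert$, so that $v\in S_{a,r}$, $\lVert\nabla v\rVert_p\le\lVert\nabla u\rVert_p$, $\lVert v\rVert_q=\lVert u\rVert_q$, $\lVert v\rVert_{p^*}=\lVert u\rVert_{p^*}$, and hence (the gradient term in $\Psi$ carrying the positive factor $\tfrac1pe^{ps}$) $E_\mu(s\star v)\le E_\mu(s\star u)$ for all $s\in\mathbb{R}$. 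By Lemma~\ref{structure2} there is a unique $t_v$ with $t_v\star v\in\mathcal{P}_{a,\mu}$, and $t_v\star v\in S_{a,r}$ because $\star$ preserves radiality and the $L^p$-norm; moreover $E_\mu(t_v\star v)=\max_sE_\mu(s\star v)$ while $u\in\mathcal{P}_{a,\mu}$ forces $t_u=0$, i.e. $E_\mu(u)=\max_sE_\mu(s\star u)$. Thus
\[
m_r(a,\mu)\le E_\mu(t_v\star v)\le E_\mu(t_v\star u)\le\max_{s\in\mathbb{R}}E_\mu(s\star u)=E_\mu(u),
\]
and taking the infimum over $u\in\mathcal{P}_{a,\mu}$ gives $m_r(a,\mu)\le m(a,\mu)$, hence equality.

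To prove $\sigma(a,\mu)\le m_r(a,\mu)$, I would fix $u\in\mathcal{P}_{a,\mu}\cap S_{a,r}$ and build an admissible path. Since $q\gamma_q=p$ and (\ref{muconcri}) holds, the coefficient $\lVert\nabla u\rVert_p^p-\tfrac{p\mu}{q}\lVert u\rVert_q^q$ of the leading exponential in $\Psi_u^\mu$ is positive, so $\Psi_u^\mu(s)\to0^+$ as $s\to-\infty$ and $\Psi_u^\mu(s)\to-\infty$ as $s\to+\infty$; also $\lVert\nabla(s\star u)\rVert_p=e^s\lVert\nabla u\rVert_p\to0$ as $s\to-\infty$, so $s\star u\in A_k$ for $s\ll0$, while $E_\mu(s\star u)<0$ for $s\gg0$. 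Choosing such $s_1\ll0$ and $s_2\gg0$, the path $\gamma_u(\tau)=\big(0,((1-\tau)s_1+\tau s_2)\star u\big)$ lies in $\Gamma$, whence
\[
\sigma(a,\mu)\le\max_{\tau\in[0,1]}\tilde E_\mu(\gamma_u(\tau))=\max_{\tau\in[0,1]}E_\mu\big(((1-\tau)s_1+\tau s_2)\star u\big)\le\max_{s\in\mathbb{R}}E_\mu(s\star u)=E_\mu(u),
\]
and passing to the infimum over $u$ gives $\sigma(a,\mu)\le m_r(a,\mu)$.

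For the reverse bound $\sigma(a,\mu)\ge m_r(a,\mu)$, I would take an arbitrary $\gamma=(\alpha,\theta)\in\Gamma$ and show the path must cross $\mathcal{P}_{a,\mu}$ at a radial point. Using $t_{s\star w}=t_w-s$, set $f(\tau)=t_{\theta(\tau)}-\alpha(\tau)$, continuous by Lemma~\ref{structure2}(iii). At $\tau=0$, $\alpha(0)=0$ and $\theta(0)\in A_k$, so $P_\mu(\theta(0))>0$ by Lemma~\ref{neighbor}; since $P_\mu(\theta(0))=(\Psi_{\theta(0)}^\mu)'(0)>0$ and $t_{\theta(0)}$ is the unique maximum point, $t_{\theta(0)}>0$, i.e. $f(0)>0$. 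At $\tau=1$, $\alpha(1)=0$ and $E_\mu(\theta(1))\le0$; if $t_{\theta(1)}>0$ then $\Psi_{\theta(1)}^\mu>0$ on $(-\infty,t_{\theta(1)})$ (a unique critical point which is a positive maximum, with $\Psi_{\theta(1)}^\mu(s)\to0^+$ at $-\infty$), forcing $E_\mu(\theta(1))=\Psi_{\theta(1)}^\mu(0)>0$, a contradiction; hence $f(1)\le0$. By the intermediate value theorem there is $\tau_\gamma$ with $f(\tau_\gamma)=0$, i.e. $\alpha(\tau_\gamma)\star\theta(\tau_\gamma)\in\mathcal{P}_{a,\mu}$, and since $\theta(\tau_\gamma)\in S_{a,r}$ and $\star$ preserves radiality this point lies in $\mathcal{P}_{a,\mu}\cap S_{a,r}$, so
\[
\max_{(\alpha,\theta)\in\gamma([0,1])}\tilde E_\mu(\alpha,\theta)\ge E_\mu\big(\alpha(\tau_\gamma)\star\theta(\tau_\gamma)\big)\ge m_r(a,\mu);
\]
the infimum over $\gamma\in\Gamma$ gives $\sigma(a,\mu)\ge m_r(a,\mu)$, completing $m(a,\mu)=m_r(a,\mu)=\sigma(a,\mu)$. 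I expect the only delicate point to be the sign bookkeeping for $f$ at the endpoints — in particular excluding $t_{\theta(1)}>0$, which rests on $\Psi_u^\mu$ being positive to the left of its unique maximum — together with checking that the crossing point inherits radial symmetry; everything else is routine given Lemmas~\ref{structure2} and~\ref{neighbor}.
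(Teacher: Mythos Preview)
Your proof is correct and follows essentially the same approach as the paper: Schwarz rearrangement for $m(a,\mu)=m_r(a,\mu)$, an explicit fiber path $\tau\mapsto(0,((1-\tau)s_1+\tau s_2)\star u)$ for $\sigma(a,\mu)\le m_r(a,\mu)$, and an intermediate-value crossing argument for $\sigma(a,\mu)\ge m_r(a,\mu)$. The only cosmetic difference is that the paper detects the crossing via the sign change of $\tau\mapsto P_\mu(\alpha(\tau)\star\theta(\tau))$ rather than your function $f(\tau)=t_{\theta(\tau)}-\alpha(\tau)=t_{\alpha(\tau)\star\theta(\tau)}$, but by Lemma~\ref{structure2}(ii) these are equivalent.
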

    \begin{proof}
    	The proof of $m(a,\mu)=m_{r}(a,\mu)$ is similar to Lemma \ref{msigma}, and hence we omit it. Next, we prove $m_{r}(a,\mu)=\sigma(a,\mu)$.
    	
    	For every $u\in\mathcal{P}_{a,\mu}\cap S_{a,r}$. By Lemma \ref{structure2}, $t_{u}=0$. Choosing $s_{0}<0<s_{1}$ such that $\lVert\nabla(s_{0}\star u)\rVert_{p}\leqslant k$ and $E_{\mu}(s_{1}\star u)\leqslant 0$, and Defining
    	    \[\gamma_{u}: \tau\in [0,1]\longmapsto\big(0,((1-\tau)s_{0}+\tau s_{1})\star u\big)\in\mathbb{R}\times S_{a,r},\]
    	then $\gamma_{u}\in\Gamma$. Thus
    	    \begin{align*}
    	    	\sigma(a,\mu)&\leqslant\max_{\tau\in [0,1]}\tilde{E}_{\mu}\big(0,((1-\tau)s_{0}+\tau s_{1})\star u\big)\\
    	    	&=\max_{\tau\in [0,1]}E_{\mu}\big(((1-\tau)s_{0}+\tau s_{1})\star u\big)\\
    	    	&\leqslant E_{\mu}(t_{u}\star u)=E_{\mu}(u),
    	    \end{align*}
        which implies $\sigma(a,\mu)\leqslant m_{r}(a,\mu)$.

        For every $\gamma\in\Gamma$, since $\gamma(0)\in(0,A_{k})$, by Lemma \ref{neighbor}, we have
        $P_{\mu}\big(\theta(0)\big)>0$. Now we claim that $P_{\mu}\big(\theta(1)\big)<0$. Indeed, since $\gamma(1)\in(0,E^{0})$, we have $E_{\mu}\big(\theta(1)\big)\leqslant 0$, that is $\Psi_{\theta(1)}^{\mu}(0)\leqslant 0$. Then, by Lemma \ref{structure2}, $t_{\theta(1)}<0$ and hence $P_{\mu}\big(\theta(1)\big)<0$. We know $\tau\longmapsto\alpha(\tau)\star\theta(\tau)$ is continuous for $[0,1]$ to $W^{1,p}(\mathbb{R}^N)$, so there exists $\tau_{\gamma}\in(0,1)$ such that $P_{\mu}\big(\alpha(\tau_{\gamma})\star\theta(\tau_{\gamma})\big)=0$. This implies
            \begin{equation*}
            	\max_{(\alpha,\theta)\in\gamma([0,1])}\tilde{E}_{\mu}(\alpha,\theta)\geqslant\tilde{E}_{\mu}\big(\alpha(\tau_{\gamma}),\theta(\tau_{\gamma})\big)=E_{\mu}\big(\alpha(\tau_{\gamma})\star\theta(\tau_{\gamma})\big)\geqslant m_{r}(a,\mu).
            \end{equation*}
        Therefore, $\sigma(a,\mu)\geqslant m_{r}(a,\mu)$.
    \end{proof}

    \begin{lemma}\label{energy2}
    	We have $m(a,\mu)<\frac{1}{N}S^{\frac{N}{p}}$.
    \end{lemma}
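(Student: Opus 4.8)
The plan is to bound $m(a,\mu)=m^{-}(a,\mu)$ from above by evaluating $E_{\mu}$ along a dilated normalized bubble and keeping track of the gain produced by the term $\mu\lvert u\rvert^{q-2}u$. Concretely, let $u_{\varepsilon}$ be the truncated extremal function of Appendix~\ref{A} and put $v_{\varepsilon}:=a\,u_{\varepsilon}/\lVert u_{\varepsilon}\rVert_{p}\in S_{a}$. By Lemma~\ref{structure2} there is a unique $t_{v_{\varepsilon}}\in\mathbb{R}$ with $t_{v_{\varepsilon}}\star v_{\varepsilon}\in\mathcal{P}_{a,\mu}=\mathcal{P}_{a,\mu}^{-}$, so that
\[
m(a,\mu)\leqslant E_{\mu}(t_{v_{\varepsilon}}\star v_{\varepsilon})=\max_{s\in\mathbb{R}}\Psi_{v_{\varepsilon}}^{\mu}(s).
\]
Because $q=p+\frac{p^{2}}{N}$ gives $q\gamma_{q}=p$, we can write $\Psi_{v_{\varepsilon}}^{\mu}(s)=A_{\varepsilon}e^{ps}-B_{\varepsilon}e^{p^{*}s}$ with $A_{\varepsilon}=\frac1p\lVert\nabla v_{\varepsilon}\rVert_{p}^{p}-\frac{\mu}{q}\lVert v_{\varepsilon}\rVert_{q}^{q}$ and $B_{\varepsilon}=\frac1{p^{*}}\lVert v_{\varepsilon}\rVert_{p^{*}}^{p^{*}}>0$; the Gagliardo--Nirenberg inequality together with $\lVert v_{\varepsilon}\rVert_{p}=a$ and assumption (\ref{muconcri}) force $A_{\varepsilon}>0$. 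An elementary one-variable maximization then gives
\[
\max_{s\in\mathbb{R}}\Psi_{v_{\varepsilon}}^{\mu}(s)=\frac{p}{N}\Big(\frac{N-p}{N}\Big)^{\frac{N-p}{p}}\,\frac{A_{\varepsilon}^{N/p}}{B_{\varepsilon}^{(N-p)/p}}.
\]

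The second step is to isolate the $\mu$-dependence. Writing $A_{\varepsilon}=\frac1p\lVert\nabla v_{\varepsilon}\rVert_{p}^{p}\,(1-x_{\varepsilon})$ with $x_{\varepsilon}=\tfrac{p\mu\lVert v_{\varepsilon}\rVert_{q}^{q}}{q\lVert\nabla v_{\varepsilon}\rVert_{p}^{p}}\in(0,1)$ and using the elementary inequality $(1-x)^{N/p}\leqslant 1-x$ for $x\in[0,1]$ (valid since $N/p\geqslant 1$), one obtains
\[
m(a,\mu)\leqslant\frac1N\Big(\frac{\lVert\nabla v_{\varepsilon}\rVert_{p}^{p}}{\lVert v_{\varepsilon}\rVert_{p^{*}}^{p}}\Big)^{N/p}-D_{\varepsilon},\qquad D_{\varepsilon}>0,
\]
where the leading term is scale invariant and hence equals $\tfrac1N\big(\lVert\nabla u_{\varepsilon}\rVert_{p}^{p}/\lVert u_{\varepsilon}\rVert_{p^{*}}^{p}\big)^{N/p}=\tfrac1N S^{N/p}+O(\varepsilon^{(N-p)/(p-1)})$ by the expansions of Appendix~\ref{A}. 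A direct computation, again using $p^{*}\tfrac{N-p}{p}=N$ so that the powers of the normalizing factor $a/\lVert u_{\varepsilon}\rVert_{p}$ cancel, shows $D_{\varepsilon}=c_{0}(1+o(1))\,\mu\,a^{p^{2}/N}\,\lVert u_{\varepsilon}\rVert_{q}^{q}\,\lVert u_{\varepsilon}\rVert_{p}^{-p^{2}/N}$ for some constant $c_{0}>0$.

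It then remains to check that $D_{\varepsilon}$ beats the remainder $O(\varepsilon^{(N-p)/(p-1)})$ as $\varepsilon\to 0$. Here I would insert the Appendix~\ref{A} asymptotics: since $p<q<p^{*}$ and $q-p=p^{2}/N$ one has $\lVert u_{\varepsilon}\rVert_{q}^{q}\sim C\varepsilon^{p^{2}/N}$, and it is precisely the hypothesis $N\geqslant p^{3/2}$ that makes the remainder in this expansion of strictly higher order (equivalently $\tfrac{(N-p)q}{p(p-1)}\geqslant\tfrac{p^{2}}{N}$). Combining with the asymptotics of $\lVert u_{\varepsilon}\rVert_{p}^{p}$ (a constant times $\varepsilon^{p}$ if $N>p^{2}$, carrying a logarithm if $N=p^{2}$, and requiring the separate $L^{p}$-estimate of Appendix~\ref{A} if $p^{3/2}\leqslant N<p^{2}$), one finds in every case that $D_{\varepsilon}$ tends to a strictly positive constant or, at worst, to $0$ strictly slower than $\varepsilon^{(N-p)/(p-1)}$; hence $m(a,\mu)<\tfrac1N S^{N/p}$ for $\varepsilon$ small. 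I expect this last comparison of orders to be the main obstacle: one must verify across all three dimension regimes that the gain from the subcritical term dominates the Sobolev remainder, which is exactly why the borderline restriction $N\geqslant p^{3/2}$ enters and cannot be dropped by this method.
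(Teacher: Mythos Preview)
Your proposal is correct and follows the same overall strategy as the paper: test $m(a,\mu)$ against a normalized truncated Aubin--Talenti bubble, project onto $\mathcal{P}_{a,\mu}^{-}$, and show that the gain coming from the $\mu\lvert u\rvert^{q-2}u$ term (which works out to $\sim C\mu a^{p^{2}/N}\lVert u_{\varepsilon}\rVert_{q}^{q}\lVert u_{\varepsilon}\rVert_{p}^{-p^{2}/N}$) beats the Sobolev remainder $O(\varepsilon^{(N-p)/(p-1)})$; the case split $N>p^{2}$, $N=p^{2}$, $p^{3/2}\leqslant N<p^{2}$ and the role of the threshold $N\geqslant p^{3/2}$ are exactly as in the paper.

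There is one technical difference worth recording. Because $q\gamma_{q}=p$, you compute $\max_{s}\Psi_{v_{\varepsilon}}^{\mu}(s)$ in closed form and then use $(1-x)^{N/p}\leqslant 1-x$ to peel off the $\mu$-contribution. The paper instead normalizes by the dilation $W_{\varepsilon}(x)=\big(a^{-1}\lVert u_{\varepsilon}\rVert_{p}\big)^{(N-p)/p}u_{\varepsilon}\big(a^{-1}\lVert u_{\varepsilon}\rVert_{p}\,x\big)$ (so that $\lVert\nabla W_{\varepsilon}\rVert_{p}^{p}=\lVert\nabla u_{\varepsilon}\rVert_{p}^{p}$ and $\lVert W_{\varepsilon}\rVert_{p^{*}}^{p^{*}}=\lVert u_{\varepsilon}\rVert_{p^{*}}^{p^{*}}$), shows the optimal dilation parameter $\tau_{\varepsilon}$ stays bounded, and then simply uses $\tfrac{1}{p}e^{p\tau_{\varepsilon}}-\tfrac{1}{p^{*}}e^{p^{*}\tau_{\varepsilon}}\leqslant\tfrac{1}{N}$. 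Both routes produce the identical gain term $C\lVert u_{\varepsilon}\rVert_{p}^{q(\gamma_{q}-1)}\lVert u_{\varepsilon}\rVert_{q}^{q}$ (note $q(\gamma_{q}-1)=-p^{2}/N$), so the final comparison of orders is the same. Your explicit-maximum argument is slightly cleaner here but relies on $q\gamma_{q}=p$; the paper's boundedness-of-$\tau_{\varepsilon}$ argument is more robust and is reused verbatim for the $L^{p}$-supercritical range $p+\tfrac{p^{2}}{N}<q<p^{*}$ in Lemma~\ref{energy3}, where no closed-form maximum is available.
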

    \begin{proof}
    	Let
    	    \begin{equation}
    		    W_{\varepsilon}(x)=\big(a^{-1}\lVert u_{\varepsilon,\tau}\rVert_{p}\big)^{\frac{N-p}{p}}u_{\varepsilon}\big(a^{-1}\lVert u_{\varepsilon}\rVert_{p}x\big),
    	    \end{equation}
    	Then, we have
    	    \[\lVert W_{\varepsilon}\rVert_{p}^p=a^p,\quad\lVert\nabla W_{\varepsilon}\rVert_{p}^p=\lVert\nabla u_{\varepsilon}\rVert_{p}^p,\quad\lVert W_{\varepsilon}\rVert_{p^*}^{p^*}=\lVert u_{\varepsilon}\rVert_{p^*}^{p^*},\]
    	and
    	    \[\lVert W_{\varepsilon}\rVert_{q}^q=(a\lVert u_{\varepsilon}\rVert_{p}^{-1})^{q(1-\gamma_{q})}\lVert u_{\varepsilon}\rVert_{q}^{q}.\]
    	Thus, there exists unique $\tau_{\varepsilon}\in\mathbb{R}$ such that $\tau_{\varepsilon}\star W_{\varepsilon}\in\mathcal{P}_{a,\mu}$. By the definition of $m(a,\mu)$, we have
    	    \begin{align}\label{m}
    	    	m(a,\mu)&\leqslant E_{\mu}(\tau_{\varepsilon}\star W_{\varepsilon})\nonumber\\
    	    	&=\frac{1}{p}e^{p\tau_{\varepsilon}}\lVert\nabla u_{\varepsilon}\rVert_{p}^p-\frac{\mu}{q}e^{q\gamma_{q}\tau_{\varepsilon}}\big(a\lVert u_{\varepsilon}\rVert_{p}^{-1}\big)^{q(1-\gamma_{q})}\lVert u_{\varepsilon}\rVert_{q}^q-\frac{1}{p^*}e^{p^*\tau_{\varepsilon}}\lVert u_{\varepsilon}\rVert_{p^*}^{p^*}
    	    \end{align}
        If $\liminf_{\varepsilon\rightarrow 0}=-\infty$ or $\limsup_{\varepsilon\rightarrow 0}=+\infty$, then
            \[m(a,\mu)\leqslant\liminf_{\varepsilon\rightarrow 0}E_{\mu}(\tau_{\varepsilon}\star W_{\varepsilon})\leqslant 0,\]
        which contradicts with Lemma \ref{neighbor}. Therefore, there exists $t_{2}>t_{1}$ such that $\tau_{\varepsilon}\in[t_{1},t_{2}]$. Now, (\ref{m}) implies
             \begin{align*}
            	m(a,\mu)&\leqslant S^{\frac{N}{p}}\Big(\frac{1}{p}e^{p\tau_{\varepsilon}}-\frac{1}{p^*}e^{p^*\tau_{\varepsilon}}\Big)+O(\varepsilon^{\frac{N-p}{p-1}})-C\lVert u_{\varepsilon}\rVert_{p}^{q(\gamma_{q}-1)}\lVert u_{\varepsilon}\rVert_{q}^q\\
            	&\leqslant\frac{1}{N}S^{\frac{N}{p}}+O(\varepsilon^{\frac{N-p}{p-1}})-\left\{\begin{array}{ll}
            		C&N>p^2\\
            		C\lvert\log\varepsilon\rvert^{-\frac{p}{N}}&N=p^2\\
            		C\varepsilon^{\frac{p(p^2-N)}{N(p-1)}}&p^{\frac{3}{2}}<N<p^2\\
            		C\varepsilon^{\frac{p^{3/2}-p}{p-1}}\lvert\log\varepsilon\rvert&N=p^\frac{3}{2}
            	\end{array}\right.\\
            &<\frac{1}{N}S^{\frac{N}{p}},
            \end{align*}
        by taking $\varepsilon$ sufficiently small.
    \end{proof}

    Now, we give the proof of Theorem \ref{th3} in the case $q=p+\frac{p^2}{N}$.\\

    \noindent\textbf{Proof of Theorem \ref{th3}:} Let
        \[X=\mathbb{R}\times S_{a,r},\quad\mathcal{F}=\Gamma,\quad and\quad B=(0,A_{k})\cup(0,E^{0}).\]
    Then, using the terminology in \cite[definition 5.1]{gn}, $\Gamma$ is a homotopy stable family of compact subset of $\mathbb{R}\times S_{a,r}$ with extend closed boundary $(0,A_{k})\cup(0,E^{0})$. Let
        \[\varphi=\tilde{E}_{\mu}(s,u),\quad c=\sigma(a,\mu),\quad and\quad F=\Big\{(s,u)\in\mathbb{R}\times S_{a,r}: \tilde{E}_{\mu}(s,u)\geqslant c\Big\},\]
    we can check that $F$ satisfies assumptions (F'1) and (F'2) in \cite[theorem 5.2]{gn}.

    Similar to the proof of Theorem \ref{th2}, there exists a PS sequence $\{(s_{n}\star w_{n})\}\subset\mathbb{R}\times S_{a,r}$ for $\tilde{E}_{\mu}|_{\mathbb{R}\times S_{a,r}}$ at level $\sigma(a,\mu)$
    and we can check that $\{u_{n}\}:=\{s_{n}\star w_{n}\}$ is a PS sequence for $E_{\mu}|_{S_{a,r}}$ at level $\sigma(a,\mu)$. Thus, $\{u_{n}\}$ is also a PS sequence for $E_{\mu}|_{S_{a}}$ at level $\sigma(a,\mu)$.

    Since
        \[P_{\mu}(u_{n})=P_{\mu}(s_{n}\star w_{n})=\partial_{s}\tilde{E}_{\mu}(s_{n},w_{n})\rightarrow 0\]
    as $n\rightarrow\infty$, by proposition \ref{compactnesslemma} and Lemma \ref{energy2}, one of the cases in proposition \ref{compactnesslemma} holds. If case (i) occurs, we have $u_{n}\rightharpoonup u$ in $W^{1,p}(\mathbb{R}^N)$ and
        \begin{equation}\label{eless02}
        	E_{\mu}(u)\leqslant m(a,\mu)-\frac{1}{N}S^{\frac{N}{p}}<0.
        \end{equation}
     Since $u$ solves (\ref{equation}) for some $\lambda<0$, by the Pohozaev identity $P_{\mu}(u)=0$, we can derive that
         \[E_{\mu}(u)=\frac{1}{N}\lVert u\rVert_{P^*}^{p^*}>0,\]
    a contradiction with (\ref{eless02}). This implies that case (ii) in proposition \ref{compactnesslemma} holds, that is $u_{n}\rightarrow u\in S_{a,r}$ in $W^{1,p}(\mathbb{R}^N)$, and $u$ solves (\ref{equation}) for some $\lambda<0$. Moreover, we can choose $u$ is non-negative and hence positive by strong maximum principle.

    It remains to show that $u$ is a ground state. This is a direct result by Proposition \ref{Pohozaev} and Lemma \ref{msigma}.$\hfill\qed$

\section{\textbf{Existence result for the case $p+\frac{p^2}{N}<q<p^*$}}
    In this section, we always assume that the condition in Theorem \ref{th3} holds and $p+\frac{p^2}{N}<q<p^*$. We will omit some of the proofs, since it is very similar to the proofs in Section \ref{critical}.

    \begin{lemma}\label{empty3}
    	$\mathcal{P}_{a,\mu}^{0}=\emptyset$, and $\mathcal{P}_{a,\mu}$ is a smooth manifold of co-dimension $2$ in $W^{1,p}(\mathbb{R}^N)$.
    \end{lemma}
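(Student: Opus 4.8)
The plan is to follow the template of Lemma \ref{empty} and Lemma \ref{empty2}, exploiting that now $q\gamma_{q}>p$, which in fact makes the argument for $\mathcal{P}_{a,\mu}^{0}=\emptyset$ unconditional (no upper bound on $\mu$ is needed, unlike in Lemma \ref{empty}). First I would suppose $u\in\mathcal{P}_{a,\mu}^{0}$. Then $P_{\mu}(u)=0$ reads $\lVert\nabla u\rVert_{p}^p=\mu\gamma_{q}\lVert u\rVert_{q}^q+\lVert u\rVert_{p^*}^{p^*}$, while $(\Psi_{u}^{\mu})''(0)=0$ gives $p\lVert\nabla u\rVert_{p}^p=\mu q\gamma_{q}^2\lVert u\rVert_{q}^q+p^*\lVert u\rVert_{p^*}^{p^*}$. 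Multiplying the first identity by $p$ and subtracting it from the second, one obtains
\[\mu\gamma_{q}(q\gamma_{q}-p)\lVert u\rVert_{q}^q+(p^*-p)\lVert u\rVert_{p^*}^{p^*}=0.\]
Since $q>p+\frac{p^2}{N}$ we have $q\gamma_{q}>p$, so both terms on the left are nonnegative; moreover $u\in S_{a}$ forces $u\not\equiv 0$, hence $\lVert u\rVert_{p^*}^{p^*}>0$ and the left-hand side is strictly positive, a contradiction. Thus $\mathcal{P}_{a,\mu}^{0}=\emptyset$.

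For the manifold assertion I would argue exactly as in Lemma \ref{empty}. The set $\mathcal{P}_{a,\mu}$ is the zero set of the $C^1$ map $u\mapsto(G(u),P_{\mu}(u))$ on $W^{1,p}(\mathbb{R}^N)$, where $G(u)=\lVert u\rVert_{p}^p-a^p$, so it suffices to show that $(dG(u),dP_{\mu}(u)):W^{1,p}(\mathbb{R}^N)\to\mathbb{R}^2$ is surjective for every $u\in\mathcal{P}_{a,\mu}$. As in Lemma \ref{empty}, surjectivity follows once one produces $\varphi\in T_{u}S_{a}$ with $dP_{\mu}(u)[\varphi]\neq 0$ (then the linear system in $\alpha,\theta$ for $\alpha\varphi+\theta u$ is solvable for every $(x,y)\in\mathbb{R}^2$). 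If no such $\varphi$ existed, $u$ would be a constrained critical point of $P_{\mu}|_{S_{a}}$, so by the Lagrange multiplier rule there is $\nu\in\mathbb{R}$ with
\[-\Delta_{p}u=\nu\lvert u\rvert^{p-2}u+\frac{\mu q\gamma_{q}}{p}\lvert u\rvert^{q-2}u+\frac{p^*}{p}\lvert u\rvert^{p^*-2}u\quad\text{in }\mathbb{R}^N.\]
Applying the Pohozaev identity of Lemma \ref{Pohozaevf} and testing this equation against $u$ yields $p\lVert\nabla u\rVert_{p}^p=\mu q\gamma_{q}^2\lVert u\rVert_{q}^q+p^*\lVert u\rVert_{p^*}^{p^*}$, i.e. $u\in\mathcal{P}_{a,\mu}^{0}$, contradicting the first part. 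Hence $\mathcal{P}_{a,\mu}$ is a smooth manifold of codimension $2$ in $W^{1,p}(\mathbb{R}^N)$.

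There is no serious obstacle here; the proof is a routine adaptation of Lemma \ref{empty} and Lemma \ref{empty2}. The only point requiring attention is the sign check in the displayed identity above, which is exactly where the hypothesis $q>p+\frac{p^2}{N}$ (equivalently $q\gamma_{q}>p$) enters and replaces the smallness condition (\ref{muconsub}) that was needed in Lemma \ref{empty}. In the write-up I would present the emptiness part in full and then note that the manifold part is "similar to the proof of Lemma \ref{empty}, hence omitted," matching the style already used for Lemma \ref{empty2}.
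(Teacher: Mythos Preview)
Your proposal is correct and matches the paper's own proof essentially verbatim: the paper derives the same identity $\mu\gamma_{q}(q\gamma_{q}-p)\lVert u\rVert_{q}^q+(p^*-p)\lVert u\rVert_{p^*}^{p^*}=0$ from $P_{\mu}(u)=0$ and $(\Psi_{u}^{\mu})''(0)=0$, then uses $q\gamma_{q}>p$ to reach a contradiction, and (implicitly) refers to Lemma~\ref{empty} for the manifold part. Your suggestion to present the emptiness argument in full and omit the manifold part with a reference is exactly what the paper does.
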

    \begin{proof}
    	If $u\in\mathcal{P}_{a,\mu}^{0}$, we have
    	    \[\lVert\nabla u\rVert_{p}^p=\mu\gamma_{q}\lVert u\rVert_{q}^q+\lVert u\rVert_{p^*}^{p^*},\quad and\quad p\lVert\nabla u\rVert_{p}^p=\mu q\gamma_{q}^2\lVert u\rVert_{q}^q+p^*\lVert u\rVert_{p^*}^{p^*},\]
    	which implies
    	    \[\mu\gamma_{q}(q\gamma_{q}-p)\lVert u\rVert_{q}^q+(p^*-p)\lVert u\rVert_{p^*}^{p^*}=0.\]
    	Since $q\gamma_{q}>p$, we know this impossible.
    \end{proof}

    \begin{lemma}\label{structure3}
    	For every $u\in S_{a}$, the function $\Psi_{u}^{\mu}$ has a unique critical point $t_{u}$ which is a strict maximum point at positive level. Moreover:
    	
    	{\rm (i)}
    	\begin{minipage}[t]{\linewidth}
    		$\mathcal{P}_{a,\mu}=\mathcal{P}_{a,\mu}^{-}$, and $s\star u\in\mathcal{P}_{a,\mu}$ if and only if $s=t_{u}$.
    	\end{minipage}
    	
    	{\rm (ii)}
    	\begin{minipage}[t]{\linewidth}
    		$t_{u}<0$ is and only if $P_{\mu}(u)<0$.
    	\end{minipage}
    	
    	{\rm (iii)}
    	\begin{minipage}[t]{\linewidth}
    		The map $u\in  S_{a}\longmapsto t_{u}\in\mathbb{R}$ is of class $C^1$.
    	\end{minipage}
    \end{lemma}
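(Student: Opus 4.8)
The plan is to repeat, essentially verbatim, the analysis of $\Psi_{u}^{\mu}$ carried out in Lemma \ref{structure} and Lemma \ref{structure2}; the only structural change is that now $q\gamma_{q}>p$, so that \emph{both} nonlinear exponents appearing in $\Psi_{u}^{\mu}$ exceed the exponent $p$ of the leading (positive) term, which makes the picture even simpler than in the borderline case $q=p+\frac{p^2}{N}$. First I would write
\[(\Psi_{u}^{\mu})'(s)=e^{ps}\Big(\lVert\nabla u\rVert_{p}^p-\mu\gamma_{q}e^{(q\gamma_{q}-p)s}\lVert u\rVert_{q}^q-e^{(p^*-p)s}\lVert u\rVert_{p^*}^{p^*}\Big)=:e^{ps}\big(\lVert\nabla u\rVert_{p}^p-g_{u}(s)\big),\]
and observe that, since $q\gamma_{q}-p>0$, $p^*-p>0$ and $\mu,\lVert u\rVert_{q},\lVert u\rVert_{p^*}>0$, the function $g_{u}$ is continuous, strictly increasing, with $g_{u}(-\infty)=0$ and $g_{u}(+\infty)=+\infty$. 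Hence $g_{u}(s)=\lVert\nabla u\rVert_{p}^p$ has a unique root $t_{u}$, which is therefore the unique critical point of $\Psi_{u}^{\mu}$, with $(\Psi_{u}^{\mu})'>0$ on $(-\infty,t_{u})$ and $(\Psi_{u}^{\mu})'<0$ on $(t_{u},+\infty)$; thus $t_{u}$ is a strict global maximum. Since $\Psi_{u}^{\mu}$ increases on $(-\infty,t_{u})$ and $\Psi_{u}^{\mu}(s)\to 0^{+}$ as $s\to-\infty$ (factor out $e^{ps}$), the maximum level $\Psi_{u}^{\mu}(t_{u})$ is positive.

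For (i), one checks directly that $P_{\mu}(s\star u)=(\Psi_{u}^{\mu})'(s)$ (in particular $P_{\mu}(u)=(\Psi_{u}^{\mu})'(0)$), so $s\star u\in\mathcal{P}_{a,\mu}$ iff $(\Psi_{u}^{\mu})'(s)=0$ iff $s=t_{u}$; the inclusion $\mathcal{P}_{a,\mu}^{-}\subseteq\mathcal{P}_{a,\mu}$ is trivial, and if $u\in\mathcal{P}_{a,\mu}$ then $t_{u}=0$ is a strict maximum, so $(\Psi_{u}^{\mu})''(0)\leqslant 0$, hence $<0$ by Lemma \ref{empty3}, i.e. $u\in\mathcal{P}_{a,\mu}^{-}$. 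For (ii), since $P_{\mu}(u)=(\Psi_{u}^{\mu})'(0)$ and the sign of $(\Psi_{u}^{\mu})'$ changes at $t_{u}$ from $+$ to $-$, we get $P_{\mu}(u)<0\iff 0>t_{u}$. For (iii), apply the implicit function theorem to $\Phi(s,u):=(\Psi_{u}^{\mu})'(s)$, which is $C^1$ on $\mathbb{R}\times S_{a}$ because $\lVert\nabla\cdot\rVert_{p}^p$, $\lVert\cdot\rVert_{q}^q$ and $\lVert\cdot\rVert_{p^*}^{p^*}$ are $C^1$ on $W^{1,p}(\mathbb{R}^N)$: one has $\Phi(t_{u},u)=0$ and $\partial_{s}\Phi(t_{u},u)=(\Psi_{u}^{\mu})''(t_{u})<0\neq 0$, so $u\mapsto t_{u}$ is $C^1$.

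There is no genuine obstacle here. The case $p+\frac{p^2}{N}<q<p^*$ is if anything cleaner than $q=p+\frac{p^2}{N}$, since the $L^q$-term is now genuinely $L^p$-supercritical rather than at the threshold; the only point requiring a line of care is the strict monotonicity of $g_{u}$ together with its limits at $\pm\infty$, which at once yields uniqueness of the critical point and the single sign change of $(\Psi_{u}^{\mu})'$. Everything else — positivity of the maximum level, the identification $\mathcal{P}_{a,\mu}=\mathcal{P}_{a,\mu}^{-}$, and the $C^1$ dependence — then follows mechanically once Lemma \ref{empty3} is in hand.
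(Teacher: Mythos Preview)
Your proof is correct and is precisely the argument the paper has in mind: the paper omits the proof of Lemma \ref{structure3} entirely, pointing back to Lemmas \ref{structure} and \ref{structure2}, and your write-up carries out exactly that program (with the clean observation that $g_{u}$ is strictly increasing since $q\gamma_{q}>p$, together with the use of Lemma \ref{empty3} to rule out $\mathcal{P}_{a,\mu}^{0}$).
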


    \begin{lemma}
    	We have $m(a,\mu)=m^{-}(a,\mu)>0$.
    \end{lemma}
    \begin{proof}
    	If $u\in\mathcal{P}_{a,\mu}$, then
    	    \[\lVert\nabla u\rVert_{p}^p=\mu\gamma_{q}\lVert u\rVert_{q}^q+\lVert u\rVert_{p^*}^{p^*}.\]
    	Using the Gagliardo-Nirenberg inequality and Sobolev inequality, we have
    	    \[\lVert\nabla u\rVert_{p}^p\leqslant\frac{\mu}{q}C_{N,q}^qa^{q(1-\gamma_{q})}\lVert\nabla u\rVert_{p}^{q\gamma_{q}}+S^{-\frac{p^*}{p}}\lVert\nabla u\rVert_{p}^{p^*}.\]
    	Since $q\gamma_{q}>p$, we derive that
    	    \[\inf_{u\in\mathcal{P}_{a,\mu}}\lVert\nabla u\rVert_{p}^p>0,\]
    	and hence by $P_{\mu}(u)=0$, we know
    	    \[\inf_{u\in\mathcal{P}_{a,\mu}}\big(\lVert u\rVert_{q}^q+\lVert u\rVert_{p^*}^{p^*}\big)>0\]
    	For every $u\in\mathcal{P}_{a,\mu}$, we have
    	    \[E_{\mu}(u)=\mu\gamma_{q}\Big(\frac{1}{p}-\frac{1}{q\gamma_{q}}\Big)\lVert u\rVert_{q}^q+\frac{1}{N}\lVert u\rVert_{p^*}^{p^*},\]
    	which implies $m(a,\mu)>0$.
    \end{proof}

    \begin{lemma}\label{neighbor2}
    	There exists $k>0$ sufficiently small such that
    	\[0<\sup_{u\in A_{k}}E_{\mu}(u)<m(a,\mu),\]
    	and
    	\[E_{\mu}(u), P_{\mu}(u)>0\quad\forall u\in A_{k},\]
    	where $A_{k}=\Big\{u\in S_{a}: \lVert\nabla u\rVert_{p}\leqslant k\Big\}$.
    \end{lemma}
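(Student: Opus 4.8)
The plan is to follow the same strategy as in Lemma~\ref{neighbor}, exploiting that in the present regime $q\gamma_{q}>p$ and $p^{*}>p$, so that near the origin the leading term $\lVert\nabla u\rVert_{p}^{p}$ dominates the nonlinear corrections. First I would record the two elementary lower bounds obtained from the Gagliardo--Nirenberg and Sobolev inequalities: for every $u\in S_{a}$, writing $t:=\lVert\nabla u\rVert_{p}$,
\[
P_{\mu}(u)\geqslant t^{p}-\mu\gamma_{q}C_{N,q}^{q}a^{q(1-\gamma_{q})}t^{q\gamma_{q}}-S^{-p^{*}/p}t^{p^{*}},
\]
\[
E_{\mu}(u)\geqslant \frac1p t^{p}-\frac{\mu}{q}C_{N,q}^{q}a^{q(1-\gamma_{q})}t^{q\gamma_{q}}-\frac{1}{p^{*}S^{p^{*}/p}}t^{p^{*}},
\]
together with the trivial upper bound $E_{\mu}(u)\leqslant \frac1p t^{p}$. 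Since $q\gamma_{q}>p$ and $p^{*}>p$, the right-hand sides of the first two displays are of the form $t^{p}\big(c_{0}+o(1)\big)$ with $c_{0}>0$ as $t\to 0^{+}$, hence strictly positive on some interval $(0,k_{1}]$; note also that $t=\lVert\nabla u\rVert_{p}>0$ for every $u\in S_{a}$, so no degenerate case arises.

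Next I would choose $k\in(0,k_{1}]$ so small that in addition $\tfrac1p k^{p}<m(a,\mu)$; this is possible because $m(a,\mu)>0$ by the previous lemma. With this $k$, every $u\in A_{k}$ satisfies $\lVert\nabla u\rVert_{p}\leqslant k\leqslant k_{1}$, so the lower bounds give $P_{\mu}(u)>0$ and $E_{\mu}(u)>0$, while the upper bound gives $E_{\mu}(u)\leqslant\tfrac1p k^{p}<m(a,\mu)$. Finally $A_{k}\neq\emptyset$ — for instance $s\star u_{0}\in A_{k}$ for a fixed $u_{0}\in S_{a}$ and $s$ sufficiently negative, since $\lVert\nabla(s\star u_{0})\rVert_{p}=e^{s}\lVert\nabla u_{0}\rVert_{p}$ — whence $\sup_{u\in A_{k}}E_{\mu}(u)\geqslant E_{\mu}(s\star u_{0})>0$. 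Combining these yields all the asserted inequalities.

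There is essentially no serious obstacle here; the only point requiring a little care is to pick a single $k$ that simultaneously forces positivity of $P_{\mu}$ and $E_{\mu}$ on $A_{k}$ and keeps $\sup_{A_{k}}E_{\mu}$ strictly below the positive threshold $m(a,\mu)$. This is exactly where the assumption $q\gamma_{q}>p$ (together with $p^{*}>p$) enters: it guarantees that both the $q$-term and the critical term are genuinely lower order as $\lVert\nabla u\rVert_{p}\to 0$, so that — unlike in Lemma~\ref{neighbor} — no smallness condition on $\mu a^{q(1-\gamma_{q})}$ such as (\ref{muconcri}) is needed for the local picture.
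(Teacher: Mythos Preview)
Your proposal is correct and follows essentially the same approach as the paper: both use the Gagliardo--Nirenberg and Sobolev inequalities to bound $P_{\mu}(u)$ and $E_{\mu}(u)$ from below by functions of $t=\lVert\nabla u\rVert_{p}$ that are positive for small $t$ (since $q\gamma_{q}>p$ and $p^{*}>p$), together with the upper bound $E_{\mu}(u)\leqslant\tfrac1p t^{p}$, and then choose $k$ small enough that $\tfrac1p k^{p}<m(a,\mu)$. Your write-up is in fact slightly more careful than the paper's terse version --- you make explicit that $A_{k}\neq\emptyset$, that $\lVert\nabla u\rVert_{p}>0$ on $S_{a}$, and why no smallness assumption on $\mu$ is needed in this regime.
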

    \begin{proof}
    	By the Gagliardo-Nirenberg inequality and Sobolev inequality, we have
    	    \[P_{\mu}(u)\geqslant\lVert\nabla u\rVert_{p}^p-\mu\gamma_{q}C_{N,p,q}^qa^{q(1-\gamma_{q})}\lVert u\rVert_{q}^q-S^{-\frac{p^*}{p}}\lVert\nabla u\rVert_{p}^{p^*},\]
    	and
    	    \[\frac{1}{p}\lVert\nabla u\rVert_{p}^p\geqslant E_{\mu}(u)\geqslant\lVert\nabla u\rVert_{p}^p-\frac{\mu}{q}C_{N,p,q}^qa^{q(1-\gamma_{q})}\lVert u\rVert_{q}^q-\frac{1}{pS^{p^*/p}}\lVert\nabla u\rVert_{p}^{p^*}.\]
    	Thus, we can choose suitable $k>0$ such that the conclusion holds.
    \end{proof}

    \[E^{c}:=\Big\{u\in S_{a}:E_{\mu}(u)\leqslant c\Big\}.\]
    \[\Gamma:=\Big\{\gamma=(\alpha,\theta)\in C([0,1],\mathbb{R}
    \times S_{a,r}): \gamma(0)\in(0,A_{k}), \gamma(1)\in(0,E^{0})\Big\},\]
    \[\sigma(a,\mu):=\inf_{\gamma\in\Gamma}\max_{(\alpha,\theta)\in\gamma([0,1])}\tilde{E}_{\mu}(\alpha,\theta),\]
    \[\tilde{E}_{\mu}(s,u):=E_{\mu}(s*u).\]

    \begin{lemma}\label{msigma3}
    	We have  $m(a,\mu)=m_{r}(a,\mu)=\sigma(a,\mu)$.
    \end{lemma}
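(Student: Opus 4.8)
The plan is to reproduce, essentially line for line, the argument of Lemma \ref{msigma2}: in the present range $p+\frac{p^2}{N}<q<p^*$ one has $q\gamma_{q}>p$, so by Lemma \ref{structure3} the fibering map $\Psi_{u}^{\mu}$ still has a single strict maximum $t_{u}$ at positive level, with $\Psi_{u}^{\mu}(s)\rightarrow 0$ as $s\rightarrow-\infty$ (the $e^{ps}$-term dominates, so $\Psi_{u}^{\mu}>0$ near $-\infty$) and $\Psi_{u}^{\mu}(s)\rightarrow-\infty$ as $s\rightarrow+\infty$; since $\Psi_{u}^{\mu}$ is increasing on $(-\infty,t_{u})$ this gives $\Psi_{u}^{\mu}>0$ on $(-\infty,t_{u}]$. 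Every structural fact used below is exactly one of those recorded in Lemmas \ref{structure3} and \ref{neighbor2}, and no new ingredient is needed.

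First I would prove $m(a,\mu)=m_{r}(a,\mu)$. The inequality $m(a,\mu)\leqslant m_{r}(a,\mu)$ is immediate. For the reverse, given $u\in\mathcal{P}_{a,\mu}$ put $v=\lvert u\rvert^{*}$, the Schwarz rearrangement of $\lvert u\rvert$. Since $\mu>0$, the rearrangement inequalities $\lVert\nabla v\rVert_{p}\leqslant\lVert\nabla u\rVert_{p}$ together with $\lVert v\rVert_{r}=\lVert u\rVert_{r}$ for every $r$ give $\Psi_{v}^{\mu}(s)\leqslant\Psi_{u}^{\mu}(s)$ for all $s\in\mathbb{R}$. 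Hence, using Lemma \ref{structure3} and that $t_{u}=0$ because $u\in\mathcal{P}_{a,\mu}$,
\[m_{r}(a,\mu)\leqslant E_{\mu}(t_{v}\star v)=\Psi_{v}^{\mu}(t_{v})\leqslant\max_{s\in\mathbb{R}}\Psi_{v}^{\mu}(s)\leqslant\max_{s\in\mathbb{R}}\Psi_{u}^{\mu}(s)=\Psi_{u}^{\mu}(0)=E_{\mu}(u),\]
and taking the infimum over $u\in\mathcal{P}_{a,\mu}$ yields $m_{r}(a,\mu)\leqslant m(a,\mu)$.

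Next I would show $\sigma(a,\mu)\leqslant m_{r}(a,\mu)$. Given $u\in\mathcal{P}_{a,\mu}\cap S_{a,r}$ (so $t_{u}=0$), choose $s_{0}<0<s_{1}$ with $\lVert\nabla(s_{0}\star u)\rVert_{p}=e^{s_{0}}\lVert\nabla u\rVert_{p}\leqslant k$ and $E_{\mu}(s_{1}\star u)=\Psi_{u}^{\mu}(s_{1})\leqslant 0$ (both possible by the limits above), and set $\gamma_{u}(\tau)=\bigl(0,((1-\tau)s_{0}+\tau s_{1})\star u\bigr)$. Then $\gamma_{u}\in\Gamma$, so
\[\sigma(a,\mu)\leqslant\max_{\tau\in[0,1]}E_{\mu}\bigl(((1-\tau)s_{0}+\tau s_{1})\star u\bigr)\leqslant\max_{s\in\mathbb{R}}\Psi_{u}^{\mu}(s)=E_{\mu}(u),\]
and taking the infimum over such $u$ gives $\sigma(a,\mu)\leqslant m_{r}(a,\mu)$.

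Finally, for $\sigma(a,\mu)\geqslant m_{r}(a,\mu)$, fix $\gamma=(\alpha,\theta)\in\Gamma$. Since $\theta(0)\in A_{k}$, Lemma \ref{neighbor2} gives $P_{\mu}(\theta(0))>0$; since $E_{\mu}(\theta(1))\leqslant 0$, i.e. $\Psi_{\theta(1)}^{\mu}(0)\leqslant 0$, and $\Psi_{\theta(1)}^{\mu}>0$ on $(-\infty,t_{\theta(1)}]$, we must have $t_{\theta(1)}<0$, hence $P_{\mu}(\theta(1))<0$ by Lemma \ref{structure3}(ii). As $\tau\mapsto\alpha(\tau)\star\theta(\tau)$ is continuous into $W^{1,p}(\mathbb{R}^N)$ with $\alpha(0)=\alpha(1)=0$, the map $\tau\mapsto P_{\mu}(\alpha(\tau)\star\theta(\tau))$ changes sign on $[0,1]$, so there is $\tau_{\gamma}$ with $\alpha(\tau_{\gamma})\star\theta(\tau_{\gamma})\in\mathcal{P}_{a,\mu}\cap S_{a,r}$; therefore
\[\max_{(\alpha,\theta)\in\gamma([0,1])}\tilde{E}_{\mu}(\alpha,\theta)\geqslant E_{\mu}\bigl(\alpha(\tau_{\gamma})\star\theta(\tau_{\gamma})\bigr)\geqslant m_{r}(a,\mu),\]
and the infimum over $\gamma$ finishes the proof. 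I do not expect a genuine obstacle here: the whole argument is verbatim that of Lemma \ref{msigma2}, and the only point deserving a line of care is the positivity $\Psi_{u}^{\mu}>0$ on $(-\infty,t_{u}]$, which holds because $q\gamma_{q}>p$ forces the $e^{ps}$-term to dominate as $s\rightarrow-\infty$ while Lemma \ref{structure3} makes $\Psi_{u}^{\mu}$ monotone on that half-line.
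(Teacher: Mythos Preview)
Your proposal is correct and follows exactly the approach the paper intends: the paper omits the proof of this lemma entirely, stating that it is ``very similar to the proofs in Section~\ref{critical}'' (i.e.\ Lemma~\ref{msigma2}), and your write-up is precisely that argument with the one extra line of care---that $q\gamma_{q}>p$ forces $\Psi_{u}^{\mu}>0$ on $(-\infty,t_{u}]$---made explicit.
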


        \begin{lemma}\label{energy3}
    	We have $m(a,\mu)<\frac{1}{N}S^{\frac{N}{p}}$.
    \end{lemma}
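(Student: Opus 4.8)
The plan is to adapt the proof of Lemma \ref{energy2} to the regime $q\gamma_q>p$, i.e.\ to build a suitable Aubin--Talenti test function on the Pohozaev manifold and estimate its energy. Let $u_\varepsilon$ denote the (truncated) extremal of the Sobolev inequality, with the asymptotics recalled in the proof of Lemma \ref{m-<m+1}, and set
$$W_\varepsilon(x)=\big(a^{-1}\lVert u_\varepsilon\rVert_{p}\big)^{\frac{N-p}{p}}u_\varepsilon\big(a^{-1}\lVert u_\varepsilon\rVert_{p}x\big)\in S_a,$$
so that $\lVert\nabla W_\varepsilon\rVert_{p}^p=\lVert\nabla u_\varepsilon\rVert_{p}^p$, $\lVert W_\varepsilon\rVert_{p^*}^{p^*}=\lVert u_\varepsilon\rVert_{p^*}^{p^*}$ and $\lVert W_\varepsilon\rVert_{q}^q=a^{q(1-\gamma_q)}\lVert u_\varepsilon\rVert_{p}^{q(\gamma_q-1)}\lVert u_\varepsilon\rVert_{q}^q$. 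By Lemma \ref{structure3} there is a unique $\tau_\varepsilon$ with $\tau_\varepsilon\star W_\varepsilon\in\mathcal P_{a,\mu}$, and, exactly as in Lemma \ref{energy2}, if $\liminf_{\varepsilon\to0}\tau_\varepsilon=-\infty$ or $\limsup_{\varepsilon\to0}\tau_\varepsilon=+\infty$ then $m(a,\mu)\le 0$, contradicting Lemma \ref{neighbor2}; hence $\tau_\varepsilon$ stays in a fixed compact interval $[t_1,t_2]$ as $\varepsilon\to0$.

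Next I would plug $\lVert\nabla u_\varepsilon\rVert_{p}^p=S^{N/p}+O(\varepsilon^{\frac{N-p}{p-1}})$ and $\lVert u_\varepsilon\rVert_{p^*}^{p^*}=S^{N/p}+O(\varepsilon^{\frac{N}{p-1}})$ into $E_\mu(\tau_\varepsilon\star W_\varepsilon)$, use the elementary bound $\max_{t>0}\big(\tfrac1p t^{p}-\tfrac1{p^*}t^{p^*}\big)=\tfrac1N$ and the boundedness of $\tau_\varepsilon$, to obtain
$$m(a,\mu)\le E_\mu(\tau_\varepsilon\star W_\varepsilon)\le\frac1N S^{\frac Np}-\frac{\mu}{q}e^{q\gamma_q\tau_\varepsilon}\lVert W_\varepsilon\rVert_{q}^q+O\big(\varepsilon^{\frac{N-p}{p-1}}\big).$$
Since $e^{q\gamma_q\tau_\varepsilon}\ge e^{q\gamma_q t_1}>0$, the whole lemma reduces to proving that $\lVert W_\varepsilon\rVert_{q}^q$ goes to $0$ strictly more slowly than $\varepsilon^{(N-p)/(p-1)}$ (or stays bounded below) as $\varepsilon\to0$. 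This is precisely where $N\ge p^{3/2}$ enters: the inequality $p+\frac{p^2}{N}\ge\frac{N(p-1)}{N-p}$ is equivalent to $N\ge p^{3/2}$, so $q>p+\frac{p^2}{N}$ forces $q>\frac{N(p-1)}{N-p}$, which is exactly the range in which $\lVert u_\varepsilon\rVert_{q}^q=C\varepsilon^{N-\frac{(N-p)q}{p}}+O(\varepsilon^{\frac{(N-p)q}{p(p-1)}})$ has leading order $\varepsilon^{N-\frac{(N-p)q}{p}}$.

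It then remains to combine this with the asymptotics of $\lVert u_\varepsilon\rVert_{p}^p$, namely $C\varepsilon^{p}$ for $N>p^2$, $C\varepsilon^{p}|\log\varepsilon|$ for $N=p^2$, and $C\varepsilon^{(N-p)/(p-1)}$ for $N<p^2$, inside $\lVert W_\varepsilon\rVert_{q}^q=a^{q(1-\gamma_q)}\lVert u_\varepsilon\rVert_{p}^{q(\gamma_q-1)}\lVert u_\varepsilon\rVert_{q}^q$. For $N>p^2$ the exponent of $\varepsilon$ collapses to $0$, so $\lVert W_\varepsilon\rVert_{q}^q$ is bounded below and one concludes at once; for $N=p^2$ one is left with a strictly negative power of $|\log\varepsilon|$, which still dominates any power of $\varepsilon$; and for $p^{3/2}\le N<p^2$ a short computation using $q(1-\gamma_q)=N-\frac{q(N-p)}{p}$ (decreasing in $q$) together with $(p^2-N)\frac{p^2}{N}\le p(N-p)$, itself equivalent to $N\ge p^{3/2}$, gives $\lVert W_\varepsilon\rVert_{q}^q\sim\varepsilon^{\frac{(p^2-N)q(1-\gamma_q)}{p(p-1)}}$ with $\frac{(p^2-N)q(1-\gamma_q)}{p(p-1)}<\frac{N-p}{p-1}$. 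In every case the negative $L^q$-term beats the $O(\varepsilon^{(N-p)/(p-1)})$ remainder, so $m(a,\mu)<\frac1N S^{N/p}$ for $\varepsilon$ small. I expect the main obstacle to be exactly this case split and the careful bookkeeping of the decay rates, where the role of the dimensional restriction becomes visible; the test-function construction and the control of $\tau_\varepsilon$ are routine transcriptions of Lemmas \ref{energy2} and \ref{structure3}.
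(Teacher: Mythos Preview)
Your proposal is correct and follows exactly the paper's approach: the same rescaled Aubin--Talenti test function $W_\varepsilon$, the same compactness argument for $\tau_\varepsilon$, and the same reduction to comparing the decay of $\lVert W_\varepsilon\rVert_q^q=a^{q(1-\gamma_q)}\lVert u_\varepsilon\rVert_p^{q(\gamma_q-1)}\lVert u_\varepsilon\rVert_q^q$ with the $O(\varepsilon^{(N-p)/(p-1)})$ remainder. Your case split and exponents agree with the paper's (and your explicit verification that $N\ge p^{3/2}$ is precisely what forces $q>\tfrac{N(p-1)}{N-p}$ and $(p^2-N)q(1-\gamma_q)<p(N-p)$ is a welcome clarification of a step the paper leaves implicit).
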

    \begin{proof}
    	Similar to Lemma \ref{energy2}, we have
    	\begin{align*}
    		m(a,\mu)&\leqslant\frac{1}{N}S^{\frac{N}{p}}+O(\varepsilon^{\frac{N-p}{p-1}})-C\lVert u_{\varepsilon}\rVert_{p}^{q(\gamma_{q}-1)}\lVert u_{\varepsilon}\rVert_{q}^q\\
    		&\leqslant\frac{1}{N}S^{\frac{N}{p}}+O(\varepsilon^{\frac{N-p}{p-1}})-\left\{\begin{array}{ll}
    			C&N>p^2\\
    			C\lvert\log\varepsilon\rvert^{\frac{q(\gamma_{q}-1)}{p}}&N=p^2\\
    			C\varepsilon^{N-\frac{q(p-\gamma_{q})(N-p)}{p(p-1)}}&p^{\frac{3}{2}}\leqslant N<p^2
    		\end{array}\right.\\
    		&<\frac{1}{N}S^{\frac{N}{p}},
    	\end{align*}
    	by taking $\varepsilon$ sufficiently small.
    \end{proof}

    Now, we give the proof of Theorem \ref{th3} in the case $p+\frac{p^2}{N}<q<p^*$.

    \noindent\textbf{Proof of Theorem \ref{th3}:} Similar to Section \ref{critical}, we can obtain a PS sequence $\{u_{n}\}$ for $E_{\mu}|_{S_{a}}$ at level $\sigma(a,\mu)$ with the property $P_{\mu}(u_{n})\rightarrow 0$ as $n\rightarrow\infty$. Therefore, by Proposition \ref{compactnesslemma} and Lemma \ref{energy3}, one of the cases in proposition \ref{compactnesslemma} holds. If case (i) occurs, we have $u_{n}\rightharpoonup u$ in $W^{1,p}(\mathbb{R}^N)$ and
        \begin{equation}\label{eless03}
    	    E_{\mu}(u)\leqslant m(a,\mu)-\frac{1}{N}S^{\frac{N}{p}}<0.
        \end{equation}
    Since $u$ solves (\ref{equation}) for some $\lambda<0$, by the Pohozaev identity $P_{\mu}(u)=0$, we can derive that
        \[E_{\mu}(u)=\mu\gamma_{q}\Big(\frac{1}{p}-\frac{1}{q\gamma_{q}}\Big)\lVert u\rVert_{q}^q+\frac{1}{N}\lVert u\rVert_{p^*}^{p^*}>0,\]
    a contradiction with (\ref{eless03}). This implies that case (ii) in proposition \ref{compactnesslemma} holds. The rest of the proof is same to Similar to Section \ref{compactnesslemma}.$\hfill\qed$

\section{\textbf{Asymptotic behavior of $u_{a,\mu}^{\pm}$}}
    In this section, the dependence of parameter $a$ will not be considered, so we write $u_{a,\mu}^{\pm}$, $\mathcal{P}_{a,\mu}$, $S_{a}$, $m(a,\mu)$, $\lambda_{a,\mu}$, ... as $u_{\mu}^{\pm}$, $\mathcal{P}_{\mu}$, $S$, $m(\mu)$, $\lambda_{\mu}$,....

\subsection{Asymptotic behavior of $u_{\mu}^{+}$ as $\mu\rightarrow 0$}
    \
    \newline
    \indent In this subsection, we always assume that the assumptions of Theorem \ref{th4}(1) hold. In fact, we can prove that $u_{\mu}^{-}\rightarrow 0$ in $D^{1,p}(\mathbb{R}^N)$ as $\mu\rightarrow 0$. Therefore, we need more accurate estimate of how fast $u_{\mu}^{-}$ approaches $0$.
    \begin{lemma}\label{asylammu}
    	We have
    	    \[-\lambda_{\mu}^{+}\sim\lVert\nabla u_{\mu}^{+}\rVert_{p}^p\sim\mu^{\frac{p}{p-q\gamma_{q}}}.\]
    \end{lemma}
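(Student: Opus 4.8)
The plan is to sandwich all three quantities between constant multiples of $\mu^{p/(p-q\gamma_{q})}$ (as $\mu\to0^{+}$), using only that $u_{\mu}^{+}\in\mathcal{P}_{a,\mu}^{+}$, that $E_{\mu}(u_{\mu}^{+})=m^{+}(a,\mu)<0$, and the Pohozaev identity. First I record two basic identities. Testing (\ref{equation}) with $u_{\mu}^{+}$ gives $\lVert\nabla u_{\mu}^{+}\rVert_{p}^{p}=\lambda_{\mu}^{+}a^{p}+\mu\lVert u_{\mu}^{+}\rVert_{q}^{q}+\lVert u_{\mu}^{+}\rVert_{p^{*}}^{p^{*}}$, and subtracting $P_{\mu}(u_{\mu}^{+})=0$ (Proposition \ref{Pohozaev}) yields
\[
-\lambda_{\mu}^{+}a^{p}=\mu(1-\gamma_{q})\lVert u_{\mu}^{+}\rVert_{q}^{q}>0,
\]
where $1-\gamma_{q}>0$ since $q<p^{*}$. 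On the other hand, substituting $\lVert u_{\mu}^{+}\rVert_{p^{*}}^{p^{*}}=\lVert\nabla u_{\mu}^{+}\rVert_{p}^{p}-\mu\gamma_{q}\lVert u_{\mu}^{+}\rVert_{q}^{q}$ into $E_{\mu}$ gives
\[
m^{+}(a,\mu)=\frac{1}{N}\lVert\nabla u_{\mu}^{+}\rVert_{p}^{p}-\mu\gamma_{q}\Big(\frac{1}{q\gamma_{q}}-\frac{1}{p^{*}}\Big)\lVert u_{\mu}^{+}\rVert_{q}^{q},
\]
with $\tfrac{1}{q\gamma_{q}}-\tfrac{1}{p^{*}}>0$ because $q\gamma_{q}<p<p^{*}$.

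For the upper bounds, since $m^{+}(a,\mu)<0$ the last identity together with the Gagliardo--Nirenberg inequality $\lVert u_{\mu}^{+}\rVert_{q}^{q}\le C_{N,p,q}^{q}a^{q(1-\gamma_{q})}\lVert\nabla u_{\mu}^{+}\rVert_{p}^{q\gamma_{q}}$ forces $\lVert\nabla u_{\mu}^{+}\rVert_{p}^{p}\le C\mu\lVert\nabla u_{\mu}^{+}\rVert_{p}^{q\gamma_{q}}$, hence $\lVert\nabla u_{\mu}^{+}\rVert_{p}^{\,p-q\gamma_{q}}\le C\mu$ and so $\lVert\nabla u_{\mu}^{+}\rVert_{p}^{p}\le C\mu^{p/(p-q\gamma_{q})}$; in particular $\lVert\nabla u_{\mu}^{+}\rVert_{p}\to0$. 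Plugging this back into the first identity gives $-\lambda_{\mu}^{+}\le C\mu\lVert\nabla u_{\mu}^{+}\rVert_{p}^{q\gamma_{q}}\le C\mu^{p/(p-q\gamma_{q})}$.

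For the lower bounds the key step is $m^{+}(a,\mu)\le-c\,\mu^{p/(p-q\gamma_{q})}$. I would fix any $w\in S_{a}$; by Lemma \ref{structure}(i)--(ii), $s_{w}\star w\in\mathcal{P}_{a,\mu}^{+}$ and $m^{+}(a,\mu)\le E_{\mu}(s_{w}\star w)=\Psi_{w}^{\mu}(s_{w})=\min\{\Psi_{w}^{\mu}(s):s\le c_{w}\}\le\Psi_{w}^{\mu}(\log\xi_{*})$ for any admissible choice of $\xi_{*}=e^{s}$. Temporarily dropping the $e^{p^{*}s}$ term and minimizing $\tfrac1p\xi^{p}\lVert\nabla w\rVert_{p}^{p}-\tfrac{\mu}{q}\xi^{q\gamma_{q}}\lVert w\rVert_{q}^{q}$ over $\xi>0$ gives the minimizer $\xi_{*}=\big(\mu\gamma_{q}\lVert w\rVert_{q}^{q}/\lVert\nabla w\rVert_{p}^{p}\big)^{1/(p-q\gamma_{q})}$, at which $\tfrac1p\xi_{*}^{p}\lVert\nabla w\rVert_{p}^{p}-\tfrac{\mu}{q}\xi_{*}^{q\gamma_{q}}\lVert w\rVert_{q}^{q}=\mu\lVert w\rVert_{q}^{q}\big(\tfrac{\gamma_{q}}{p}-\tfrac1q\big)\xi_{*}^{q\gamma_{q}}=-c\,\mu^{p/(p-q\gamma_{q})}$ (note $\tfrac{\gamma_{q}}{p}-\tfrac1q=\tfrac{q\gamma_{q}-p}{pq}<0$), while the omitted term $\tfrac{1}{p^{*}}\xi_{*}^{p^{*}}\lVert w\rVert_{p^{*}}^{p^{*}}=O(\mu^{p^{*}/(p-q\gamma_{q})})$ is of strictly higher order since $p^{*}>p$. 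Hence $m^{+}(a,\mu)\le-c\mu^{p/(p-q\gamma_{q})}$ for $\mu$ small. Feeding this into the energy identity and discarding the nonnegative gradient term gives $\mu\lVert u_{\mu}^{+}\rVert_{q}^{q}\ge c\mu^{p/(p-q\gamma_{q})}$, whence $-\lambda_{\mu}^{+}=\mu(1-\gamma_{q})\lVert u_{\mu}^{+}\rVert_{q}^{q}/a^{p}\ge c\mu^{p/(p-q\gamma_{q})}$; combined with the upper bound this proves $-\lambda_{\mu}^{+}\sim\mu^{p/(p-q\gamma_{q})}$. Finally, from $P_{\mu}(u_{\mu}^{+})=0$ and the first identity, $\lVert\nabla u_{\mu}^{+}\rVert_{p}^{p}=\tfrac{\gamma_{q}}{1-\gamma_{q}}(-\lambda_{\mu}^{+})a^{p}+\lVert u_{\mu}^{+}\rVert_{p^{*}}^{p^{*}}$, and since $\lVert u_{\mu}^{+}\rVert_{p^{*}}^{p^{*}}\le S^{-p^{*}/p}\lVert\nabla u_{\mu}^{+}\rVert_{p}^{\,p^{*}-p}\lVert\nabla u_{\mu}^{+}\rVert_{p}^{p}=o\big(\lVert\nabla u_{\mu}^{+}\rVert_{p}^{p}\big)$ (because $\lVert\nabla u_{\mu}^{+}\rVert_{p}\to0$), we get $\lVert\nabla u_{\mu}^{+}\rVert_{p}^{p}\sim-\lambda_{\mu}^{+}\sim\mu^{p/(p-q\gamma_{q})}$.

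The main obstacle is the lower bound $m^{+}(a,\mu)\le-c\mu^{p/(p-q\gamma_{q})}$: one must check that the neglected $e^{p^{*}s}$ contribution is genuinely of higher order at the critical scale $e^{s}\sim\mu^{1/(p-q\gamma_{q})}$, and that $\log\xi_{*}\le c_{w}$ for small $\mu$ (true since $\xi_{*}\to0$ and, by $q<p+\tfrac{p^{2}}{N}$, one has $\gamma_{q}<p/q$, so $\xi_{*}$ stays below the first zero $c_{w}$ of $\Psi_{w}^{\mu}$). Everything else is a routine use of the two identities above together with the Gagliardo--Nirenberg and Sobolev inequalities.
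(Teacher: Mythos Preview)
Your argument is correct and follows the same strategy as the paper: the upper bounds come from the Pohozaev constraint plus Gagliardo--Nirenberg (the paper uses the strict inequality $(\Psi_{u_{\mu}^{+}}^{\mu})''(0)>0$ defining $\mathcal{P}_{a,\mu}^{+}$ rather than $m^{+}<0$, but the effect is the same), and the lower bound on $-\lambda_{\mu}^{+}$ comes from bounding $m^{+}(a,\mu)$ above via a fixed test profile $w\in S_{a}$ scaled into $\mathcal{P}_{a,\mu}^{+}$.

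One soft spot: your justification of $\log\xi_{*}\le c_{w}$ by ``$\xi_{*}\to0$'' is not quite enough, because $c_{w}$ also tends to $-\infty$ at the same rate $\mu^{1/(p-q\gamma_{q})}$. What you really need (and what suffices) is the weaker condition $\log\xi_{*}<t_{w}$, since $s_{w}$ is the global minimum of $\Psi_{w}^{\mu}$ on $(-\infty,t_{w})$. This one \emph{does} follow from $\xi_{*}\to0$, because as $\mu\to0$ the maximum point $t_{w}$ converges to the finite maximizer of $s\mapsto\tfrac{1}{p}e^{ps}\lVert\nabla w\rVert_{p}^{p}-\tfrac{1}{p^{*}}e^{p^{*}s}\lVert w\rVert_{p^{*}}^{p^{*}}$. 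With that adjustment your proof is complete and matches the paper's.
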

    \begin{proof}
    	Since $u_{\mu}^{+}\in\mathcal{P}_{\mu}^{+}$, we have
    	    \begin{equation*}
    	    	\lVert\nabla u_{\mu}^{+}\rVert_{p}^p=\mu\gamma_{q}\lVert u_{\mu}^{+}\rVert_{q}^q+\lVert u_{\mu}^{+}\rVert_{p^*}^{p^*},
    	    \end{equation*}
        and
            \[p\lVert\nabla u_{\mu}^{+}\rVert_{p}^p>\mu q\gamma_{q}^2\lVert u_{\mu}^{+}\rVert_{q}^q+p^*\lVert u_{\mu}^{+}\rVert_{p^*}^{p^*}.\]
        It follows from the Gagliardo-Nirenberg inequality that
            \begin{equation}\label{asynablaq}
            	(p^*-p)\lVert\nabla u_{\mu}^{+}\rVert_{p}^p\leqslant\mu\gamma_{q}(p^*-q\gamma_{q})\lVert u_{\mu}^{+}\rVert_{q}^q\leqslant\mu\gamma_{q}(p^*-q\gamma_{q})C_{N,p,q}^qa^{q(1-\gamma_{q})}\lVert\nabla u_{\mu}^{+}\rVert_{p}^{q\gamma_{q}},
            \end{equation}
    	which together with $q\gamma_{q}<p$ for $p<q<p+\frac{p^2}{N}$, implies
    	    \[\lVert\nabla u_{\mu}^{+}\rVert_{p}^p\leqslant C\mu^{\frac{p}{p-q\gamma_{q}}}.\]
    	Using Gagliardo-Nirenberg inequality again, we know
    	    \begin{equation}\label{asyuq}
    	    	\lVert u_{\mu}^{+}\rVert_{q}^q\leqslant C\mu^{\frac{q\gamma_{q}}{p-q\gamma_{q}}}.
    	    \end{equation}

        Let $u\in S$ be fixed. Then, there exists unique $s_{u}(\mu)\in\mathbb{R}$ such that $s_{u}(\mu)\star\mathcal{P}_{\mu}^{+}$, that is
            \[e^{ps_{u}(\mu)}\lVert\nabla u\rVert_{p}^p=\mu\gamma_{q}e^{q\gamma_{q}s_{u}(\mu)}\lVert u\rVert_{q}^q+e^{p^*s_{u}(\mu)}\lVert u\rVert_{p^*}^{p^*},\]
        and
            \[pe^{ps_{u}(\mu)}\lVert\nabla u\rVert_{p}^p>\mu q\gamma_{q}^2e^{q\gamma_{q}s_{u}(\mu)}\lVert u\rVert_{q}^q+p^*e^{p^*s_{u}(\mu)}\lVert u\rVert_{p^*}^{p^*}.\]
        It follows that
            \[\bigg(\frac{\mu\gamma_{q}\lVert u\rVert_{q}^q}{\lVert\nabla u\rVert_{p}^p}\bigg)^{\frac{1}{p-q\gamma_{q}}}<e^{s_{u}(\mu)}<\bigg(\frac{\mu\gamma_{q}(p^*-q\gamma_{q})\lVert u\rVert_{q}^q}{(p^*-p)\lVert\nabla u\rVert_{p}^p}\bigg)^{\frac{1}{p-q\gamma_{q}}},\]
        which implies $e^{s_{u}(\mu)}\sim\mu^{\frac{1}{p-q\gamma_{q}}}$. Thus, by $s_{u}(\mu)\star u\in\mathcal{P}_{\mu}^{+}$ and $q\gamma_{q}<p$ for $p<q<p+\frac{p^2}{N}$, we have
            \[E_{\mu}(s_{u}(\mu)\star u)=\Big(\frac{1}{p}-\frac{1}{q\gamma_{q}}\Big)e^{ps_{u}(\mu)}\lVert\nabla u\rVert_{p}^p+\Big(\frac{1}{q\gamma_{q}}-\frac{1}{p^*}\Big)e^{p^*s_{u}(\mu)}\lVert u\rVert_{q}^q\sim-\mu^{\frac{p}{p-q\gamma_{q}}}.\]
        Therefore, by $E_{\mu}(s_{u}\star u)\geqslant m^{+}(\mu)$ and
            \[m^{+}(\mu)=\mu\gamma_{q}\Big(\frac{1}{p}-\frac{1}{q\gamma_{q}}\Big)\lVert u_{\mu}^{+}\rVert_{q}^q+\frac{1}{N}\lVert u_{\mu}^{+}\rVert_{p^*}^{p^*}>\mu\gamma_{q}\Big(\frac{1}{p}-\frac{1}{q\gamma_{q}}\Big)\lVert u_{\mu}^{+}\rVert_{q}^q,\]
        we obtain
            \begin{equation}\label{asyuqgeq}
            	\lVert u_{\mu}^{+}\rVert_{q}^q\geqslant C\mu^{\frac{q\gamma_{q}}{p-q\gamma_{q}}}.
            \end{equation}

        Now, (\ref{asyuq}) and (\ref{asyuqgeq}) implies
            \[\lVert u_{\mu}^{+}\rVert_{q}^q\sim\mu^{\frac{q\gamma_{q}}{p-q\gamma_{q}}}.\]
        By the Pohozaev identity, we know
            \[\lambda_{\mu}^{+}a^p=\mu(\gamma_{q}-1)\lVert u_{\mu}^{+}\rVert_{q}^q.\]
        Therefore, by (\ref{asynablaq}),
            \[-\lambda_{\mu}^{+}\sim\lVert\nabla u_{\mu}^{+}\rVert_{p}^p\sim\mu^{\frac{p}{p-q\gamma_{q}}}.\]
    \end{proof}
    \begin{remark}
    	 {\rm It is clear that $u_{\mu}^{+}\rightarrow 0$ in $D^{1,p}(\mathbb{R}^N)$ and $m^{+}(\mu)\rightarrow 0$ as $\mu\rightarrow 0$.}
    \end{remark}

    \noindent\textbf{Proof of Theorem \ref{th4}(1)} Let
        \[w_{\mu}=\mu^{-\frac{N}{p(p-q\gamma_{q})}}u_{\mu}^{+}\Big(\mu^{-\frac{1}{p-q\gamma_{q}}}\cdot\Big)\in S.\]
    By Lemma \ref{asylammu},
        \[\lVert\nabla w_{\mu}\rVert_{p}^p=\mu^{-\frac{p}{p-q\gamma_{q}}}\lVert\nabla u_{\mu}^{+}\rVert_{p}^p\leqslant C,\]
    which implies $\{w_{\mu}\}$ is bounded in $W^{1,p}(\mathbb{R}^N)$. Thus, there exists $w\in W^{1,p}(\mathbb{R}^N)$ such that $w_{\mu}\rightharpoonup w$ in $W^{1,p}(\mathbb{R}^N)$, $w_{\mu}\rightarrow w$ in $L^q(\mathbb{R}^N)$, $w_{\mu}\rightarrow w$ a.e. in $\mathbb{R}^N$.

    We know $u_{\mu}^{+}$ solves (\ref{equation}) for some $\lambda_{\mu}^{+}$. Direct calculations show that
       \[-\Delta_{p}w_{\mu}=\lambda_{\mu}^{+}\mu^{-\frac{p}{p-q\gamma_{q}}}w_{\mu}^{p-1}+w_{\mu}^{q-1}+\mu^{\frac{p^2}{(N-p)(p-q\gamma_{q})}}w_{\mu}^{p^*-1}.\]
    By Lemma \ref{asylammu}, we know $\{\lambda_{\mu}^{+}\mu^{-\frac{p}{p-q\gamma_{q}}}\}$ is bounded, hence there exists $\sigma_{0}>0$ such that $\lambda_{\mu}^{+}\mu^{-\frac{p}{p-q\gamma_{q}}}\rightarrow-\sigma_{0}$ as $\mu\rightarrow 0$. Considering the mapping $T:W^{1,p}(\mathbb{R}^N)\rightarrow\Big(W^{1,p}(\mathbb{R}^N)\Big)^*$ which is given by
        \[<Tu,v>=\int_{\mathbb{R}^N}\big(\lvert\nabla u\rvert^{p-2}\nabla u\cdot\nabla v+\sigma_{0}\lvert u\rvert^{p-2}uv\big)dx.\]
    Then, similar to \cite[Lemma 3.6]{hdly}, we can prove that $w_{\mu}\rightarrow w$ in $W^{1,p}(\mathbb{R}^N)$. Thus, $w$ satisfies the equation
        \[-\Delta_{p}w+\sigma_{0}w^{p-1}=w^{q-1}.\]
    Let $\tilde{w}=\sigma_{0}^{\frac{1}{p-q}}w\big(\sigma_{0}^{-\frac{1}{p}}\cdot\big)$. It is not difficult to shows that
        \[\sigma_{0}^{\frac{1}{p-q}}\mu^{-\frac{N}{p(p-q\gamma_{q})}}u_{\mu}^{+}\big(\sigma_{0}^{-\frac{1}{p}}\mu^{-\frac{1}{p-q\gamma_{q}}}\cdot\big)\rightarrow\tilde{w}\]
    in $W^{1,p}(\mathbb{R}^N)$, and $\tilde{w}$ satisfies (\ref{uniqueness}). By the regularity and properties of $u_{\mu}^{+}$, we can derive that $\tilde{w}$ is the "ground state" of (\ref{uniqueness}) and hence $\tilde{w}=\phi_{0}$. Now, using $u_{\mu}^{+}\in S_{a}$, we can obtain that
        \[\sigma_{0}=\bigg(\frac{a^p}{\lVert\phi_{0}\rVert_{p}^p}\bigg)^{\frac{p(q-p)}{p^2-N(q-p)}}.\]
    $\hfill\qed$

\subsection{Asymptotic behavior of $u_{\mu}^{-}$ as $\mu\rightarrow 0$}
    \
    \newline
    \indent In this subsection, we always assume that the assumptions of Theorem \ref{th4}(2) or (3) hold. Unlike $u_{\mu}^{+}$, we can prove that $\lVert\nabla u_{\mu}^{-}\rVert_{p}^p,\lVert u_{\mu}^{-}\rVert_{p^*}^{p^*}\rightarrow S^{\frac{N}{p}}$ as $\mu\rightarrow 0$.
    \begin{lemma}\label{infmax}
    	Let $\mu>0$ satisfies {\rm (\ref{muconsub})} for $p<q<p+\frac{p^2}{N}$ and { \rm(\ref{muconcri})} for $q=p+\frac{p^2}{N}$. Then,
    	    \[m^{-}(\mu)=\inf_{u\in S}\max_{s\in\mathbb{R}}E_{\mu}(s\star u).\]
    \end{lemma}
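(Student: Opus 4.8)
The plan is to establish the identity by a double inequality, using only the fibering structure of $\Psi_u^\mu$ on the scaling half-lines $s\mapsto s\star u$ already obtained in Lemma~\ref{structure} (for $p<q<p+\frac{p^2}{N}$) and in Lemma~\ref{structure2} (for $q=p+\frac{p^2}{N}$). The smallness hypotheses (\ref{muconsub}) and (\ref{muconcri}) are needed precisely to invoke those two lemmas, so they enter only implicitly once the lemmas are quoted; the rest is a repackaging of the information they contain.

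First I would prove $m^-(\mu)\le\inf_{u\in S}\max_{s\in\mathbb{R}}E_\mu(s\star u)$. Fix $u\in S$. By Lemma~\ref{structure}(i),(iii) (respectively Lemma~\ref{structure2}(i) together with the fact that $t_u$ is a strict maximum point of $\Psi_u^\mu$ at positive level) there is $t_u\in\mathbb{R}$ with $t_u\star u\in\mathcal{P}_\mu^-$ and
\[
E_\mu(t_u\star u)=\Psi_u^\mu(t_u)=\max_{s\in\mathbb{R}}\Psi_u^\mu(s)=\max_{s\in\mathbb{R}}E_\mu(s\star u).
\]
Since $t_u\star u\in\mathcal{P}_\mu^-$, the definition of $m^-(\mu)$ gives $m^-(\mu)\le E_\mu(t_u\star u)=\max_{s\in\mathbb{R}}E_\mu(s\star u)$, and taking the infimum over $u\in S$ yields the claimed bound.

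Next I would prove the reverse inequality. Take any $v\in\mathcal{P}_\mu^-$. Since $0\star v=v\in\mathcal{P}_\mu^-\subset\mathcal{P}_\mu$, the characterization of which dilations of $v$ lie on the Pohozaev manifold (Lemma~\ref{structure}(i), which also uses $\mathcal{P}_\mu^0=\emptyset$ to exclude $0=s_v$, or Lemma~\ref{structure2}(i)) forces $t_v=0$, so $0$ is the global maximum point of $\Psi_v^\mu$. Hence
\[
E_\mu(v)=\Psi_v^\mu(0)=\max_{s\in\mathbb{R}}\Psi_v^\mu(s)=\max_{s\in\mathbb{R}}E_\mu(s\star v)\ge\inf_{u\in S}\max_{s\in\mathbb{R}}E_\mu(s\star u).
\]
Taking the infimum over $v\in\mathcal{P}_\mu^-$ gives $m^-(\mu)\ge\inf_{u\in S}\max_{s\in\mathbb{R}}E_\mu(s\star u)$, and combining the two inequalities finishes the proof.

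I do not expect a genuine obstacle here: the statement is essentially a reformulation of the fibering analysis, and every ingredient is available from the earlier lemmas. The only point requiring a little care is to cite the appropriate structural lemma in each of the two ranges of $q$: in the range $p<q<p+\frac{p^2}{N}$ the map $\Psi_v^\mu$ also has a local minimizer $s_v\star v\in\mathcal{P}_\mu^+$, so one must check that a point of $\mathcal{P}_\mu^-$ is not that minimizer, which is exactly Lemma~\ref{structure}(i) together with $\mathcal{P}_\mu^0=\emptyset$.
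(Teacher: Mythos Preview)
Your proposal is correct and follows essentially the same approach as the paper: a double inequality obtained from the fibering structure, using that for each $u\in S$ the maximum of $s\mapsto E_\mu(s\star u)$ is attained at $t_u$ with $t_u\star u\in\mathcal{P}_\mu^-$, and conversely that any $v\in\mathcal{P}_\mu^-$ has $t_v=0$. The paper presents the two inequalities in the opposite order and also cites Lemma~\ref{structure3} to cover the range $p+\tfrac{p^2}{N}<q<p^*$ (where no smallness condition on $\mu$ is needed), but the argument is otherwise identical to yours.
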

    \begin{proof}
    	For every $v\in\mathcal{P}_{\mu}^{-}$, by Lemma \ref{structure}, \ref{structure2} and \ref{structure3}, we know
    	    \[E_{\mu}(v)=\max_{s\in\mathbb{R}}E_{\mu}(s\star v)\geqslant\inf_{u\in S}\max_{s\in\mathbb{R}}E_{\mu}(s\star u),\]
    	and hence
    	    \[m^{-}(\mu)\geqslant\inf_{u\in S}\max_{s\in\mathbb{R}}E_{\mu}(s\star u).\]
    	
    	For every $v\in S_{a}$, by Lemma \ref{structure}, \ref{structure2} and \ref{structure3}, we know
    	    \[\max_{s\in\mathbb{R}}E_{\mu}(s\star v)=E_{\mu}(t_{v}\star v)\geqslant\inf_{u\in\mathcal{P}_{\mu}^{-}}E_{\mu}(u),\]
    	and hence
    	    \[\inf_{u\in S}\max_{s\in\mathbb{R}}E_{\mu}(s\star u)\geqslant m^{-}(\mu).\]
    \end{proof}

    \begin{lemma}\label{noninc}
    	Let $\tilde{\mu}>0$ satisfies {\rm (\ref{muconsub})} for $p<q<p+\frac{p^2}{N}$ and { \rm(\ref{muconcri})} for $q=p+\frac{p^2}{N}$. Then, the function $\mu\in(0,\tilde{\mu}]\mapsto m^{-}(\mu)\in\mathbb{R}$ is non-increasing.
    \end{lemma}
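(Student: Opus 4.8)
The plan is to exploit the variational characterisation of $m^-(\mu)$ obtained in Lemma \ref{infmax} together with the explicit affine dependence of the fibre map $s\mapsto E_\mu(s\star u)$ on the parameter $\mu$.

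First I would fix $0<\mu_1<\mu_2\le\tilde\mu$. Since the smallness assumptions only require $\mu$ to be small --- namely $\mu a^{q(1-\gamma_q)}<\alpha(N,p,q)$ when $p<q<p+\frac{p^2}{N}$, and $\mu a^{p^2/N}<\frac{q}{pC_{N,p,q}^q}$ when $q=p+\frac{p^2}{N}$ --- they are inherited by every $\mu\le\tilde\mu$. Hence both $\mu_1$ and $\mu_2$ lie in the admissible range, and Lemma \ref{infmax} gives
\[
m^-(\mu_i)=\inf_{u\in S}\max_{s\in\mathbb{R}}E_{\mu_i}(s\star u),\qquad i=1,2 .
\]
Moreover, for each $u\in S$ the inner maximum is attained and finite: by Lemma \ref{structure} (resp. \ref{structure2}, \ref{structure3}) the function $\Psi_u^\mu$ has a unique critical point, which is its global maximum.

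Next, for a fixed $u\in S$ and $s\in\mathbb{R}$ I would simply write out
\[
E_{\mu}(s\star u)=\frac1p e^{ps}\lVert\nabla u\rVert_p^p-\frac{\mu}{q}e^{q\gamma_q s}\lVert u\rVert_q^q-\frac1{p^*}e^{p^*s}\lVert u\rVert_{p^*}^{p^*},
\]
so that
\[
E_{\mu_2}(s\star u)=E_{\mu_1}(s\star u)-\frac{\mu_2-\mu_1}{q}\,e^{q\gamma_q s}\lVert u\rVert_q^q\le E_{\mu_1}(s\star u),
\]
because $\mu_2>\mu_1$ and $\lVert u\rVert_q>0$ (as $u\in S$ is non-trivial). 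Taking the supremum over $s\in\mathbb{R}$ and then the infimum over $u\in S$ yields $m^-(\mu_2)\le m^-(\mu_1)$, which is precisely the asserted monotonicity on $(0,\tilde\mu]$.

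The argument is essentially immediate once Lemma \ref{infmax} is in hand; the only point that deserves a word of care is the admissibility of every $\mu\in(0,\tilde\mu]$, i.e. that the smallness hypotheses under which $m^-$ is defined and characterised as a min-max are stable under decreasing $\mu$ --- which is clear from their form. I do not expect any genuine obstacle here.
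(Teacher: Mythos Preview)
Your argument is correct and is essentially identical to the paper's own proof: both fix $0<\mu_1<\mu_2\le\tilde\mu$, invoke the min--max characterisation of Lemma~\ref{infmax}, and use the pointwise inequality $E_{\mu_2}(s\star u)\le E_{\mu_1}(s\star u)$ to conclude. Your version is in fact slightly more careful, since you keep the factor $e^{q\gamma_q s}$ in the difference term and explicitly note that the smallness hypotheses are inherited by all $\mu\le\tilde\mu$.
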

    \begin{proof}
    	Let $0<\mu_{1}<\mu_{2}\leqslant\tilde{\mu}$. By Lemma \ref{infmax}, we have
    	    \begin{align*}
    	    	m^{-}(\mu_{2})&=\inf_{u\in S}\max_{s\in\mathbb{R}}E_{\mu_{2}}(s\star u)=\inf_{u\in S}\max_{s\in\mathbb{R}}\Big(E_{\mu_{1}}(s\star u)-\frac{\mu_{2}-\mu_{1}}{q}\lVert u\rVert_{q}^q\Big)\\
    	    	&\leqslant\inf_{u\in S}\max_{s\in\mathbb{R}}E_{\mu_{1}}(s\star u)=m^{-}(\mu_{1}).
    	    \end{align*}
    \end{proof}

    \begin{lemma}\label{asyuneg}
    	We have $\lVert\nabla u_{\mu}^{-}\rVert_{p}^p, \lVert u_{\mu}^{-}\rVert_{p^*}^{p^*}\rightarrow S^{\frac{N}{p}}$ and $m^{-}(\mu)\rightarrow S^{\frac{N}{p}}/N$ as $\mu\rightarrow 0$.
    \end{lemma}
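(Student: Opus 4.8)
The plan is to use the mountain-pass characterization of $m^-(\mu)$ together with the optimizer $U_\varepsilon$ of the Sobolev inequality as a trial function, and then a squeeze argument. First I would establish the upper bound $\limsup_{\mu\to 0} m^-(\mu)\le \tfrac1N S^{N/p}$. Since $U_\varepsilon$ (for $N>p^2$, suitably rescaled so that $\|U_\varepsilon\|_p^p=a^p$; for $N\le p^2$ one truncates $U_\varepsilon$ and rescales, as in the proof of Lemma~\ref{energy2}) lies in $S$, Lemma~\ref{infmax} gives
\[
m^-(\mu)\le \max_{s\in\mathbb{R}} E_\mu(s\star U_\varepsilon)
= \max_{s\in\mathbb{R}}\Big(\frac1p e^{ps}\|\nabla U_\varepsilon\|_p^p - \frac{\mu}{q}e^{q\gamma_q s}\|U_\varepsilon\|_q^q - \frac1{p^*}e^{p^*s}\|U_\varepsilon\|_{p^*}^{p^*}\Big).
\]
Dropping the (nonnegative) $\mu$-term and maximizing the resulting function of $s$ yields $\tfrac1N\big(\|\nabla U_\varepsilon\|_p^p\big)^{N/p}\big/\big(\|U_\varepsilon\|_{p^*}^{p^*}\big)^{(N-p)/p}$, which is $\tfrac1N S^{N/p}$ up to a controlled error; letting $\varepsilon\to0$ and using the expansions recorded before Lemma~\ref{m-<m+1} gives $\limsup_{\mu\to0}m^-(\mu)\le \tfrac1N S^{N/p}$. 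Combined with the uniform bound $m^-(\mu)<\tfrac1N S^{N/p}$ from Theorem~\ref{th2}/\ref{th3}, we get $\limsup_{\mu\to0}m^-(\mu)\le \tfrac1N S^{N/p}$; monotonicity (Lemma~\ref{noninc}) then shows $m^-(\mu)$ converges, to some $\ell\le\tfrac1N S^{N/p}$.

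Next I would obtain the matching lower bound by exploiting that $u_\mu^-\in\mathcal{P}_\mu$, so $\|\nabla u_\mu^-\|_p^p=\mu\gamma_q\|u_\mu^-\|_q^q+\|u_\mu^-\|_{p^*}^{p^*}$ and, using the Sobolev and Gagliardo--Nirenberg inequalities together with $\|u_\mu^-\|_p=a$,
\[
\|\nabla u_\mu^-\|_p^p \le \mu\gamma_q C_{N,p,q}^q a^{q(1-\gamma_q)}\|\nabla u_\mu^-\|_p^{q\gamma_q} + S^{-p^*/p}\|\nabla u_\mu^-\|_p^{p^*}.
\]
Since $\{u_\mu^-\}$ is a Pohozaev-constrained minimizer with $E_\mu(u_\mu^-)=m^-(\mu)$ bounded, one first checks $\|\nabla u_\mu^-\|_p^p$ stays bounded and bounded away from $0$; then the displayed inequality forces $\|\nabla u_\mu^-\|_p^p \ge S^{p^*/p}\cdot\big(1-o(1)\big)$ wait—more precisely, writing $A_\mu=\|\nabla u_\mu^-\|_p^p$ one gets $A_\mu\le o(1)A_\mu^{q\gamma_q/p} + S^{-p^*/p}A_\mu^{p^*/p}$, i.e. $1\le o(1)A_\mu^{(q\gamma_q-p)/p}+S^{-p^*/p}A_\mu^{(p^*-p)/p}$, whence $\liminf_{\mu\to0}A_\mu\ge S^{N/p}$. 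On the other hand, from $E_\mu(u_\mu^-)=\tfrac1N\|u_\mu^-\|_{p^*}^{p^*}+\mu\gamma_q\big(\tfrac1p-\tfrac1{q\gamma_q}\big)\|u_\mu^-\|_q^q$ together with $m^-(\mu)\le \tfrac1N S^{N/p}$ and Gagliardo--Nirenberg control of $\|u_\mu^-\|_q^q$ by powers of $A_\mu$, one deduces $\limsup_{\mu\to0}\|u_\mu^-\|_{p^*}^{p^*}\le S^{N/p}$; combined with the Sobolev inequality $\|u_\mu^-\|_{p^*}^p\ge S^{-1}A_\mu$ wait, $S\|u\|_{p^*}^p\le\|\nabla u\|_p^p$ gives $\|u_\mu^-\|_{p^*}^{p^*}\ge (S^{-1}A_\mu)^{p^*/p}$, so $\limsup A_\mu\le S^{N/p}$ as well. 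Hence $A_\mu=\|\nabla u_\mu^-\|_p^p\to S^{N/p}$ and $\|u_\mu^-\|_{p^*}^{p^*}\to S^{N/p}$, and then $E_\mu(u_\mu^-)=\tfrac1N\|u_\mu^-\|_{p^*}^{p^*}+o(1)\to\tfrac1N S^{N/p}$, i.e. $m^-(\mu)\to\tfrac1N S^{N/p}$.

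The main obstacle is the bootstrap controlling $\|u_\mu^-\|_q^q$ and $\mu\|u_\mu^-\|_q^q$: a priori one only knows $A_\mu$ is bounded, and one must rule out that the $q$-term carries a non-vanishing share of the energy in the limit. This is handled by feeding the Gagliardo--Nirenberg bound $\|u_\mu^-\|_q^q\le C_{N,p,q}^q a^{q(1-\gamma_q)}A_\mu^{q\gamma_q/p}$ (with $A_\mu$ bounded) back into both the Pohozaev identity and the energy identity, so that $\mu\gamma_q\|u_\mu^-\|_q^q\to 0$ and the two inequalities $\liminf A_\mu\ge S^{N/p}$, $\limsup A_\mu\le S^{N/p}$ close up; a secondary subtlety in the $N\le p^2$ cases is that the trial-function rescaling must be done on a truncation of $U_\varepsilon$ since $U_\varepsilon\notin L^p(\mathbb{R}^N)$, exactly as in Lemma~\ref{energy2}. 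Once $m^-(\mu)\to\tfrac1N S^{N/p}$ and $A_\mu\to S^{N/p}$, $\{u_\mu^-\}$ is a minimizing sequence for the Sobolev quotient, setting up the concentration analysis in the subsequent parts of Theorem~\ref{th4}.
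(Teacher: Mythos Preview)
Your strategy is the same as the paper's: use the Pohozaev identity together with the Sobolev and Gagliardo--Nirenberg inequalities to squeeze $\lVert\nabla u_\mu^-\rVert_p^p$ to $S^{N/p}$, relying on the already-proved bound $m^-(\mu)<\tfrac1N S^{N/p}$. The first paragraph (re-deriving the upper bound via $U_\varepsilon$) is unnecessary, since Lemmas~\ref{m-<m+1}, \ref{m-<m+2}, \ref{energy2}, \ref{energy3} already give $m^-(\mu)<\tfrac1N S^{N/p}$ for all admissible $\mu$; you acknowledge this yourself.

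There is, however, a genuine error in the step where you derive $\limsup A_\mu\le S^{N/p}$. You write that the Sobolev inequality $S\lVert u\rVert_{p^*}^p\le\lVert\nabla u\rVert_p^p$ ``gives $\lVert u_\mu^-\rVert_{p^*}^{p^*}\ge (S^{-1}A_\mu)^{p^*/p}$''. This is the wrong direction: Sobolev gives $\lVert u_\mu^-\rVert_{p^*}^{p^*}\le (S^{-1}A_\mu)^{p^*/p}$, which does not let you transfer the upper bound on $\lVert u_\mu^-\rVert_{p^*}^{p^*}$ back to $A_\mu$. The fix is immediate and is what the paper does: use the Pohozaev identity $A_\mu=\mu\gamma_q\lVert u_\mu^-\rVert_q^q+\lVert u_\mu^-\rVert_{p^*}^{p^*}$ directly. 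Since $\lVert u_\mu^-\rVert_q^q\le C_{N,p,q}^q a^{q(1-\gamma_q)}A_\mu^{q\gamma_q/p}$ is bounded (once $A_\mu$ is), the $\mu$-term vanishes and $\limsup A_\mu=\limsup\lVert u_\mu^-\rVert_{p^*}^{p^*}\le S^{N/p}$. Equivalently, as in the paper, write $E_\mu(u_\mu^-)=\tfrac1N A_\mu-\mu\gamma_q\bigl(\tfrac1{q\gamma_q}-\tfrac1{p^*}\bigr)\lVert u_\mu^-\rVert_q^q$ and pass to the limit.

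A secondary point you leave implicit: the lower bound $A_\mu$ bounded away from $0$ (needed in the subcritical range $q\gamma_q<p$ to control $o(1)A_\mu^{(q\gamma_q-p)/p}$) follows from Lemma~\ref{noninc}, since $A_\mu\to 0$ would force $m^-(\mu)=E_\mu(u_\mu^-)\to 0$, contradicting $m^-(\mu)\ge m^-(\tilde\mu)>0$. The paper makes this explicit.
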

    \begin{proof}
    	Using the fact $m^{-}(\mu)<S^{\frac{N}{p}}/N$ and slightly modifying the proof of Lemma \ref{bounded}, we know $\{u_{\mu}^{-}\}$ is bounded in $W^{1,p}(\mathbb{R}^N)$. Thus, we can assume that $\lVert\nabla u_{\mu}^{-}\rVert_{p}^p\rightarrow l$ as $\mu\rightarrow 0$.
    	
    	We claim that $l\neq 0$. Suppose by contradiction that $l=0$, then $E_{\mu}(u_{\mu}^{-})\rightarrow 0$ as $\mu\rightarrow 0$. However, by Lemma \ref{mplus}, \ref{neighbor}, \ref{neighbor2} and \ref{noninc},, we know $E_{\mu}(u_{\mu}^{-})\geqslant m^{-}(\tilde{\mu})>0$ for every $0<\mu\leqslant\tilde{\mu}$, a contradiction.
    	
    	Now, by $P_{\mu}(u_{\mu}^{-})=0$, we deduce that
    	    \[\lVert u_{\mu}^{-}\rVert_{p^*}^{p^*}=\lVert\nabla u_{\mu}^{-}\rVert_{p}^p-\mu\gamma_{q}\lVert u_{\mu}^{-}\rVert_{q}^q\rightarrow l\]
    	as $\mu\rightarrow 0$. Therefore, by the Sobolev inequality we have $l\geqslant Sl^{\frac{p}{p^*}}$ which implies $l\geqslant S^{\frac{N}{p}}$. On the other hand, since
    	    \[\frac{l}{N}=\lim_{\mu\rightarrow 0}\bigg(\frac{1}{N}\lVert\nabla u_{\mu}^{-}\rVert_{p}^p-\mu\gamma_{q}\Big(\frac{1}{q\gamma_{q}}-\frac{1}{p^*}\Big)\lVert u_{\mu}^{-}\rVert_{p^*}^{p^*}\bigg)=\lim_{\mu\rightarrow 0}E_{\mu}(u_{\mu}^{-})\leqslant\frac{1}{N}S^{\frac{N}{p}},\]
    	we obtain that $l=S^{\frac{N}{p}}$. We complete the proof.
    \end{proof}

    \noindent\textbf{Proof of Theorem \ref{th4}(2)} Lemma \ref{asyuneg} implies $\{u_{\mu}^{-}\}$ is a minimizing sequence of the following minimizing problem:
        \[S=\inf_{u\in D^{1,p}(\mathbb{R}^N)\backslash\{0\}}\frac{\lVert\nabla u\rVert_{p}^p}{\lVert u\rVert_{p^*}^{p^*}}.\]
    Since $u_{\mu}^{-}$ is radially symmetric, by \cite[Theorem 4.9]{sm}, there exists $\sigma_{\mu}>0$ such that
        \[w_{\mu}=\sigma_{\mu}^{\frac{N-p}{p}}u_{\mu}^{-}(\sigma_{\mu}\cdot)\rightarrow U_{\varepsilon_{0}}\]
    in $D^{1,p}(\mathbb{R}^N)$ as $\mu\rightarrow 0$ for some $\varepsilon_{0}>0$. We know $U_{\varepsilon_{0}}\notin S$ for $N\leqslant p^2$, and $\lVert w_{\mu}\rVert_{p}^p=a^p/\sigma_{\mu}^p$, by the Fatou lemma, $\sigma_{\mu}\rightarrow 0$ as $\mu\rightarrow 0$.$\hfill\qed$

    $U_{\varepsilon_{0}}\notin S$ for $N\leqslant p^2$ implies $w_{\mu}$ will not converge to $U_{\varepsilon_{0}}$ in $W^{1,p}(\mathbb{R})$ as $\mu\rightarrow 0$. However, since $U_{\varepsilon_{0}}\in S$ for $N>p^2$, in the next, we will prove $u_{\mu}^{-}\rightarrow U_{\varepsilon_{0}}$ in $W^{1,p}(\mathbb{R}^N)$ as $\mu\rightarrow 0$ for $N>p^2$. \\

    \noindent\textbf{Proof of Theorem \ref{th4}(3)} Since $\lVert U_{\varepsilon}\rVert_{p}^p$=$\varepsilon^p\lVert U_{1}\rVert_{p}^p$, we can choose $\varepsilon_{0}>0$ satisfies $\lVert U_{\varepsilon_{0}}\rVert_{p}^p=a^p$. Hence, there exists unique $t(\mu)\in\mathbb{R}$ such that $t(\mu)\star U_{\varepsilon_{0}}\in\mathcal{P}_{\mu}^{-}$, that is
        \[e^{pt(\mu)}S^{\frac{N}{p}}=\mu\gamma_{q}e^{q\gamma_{q}t(\mu)}\lVert U_{\varepsilon_{0}}\rVert_{q}^q+e^{p^*t(\mu)}S^{\frac{N}{p}}.\]
    Clearly, $t(0)=0$. Now, using implicit function theorem, $t(\mu)$ is of class $C^1$ in a neighborhood of $0$. By direct calculation, we have
        \[t'(0)=-\frac{\gamma_{q}\lVert U_{\varepsilon_{0}}\rVert_{q}^q}{(p^*-p)S^{N/p}},\]
    which implies
        \[t(\mu)=t(0)+t'(0)\mu+o(\mu)=-\frac{\gamma_{q}\lVert U_{\varepsilon_{0}}\rVert_{q}^q}{(p^*-p)S^{N/p}}\mu+o(\mu).\]
    Consequently,
        \begin{align*}
        	E_{\mu}\big(t(\mu)\star U_{\varepsilon_{0}}\big)&=\Big(\frac{1}{p}-\frac{1}{q\gamma_{q}}\Big)e^{pt(\mu)}\lVert U_{\varepsilon_{0}}\rVert_{p}^p+\Big(\frac{1}{q\gamma_{q}}-\frac{1}{p^*}\Big)e^{p^*t(\mu)}\lVert U_{\varepsilon_{0}}\rVert_{p^*}^{p^*}\\
        	&=\Big(\frac{1}{p}-\frac{1}{q\gamma_{q}}\Big)\bigg(1-\frac{p\gamma_{q}\lVert U_{\varepsilon_{0}}\rVert_{q}^q}{(p^*-p)S^{N/p}}\mu+o(\mu)\bigg)S^{\frac{N}{p}}+\\
        	&\qquad\qquad\Big(\frac{1}{q\gamma_{q}}-\frac{1}{p^*}\Big)\bigg(1-\frac{p^*\gamma_{q}\lVert U_{\varepsilon_{0}}\rVert_{q}^q}{(p^*-p)S^{N/p}}\mu+o(\mu)\bigg)S^{\frac{N}{p}}\\
        	&=\frac{1}{N}S^{\frac{N}{p}}-\frac{\lVert U_{\varepsilon_{0}}\rVert_{q}^q}{q}\mu+o(\mu).
        \end{align*}
    By the definition of $m^{-}(\mu)$, we have
        \begin{equation}\label{asyumuue}
        	m^{-}(\mu)=\frac{1}{N}\lVert\nabla u_{\mu}^{-}\rVert_{p}^p-\mu\gamma_{q}\Big(\frac{1}{q\gamma_{q}}-\frac{1}{p^*}\Big)\lVert u_{\mu}^{-}\rVert_{q}^q\leqslant\frac{1}{N}S^{\frac{N}{p}}-\frac{\lVert U_{\varepsilon_{0}}\rVert_{q}^q}{q}\mu+o(\mu).
        \end{equation}

    By the Sobolev inequality, we have
        \begin{align*}
        	\lVert u_{\mu}^{-}\rVert_{p}^p&\geqslant S\lVert u_{\mu}^{-}\rVert_{p^*}^p=S\Big(\lVert\nabla u_{\mu}\rVert_{p}^p-\mu\gamma_{q}\lVert u_{\mu}^{-}\rVert_{q}^q\Big)^{\frac{p}{p^*}}\\
        	&=S\lVert\nabla u_{\mu}^{-}\rVert_{p}^{\frac{p^2}{p^*}}\bigg(1-\frac{\gamma_{q}\lVert u_{\mu}^{-}\rVert_{q}^q}{S^{N/p}}\mu+o(\mu)\bigg)^{\frac{p}{p^*}}\\
        	&=S\lVert\nabla u_{\mu}^{-}\rVert_{p}^{\frac{p^2}{p^*}}\bigg(1-\frac{p\gamma_{q}\lVert u_{\mu}^{-}\rVert_{q}^q}{p^*S^{N/p}}\mu+o(\mu)\bigg).
        \end{align*}
    Thus,
        \[\lVert\nabla u_{\mu}^{-}\rVert_{p}^p\geqslant S^{\frac{N}{p}}\bigg(1-\frac{p\gamma_{q}\lVert u_{\mu}^{-}\rVert_{q}^q}{p^*S^{N/p}}+o(\mu)\bigg)^{\frac{N}{p}}=S^{\frac{N}{p}}-\frac{(N-p)\gamma_{q}\lVert u_{\mu}^{-}\rVert_{q}^q}{p}\mu+o(\mu),\]
    which together with (\ref{asyumuue}), implies
        \begin{equation}\label{umugeque}
        	\lVert u_{\mu}^{-}\rVert_{q}^q\geqslant\lVert U_{\varepsilon_{0}}\rVert_{q}^q+o(1).
        \end{equation}

    Since $\{u_{\mu}^{-}\}$ is bounded in $W^{1,p}(\mathbb{R}^N)$, there exists $u\in W^{1,p}(\mathbb{R}^N)$ such that $u_{\mu}^{-}\rightharpoonup u$ in $W^{1,p}(\mathbb{R}^N)$, $u_{\mu}^{-}\rightarrow u$ in $L^q(\mathbb{R}^N)$ and $u_{\mu}\rightarrow u$ a.e. in $\mathbb{R}^N$ as $\mu\rightarrow 0$. By (\ref{umugeque}), we know $u\neq 0$.

    By the Pohozaev identity, we know
        \[\lambda_{\mu}^{-}a^p=\mu(\gamma_{q}-1)\lVert u_{\mu}^{-}\rVert_{q}^q\rightarrow 0\]
    as $\mu\rightarrow 0$. Thus, by the weak convergence, $u$ is the solution of the equation
        \[-\Delta_{p}U=U^{p^*-1},\]
    which implies $u=U_{\varepsilon,y}$ for some $(\varepsilon,y)\in(\mathbb{R}^+,\mathbb{R}^N)$. Since $u_{\mu}^{-}$ is radially symmetric, we have $y=0$ and hence $u=U_{\varepsilon}$. Now, by the Fatou lemma and (\ref{umugeque}), we obtain
        \[\lVert U_{\varepsilon}\rVert_{p}^p=\lVert u\rVert_{p}^p\leqslant a^p=\lVert U_{{\varepsilon_{0}}}\rVert_{p}^p,\quad\lVert U_{\varepsilon}\rVert_{q}^q=\lVert u\rVert_{q}^q=\lVert U_{\varepsilon_{0}}\rVert_{q}^q.\]
    Therefore, $\varepsilon=\varepsilon_{0}$ and $u=U_{\varepsilon_{0}}$. Finally, since
        \[\lVert U_{\varepsilon_{0}}\rVert_{p}^p=\lim_{\mu\rightarrow 0}\lVert u_{\mu}^{-}\rVert_{p}^p=a^p,\quad and\quad\lVert\nabla U_{\varepsilon_{0}}\rVert_{p}^p=\lim_{\mu\rightarrow 0}\lVert\nabla u_{\mu}^{-}\rVert_{p}^p=S^{\frac{N}{p}},\]
    the Br\'{e}zis-Lieb lemma \cite{bhle} implies $u_{\mu}\rightarrow U_{\varepsilon_{0}}$ in $W^{1,p}(\mathbb{R}^N)$ as $\mu\rightarrow 0$.$\hfill\qed$

\subsection{Asymptotic behavior of $u_{\mu}^{-}$ as $\mu$ goes to its upper bound}
    \
    \newline
    \indent In this subsection, we always assume that the assumptions of Theorem \ref{th5} hold. Firstly, we assume that $q=p+\frac{p^2}{N}$ and we prove $\bar{\alpha}$ is the upper bound of $\mu$ when $q=p+\frac{p^2}{N}$.
    \begin{lemma}\label{infty}
    	We have
    	    \[\sup_{u\in S}\frac{\lVert\nabla u\rVert_{p}^p}{\lVert u\rVert_{q}^q}=+\infty.\]
    \end{lemma}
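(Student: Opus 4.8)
The plan is to produce, for each large parameter $A>1$, a test function $w_A\in S$ along which the quotient $\lVert\nabla w_A\rVert_p^p/\lVert w_A\rVert_q^q$ diverges. The first thing to notice is why a naive attempt fails: in the $L^p$-critical case $q=p+\frac{p^2}{N}$ one has $q-p=\frac{p^2}{N}$, hence $\gamma_q=\frac{N(q-p)}{pq}=\frac pq$ and therefore $q\gamma_q=p$. Consequently, for the dilation $s\star u$ in the excerpt, $\lVert\nabla(s\star u)\rVert_p^p=e^{ps}\lVert\nabla u\rVert_p^p$ and $\lVert s\star u\rVert_q^q=e^{q\gamma_q s}\lVert u\rVert_q^q=e^{ps}\lVert u\rVert_q^q$, so the quotient is \emph{invariant} under $s\star(\cdot)$ and, more generally, under every isotropic $L^p$-preserving rescaling. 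Thus any successful test family must break isotropy, and I would dilate in a single coordinate direction (a ``thin slab'').

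Concretely, fix $v\in C_c^\infty(\mathbb{R}^N)\setminus\{0\}$. Since $v$ has compact support and is not identically zero it cannot be independent of $x_1$, so $\partial_{x_1}v\not\equiv 0$. For $A>1$ set
\[
w_A(x):=c_A\,v(Ax_1,x_2,\dots,x_N),\qquad c_A:=\Big(\tfrac{a^p A}{\lVert v\rVert_p^p}\Big)^{1/p}.
\]
A change of variables gives $\lVert w_A\rVert_p^p=c_A^p A^{-1}\lVert v\rVert_p^p=a^p$, so $w_A\in S$, and likewise $\lVert w_A\rVert_q^q=c_A^q A^{-1}\lVert v\rVert_q^q=\big(a^p/\lVert v\rVert_p^p\big)^{q/p}\lVert v\rVert_q^q\,A^{q/p-1}$. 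Because $q=p+\frac{p^2}{N}$ we have $q/p-1=p/N$, hence $\lVert w_A\rVert_q^q=C_1A^{p/N}$ with $C_1>0$ depending only on $N,p,q,a,v$.

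For the gradient, since $A\ge1$ one has the pointwise sandwich
\[
c_A^pA^p\,\lvert\partial_{x_1}v(Ax_1,x')\rvert^p\ \le\ \lvert\nabla w_A(x)\rvert^p\ \le\ c_A^pA^p\,\lvert\nabla v(Ax_1,x')\rvert^p,
\]
where the left bound drops the components in $x_2,\dots,x_N$ and the right bound uses $A\ge1$ to factor $A^p$ out of the full Euclidean gradient length. Integrating and using $c_A^pA^{p-1}=(a^p/\lVert v\rVert_p^p)A^p$ yields $C_2A^p\le\lVert\nabla w_A\rVert_p^p\le C_3A^p$ with $C_2=(a^p/\lVert v\rVert_p^p)\lVert\partial_{x_1}v\rVert_p^p>0$. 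Combining the two estimates,
\[
\frac{\lVert\nabla w_A\rVert_p^p}{\lVert w_A\rVert_q^q}\ \ge\ \frac{C_2}{C_1}\,A^{\,p-\frac pN}\ =\ \frac{C_2}{C_1}\,A^{\,\frac{p(N-1)}{N}},
\]
and since $N\ge2$ the exponent $\frac{p(N-1)}{N}$ is positive, so letting $A\to\infty$ forces $\sup_{u\in S}\lVert\nabla u\rVert_p^p/\lVert u\rVert_q^q=+\infty$, which is the claim.

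The only genuine point to get right is the conceptual one flagged in the first paragraph: dilation-invariance of the quotient is exactly the criticality identity $q\gamma_q=p$, so one is forced to use anisotropic test functions; after that the proof is just the routine change-of-variables bookkeeping above, with a little care only in passing from $\lvert\nabla w_A\rvert$ to $\lVert\nabla w_A\rVert_p$ via the displayed sandwich valid for $A\ge1$.
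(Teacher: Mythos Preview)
Your proof is correct and takes a genuinely different route from the paper's. The paper first reduces (via the Sobolev inequality) to showing $\sup_{u\in S}\lVert u\rVert_{p^*}^p/\lVert u\rVert_q^q=+\infty$, and then exhibits \emph{radial} test functions $u_k(x)=A_k\varphi_k(x)(1+\lvert x\rvert^2)^{-a_k}$ with a cut-off at radius $k$ and exponent $a_k=\tfrac{N-p}{2p}-\tfrac{1}{\log\log(k+2)}$ slowly approaching the Aubin--Talenti decay; careful asymptotics of $\lVert u_k\rVert_p$, $\lVert u_k\rVert_q$, $\lVert u_k\rVert_{p^*}$ then give the blow-up. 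Your anisotropic one-direction dilation $w_A(x)=c_A v(Ax_1,x')$ is conceptually cleaner and avoids all of that: once you observe that the criticality identity $q\gamma_q=p$ forces the quotient to be invariant under isotropic $L^p$-preserving rescalings, the thin-slab construction gives $\lVert\nabla w_A\rVert_p^p/\lVert w_A\rVert_q^q\gtrsim A^{p(N-1)/N}\to\infty$ by a straightforward change of variables. The trade-off is that the paper's construction stays inside the radial class $S_{a,r}$ (potentially useful downstream, e.g.\ in Lemma~\ref{notemp} and Lemma~\ref{asycrinonexi} where one manipulates $\Psi_u^\mu$ for $u\in S$), whereas your $w_A$ is manifestly non-radial; since the lemma is stated for $S=S_a$ and its subsequent uses in the paper only require the supremum over $S_a$, this causes no difficulty.
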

    \begin{proof}
    	By the Sobolev inequality, we just have to prove that
    	    \[\sup_{u\in S}\frac{\lVert u\rVert_{p^*}^p}{\lVert u\rVert_{q}^q}=+\infty.\]
    	Let
    	    \[u_{k}(x)=\frac{A_{k}\varphi_{k}(x)}{(1+\lvert x\rvert^2)^{a_{k}}}\in S,\]
    	where $A_{k}>0$ is a constant dependent on $k$,
    	    \[a_{k}=\frac{N-p}{2p}-\frac{1}{\log\log(k+2)},\]
    	and $\varphi_{k}\in C_{c}^{\infty}(\mathbb{R}^N)$ is a radial cut-off function satisfies
    	    \[0\leqslant\varphi_{k}\leqslant 1,\quad\varphi_{k}=1\ in\ B_{k},\quad and\quad\varphi_{k}=0\ in\ B_{k+1}^c.\]
    	Since $u_{k}\in S$ and
    	    \begin{align*}
    	    	\lVert u_{k}\rVert_{p}^p&=A_{k}^p\int_{\mathbb{R}^N}\frac{\varphi_{k}^p(x)}{(1+\lvert x\rvert^2)^{pa_{k}}}dx\sim A_{k}^p\int_{0}^{+\infty}\frac{\varphi_{k}^p(r)r^{N-1}}{(1+r^2)^{pa_{k}}}dr\\
    	    	&\sim A_{k}^p\int_{0}^{k}\frac{r^{N-1}}{(1+r^2)^{pa_{k}}}dr\sim\frac{A_{k}^pk^{N-2pa_{k}}}{N-2pa_{k}}\sim A_{k}^pk^{N-2pa_{k}}
    	    \end{align*}
        as $k\rightarrow\infty$, we have $A_{k}\sim k^{2a_{k}-N/p}$ as $k\rightarrow\infty$. Therefore,
            \[\lVert u_{k}\rVert_{q}^q=A_{k}^q\int_{\mathbb{R}^N}\frac{\varphi_{k}^q(x)}{(1+\lvert x\rvert^2)^{qa_{k}}}dx\sim k^{2qa_{k}-N-p}\int_{0}^{k}\frac{r^{N-1}}{(1+r^2)^{qa_{k}}}dr\sim\frac{1}{k^p},\]
        and
            \[\lVert u_{k}\rVert_{p^*}^{p^*}=A_{k}^{p^*}\int_{\mathbb{R}^N}\frac{A_{k}^{p^*}}{(1+\lvert x\rvert^2)^{p^*a_{k}}}dx\sim k^{2p^*a_{k}-\frac{Np^*}{p}}\int_{0}^{k}\frac{r^{N-1}}{(1+r^2)^{p^*a_{k}}}dr\sim\frac{1}{(N-p^*a_{k})k^{p^*}},\]
        as $k\rightarrow\infty$, which implies
            \[\frac{\lVert u_{k}\rVert_{p^*}^p}{\lVert u_{k}\rVert_{q}^q}\sim\frac{1}{(N-p^*a_{k})^{p/p^*}}\rightarrow+\infty\]
        as $k\rightarrow\infty$.
    \end{proof}

    \begin{lemma}\label{notemp}
    	For $\mu\geqslant\bar{\alpha}$, we have $\mathcal{P}_{\mu}=\mathcal{P}_{\mu}^{-}\neq\emptyset$.
    \end{lemma}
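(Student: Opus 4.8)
The plan is to run the same fibering–map analysis used in Lemma~\ref{structure2}, but this time keeping track of the fact that for $\mu\geq\bar\alpha$ the Gagliardo--Nirenberg bound no longer forces the relevant coefficient to be positive for \emph{every} $u\in S_a$: for some $u$ the map $\Psi_u^\mu$ will simply have no critical point. Since $q=p+\frac{p^2}{N}$ gives $q\gamma_q=p$, for $u\in S_a$ one can write $\Psi_u^\mu(s)=A_u e^{ps}-B_u e^{p^*s}$ with $A_u=\frac1p\lVert\nabla u\rVert_p^p-\frac\mu q\lVert u\rVert_q^q$ and $B_u=\frac1{p^*}\lVert u\rVert_{p^*}^{p^*}>0$. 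A direct computation of $(\Psi_u^\mu)'$ shows that $\Psi_u^\mu$ has a critical point if and only if $A_u>0$, that this critical point is then unique, and that $(\Psi_u^\mu)''$ at it equals $p(p-p^*)A_ue^{p s_u}<0$, so it is a strict global maximum at a strictly positive level.

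First I would prove $\mathcal{P}_{\mu}\neq\emptyset$. This is precisely where Lemma~\ref{infty} is used: because $\sup_{u\in S_a}\lVert\nabla u\rVert_p^p/\lVert u\rVert_q^q=+\infty$, for the given $\mu$ one can pick $u_0\in S_a$ with $\lVert\nabla u_0\rVert_p^p/\lVert u_0\rVert_q^q>p\mu/q$, i.e. $A_{u_0}>0$. Then $\Psi_{u_0}^\mu$ has its (unique) critical point $s_0$, and since $s\star u\in\mathcal{P}_{\mu}$ if and only if $(\Psi_u^\mu)'(s)=0$, we get $s_0\star u_0\in\mathcal{P}_{\mu}$, hence $\mathcal{P}_{\mu}\neq\emptyset$.

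Next, $\mathcal{P}_{\mu}^{0}=\emptyset$ is the same computation as in Lemma~\ref{empty2}: for $u\in\mathcal{P}_{\mu}^{0}$, combining $\lVert\nabla u\rVert_p^p=\mu\gamma_q\lVert u\rVert_q^q+\lVert u\rVert_{p^*}^{p^*}$ with $p\lVert\nabla u\rVert_p^p=\mu q\gamma_q^2\lVert u\rVert_q^q+p^*\lVert u\rVert_{p^*}^{p^*}$ and using $q\gamma_q=p$ forces $\lVert u\rVert_{p^*}=0$, impossible on $S_a$. Finally, if $u\in\mathcal{P}_{\mu}$ then $s=0$ is a critical point of $\Psi_u^\mu$, so by the first paragraph $A_u>0$ and $0$ is its strict maximum, whence $(\Psi_u^\mu)''(0)<0$, i.e. $u\in\mathcal{P}_{\mu}^{-}$; since trivially $\mathcal{P}_{\mu}^{-}\subseteq\mathcal{P}_{\mu}$, this yields $\mathcal{P}_{\mu}=\mathcal{P}_{\mu}^{-}$.

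The only genuine obstacle is the non-emptiness step: unlike the regime $\mu<\bar\alpha$ treated in Section~\ref{critical}, here one cannot get $A_u>0$ for all $u$, so $\mathcal{P}_{\mu}\neq\emptyset$ must be extracted from the unboundedness of the ratio $\lVert\nabla u\rVert_p^p/\lVert u\rVert_q^q$ provided by Lemma~\ref{infty}; everything else is the routine bookkeeping already carried out in the earlier sections.
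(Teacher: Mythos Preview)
Your proposal is correct and follows essentially the same route as the paper: both use Lemma~\ref{infty} to find some $u\in S_a$ with $\lVert\nabla u\rVert_p^p>\mu\gamma_q\lVert u\rVert_q^q$ (equivalently $A_u>0$, since $\gamma_q=p/q$), obtain a critical point of $\Psi_u^\mu$ to show $\mathcal{P}_\mu\neq\emptyset$, and then exclude $\mathcal{P}_\mu^{0}\cup\mathcal{P}_\mu^{+}$ using $q\gamma_q=p$. The only cosmetic difference is that the paper rules out $\mathcal{P}_\mu^{0}\cup\mathcal{P}_\mu^{+}$ by a direct linear combination of the two defining relations (yielding $(p^*-p)\lVert v\rVert_{p^*}^{p^*}\le0$), whereas you argue via the fibering picture that any critical point of $\Psi_u^\mu$ is automatically a strict maximum; these are equivalent computations.
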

    \begin{proof}
    	For every $\mu\geq\bar{\alpha}$, by Lemma \ref{infty}, there exists $u\in S$ such that $\lVert\nabla u\rVert_{p}^p>\mu\gamma_{q}\lVert u\rVert_{q}^q$. Then,
    	    \[\Psi_{u}^{\mu}(s)=\frac{1}{p}e^{ps}\big(\lVert\nabla u\rVert_{p}^p-\mu\gamma_{q}\lVert u\rVert_{q}^q\big)-\frac{1}{p^*}e^{p^*s}\lVert u\rVert_{p^*}^{p^*}\]
    	has a critical point $t_{u}\in\mathbb{R}$. By proposition \ref{Pohozaev}, we know $t_{u}\star u\in\mathcal{P}_{\mu}$ which implies $\mathcal{P}_{\mu}\neq\emptyset$.
    	
    	If there exists $v\in\mathcal{P}_{\mu}^{0}\cup\mathcal{P}_{\mu}^{+}$, we have
    	    \[\lVert\nabla v\rVert_{p}^p=\mu\gamma_{q}\lVert v\rVert_{q}^q+\lVert v\rVert_{p^*}^{p^*},\quad and\quad p\lVert\nabla v\rVert_{p}^p\geqslant\mu q\gamma_{q}^2\lVert v\rVert_{q}^q+p^*\lVert v\rVert_{p^*}^{p^*},\]
    	which implies $\lVert v\rVert_{p^*}^{p^*}\leqslant 0$(since $q\gamma_{q}=p$), a contradiction since $v\in S$.
    \end{proof}

    \begin{lemma}\label{asycrinonexi}
    	For $\mu\geqslant\bar{\alpha}$, there is $m^{-}(\mu)=0$, and $m^{-}(\mu)$ can not be attained by any $u\in S$.
    \end{lemma}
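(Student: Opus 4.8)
The plan is to convert the Pohozaev constraint into an explicit formula for $E_\mu$ on $\mathcal P_\mu$, and then to push $m^-(\mu)$ down to $0$ by combining Lemma~\ref{infty} with a dilation of the Gagliardo--Nirenberg extremal. Since $q=p+\frac{p^2}{N}$ we have $q\gamma_q=p$ and $\gamma_q=p/q$, so for $u\in\mathcal P_\mu$ the identity $P_\mu(u)=0$ reads $\|\nabla u\|_p^p-\mu\gamma_q\|u\|_q^q=\|u\|_{p^*}^{p^*}$, and since $\frac\mu q\|u\|_q^q=\frac\mu p\gamma_q\|u\|_q^q$ this gives $E_\mu(u)=\frac1p\big(\|\nabla u\|_p^p-\mu\gamma_q\|u\|_q^q\big)-\frac1{p^*}\|u\|_{p^*}^{p^*}=\big(\frac1p-\frac1{p^*}\big)\|u\|_{p^*}^{p^*}=\frac1N\|u\|_{p^*}^{p^*}$. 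In particular $E_\mu>0$ on $\mathcal P_\mu$ (since $u\in S$ forces $u\not\equiv0$), hence $m^-(\mu)=\inf_{\mathcal P_\mu}E_\mu\ge0$; moreover if the infimum were attained at some $u$ we would get $m^-(\mu)=\frac1N\|u\|_{p^*}^{p^*}>0$. Thus, once $m^-(\mu)=0$ is established, non-attainment follows automatically, and it only remains to prove $m^-(\mu)=0$.

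Next I would reduce this to a scalar quotient along the $\star$-fibres. For $v\in S$ set $f(v):=\|\nabla v\|_p^p-\mu\gamma_q\|v\|_q^q$; using $q\gamma_q=p$ one has $\Psi_v^\mu(s)=\frac1p e^{ps}f(v)-\frac1{p^*}e^{p^*s}\|v\|_{p^*}^{p^*}$ and $(\Psi_v^\mu)'(s)=P_\mu(s\star v)$. When $f(v)>0$ this function has a unique maximum point $t_v$, so $t_v\star v\in\mathcal P_\mu$ (which by Lemma~\ref{notemp} equals $\mathcal P_\mu^-$), and a direct computation gives $E_\mu(t_v\star v)=\Psi_v^\mu(t_v)=\frac1N\big(f(v)/\|v\|_{p^*}^{p}\big)^{N/p}$. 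Therefore $0\le m^-(\mu)\le\frac1N\inf\big\{(f(v)/\|v\|_{p^*}^p)^{N/p}: v\in S,\ f(v)>0\big\}$, and it suffices to produce $v_n\in S$ with $f(v_n)>0$, $f(v_n)\to0$ and $\|v_n\|_{p^*}^p$ bounded away from $0$.

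Such a sequence I would obtain by interpolating, inside $S$, between a function with $f>0$ and one with $f\le0$. By Lemma~\ref{infty} there is $v_0\in S$, which may be taken $\ge0$ (the competitors built in its proof are nonnegative), with $\|\nabla v_0\|_p^p/\|v_0\|_q^q>\mu\gamma_q$, so that $f(v_0)>0$. On the other hand put $v_1:=(a/\|\phi_0\|_p)\phi_0\in S$; since $\phi_0$ is a dilation of the Gagliardo--Nirenberg extremal, equality holds in the Gagliardo--Nirenberg inequality, giving $\|\nabla v_1\|_p^p=\big(C_{N,p,q}^q a^{p^2/N}\big)^{-1}\|v_1\|_q^q$, so $f(v_1)=\|v_1\|_q^q\big((C_{N,p,q}^q a^{p^2/N})^{-1}-\mu\gamma_q\big)\le0$ precisely because $\mu\ge\bar\alpha$; this sign computation, which pins down the exact threshold $\bar\alpha$, is the crux of the argument. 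Both $v_0,v_1$ being $\ge0$, the path $\gamma(t):=a\big((1-t)v_0+tv_1\big)/\|(1-t)v_0+tv_1\|_p$ is continuous from $[0,1]$ into $S$, because $\|(1-t)v_0+tv_1\|_p^p\ge(1-t)^p a^p+t^p a^p>0$. Then $t\mapsto f(\gamma(t))$ is continuous with $f(\gamma(0))>0\ge f(\gamma(1))$, so $t^*:=\inf\{t:f(\gamma(t))\le0\}\in(0,1]$ satisfies $f(\gamma(t^*))=0$ and $f(\gamma(t))>0$ on $[0,t^*)$. Choosing $t_n\uparrow t^*$ and $v_n:=\gamma(t_n)$ we obtain $f(v_n)>0$, $f(v_n)\to f(\gamma(t^*))=0$, while $\|v_n\|_{p^*}^p\to\|\gamma(t^*)\|_{p^*}^p>0$ since $\gamma(t^*)\in S$. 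Hence $E_\mu(t_{v_n}\star v_n)\to0$, so $m^-(\mu)\le0$ and therefore $m^-(\mu)=0$; non-attainment was already noted. The only subtle step is the sign of $f(v_1)$, tied to the precise value of $\bar\alpha$; everything else is continuity of $f$ and $\|\cdot\|_{p^*}$ together with the elementary behaviour of $E_\mu$ along the scalings $s\star v$.
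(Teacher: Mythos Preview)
Your proof is correct and follows the same core strategy as the paper: both reduce the claim $m^-(\mu)=0$ to exhibiting a sequence $v_n\in S$ with $f(v_n)=\lVert\nabla v_n\rVert_p^p-\mu\gamma_q\lVert v_n\rVert_q^q\to 0^+$ while $\lVert v_n\rVert_{p^*}$ stays bounded away from $0$, and both derive the lower bound $m^-(\mu)\ge0$ and non-attainment from $E_\mu(u)=\frac1N\lVert u\rVert_{p^*}^{p^*}$ on $\mathcal P_\mu$.

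The difference lies in how this sequence is produced. The paper simply asserts, citing Lemma~\ref{infty}, the existence of $u_n\in S$ with $\lVert\nabla u_n\rVert_p^p/\lVert u_n\rVert_q^q\to(\mu\gamma_q)^+$, then rescales so that $\lVert u_n\rVert_q=1$ and uses Sobolev and H\"older to pin $\lVert u_n\rVert_{p^*}$ between two positive constants. Strictly speaking Lemma~\ref{infty} only gives $\sup=+\infty$, so the paper is implicitly using that the infimum of the ratio on $S$ equals $\bar\alpha\gamma_q\le\mu\gamma_q$ (by Gagliardo--Nirenberg) together with connectedness. Your path argument from a large-ratio function $v_0$ to the Gagliardo--Nirenberg extremal $v_1$ (where $f(v_1)\le0$ precisely because $\mu\ge\bar\alpha$) makes this step fully explicit via the intermediate value theorem, and the continuity of $\lVert\cdot\rVert_{p^*}$ along the path replaces the paper's Sobolev/H\"older bound. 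Your version is slightly longer but more self-contained; the paper's version is terser but leaves the approximation step to the reader. One cosmetic point: $\phi_0$ is a \emph{scalar multiple} (not a dilation) of the extremal $\psi_0$, but since the Gagliardo--Nirenberg quotient is $0$-homogeneous this does not affect your computation of $f(v_1)$.
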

    \begin{proof}
    	For every $\mu\geqslant\bar{\alpha}$, by Lemma \ref{infty}, there exists $\{u_{n}\}\subset S$ such that
    	    \[\frac{\lVert\nabla u_{n}\rVert_{p}^p}{\lVert u_{n}\rVert_{q}^q}>\mu\gamma_{q},\quad and\quad\frac{\lVert\nabla u_{n}\rVert_{p}^p}{\lVert u_{n}\rVert_{q}^q}\rightarrow\mu\gamma_{q}\]
    	as $n\rightarrow\infty$. Without loss of generality, by scaling $\frac{as_{n}^{N/p}}{\lVert u_{n}\rVert_{p}^p}u_{n}(sx)$ if necessary, we may assume that $\lVert u_{n}\rVert_{q}^q=1$. Then, we have $\lVert\nabla u_{n}\rVert_{p}^p>\mu\gamma_{q}$ and $\lVert\nabla u_{n}\rVert_{p}^p\rightarrow\mu\gamma_{q}$ as $n\rightarrow\infty$.
    	
    	Now, we can obtain the function
    	    \[\Psi_{u_{n}}^{\mu}(s)=\frac{1}{p}e^{ps}\big(\lVert\nabla u_{n}\rVert_{p}^p-\mu\gamma_{q}\big)-\frac{1}{p^*}e^{p^*s}\lVert u_{n}\rVert_{p^*}^{p^*}\]
    	has a critical point $t_{n}\in\mathbb{R}$. Hence, $t_{n}\star u_{n}\in\mathcal{P}_{\mu}^{-}$ and we have
    	    \begin{equation}\label{tn}
    	    	\lVert u_{n}\rVert_{p}^p-\mu\gamma_{q}=e^{(p^*-p)t_{n}}\lVert u_{n}\rVert_{p^*}^{p^*}.
    	    \end{equation}
        By the Sobolev inequality $S\lVert u_{n}\rVert_{p}^p\leqslant\lVert\nabla u_{n}\rVert_{p}^p$ and H\"older inequality $\lVert u_{n}\rVert_{q}^q\leqslant\lVert u_{n}\rVert_{p^*}^{q\gamma_{q}}\lVert u_{n}\rVert_{p}^{q(1-\gamma_{q})}$, we obtain $\lVert u_{n}\rVert_{p^*}^{p^*}\sim 1$. Thus, (\ref{tn}) implies $t_{n}\rightarrow-\infty$ as $n\rightarrow\infty$.

        By the definition of $m^{-}(\mu)$, we have $m^{-}(\mu)\leqslant E_{\mu}(t_{n}\star u_{n})$, that is
            \[m^{-}(\mu)\leqslant\frac{1}{N}e^{pt_{n}}\big(\lVert\nabla u_{n}\rVert_{p}^p-\mu\gamma_{q}\big)\rightarrow 0\]
        as $n\rightarrow\infty$, which implies $m^{-}(\mu)\leqslant 0$. For every $u\in\mathcal{P}_{\mu}^{-}$, we have
            \[E_{\mu}(u)=\frac{1}{N}\lVert u\rVert_{p^*}^{p^*},\]
        which implies $m^{-}(\mu)\geqslant 0$. Therefore, $m^{-}(\mu)=0$.

        If there exists $u\in\mathcal{P}_{\mu}^{-}$ such that $E_{\mu}(u)=0$. Then, it must have $u\equiv 0$ which contradicts with $u\in S$.
    \end{proof}

    Theorem \ref{th3} and Lemma \ref{asycrinonexi} implies $\bar{\alpha}$ is the upper bound of $\mu$. Therefore, we can study the asymptotic behavior of $u_{a,\mu}^{-}$ as $\mu\rightarrow\bar{\alpha}$. We give the asymptotic behavior of $\lambda_{\mu}^{-}$ as follows.

    \begin{lemma}\label{asycrilam}
    	For $\mu<\bar{\alpha}$, we have
    	    \[-\lambda_{\mu}^{-}\sim\lVert\nabla u_{\mu}^{-}\rVert_{p}^p\sim(\bar{\alpha}-\mu)^{\frac{N-p}{p}},\]
    	as $\mu\rightarrow\bar{\alpha}$.
    \end{lemma}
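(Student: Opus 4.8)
The plan is to reduce everything to the single scalar $m^{-}(\mu)=E_{\mu}(u_{\mu}^{-})$ and to pin down its rate of decay as $\mu\to\bar{\alpha}$. First I would record the algebraic identities valid in this regime. Since $q=p+\frac{p^{2}}{N}$ we have $q\gamma_{q}=p$, so on $\mathcal{P}_{\mu}$ the relation $P_{\mu}(u)=0$ reads $\lVert\nabla u\rVert_{p}^{p}-\mu\gamma_{q}\lVert u\rVert_{q}^{q}=\lVert u\rVert_{p^{*}}^{p^{*}}$ and $E_{\mu}(u)=\frac{1}{N}\lVert u\rVert_{p^{*}}^{p^{*}}$; in particular $m^{-}(\mu)=\frac{1}{N}\lVert u_{\mu}^{-}\rVert_{p^{*}}^{p^{*}}$. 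I would also keep the Pohozaev relation $-\lambda_{\mu}^{-}a^{p}=\mu(1-\gamma_{q})\lVert u_{\mu}^{-}\rVert_{q}^{q}$ (already used in Lemma~\ref{asylammu} and Lemma~\ref{asyuneg}), which gives $-\lambda_{\mu}^{-}\sim\lVert u_{\mu}^{-}\rVert_{q}^{q}$ since $\mu\to\bar{\alpha}$ stays bounded away from $0$.

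Next I would prove the two-sided estimate $m^{-}(\mu)\sim(\bar{\alpha}-\mu)^{N/p}$. For the upper bound, use the minimax characterization $m^{-}(\mu)=\inf_{u\in S}\max_{s\in\R}E_{\mu}(s\star u)$ from Lemma~\ref{infmax} with a fixed rescaling $\phi\in S$ of the Gagliardo--Nirenberg extremal $\psi_{0}$. Because $\phi$ saturates the Gagliardo--Nirenberg inequality and $\bar{\alpha}$ is the sharp constant in {\rm(\ref{muconcri})}, one has exactly $\lVert\nabla\phi\rVert_{p}^{p}-\mu\gamma_{q}\lVert\phi\rVert_{q}^{q}=(1-\mu/\bar{\alpha})\lVert\nabla\phi\rVert_{p}^{p}>0$, and a direct maximization of $\Psi_{\phi}^{\mu}$ (using $\frac{p^{*}}{p^{*}-p}=\frac{N}{p}$, $\frac{p}{p^{*}-p}=\frac{N-p}{p}$) yields $\max_{s\in\R}E_{\mu}(s\star\phi)=\frac{1}{N}\big((1-\mu/\bar{\alpha})\lVert\nabla\phi\rVert_{p}^{p}\big)^{N/p}\lVert\phi\rVert_{p^{*}}^{-N}=c_{0}(1-\mu/\bar{\alpha})^{N/p}$ with $c_{0}$ depending only on $\psi_{0}$ and $a$. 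For the lower bound, for every $u\in\mathcal{P}_{\mu}$ combine the Gagliardo--Nirenberg inequality ($\mu\gamma_{q}\lVert u\rVert_{q}^{q}\le\frac{\mu}{\bar{\alpha}}\lVert\nabla u\rVert_{p}^{p}$, which is the content of {\rm(\ref{muconcri})}) with the Sobolev inequality to get $\lVert u\rVert_{p^{*}}^{p^{*}}=\lVert\nabla u\rVert_{p}^{p}-\mu\gamma_{q}\lVert u\rVert_{q}^{q}\ge(1-\mu/\bar{\alpha})S\lVert u\rVert_{p^{*}}^{p}$, hence $\lVert u\rVert_{p^{*}}^{p^{*}}\ge\big(S(1-\mu/\bar{\alpha})\big)^{N/p}$ and $E_{\mu}(u)\ge\frac{1}{N}\big(S(1-\mu/\bar{\alpha})\big)^{N/p}$; taking the infimum over $\mathcal{P}_{\mu}$ gives the matching bound. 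Since $1-\mu/\bar{\alpha}\sim\bar{\alpha}-\mu$, this proves $\lVert u_{\mu}^{-}\rVert_{p^{*}}^{p^{*}}=Nm^{-}(\mu)\sim(\bar{\alpha}-\mu)^{N/p}$.

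Then I would extract the gradient estimate, using $N/p-1=\frac{N-p}{p}=\frac{N}{p^{*}}$. The upper bound follows from $(1-\mu/\bar{\alpha})\lVert\nabla u_{\mu}^{-}\rVert_{p}^{p}\le\lVert\nabla u_{\mu}^{-}\rVert_{p}^{p}-\mu\gamma_{q}\lVert u_{\mu}^{-}\rVert_{q}^{q}=\lVert u_{\mu}^{-}\rVert_{p^{*}}^{p^{*}}\le C(\bar{\alpha}-\mu)^{N/p}$, so $\lVert\nabla u_{\mu}^{-}\rVert_{p}^{p}\le C(\bar{\alpha}-\mu)^{(N-p)/p}$. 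The lower bound is cheaper: the Sobolev inequality applied directly to $u_{\mu}^{-}$ gives $\lVert\nabla u_{\mu}^{-}\rVert_{p}^{p}\ge S\lVert u_{\mu}^{-}\rVert_{p^{*}}^{p}=S\big(\lVert u_{\mu}^{-}\rVert_{p^{*}}^{p^{*}}\big)^{p/p^{*}}\ge C(\bar{\alpha}-\mu)^{(N/p)(p/p^{*})}=C(\bar{\alpha}-\mu)^{(N-p)/p}$. Finally, since $\lVert u_{\mu}^{-}\rVert_{p^{*}}^{p^{*}}\sim(\bar{\alpha}-\mu)^{N/p}=o\big((\bar{\alpha}-\mu)^{(N-p)/p}\big)=o\big(\lVert\nabla u_{\mu}^{-}\rVert_{p}^{p}\big)$, the identity $\mu\gamma_{q}\lVert u_{\mu}^{-}\rVert_{q}^{q}=\lVert\nabla u_{\mu}^{-}\rVert_{p}^{p}-\lVert u_{\mu}^{-}\rVert_{p^{*}}^{p^{*}}$ forces $\lVert u_{\mu}^{-}\rVert_{q}^{q}\ge\frac{1}{2\mu\gamma_{q}}\lVert\nabla u_{\mu}^{-}\rVert_{p}^{p}$ for $\mu$ near $\bar{\alpha}$, while Gagliardo--Nirenberg gives the reverse comparison; hence $-\lambda_{\mu}^{-}\sim\lVert u_{\mu}^{-}\rVert_{q}^{q}\sim\lVert\nabla u_{\mu}^{-}\rVert_{p}^{p}\sim(\bar{\alpha}-\mu)^{(N-p)/p}$.

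I expect the only delicate point to be the sharp upper bound $m^{-}(\mu)\le C(\bar{\alpha}-\mu)^{N/p}$: this is where it is essential that $\bar{\alpha}$ is the \emph{extremal} Gagliardo--Nirenberg constant, so that the coefficient $\lVert\nabla\phi\rVert_{p}^{p}-\mu\gamma_{q}\lVert\phi\rVert_{q}^{q}$ of the test function vanishes \emph{linearly} in $\bar{\alpha}-\mu$; a merely admissible bound would not give the correct exponent. The rest is bookkeeping with the Sobolev and Gagliardo--Nirenberg inequalities, the manifold identities, and the Pohozaev relation, together with the elementary check that $\lVert\nabla\phi\rVert_{p}^{p}-\mu\gamma_{q}\lVert\phi\rVert_{q}^{q}>0$ for every $\mu<\bar{\alpha}$, which guarantees $\max_{s}E_{\mu}(s\star\phi)$ is attained and equals the stated quantity.
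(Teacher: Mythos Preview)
Your proposal is correct and follows essentially the same strategy as the paper: both use a rescaling of the Gagliardo--Nirenberg extremal $\psi_{0}$ as a test function on $\mathcal{P}_{\mu}^{-}$ to obtain the sharp upper bound $m^{-}(\mu)\le C(\bar{\alpha}-\mu)^{N/p}$, and both combine the Gagliardo--Nirenberg and Sobolev inequalities on the Pohozaev identity for the matching lower bound. The only minor organizational difference is that the paper first isolates $\lVert\nabla u_{\mu}^{-}\rVert_{p}^{p}\ge C(\bar{\alpha}-\mu)^{(N-p)/p}$ directly and then bounds $\lVert\nabla u_{\mu}^{-}\rVert_{p}^{p}$ from above via H\"older ($\lVert u\rVert_{q}^{q}\le a^{q(1-\gamma_{q})}\lVert u\rVert_{p^{*}}^{p}$), whereas you pin down $m^{-}(\mu)\sim(\bar{\alpha}-\mu)^{N/p}$ first and then read off the gradient bound from $(1-\mu/\bar{\alpha})\lVert\nabla u_{\mu}^{-}\rVert_{p}^{p}\le\lVert u_{\mu}^{-}\rVert_{p^{*}}^{p^{*}}$; your chaining is slightly more streamlined but the content is the same.
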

    \begin{proof}
    	By the Gagliardo-Nirenberg inequality and Sobolev inequality,
    	    \[\lVert\nabla u_{\mu}^{-}\rVert_{p}^p=\mu\gamma_{q}\lVert u_{\mu}^{-}\rVert_{q}^q+\lVert u_{\mu}^{-}\rVert_{p^*}^{p^*}\leqslant\frac{\mu}{\bar{\alpha}}\lVert\nabla u_{\mu}^{-}\rVert_{p}^p+S^{\frac{p^*}{p}}\lVert\nabla u_{\mu}^{-}\rVert_{p}^{p^*},\]
    	which implies
    	    \begin{equation}\label{asycrinab}
    	    	\lVert\nabla u_{\mu}^{-}\rVert_{p}^p\geqslant C(\alpha-\mu)^{\frac{N-p}{p}},
    	    \end{equation}
        as $\mu\rightarrow\bar{\alpha}$.

        Let $\varphi=\frac{a}{\lVert\psi_{0}\rVert_{p}}\psi_{0}\in S$, where $\psi_{0}$ is a minimizer of Gagliardo-Nirenberg inequality. Then, direct calculations shows that $t_{\mu}\star\varphi\in\mathcal{P}_{\mu}^{-}$, where
            \[e^{t_{\mu}}=\frac{\lVert\psi_{0}\rVert_{p}\lVert\nabla\psi_{0}\rVert_{p}^{p/(p^*-p)}}{a\lVert\psi_{0}\rVert_{p^*}^{p^*/(p^*-p)}}\Big(1-\frac{\mu}{\bar{\alpha}}\Big)^{\frac{1}{p^*-p}}.\]
        Therefore,
            \[m^{-}(\mu)\leqslant E_{\mu}(t_{\mu}\star\varphi)=\frac{a^p\lVert\nabla\psi_{0}\rVert_{p}^p}{N\lVert\psi_{0}\rVert_{p}^p}\Big(1-\frac{\mu}{\bar{\alpha}}\Big)e^{pt_{\mu}}
            =\frac{1}{N}\Big(1-\frac{\mu}{\bar{\alpha}}\Big)^{\frac{N}{p}}\frac{\lVert\nabla\psi_{0}\rVert_{p}^N}{\lVert\psi_{0}\rVert_{p^*}^N}.\]
        Since
            \[E_{\mu}(u_{\mu}^{-})=\frac{1}{N}\lVert u_{\mu}^{-}\rVert_{p^*}^{p^*},\]
        we have
            \[\lVert u_{\mu}^{-}\rVert_{p^*}^{p^*}\leqslant C(\bar{\alpha}-\mu)^{\frac{N}{p}}.\]
        Now, by the H\"older inequality, we obtain
            \[\lVert\nabla u_{\mu}^{-}\rVert_{p}^p=\mu\gamma_{q}\lVert u_{\mu}^{-}\rVert_{q}^q+\lVert u_{\mu}^{-}\rVert_{p^*}^{p^*}\leqslant\frac{\mu}{\bar{\alpha}}\lVert u_{\mu}^{-}\rVert_{p^*}^p+\lVert u_{\mu}^{-}\rVert_{p^*}^{p^*}\leqslant C(\bar{\alpha}-\mu)^{\frac{N-p}{p}},\]
        as $\mu\rightarrow\bar{\alpha}$, which together with (\ref{asycrinab}), implies
            \[\lVert\nabla u_{\mu}^{-}\rVert_{p}^p\sim(\bar{\alpha}-\mu)^{\frac{N-p}{p}}.\]

        Using the Gagliardo-Nirenberg inequality and Sobolev inequality again, we have
            \[\lVert u_{\mu}^{-}\rVert_{q}^q\leqslant C\lVert\nabla u_{\mu}^{-}\rVert_{p}^p\leqslant C(\bar{\alpha}-\mu)^{\frac{N-p}{p}},\]
        and
            \[\mu\gamma_{q}\lVert u_{\mu}^{-}\rVert_{q}^q=\lVert\nabla u_{\mu}^{-}\rVert_{p}^p-\lVert u_{\mu}^{-}\rVert_{p}^{p^*}\geqslant\lVert\nabla u_{\mu}^{-}\rVert_{p}^p-S^{\frac{p^*}{p}}\lVert\nabla u_{\mu}^{-}\rVert_{p}^{p^*}\geqslant C(\bar{\alpha}-\mu)^{\frac{N-p}{p}},\]
        as $\mu\rightarrow\bar{\alpha}$. Therefore,
            \[\lVert u_{\mu}^{-}\rVert_{q}^q\sim(\bar{\alpha}-\mu)^{\frac{N-p}{p}},\]
        as $\mu\rightarrow\bar{\alpha}$. By the Pohozaev identity $\lambda_{\mu}^{-}a^p=\mu(\gamma_{q}-1)\lVert u_{\mu}^{-}\rVert_{q}^q$, we know
            \[-\lambda_{\mu}^{-}\sim(\bar{\alpha}-\mu)^{\frac{N-p}{p}},\]
        as $\mu\rightarrow\bar{\alpha}$.
    \end{proof}
    \begin{remark}
    	{\em We have $u_{\mu}^{-}\rightarrow 0$ in $D^{1,p}(\mathbb{R}^N)$ and $m^{-}(\mu)\rightarrow 0$ as $\mu\rightarrow\bar{\alpha}$.}
    \end{remark}

    \noindent\textbf{Proof of Theorem \ref{th5}(1)} Let
        \[w_{\mu}=s_{\mu}^{\frac{N}{p}}u_{\mu}^{-}(s_{\mu}\cdot)\in S,\]
    where $s_{\mu}=(\bar{\alpha}-\mu)^{-(N-p)/p^2}$. By Lemma \ref{asycrilam},
        \[\lVert\nabla w_{\mu}\rVert_{p}^p=s_{\mu}^p\lVert\nabla u_{\mu}^{-}\rVert_{p}^p\leqslant C,\]
    which implies $\{w_{\mu}\}$ is bounded in $W^{1,p}(\mathbb{R}^N)$. Thus, there exists $w\in W^{1,p}(\mathbb{R}^N)$ such that $w_{\mu}\rightharpoonup w$ in $W^{1,p}(\mathbb{R}^N)$, $w_{\mu}\rightarrow w$ in $L^q(\mathbb{R}^N)$, $w_{\mu}\rightarrow w$ a.e. in $\mathbb{R}^N$.

    Direct calculations shows that
        \[-\Delta_{p}w_{\mu}=\lambda_{\mu}^{-}s_{\mu}^pw_{\mu}^{p-1}+\mu w_{\mu}^{q-1}+s_{\mu}^{-\frac{p^2}{N-p}}w_{\mu}^{p^*-1}.\]
    By Lemma \ref{asycrilam}, we know $\{\lambda_{\mu}^{+}s_{\mu}^p\}$ is bounded, hence there exists $\sigma_{0}>0$ such that $\lambda_{\mu}^{+}s_{\mu}^p\rightarrow-\sigma_{0}$ as $\mu\rightarrow 0$. Similar to the proof of Theorem \ref{th4}(1), we can prove that $w_{\mu}\rightarrow w$ in $W^{1,p}(\mathbb{R}^N)$. Thus, $w$ satisfies the equation
        \[-\Delta_{p}w+\sigma_{0}w^{p-1}=\bar{\alpha}w^{q-1}.\]
    Let $\tilde{w}=(\bar{\alpha}\sigma_{0})^{\frac{1}{p-q}}w\big(\sigma_{0}^{-\frac{1}{p}}\cdot\big)$. It is not difficult to show that
        \[(\bar{\alpha}\sigma_{0})^{\frac{1}{p-q}}s_{\mu}^{\frac{N}{p}}u_{\mu}^{-}\big(\sigma_{0}^{-\frac{1}{p}}s_{\mu}\cdot\big)\rightarrow\tilde{w}\]
    in $W^{1,p}(\mathbb{R}^N)$, and $\tilde{w}$ satisfies (\ref{uniqueness}). By the regularity and properties of $u_{\mu}^{-}$, we can derive that $\tilde{w}$ is the "ground state" of (\ref{uniqueness}) and hence $\tilde{w}=\phi_{0}$. Now, using $w\in S_{a}$, we can obtain that
        \[\sigma_{0}=\bar{\alpha}^{\frac{p^2}{N(q-p)-p^2}}\bigg(\frac{a^p}{\lVert\phi_{0}\rVert_{p}^p}\bigg)^{\frac{p(q-p)}{p^2-N(q-p)}}.\]
    $\hfill\qed$

    Now, we assume that $p+\frac{p^2}{N}<q<p^*$. Obviously, the upper bound of $\mu$ is $+\infty$.
    \begin{lemma}\label{asysuplam}
    	We have
    	    \[-\lambda_{\mu}^{-}\sim\lVert\nabla u_{\mu}^{-}\rVert_{p}^p\sim\mu^{-\frac{p}{q\gamma_{q}-p}},\]
        as $\mu\rightarrow+\infty$.
    \end{lemma}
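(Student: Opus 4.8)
The plan is to follow the same scheme as in Lemma~\ref{asylammu} and Lemma~\ref{asycrilam}: first pin down the decay rate of $m^{-}(\mu)$ by testing with a fixed function, then convert this into two-sided bounds on $\lVert\nabla u_{\mu}^{-}\rVert_{p}^{p}$, $\lVert u_{\mu}^{-}\rVert_{q}^{q}$ and $\lVert u_{\mu}^{-}\rVert_{p^{*}}^{p^{*}}$, and finally read off $\lambda_{\mu}^{-}$ from the Pohozaev identity $-\lambda_{\mu}^{-}a^{p}=\mu(1-\gamma_{q})\lVert u_{\mu}^{-}\rVert_{q}^{q}$, obtained exactly as in Proposition~\ref{Pohozaev} by subtracting the equation tested against $u_{\mu}^{-}$ from the Pohozaev relation. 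The structural point that makes everything work in the regime $p+\frac{p^{2}}{N}<q<p^{*}$ is that $q\gamma_{q}>p$, so that in the identity $\lVert\nabla u\rVert_{p}^{p}=\mu\gamma_{q}\lVert u\rVert_{q}^{q}+\lVert u\rVert_{p^{*}}^{p^{*}}$ on $\mathcal{P}_{\mu}$ the critical term $\lVert u\rVert_{p^{*}}^{p^{*}}$ turns out to be of strictly lower order than the other two as $\mu\to+\infty$.

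First I would fix $\varphi=\frac{a}{\lVert\psi_{0}\rVert_{p}}\psi_{0}\in S$ with $\psi_{0}$ a Gagliardo--Nirenberg optimizer, and let $t_{\mu}$ be the unique number with $t_{\mu}\star\varphi\in\mathcal{P}_{\mu}^{-}$, i.e.
\[
e^{pt_{\mu}}\lVert\nabla\varphi\rVert_{p}^{p}=\mu\gamma_{q}e^{q\gamma_{q}t_{\mu}}\lVert\varphi\rVert_{q}^{q}+e^{p^{*}t_{\mu}}\lVert\varphi\rVert_{p^{*}}^{p^{*}}.
\]
From $e^{pt_{\mu}}\lVert\nabla\varphi\rVert_{p}^{p}\ge e^{p^{*}t_{\mu}}\lVert\varphi\rVert_{p^{*}}^{p^{*}}$ one gets $e^{(p^{*}-p)t_{\mu}}$ bounded, hence $e^{t_{\mu}}$ bounded above, and then the identity forces $\mu e^{q\gamma_{q}t_{\mu}}\le C$, so $e^{t_{\mu}}\to 0$. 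Dividing the identity by $e^{q\gamma_{q}t_{\mu}}$ and using $q\gamma_{q}>p$ shows the $p^{*}$-term becomes negligible, whence $e^{t_{\mu}}\sim\mu^{-1/(q\gamma_{q}-p)}$. Substituting into $E_{\mu}(t_{\mu}\star\varphi)=\big(\tfrac{1}{p}-\tfrac{1}{q\gamma_{q}}\big)e^{pt_{\mu}}\lVert\nabla\varphi\rVert_{p}^{p}+\big(\tfrac{1}{q\gamma_{q}}-\tfrac{1}{p^{*}}\big)e^{p^{*}t_{\mu}}\lVert\varphi\rVert_{p^{*}}^{p^{*}}$ and noting the second term decays faster, I obtain $m^{-}(\mu)\le E_{\mu}(t_{\mu}\star\varphi)\le C\mu^{-p/(q\gamma_{q}-p)}$.

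Next I would exploit $E_{\mu}(u_{\mu}^{-})=m^{-}(\mu)$ together with $E_{\mu}(u_{\mu}^{-})=\mu\gamma_{q}\big(\tfrac{1}{p}-\tfrac{1}{q\gamma_{q}}\big)\lVert u_{\mu}^{-}\rVert_{q}^{q}+\tfrac{1}{N}\lVert u_{\mu}^{-}\rVert_{p^{*}}^{p^{*}}$, a sum of two nonnegative terms; hence $\mu\lVert u_{\mu}^{-}\rVert_{q}^{q}\le Cm^{-}(\mu)$ and $\lVert u_{\mu}^{-}\rVert_{p^{*}}^{p^{*}}\le Cm^{-}(\mu)$, so $\lVert u_{\mu}^{-}\rVert_{q}^{q}\le C\mu^{-q\gamma_{q}/(q\gamma_{q}-p)}$ and $\lVert u_{\mu}^{-}\rVert_{p^{*}}^{p^{*}}\le C\mu^{-p/(q\gamma_{q}-p)}$. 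Plugging these into $P_{\mu}(u_{\mu}^{-})=0$ yields $\lVert\nabla u_{\mu}^{-}\rVert_{p}^{p}\le C\mu^{-p/(q\gamma_{q}-p)}$. For the matching lower bound, I would run $P_{\mu}(u_{\mu}^{-})=0$ through the Gagliardo--Nirenberg and Sobolev inequalities to get
\[
1\le\mu\gamma_{q}C_{N,p,q}^{q}a^{q(1-\gamma_{q})}\lVert\nabla u_{\mu}^{-}\rVert_{p}^{q\gamma_{q}-p}+S^{-p^{*}/p}\lVert\nabla u_{\mu}^{-}\rVert_{p}^{p^{*}-p};
\]
since the last term tends to $0$ by the upper bound just established, for large $\mu$ the first term is $\ge\tfrac{1}{2}$, yielding $\lVert\nabla u_{\mu}^{-}\rVert_{p}^{p}\ge C\mu^{-p/(q\gamma_{q}-p)}$, hence $\lVert\nabla u_{\mu}^{-}\rVert_{p}^{p}\sim\mu^{-p/(q\gamma_{q}-p)}$.

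Finally, since $\lVert u_{\mu}^{-}\rVert_{p^{*}}^{p^{*}}\le S^{-p^{*}/p}\lVert\nabla u_{\mu}^{-}\rVert_{p}^{p^{*}}\le C\mu^{-p^{*}/(q\gamma_{q}-p)}$ is of strictly smaller order than $\mu^{-p/(q\gamma_{q}-p)}$, the relation $\mu\gamma_{q}\lVert u_{\mu}^{-}\rVert_{q}^{q}=\lVert\nabla u_{\mu}^{-}\rVert_{p}^{p}-\lVert u_{\mu}^{-}\rVert_{p^{*}}^{p^{*}}$ upgrades the bound to $\lVert u_{\mu}^{-}\rVert_{q}^{q}\sim\mu^{-q\gamma_{q}/(q\gamma_{q}-p)}$, and the Pohozaev identity then gives $-\lambda_{\mu}^{-}=\tfrac{\mu(1-\gamma_{q})}{a^{p}}\lVert u_{\mu}^{-}\rVert_{q}^{q}\sim\mu^{1-q\gamma_{q}/(q\gamma_{q}-p)}=\mu^{-p/(q\gamma_{q}-p)}$, completing the proof. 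The only genuinely delicate point is the bookkeeping of the three competing exponents $p$, $q\gamma_{q}$, $p^{*}$ against $q\gamma_{q}-p$: one must obtain the upper bound on $\lVert\nabla u_{\mu}^{-}\rVert_{p}$ \emph{before} the Gagliardo--Nirenberg step can legitimately discard the $\lVert u_{\mu}^{-}\rVert_{p^{*}}^{p^{*}}$ term, so the order of the steps is essential and cannot be rearranged.
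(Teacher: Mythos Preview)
Your proof is correct and follows essentially the same scheme as the paper: bound $m^{-}(\mu)$ from above by testing with a fixed element of $S$, combine with the identity $P_{\mu}(u_{\mu}^{-})=0$ and the Gagliardo--Nirenberg/Sobolev inequalities to pin down $\lVert\nabla u_{\mu}^{-}\rVert_{p}^{p}$ and $\lVert u_{\mu}^{-}\rVert_{q}^{q}$, and finish with the Pohozaev relation $-\lambda_{\mu}^{-}a^{p}=\mu(1-\gamma_{q})\lVert u_{\mu}^{-}\rVert_{q}^{q}$. The only substantive difference is the order: the paper derives the lower bound $\lVert\nabla u_{\mu}^{-}\rVert_{p}^{p}\ge C\mu^{-p/(q\gamma_{q}-p)}$ \emph{first}, directly from $1\le \mu C_{1}\lVert\nabla u_{\mu}^{-}\rVert_{p}^{q\gamma_{q}-p}+C_{2}\lVert\nabla u_{\mu}^{-}\rVert_{p}^{p^{*}-p}$ (which, since both exponents are positive, already forces the desired bound without knowing that $\lVert\nabla u_{\mu}^{-}\rVert_{p}\to 0$), and then uses the form $E_{\mu}(u_{\mu}^{-})=(\tfrac{1}{p}-\tfrac{1}{q\gamma_{q}})\lVert\nabla u_{\mu}^{-}\rVert_{p}^{p}+(\tfrac{1}{q\gamma_{q}}-\tfrac{1}{p^{*}})\lVert u_{\mu}^{-}\rVert_{p^{*}}^{p^{*}}$ to read off the upper bound on $\lVert\nabla u_{\mu}^{-}\rVert_{p}^{p}$ from $m^{-}(\mu)$. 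So your closing remark that the order ``cannot be rearranged'' is a bit too strong; the paper's reversed order also works, though your sequencing makes the negligibility of the $p^{*}$-term more transparent.
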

    \begin{proof}
    	By the Gagliardo-Nirenberg inequality and Sobolev inequality,
    	    \[\lVert\nabla u_{\mu}^{-}\rVert_{p}^p=\mu\gamma_{q}\lVert u_{\mu}^{-}\rVert_{q}^q+\lVert u_{\mu}^{-}\rVert_{p^*}^{p^*}\leqslant\mu\gamma_{q} a^{q(1-\gamma_{q})}C_{N,p,q}^q\lVert\nabla u_{\mu}^{-}\rVert_{p}^{q\gamma_{q}}+S^{\frac{p^*}{p}}\lVert\nabla u_{\mu}^{-}\rVert_{p}^{p^*},\]
    	which implies
    	    \begin{equation}\label{asysupnab}
    		    \lVert\nabla u_{\mu}^{-}\rVert_{p}^p\geqslant C\mu^{-\frac{p}{q\gamma_{q}-p}},
    	    \end{equation}
    	as $\mu\rightarrow+\infty$.
    	
    	Let $u\in S$ be fixed. Then, there exists $t_{\mu}\in\mathbb{R}$ such that $t_{\mu}\star u\in\mathcal{P}_{\mu}^{-}$, that is
    	    \[e^{pt_{\mu}}\lVert\nabla u\rVert_{p}^p=\mu\gamma_{q}e^{q\gamma_{q}t_{\mu}}\lVert u\rVert_{q}^q+e^{p^*t_{\mu}}\lVert u\rVert_{p^*}^{p^*},\]
    	which implies
    	    \[e^{t_{\mu}}\leqslant C\mu^{-\frac{1}{q\gamma_{q}-p}},\]
    	as $\mu\rightarrow+\infty$. Therefore,
    	    \[E_{\mu}(t_{\mu}\star u)=\frac{1}{N}e^{pt_{\mu}}\lVert\nabla u\rVert_{p}^p+\mu\gamma_{q}\Big(\frac{1}{q\gamma_{q}}-\frac{1}{p^*}\Big)e^{p^*t_{\mu}}\lVert u\rVert_{q}^q\leqslant C\mu^{-\frac{p}{q\gamma_{q}-p}},\]
    	as $\mu\rightarrow+\infty$. Since,
    	    \[E_{\mu}(u_{\mu}^{-})=\frac{1}{N}\lVert\nabla u_{\mu}^{-}\rVert_{p}^{p}+\mu\gamma_{q}\Big(\frac{1}{q\gamma_{q}}-\frac{1}{p^*}\Big)\lVert u_{\mu}^{-}\rVert_{q}^q\geqslant\frac{1}{N}\lVert\nabla u_{\mu}^{-}\rVert_{p}^{p},\]
    	we have
    	    \[\lVert\nabla u_{\mu}^{-}\rVert_{p}^{p}\leqslant E_{\mu}(t_{\mu}\star u)\leqslant C\mu^{-\frac{p}{q\gamma_{q}-p}},\]
    	as $\mu\rightarrow+\infty$, which together with (\ref{asysupnab}), implies
    	    \[\lVert\nabla u_{\mu}^{-}\rVert_{p}^p\sim\mu^{-\frac{p}{q\gamma_{q}-p}}\]
    	as $\mu\rightarrow+\infty$.
    	
    	Using the Gagliardo-Nirenberg inequality and Sobolev inequality again, we have
    	    \[\lVert u_{\mu}^{-}\rVert_{q}^q\leqslant C\lVert\nabla u_{\mu}^{-}\rVert_{p}^{q\gamma_{q}}\leqslant C\mu^{-\frac{q\gamma_{q}}{q\gamma_{q}-p}},\]
    	and
    	    \[\mu\gamma_{q}\lVert u_{\mu}^{-}\rVert_{q}^q=\lVert\nabla u_{\mu}^{-}\rVert_{p}^p-\lVert u_{\mu}^{-}\rVert_{p}^{p^*}\geqslant\lVert\nabla u_{\mu}^{-}\rVert_{p}^p-S^{\frac{p^*}{p}}\lVert\nabla u_{\mu}^{-}\rVert_{p}^{p^*}\geqslant C\mu^{-\frac{p}{q\gamma_{q}-p}}\]
    	as $\mu\rightarrow+\infty$. Therefore,
    	    \[\lVert u_{\mu}^{-}\rVert_{q}^q\sim\mu^{-\frac{q\gamma_{q}}{q\gamma_{q}-p}}\]
    	as $\mu\rightarrow+\infty$. By the Pohozaev identity $\lambda_{\mu}^{-}a^p=\mu(\gamma_{q}-1)\lVert u_{\mu}^{-}\rVert_{q}^q$, we know
    	    \[-\lambda_{\mu}^{-}\sim\mu^{-\frac{p}{q\gamma_{q}-p}}\]
    	as $\mu\rightarrow+\infty$.
    \end{proof}
    \begin{remark}
    	{\em We have $u_{\mu}^{-}\rightarrow 0$ in $D^{1,p}(\mathbb{R}^N)$ and $m^{-}(\mu)\rightarrow 0$ as $\mu\rightarrow+\infty$.}
    \end{remark}

    \noindent\textbf{Proof of Theorem \ref{th5}(2)} Let
        \[w_{\mu}=\mu^{\frac{N}{p(q\gamma_{q}-p)}}u_{\mu}^{-}\Big(\mu^{\frac{1}{q\gamma_{q}-p}}\cdot\Big)\in S.\]
    Similar to the proof of Theorem \ref{th4}(1), we can prove that there exists $w\in W^{1,p}(\mathbb{R}^N)$ such that $w_{\mu}\rightarrow w$ in $W^{1,p}(\mathbb{R}^N)$ and $w$ satisfies
        \[-\Delta_{p}w+\sigma_{0}w^{p-1}=w^{q-1}\]
    for some $\sigma_{0}>0$.

    Let $\tilde{w}=\sigma_{0}^{\frac{1}{p-q}}w\big(\sigma_{0}^{-\frac{1}{p}}\cdot\big)$. Then
        \[\sigma_{0}^{\frac{1}{p-q}}\mu^{\frac{N}{q\gamma_{q}-p}}u_{\mu}^{-}\Big(\sigma_{0}^{-\frac{1}{p}}\mu^{\frac{1}{q\gamma_{q}-p}}\cdot\Big)\rightarrow\tilde{w}\]
    in $W^{1,p}(\mathbb{R}^N)$ as $\mu\rightarrow+\infty$ and we can prove that $\tilde{w}=\phi_{0}$. Finally, using $w\in S_{a}$, we have
        \[\sigma_{0}=\bigg(\frac{a^p}{\lVert\phi_{0}\rVert_{p}^p}\bigg)^{\frac{p(q-p)}{p^2-N(q-p)}}.\]
    $\hfill\qed$

\section{\textbf{Nonexistence result}}
    In this section, we prove the nonexistence result for $\mu<0$. The proof is not complicated and is a direct application of the result in \cite{asnsb}.\\

    \noindent\textbf{Proof of Theorem \ref{th6}} Let $u$ be a critical point of $E_{\mu}|_{S_{a}}$. Then, $u$ solves (\ref{equation}) for some $\lambda\in\mathbb{R}$. By the Pohozaev identity, we have
        \[\lambda a^p=\mu(\gamma_{q}-1)\lVert u\rVert_{q}^q,\]
    which implies $\lambda<0$, since $\mu<0$, $\gamma_{q}<1$ and $u\in S_{a}$.

    Using the Sobolev inequality and the fact that $P_{\mu}(u)=0$, we deduce that
        \[\lVert\nabla u\rVert_{p}^p=\mu\gamma_{q}\lVert u\rVert_{q}^q+\lVert u\rVert_{p^*}^{p^*}<\lVert u\rVert_{p^*}^{p^*}\leqslant S^{-\frac{p^*}{p}}\lVert\nabla u\rVert_{p}^{p^*},\]
    which implies $\lVert\nabla u\rVert_{p}^p>S^{\frac{N}{p}}$. Therefore,
        \[E_{\mu}(u)=\frac{1}{N}\lVert\nabla u\rVert_{p}^p-\mu\gamma_{q}\Big(\frac{1}{q\gamma_{q}}-\frac{1}{p^*}\Big)\lVert u\rVert_{q}^q>\frac{1}{N}S^{\frac{N}{p}}.\]
    We complete the proof of (1).

    In order to prove (2), we use corollary 4.2 in \cite{asnsb}. Let $Q=\Delta_{p}$, $\gamma=0$ and $g(u)=\lambda u+\mu u^{q-1}+u^{p^*-1}$. We know
        \[\alpha^*=\frac{N-p}{N-1}>0,\]
    thus,
        \[\sigma^*=p-1+\frac{p-\gamma}{\alpha^*}=\frac{N(p-1)}{N-p}.\]
    By (1), since $\lambda<0$, we have
        \[\liminf_{s\rightarrow 0^+}s^{-\sigma^*}g(s)=+\infty.\]
     Now, by \cite[corollary 4.2]{asnsb}, (\ref{equation}) has no positive solution for any $\mu<0$.$\hfill\qed$

\section{\textbf{Multiplicity result}}
    In this section, we will prove the multiplicity result. Thus, we always  assume that the assumptions of Theorem \ref{th7} holds.

    Firstly, we introduce the concept of genus. Let $X$ be a Banach space and $A$ be a subset of $X$. The set $A$ is said to be symmetric if $u\in A$ implies $-u\in A$. Denote by $\Sigma$ the family of closed symmetric subsets $A$ of $X$ such that $0\notin A$, that is
        \[\Sigma=\{A\subset X\backslash\{0\}: \mbox{A is closed and symmetric with respect to the origin}\}.\]
    For $A\in\Sigma$, define the genus $\gamma(A)$ by
        \[\gamma(A)=\min\{k\in\mathbb{N}: \exists\phi\in C(A,\mathbb{R}^k\backslash\{0\})\ \mbox{and}\ \phi(x)=-\phi(-x), \forall x\in X\}.\]
    If such odd map $\phi$ does not exist, we define $\gamma(A)=+\infty$. For all $k\in\mathbb{N}_{+}$, let
        \[\Sigma_{k}=\{A:A\in\Sigma\ and\ \gamma(A)\geqslant k\}.\]

    For every $\delta>0$ and $A\in\Sigma$, let
        \[A_{\delta}=\{x\in X:\inf\nolimits_{y\in A}\lVert x-y\rVert_{X}\leqslant\delta\}.\]
    We have following lemma with respect to genus.
    \begin{lemma}\label{genus}
    	{\rm\cite[section 7]{rph}} Let $A,B\in\Sigma$. Then the following statements hold.
    	
    	\noindent{\rm (i)}
    	\begin{minipage}[t]{\linewidth}
    		If $\gamma(A)\geqslant 2$, then $A$ contains infinitely many distinct points.
    	\end{minipage}

    	\noindent{\rm (ii)}
    	\begin{minipage}[t]{\linewidth}
    		If there exists an odd mapping $f\in C(A,B)$, then $\gamma(A)\leqslant\gamma(B)$. In particular, if $f$ is a homeomorphism between $A$ and $B$, then $\gamma(A)=\gamma(B)$.
    	\end{minipage}

        \noindent{\rm (iii)}
        \begin{minipage}[t]{\linewidth}
        	Let $\mathbb{S}^{N-1}$ is the sphere in $\mathbb{R}^N$, then $\gamma(\mathbb{S}^{N-1})=N$.
        \end{minipage}

        \noindent{\rm (iv)}
        \begin{minipage}[t]{\linewidth}
        	If $\gamma(B)<+\infty$, then $\gamma(\overline{A-B})\geqslant\gamma(A)-\gamma(B)$.
        \end{minipage}

        \noindent{\rm (v)}
        \begin{minipage}[t]{\linewidth}
        	If $A$ is compact, then $\gamma(A)<\infty$ and there exists $\delta>0$ such that $\gamma(A)=\gamma(A_{\delta})$.
        \end{minipage}
    \end{lemma}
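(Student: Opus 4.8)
The plan is to derive all five properties directly from the definition of the Krasnoselskii genus, using only elementary topology plus one substantial external input, the Borsuk--Ulam theorem, which is needed only for the lower bound in (iii). Two soft consequences of the definition will be used repeatedly and should be recorded first. \emph{Monotonicity:} if $A\subset B$ with $A,B\in\Sigma$, then composing an admissible map for $B$ with the inclusion $A\hookrightarrow B$ gives $\gamma(A)\le\gamma(B)$; this is the inequality in (ii) for the special case $f=\mathrm{id}$. \emph{The odd-extension trick:} if $\phi\in C(D,\mathbb{R}^k\setminus\{0\})$ is odd on a closed symmetric $D\subset X$, extend it coordinatewise by the Tietze (Dugundji) extension theorem to $\tilde\phi\in C(X,\mathbb{R}^k)$ and symmetrise by $\hat\phi(x)=\tfrac12(\tilde\phi(x)-\tilde\phi(-x))$; then $\hat\phi$ is odd on $X$ and $\hat\phi|_D=\phi$. \emph{Subadditivity:} given admissible $\phi_A,\phi_B$ on $A,B$, extend both to odd maps on $X$, restrict to $A\cup B$, and form $(\hat\phi_A,\hat\phi_B)$; it is odd, continuous, and nonzero on $A\cup B$ (on $A$ the first block equals $\phi_A\neq0$, on $B$ the second equals $\phi_B\neq0$), so $\gamma(A\cup B)\le\gamma(A)+\gamma(B)$.

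For (i) I argue by contraposition: if $A=\{\pm x_1,\dots,\pm x_m\}$ is finite then $0\notin A$ forces $x_i\neq-x_i$, so the map $x_i\mapsto 1$, $-x_i\mapsto-1$ is continuous (the set is discrete) and odd, giving $\gamma(A)\le1$; hence $\gamma(A)\ge2$ implies $A$ is infinite. For (ii), if $\gamma(B)=k<\infty$ with admissible $\psi\in C(B,\mathbb{R}^k\setminus\{0\})$ and $f\in C(A,B)$ is odd, then $\psi\circ f$ is admissible for $A$, so $\gamma(A)\le\gamma(B)$; applying this to $f$ and to $f^{-1}$ yields equality when $f$ is a homeomorphism. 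For (iii), the inclusion $\mathbb{S}^{N-1}\hookrightarrow\mathbb{R}^N\setminus\{0\}$ is odd and continuous, so $\gamma(\mathbb{S}^{N-1})\le N$; conversely, if $\gamma(\mathbb{S}^{N-1})=k<N$ there would exist an odd continuous $\phi:\mathbb{S}^{N-1}\to\mathbb{R}^k\setminus\{0\}$, and then $\phi/|\phi|$ would be an odd continuous map $\mathbb{S}^{N-1}\to\mathbb{S}^{k-1}$ with $k-1<N-1$, contradicting the Borsuk--Ulam theorem, so $\gamma(\mathbb{S}^{N-1})=N$.

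For (iv), put $C=\overline{A-B}$; since $A$ is closed, $C\subset A$, hence $C\in\Sigma$. Let $m=\gamma(B)<\infty$ and $j=\gamma(C)$, with admissible maps $\psi$ on $B$ and $\phi$ on $C$. Extend $\psi$ to an odd $\hat\psi\in C(X,\mathbb{R}^m)$ and $\phi$ to an odd $\hat\phi\in C(X,\mathbb{R}^j)$ as above, and restrict both to $A$. Then $\Phi=(\hat\phi,\hat\psi):A\to\mathbb{R}^{j+m}$ is odd, continuous, and nowhere zero on $A$: for $x\in B$ one has $\hat\psi(x)=\psi(x)\neq0$, while for $x\in A-B\subset C$ one has $\hat\phi(x)=\phi(x)\neq0$. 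Therefore $\gamma(A)\le j+m$, i.e. $\gamma(\overline{A-B})\ge\gamma(A)-\gamma(B)$. For (v), let $A\in\Sigma$ be compact. For each $x\in A$ choose $\rho_x>0$ so small that $\overline{B(x,\rho_x)}$ and $\overline{B(-x,\rho_x)}$ are disjoint and avoid $0$; then $M_x:=\overline{B(x,\rho_x)}\cup\overline{B(-x,\rho_x)}$ lies in $\Sigma$ with $\gamma(M_x)=1$ (send one ball to $1$, the other to $-1$). Extract a finite subcover $B(x_1,\rho_{x_1}),\dots,B(x_n,\rho_{x_n})$ of $A$; by monotonicity and subadditivity, $\gamma(A)\le\gamma(\bigcup_i M_{x_i})\le n<\infty$. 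For the neighbourhood claim, take admissible $\phi\in C(A,\mathbb{R}^k\setminus\{0\})$ with $k=\gamma(A)$, extend to an odd $\hat\phi\in C(X,\mathbb{R}^k)$; by compactness $|\hat\phi|\ge\varepsilon_0>0$ on $A$, so $U=\{x:|\hat\phi(x)|>\varepsilon_0/2\}$ is an open symmetric neighbourhood of $A$, hence contains $A_\delta$ for $\delta$ small, and $0\notin A_\delta$ because $\mathrm{dist}(0,A)>0$. Then $\hat\phi|_{A_\delta}$ is admissible, giving $\gamma(A_\delta)\le k$, while $A\subset A_\delta$ gives $\gamma(A)\le\gamma(A_\delta)$; hence $\gamma(A)=\gamma(A_\delta)$.

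The only genuinely hard ingredient is the Borsuk--Ulam theorem underpinning $\gamma(\mathbb{S}^{N-1})\ge N$ in (iii); everything else reduces to the odd-extension trick (hence ultimately to the Tietze extension theorem), compactness, and bookkeeping. Since the paper only applies Lemma~\ref{genus} and the statement is standard, it would be equally acceptable to cite Borsuk--Ulam and \cite[section~7]{rph} rather than reproduce these arguments in full.
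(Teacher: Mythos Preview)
Your proof is correct and self-contained; the only nontrivial input, Borsuk--Ulam for the lower bound in (iii), is correctly isolated, and the odd Tietze extension trick handles (iv) and (v) cleanly. One cosmetic remark: in (iv) you should note that if $\gamma(\overline{A-B})=\infty$ the inequality is trivial, so that assuming $j<\infty$ (and hence the existence of $\phi$) loses no generality.

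As for comparison with the paper: there is nothing to compare. The paper does not prove Lemma~\ref{genus}; it simply cites \cite[section~7]{rph} and moves on. Your argument therefore goes well beyond what the paper supplies, and your closing remark---that citing Borsuk--Ulam and Rabinowitz would be equally acceptable in context---is exactly what the authors did.
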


    Let $\varphi\in C^{1}(X,\mathbb{R})$ be an even functional and
        \[V=\{v\in X:\psi(v)=1\},\]
    where $\psi\in C^{2}(X,\mathbb{R})$ and $\psi'(v)\neq 0$ for all $v\in V$. We define the set of critical points of $\varphi|_{V}$ at level $c$ as
        \[K^c=\{u\in V:\varphi(u)=c,\varphi|_{V}'(u)=0\}.\]
    The following conclusion is the key to proving the result of multiplicity.
    \begin{proposition}\label{minimax}
    	Assume that $\varphi|_{V}$ is bounded from below and satisfies the $(PS)_{c}$ condition for all $c<0$. Moreover, we also assume that $\Sigma_{k}\neq\emptyset$ for $k\in\mathbb{N}_{+}$. Define a sequence of mini-max values $-\infty<c_{1}\leqslant c_{2}\leqslant...\leqslant c_{n}\leqslant...<+\infty$ as follows
    	    \[c_{k}:=\inf_{A\in\Sigma_{k}}\sup_{u\in A}\varphi(u)\quad\forall k=1,2,..,n.\]
    	We have following statements hold.
    	
    	\noindent{\rm (i)}
    	\begin{minipage}[t]{\linewidth}
    		If $c_{k}<0$, then $c_{k}$ is a critical value of $\varphi|_{V}$.
    	\end{minipage}

        \noindent{\rm (ii)}
        \begin{minipage}[t]{\linewidth}
        	If there exists $c<0$ such that
        	\[c_{k}=c_{k+1}=...=c_{k+l}=c,\]
        	then $\gamma(K^c)\geqslant l+1$. In particular, $\varphi|_{V}$ has infinitely many critical points at level $c$ if $l\geqslant 2$.
        \end{minipage}
    \end{proposition}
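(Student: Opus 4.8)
The plan is to adapt the classical Lusternik--Schnirelmann / genus-theoretic minimax machinery (as in Rabinowitz \cite{rph} and Willem \cite{wm}) to the constrained functional $\varphi|_{V}$. First I would record the standing structural facts: $\varphi$ is even, $\varphi|_V$ is bounded below and satisfies $(PS)_c$ for every $c<0$, the sets $\Sigma_k$ are nonempty, and the family $\{\Sigma_k\}$ is monotone, $\Sigma_{k+1}\subset\Sigma_k$. Monotonicity gives $c_1\leqslant c_2\leqslant\cdots$ immediately, and boundedness from below gives $c_k>-\infty$; since each $\Sigma_k$ contains at least one nonempty symmetric closed set (hence $\varphi$ attains some finite value on it), each $c_k<+\infty$. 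So the sequence is well-defined and monotone. The key device is a \emph{quantitative deformation lemma} for $\varphi|_V$: because $\psi\in C^2(X,\mathbb{R})$ with $\psi'(v)\neq0$ on $V$, $V$ is a $C^2$ Banach manifold, and for $\varphi\in C^1$ one obtains an odd, $\varphi$-decreasing deformation $\eta:[0,1]\times V\to V$ which, given a neighborhood $\mathcal{O}$ of $K^c$ and suitable $\bar\varepsilon>\varepsilon>0$, pushes $\{\varphi\leqslant c+\varepsilon\}\setminus\mathcal{O}$ into $\{\varphi\leqslant c-\varepsilon\}$; oddness of $\eta$ is where we use that $\varphi$ is even and $V$ is symmetric, and existence of the deformation is where the $(PS)_c$ hypothesis (valid for $c<0$) enters. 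This is exactly the point in Theorem \ref{th7}'s setting where one needs $p>2$ so that $\|u\|_p^p\in C^2$.

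For part (i): suppose $c=c_k<0$ is not a critical value, so $K^c=\emptyset$. Apply the deformation lemma with $\mathcal{O}=\emptyset$ to get $\varepsilon>0$ and an odd homeomorphism $\eta(1,\cdot)$ with $\eta\big(1,\{\varphi\leqslant c+\varepsilon\}\big)\subset\{\varphi\leqslant c-\varepsilon\}$. By definition of $c_k$ as an infimum, pick $A\in\Sigma_k$ with $\sup_A\varphi<c+\varepsilon$, i.e. $A\subset\{\varphi\leqslant c+\varepsilon\}$. Then $\eta(1,A)\subset\{\varphi\leqslant c-\varepsilon\}$, and since $\eta(1,\cdot)$ is odd and continuous, Lemma \ref{genus}(ii) gives $\gamma(\eta(1,A))\geqslant\gamma(A)\geqslant k$, so $\eta(1,A)\in\Sigma_k$; but then $c_k\leqslant\sup_{\eta(1,A)}\varphi\leqslant c-\varepsilon<c$, a contradiction.

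For part (ii): let $c=c_k=\cdots=c_{k+l}<0$ and suppose for contradiction that $\gamma(K^c)\leqslant l$ (note $K^c\in\Sigma$ since it is a closed symmetric set avoiding $0$—here $c<0$ prevents $0\in K^c$ because $\varphi(0)$ should be nonnegative in the truncated setting, a point to spell out—and $K^c$ is compact by $(PS)_c$). By Lemma \ref{genus}(v) choose $\delta>0$ with $\gamma\big(\overline{(K^c)_\delta}\big)=\gamma(K^c)\leqslant l$; set $\mathcal{O}=(K^c)_\delta$, an open symmetric neighborhood. Apply the deformation lemma to obtain $\varepsilon>0$ and an odd $\eta(1,\cdot)$ with $\eta\big(1,\{\varphi\leqslant c+\varepsilon\}\setminus\mathcal{O}\big)\subset\{\varphi\leqslant c-\varepsilon\}$. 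Since $c_{k+l}=c$, pick $A\in\Sigma_{k+l}$ with $\sup_A\varphi<c+\varepsilon$. Consider $B=\overline{A\setminus\mathcal{O}}$. By the subadditivity property Lemma \ref{genus}(iv), $\gamma(B)\geqslant\gamma(A)-\gamma(\overline{\mathcal{O}})\geqslant(k+l)-l=k$, so $B\in\Sigma_k$, and also $A\setminus\mathcal{O}\subset\{\varphi\leqslant c+\varepsilon\}\setminus\mathcal{O}$; by continuity $B$ stays in a slightly larger sublevel set, which after shrinking $\varepsilon$ we still keep below $c+\varepsilon$, so $\eta(1,B)\subset\{\varphi\leqslant c-\varepsilon\}$ and $\eta(1,B)\in\Sigma_k$. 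Hence $c_k\leqslant\sup_{\eta(1,B)}\varphi\leqslant c-\varepsilon<c$, contradicting $c_k=c$. Therefore $\gamma(K^c)\geqslant l+1$, and when $l\geqslant2$ this forces $\gamma(K^c)\geqslant3\geqslant2$, so by Lemma \ref{genus}(i) $K^c$ is an infinite set, i.e. $\varphi|_V$ has infinitely many critical points at level $c$.

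The main obstacle is establishing the odd quantitative deformation lemma on the manifold $V$ under the given regularity: one must verify that $\varphi|_V$ is $C^1$ in the manifold sense (using $\psi'(v)\neq0$ to build charts), construct a locally Lipschitz odd pseudo-gradient vector field tangent to $V$, integrate its flow, and control it using $(PS)_c$—all while being careful that the construction respects the $\mathbb{Z}_2$-symmetry. In the paper's application this is handled by invoking \cite[lemma 5.15]{wm}, which is precisely why the hypothesis $\psi=\|\cdot\|_p^p\in C^2$ (hence $p>2$) is imposed; the remainder of the argument is the standard genus bookkeeping carried out above.
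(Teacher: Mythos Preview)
Your proposal is correct and follows exactly the standard genus/deformation argument that the paper invokes by citing \cite[Theorem 2.1]{jlls} together with the quantitative deformation lemma \cite[Lemma 5.15]{wm}. Two small cleanups: $K^c\subset V$ and $0\notin V$ already ensure $0\notin K^c$ (so there is no need to discuss $\varphi(0)$), and since $A\in\Sigma_{k+l}$ is closed and $\mathcal{O}$ is open, the set $B=A\setminus\mathcal{O}$ is already closed and contained in $\{\varphi\leqslant c+\varepsilon\}\setminus\mathcal{O}$, so no shrinking of $\varepsilon$ is needed in part~(ii).
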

    \begin{proof}
    	The proof is very similar to \cite[Theorem 2.1]{jlls}, if we replace \cite[Lemma 2.3]{jlls} with following quantitative deformation lemma.
    \end{proof}

    For every $c,d\in\mathbb{R}$ with $c<d$, define
        \[\varphi|_{V}^{c}:=\{u\in V:\varphi(u)\leqslant c\},\quad and\quad\varphi^{-1}([c,d]):=\{u\in X:c\leqslant\varphi(u)\leqslant d\}.\]
    Then, we have following quantitative deformation lemma.
    \begin{lemma}
    	{\rm \cite[Lemma 5.15]{wm}} Let $\varphi\in C^{1}(X,\mathbb{R})$, $W\subset V$, $c\in\mathbb{R}$, and $\varepsilon,\delta>0$ such that
    	    \[\lVert\varphi|_{V}'(u)\rVert\geqslant\frac{8\varepsilon}{\delta}\quad\forall u\in\varphi^{-1}([c-2\varepsilon,c+2\varepsilon]\cap W_{2\delta}).\]
    	Then there exists $\eta\in C([0,1]\times V,V)$ such that
    	
    	\noindent{\rm (i)} $\eta(t,u)=u$ if $t=0$ or if $u\notin\varphi^{-1}([c-2\varepsilon,c+2\varepsilon])\cap W_{2\delta}$.
    	
    	\noindent{\rm (ii)} $\eta(1,\varphi^{c+\varepsilon}\cap W)\in\varphi^{c-\varepsilon}$.
    	
    	\noindent{\rm (iii)} $\varphi(\eta(\cdot,u))$ is non-increasing in $t\in [0,1]$ for all $v\in V$.
    	
    	\noindent{\rm (iv)} $\eta(t,u)$ is odd in $V$ for all $t\in [0,1]$ if $\varphi$ is even in $V$.
    \end{lemma}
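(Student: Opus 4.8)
The plan is to run the classical negative pseudo-gradient flow argument for the restricted functional $\varphi|_{V}$ on the constraint manifold $V$, with the constants chosen so that the numerical bookkeeping closes up. Put $\sigma:=8\varepsilon/\delta$ and, all sets being understood as subsets of $V$,
\[
S:=\varphi^{-1}([c-2\varepsilon,c+2\varepsilon])\cap W_{2\delta},\qquad S':=\varphi^{-1}([c-\varepsilon,c+\varepsilon])\cap W_{\delta}.
\]
By hypothesis $\lVert\varphi|_{V}'(u)\rVert\geqslant\sigma>0$ on $S$, so $\varphi|_{V}$ has no critical point there. First I would construct, by the standard partition-of-unity argument on the $C^{2}$ Banach manifold $V$, a locally Lipschitz pseudo-gradient field $v$ for $\varphi|_{V}$ on $\{u\in V:\varphi|_{V}'(u)\neq 0\}\supset S$, with $v(u)\in T_{u}V$,
\[
\lVert v(u)\rVert\leqslant 2\lVert\varphi|_{V}'(u)\rVert,\qquad \langle\varphi|_{V}'(u),v(u)\rangle\geqslant\lVert\varphi|_{V}'(u)\rVert^{2};
\]
when $\varphi$ is even and $V,W$ are symmetric (which is the situation in the application) I would replace $v(u)$ by $\tfrac12\bigl(v(u)-v(-u)\bigr)$, still a pseudo-gradient since $\varphi|_{V}'(-u)=-\varphi|_{V}'(u)$, and now odd. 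Then I pick a locally Lipschitz cut-off $g:V\to[0,1]$ with $g\equiv 1$ on $S'$ and $g\equiv 0$ on $V\setminus S$ (built from distances to these closed symmetric sets, hence even in the symmetric case), and set
\[
f(u):=\begin{cases}\ -\,4\varepsilon\,\dfrac{g(u)\,v(u)}{\lVert\varphi|_{V}'(u)\rVert^{2}},& u\in S,\\[2mm]\ 0,& u\in V\setminus S.\end{cases}
\]
This $f$ is locally Lipschitz on $V$, tangent to $V$, and bounded with $\lVert f(u)\rVert\leqslant 8\varepsilon/\lVert\varphi|_{V}'(u)\rVert\leqslant 8\varepsilon/\sigma=\delta$.

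Next I would let $\eta\in C([0,1]\times V,V)$ be the solution of $\partial_{t}\eta(t,u)=f(\eta(t,u))$, $\eta(0,u)=u$: it exists globally because $f$ is bounded and locally Lipschitz, and each $\eta(t,\cdot)$ is a homeomorphism of $V$ (inverted by the reverse flow). Since $f$ is tangent to $V=\{\psi=1\}$ we get $\tfrac{d}{dt}\psi(\eta)=\psi'(\eta)[f(\eta)]=0$, so the flow stays on $V$; this is where the $C^{2}$ regularity of $\psi$ enters, giving a $C^{1}$ tangent bundle and local charts in which the ODE can be solved. Property (i) holds because $f\equiv 0$ off the closed set $S$, so any trajectory starting outside $S$ is constant by uniqueness; property (iii) follows from
\[
\tfrac{d}{dt}\varphi(\eta(t,u))=\langle\varphi|_{V}'(\eta),f(\eta)\rangle\leqslant -4\varepsilon\,g(\eta(t,u))\leqslant 0;
\]
and property (iv) holds because $f$ is odd, so $u\mapsto-\eta(t,-u)$ solves the same initial value problem and must coincide with $\eta(t,\cdot)$ by uniqueness.

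For property (ii), take $u\in\varphi^{c+\varepsilon}\cap W$ and suppose, for contradiction, that $\varphi(\eta(1,u))>c-\varepsilon$. By (iii), $c-\varepsilon<\varphi(\eta(t,u))\leqslant\varphi(u)\leqslant c+\varepsilon$ for all $t\in[0,1]$, so $\eta(t,u)\in\varphi^{-1}([c-\varepsilon,c+\varepsilon])$; moreover $\lVert f\rVert\leqslant\delta$ and $u\in W$ force $\operatorname{dist}(\eta(t,u),W)\leqslant\int_{0}^{t}\lVert f\rVert\,ds\leqslant\delta$, hence $\eta(t,u)\in W_{\delta}$, hence $\eta(t,u)\in S'$ and $g(\eta(t,u))=1$ for every $t\in[0,1]$. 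Then (iii) yields $\varphi(\eta(1,u))\leqslant\varphi(u)-4\varepsilon\leqslant c-3\varepsilon<c-\varepsilon$, a contradiction. Therefore $\eta(1,\varphi^{c+\varepsilon}\cap W)\subset\varphi^{c-\varepsilon}$, which is (ii).

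The genuinely delicate point is the construction in the first paragraph: producing a locally Lipschitz pseudo-gradient field for $\varphi|_{V}$ that is simultaneously tangent to $V$, odd in the symmetric case, and meshes with the cut-off, together with the basic ODE theory (existence, uniqueness, continuous dependence) for the induced flow on a manifold modelled on a general Banach space, where $\lVert\varphi|_{V}'(u)\rVert$ denotes the dual norm and the pseudo-gradient stands in for the missing Riesz representative. All of this is standard, cf. \cite{wm}, so the remaining substance is merely verifying — as done above — that the calibration constant $8$ in $8\varepsilon/\delta$, and the buffers $2\delta$ versus $\delta$ and $2\varepsilon$ versus the $4\varepsilon$ energy drop, are exactly what make the two steps above go through.
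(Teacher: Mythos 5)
The paper does not actually prove this lemma---it is quoted directly from Willem \cite[Lemma 5.15]{wm}---and your argument is exactly the standard pseudo-gradient flow proof behind that citation: the calibration works out as you check ($\lVert f(u)\rVert\leqslant 8\varepsilon/\lVert\varphi|_{V}'(u)\rVert\leqslant\delta$, so the trajectory stays in $W_{\delta}$ and in the $\varepsilon$-slab, giving the $4\varepsilon$ energy drop over unit time), and (i)--(iv) follow as you indicate. Your observation that the oddness in (iv) really uses symmetry of $W$ (and $V$), which holds in the paper's application, is a reasonable sharpening of the statement as transcribed, so I regard the proposal as correct and essentially identical to the cited source's proof.
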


    Let $\tau\in C^{\infty}(\mathbb{R}^{+},[0,1])$ be a non-increasing function satisfies
        \[\tau(t)=1\ \mbox{for}\ t\in[0,R_{0}],\quad\mbox{and}\quad\tau(t)=0\ \mbox{for}\ t\in[R_{1},+\infty),\]
    where $R_{0}$ and $R_{1}$ are obtained by Lemma \ref{hfunction}. Define the truncated functional as follows
        \[E_{\tau}(u)=\frac{1}{p}\lVert\nabla u\rVert_{p}^p-\frac{\mu}{q}\lVert u\rVert_{q}^q-\frac{1}{p^*}\tau(\lVert\nabla u\rVert_{p})\lVert u\rVert_{P^*}^{p^*}.\]
    For $u\in S_{a}$, by the Gagliardo-Nirenberg inequality and Sobolev inequality, there is
        \[E_{\tau}(u)\geqslant\frac{1}{p}\lVert\nabla u\rVert_{p}^p-\frac{\mu}{q}C_{N,p,q}^qa^{q(1-\gamma_{q})}\lVert\nabla u\rVert_{p}^{q\gamma_{q}}-\frac{1}{p^*S^{p^*/p}}\tau(\lVert\nabla u\rVert_{p})\lVert\nabla u\rVert_{p}^{p^*}=\tilde{h}(\lVert\nabla u\rVert_{p}),\]
    where
        \[\tilde{h}(t)=\frac{1}{p}t^p-\frac{\mu}{q}C_{N,p,q}^qa^{q(1-\gamma_{q})}t^{q\gamma_{q}}-\frac{\tau(t)}{p^*S^{p^*/p}}t^{p^*}.\]
    By Lemma \ref{hfunction}, we know that $\tilde{h}(t)<0$ for $t\in(0,R_{0})$ and $\tilde{h}(t)>0$ for $t\in(R_{0},+\infty)$.

    \begin{lemma}\label{infps}
    	We have
    	\noindent{\rm (i)}
    	\begin{minipage}[t]{\linewidth}
    		$E_{\tau}\in C^{1}(W_{rad}^{1,p}(\mathbb{R}^N),\mathbb{R})$.
    	\end{minipage}

        \noindent{\rm (ii)}
        \begin{minipage}[t]{\linewidth}
        	$E_{\tau}|_{S_{a,r}}$ is coercive and bounded from below. Moreover, if $E_{\tau}(u)\leqslant 0$ on $S_{a,r}$, then $\lVert\nabla u\rVert_{p}\leqslant R_{0}$ and $E_{\tau}(u)=E_{\mu}(u)$.
        \end{minipage}

        \noindent{\rm (iii)}
        \begin{minipage}[t]{\linewidth}
        	$E_{\tau}|_{S_{a,r}}$ satisfies the $(PS)_{c}$ condition for all $c<0$.
        \end{minipage}
    \end{lemma}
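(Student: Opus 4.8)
For (i), decompose $E_{\tau}(u)=\frac1p\|\nabla u\|_p^p-\frac\mu q\|u\|_q^q-\frac1{p^*}\tau(\|\nabla u\|_p)\|u\|_{p^*}^{p^*}$. The first two terms and the factor $u\mapsto\|u\|_{p^*}^{p^*}$ are of class $C^1$ on $W^{1,p}(\mathbb{R}^N)$, hence on $W_{rad}^{1,p}(\mathbb{R}^N)$, by the continuous embeddings $W^{1,p}\hookrightarrow L^q\cap L^{p^*}$ and the standard differentiability of power-type functionals. The only point to check is that $u\mapsto\tau(\|\nabla u\|_p)$ is $C^1$: on the open ball $\{\|\nabla u\|_p<R_0\}$, a neighbourhood of the origin, it is identically $1$, while on $W_{rad}^{1,p}(\mathbb{R}^N)\setminus\{0\}$ it is the composition of the $C^1$ map $u\mapsto\|\nabla u\|_p^p$ with the $C^\infty$ function $t\mapsto\tau(t^{1/p})$ on $(0,\infty)$; hence it is $C^1$ everywhere, and so is the product $\tau(\|\nabla u\|_p)\|u\|_{p^*}^{p^*}$. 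For (ii), I would use the inequality $E_{\tau}(u)\ge\tilde h(\|\nabla u\|_p)$ recorded before the lemma: since $\tau\equiv 0$ on $[R_1,+\infty)$, for $t\ge R_1$ one has $\tilde h(t)=\frac1p t^p-\frac\mu q C_{N,p,q}^q a^{q(1-\gamma_{q})}t^{q\gamma_{q}}$, which tends to $+\infty$ because $q\gamma_{q}<p$ for $p<q<p+\frac{p^2}{N}$; thus $\tilde h$, and therefore $E_{\tau}|_{S_{a,r}}$, is coercive, and a continuous coercive function on $[0,\infty)$ is bounded from below, giving the lower bound on $E_{\tau}|_{S_{a,r}}$. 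By Lemma \ref{hfunction} (recall $\tilde h>0$ on $(R_0,+\infty)$), if $u\in S_{a,r}$ and $E_{\tau}(u)\le 0$ then $\|\nabla u\|_p\le R_0$, hence $\tau(\|\nabla u\|_p)=1$ and $E_{\tau}(u)=E_{\mu}(u)$.

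The substance is (iii). Let $\{u_n\}\subset S_{a,r}$ be a $(PS)_c$ sequence for $E_{\tau}|_{S_{a,r}}$ with $c<0$; by (ii) it is bounded. Since $E_{\tau}(u_n)\to c<0$ and $\{t\ge 0:\tilde h(t)\le c/2\}$ is a compact subset of $(0,R_0)$, there are $n_0$ and $R_0'<R_0$ with $\|\nabla u_n\|_p\le R_0'$ for $n\ge n_0$; as $\tau\equiv 1$ and $\tau'\equiv 0$ on $\{\|\nabla u\|_p<R_0\}$, the functionals $E_{\tau}$ and $E_{\mu}$ coincide near each $u_n$, so $\{u_n\}$ is a $(PS)_c$ sequence for $E_{\mu}|_{S_{a,r}}$, hence for $E_{\mu}|_{S_a}$ by \cite[theorem 2.2]{kjom}. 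Up to a subsequence $u_n\rightharpoonup u$ in $W^{1,p}(\mathbb{R}^N)$, $u_n\to u$ in $L^q(\mathbb{R}^N)$ by the compact radial embedding, $u_n\to u$ a.e., $|\nabla u_n|^p\rightharpoonup\kappa$, $|u_n|^{p^*}\rightharpoonup\nu$ as in Lemma \ref{concentration}, with Lagrange multipliers $\lambda_n\to\lambda$. Now I rule out concentration exactly as in the $J\neq\emptyset$ part of the proof of Proposition \ref{compactnesslemma} (that argument uses only the boundedness and the $(PS)$ property, not $P_{\mu}(u_n)\to 0$): testing the equation against $\varphi_{\varepsilon}u_n$ gives $\nu_j=\kappa_j$ and then $\nu_j\ge S^{N/p}$ for some $j$, whence by Lemma \ref{concentrationnorm}, $c=\lim E_{\mu}(u_n)\ge E_{\mu}(u)+\frac1N S^{N/p}$. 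On the other hand $u$ is a weak solution of (\ref{equation}): if $u\equiv 0$ then $E_{\mu}(u)=0$, and if $u\not\equiv 0$ then $P_{\mu}(u)=0$ by Lemma \ref{Pohozaevf}, so the computation in the proof of Theorem \ref{th1} together with $\|u\|_p\le a$ (Fatou) and assumption (\ref{muconsub}) yields $E_{\mu}(u)\ge-\frac1N S^{N/p}$. In either case $c\ge 0$, contradicting $c<0$; hence $J=\emptyset$, and Lemma \ref{concentrationnorm} with the Br\'ezis--Lieb lemma \cite{bhle} gives $u_n\to u$ in $L^{p^*}(\mathbb{R}^N)$.

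It remains to upgrade this to strong convergence in $W^{1,p}$. First $u\not\equiv 0$, since otherwise $c=\lim E_{\mu}(u_n)=\frac1p\lim\|\nabla u_n\|_p^p\ge 0$; then the Pohozaev identity $\lambda\|u\|_p^p=\mu(\gamma_{q}-1)\|u\|_q^q$ forces $\lambda<0$. Testing the difference of the equations solved by $u_n$ and $u$ against $u_n-u$ and using the strong convergence in $L^q$ and $L^{p^*}$ and $\lambda_n\to\lambda$, one gets $\int_{\mathbb{R}^N}\big(|\nabla u_n|^{p-2}\nabla u_n-|\nabla u|^{p-2}\nabla u\big)\cdot\nabla(u_n-u)\,dx=\lambda(a^p-\|u\|_p^p)+o(1)$; the left-hand side is nonnegative by monotonicity of the $p$-Laplacian and $\lambda<0$, so $\|u\|_p\ge a$, and with Fatou $\|u\|_p=a$. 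Then the left-hand side is $o(1)$, and since $p\ge 2$ the uniform monotonicity inequality gives $\nabla u_n\to\nabla u$ in $L^p(\mathbb{R}^N)$; combined with $\|u_n\|_p=\|u\|_p=a$ and $u_n\rightharpoonup u$ in $L^p$, uniform convexity yields $u_n\to u$ in $L^p(\mathbb{R}^N)$, hence $u_n\to u$ in $W^{1,p}(\mathbb{R}^N)$. The main obstacle is precisely this part: the minimax sequence produced by the genus construction need not satisfy $P_{\mu}(u_n)\to 0$, so Proposition \ref{compactnesslemma} cannot be quoted as is, and one must recover its conclusion by hand — the new ingredient being the combination of the bubbling-energy inequality with the \emph{lower} bound $E_{\mu}\ge-\frac1N S^{N/p}$ on the Pohozaev manifold (which is where (\ref{muconsub}) is used), followed by recovering $\|u\|_p=a$ from the strict negativity of $\lambda$.
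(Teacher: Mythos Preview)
Your proof is correct and follows essentially the same route as the paper: for (i) and (ii) the arguments coincide, and for (iii) both the paper and you reduce to a Palais--Smale sequence for $E_{\mu}|_{S_{a,r}}$ via (ii), run the concentration-compactness argument of Proposition~\ref{compactnesslemma}, and rule out bubbling by invoking the lower bound $E_{\mu}(u)\geqslant -\tfrac{1}{N}S^{N/p}$ established in the proof of Theorem~\ref{th1} under (\ref{muconsub}). Your write-up is in fact more careful than the paper's on two points: you make explicit that Proposition~\ref{compactnesslemma} cannot be quoted verbatim because $P_{\mu}(u_{n})\to 0$ is not available here (the paper just says ``similar to the proof of Proposition~\ref{compactnesslemma}''), and for the final upgrade to strong $W^{1,p}$-convergence you recover $\lVert u\rVert_{p}=a$ directly from the monotonicity inequality and the sign of $\lambda$, whereas the paper defers to \cite[Lemma~3.6]{hdly}; both arguments are equivalent once $\lambda<0$ is known, and your use of $p\geqslant 2$ is legitimate since Theorem~\ref{th7} assumes $p>2$.
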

    \begin{proof}
    	(i) In fact, we just have to prove $I(u)=\tau(\lVert\nabla u\rVert_{p})\in C^{1}(W_{rad}^{1,p}(\mathbb{R}^N),\mathbb{R})$. For every $u\in W_{rad}^{1,p}(\mathbb{R}^N)$, direct calculations show that
    	    \[I'(u)v=\frac{1}{p}\tau'(\lVert\nabla u\rVert_{p})\lVert\nabla u\rVert_{p}^{1-p}\int_{\mathbb{R}^N}\lvert\nabla u\rvert^{p-2}\nabla u\cdot\nabla vdx\quad\forall v\in W_{rad}^{1,p}(\mathbb{R}^N).\]
    	
    	(ii) For every $u\in S_{a,r}$, since $E_{\tau}(u)\geq\tilde{h}(\lVert\nabla u\rVert_{p})$ and $\tilde{h}(t)\rightarrow+\infty$ as $t\rightarrow+\infty$, we know $E_{\tau}|_{S_{a,r}}$ is coercive and bounded from below. If $E_{\tau}(u)\leqslant 0$ on $S_{a,r}$. Then, using $E_{\tau}(u)\geq\tilde{h}(\lVert\nabla u\rVert_{p})$ again, since $\tilde{h}(t)>0$ on $(R_{0},+\infty)$, we have $\lVert\nabla u\rVert_{p}\leqslant R_{0}$ and $E_{\tau}(u)=E_{\mu}(u)$.
    	
    	(iii) Let $\{u_{n}\}\in S_{a,r}$ be a PS sequence for $E_{\tau}|_{S_{a,r}}$ at level $c<0$. Then, by (ii), we know $\lVert\nabla u_{n}\rVert_{p}<R_{0}$ for $n$ sufficiently large and hence $E_{\tau}(u_{n})=E_{\mu}(u_{n})$. Therefore, $\{u_{n}\}$ is also a PS sequence for $E_{\mu}|_{S_{a,r}}$. Since $\{u_{n}\}$ is bounded in $W_{rad}^{1,p}(\mathbb{R}^N)$, similar to the proof of Proposition \ref{compactnesslemma}, by concentration compactness lemma, we can prove that one of the cases in Proposition \ref{compactnesslemma} holds. However, similar to the proof of Theorem \ref{th1}, we can prove that case (i) does not occurs under assumption $\mu<\alpha a^{q(\gamma_{q}-1)}$. Thus, $\{u_{n}\}$ converges strongly in $W_{rad}^{1,p}(\mathbb{R}^N)$.
    \end{proof}

    \begin{lemma}\label{gengeqn}
    	Given $n\in\mathbb{N}_{+}$, there exists $\varepsilon=\varepsilon(n)$ such that $\gamma(E_{\tau}|_{S_{a,r}}^{-\varepsilon})\geqslant n$.	
    \end{lemma}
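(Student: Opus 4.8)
The plan is to exhibit, for each fixed $n$, a compact symmetric subset of a negative sublevel set of $E_{\tau}|_{S_{a,r}}$ whose genus equals $n$; monotonicity of the genus (Lemma~\ref{genus}(ii)) then forces $\gamma\big(E_{\tau}|_{S_{a,r}}^{-\varepsilon}\big)\geq n$ for a suitable $\varepsilon=\varepsilon(n)>0$.

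First I would fix an $n$-dimensional subspace $V_{n}\subset W_{rad}^{1,p}(\mathbb{R}^N)$ spanned by $n$ linearly independent radial functions in $C_{c}^{\infty}(\mathbb{R}^N)$ (for instance bumps supported on pairwise disjoint annuli), and set $\mathcal{S}:=\{u\in V_{n}:\lVert u\rVert_{p}=a\}$. Since all norms on the finite-dimensional space $V_{n}$ are equivalent, $\mathcal{S}$ is compact, the radial projection is an odd homeomorphism of $\mathbb{S}^{n-1}$ onto $\mathcal{S}$, so $\gamma(\mathcal{S})=n$ by Lemma~\ref{genus}(ii)--(iii); moreover there are constants $M>0$ and $\delta_{0}>0$ with $\lVert\nabla u\rVert_{p}\leq M$ and $\lVert u\rVert_{q}^{q}\geq\delta_{0}$ uniformly for $u\in\mathcal{S}$.

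Next I would exploit the $L^{p}$-subcritical scaling. For each $s\in\mathbb{R}$ the map $u\mapsto s\star u$ is an odd linear homeomorphism of $W_{rad}^{1,p}(\mathbb{R}^N)$ preserving $S_{a,r}$, so $\gamma(s\star\mathcal{S})=\gamma(\mathcal{S})=n$. Because $p<q<p+\frac{p^{2}}{N}$ gives $q\gamma_{q}<p<p^*$, for $u\in\mathcal{S}$ one has, using that the $p^*$-term is nonpositive,
\[
E_{\mu}(s\star u)=\tfrac{1}{p}e^{ps}\lVert\nabla u\rVert_{p}^{p}-\tfrac{\mu}{q}e^{q\gamma_{q}s}\lVert u\rVert_{q}^{q}-\tfrac{1}{p^*}e^{p^*s}\lVert u\rVert_{p^*}^{p^*}\leq e^{q\gamma_{q}s}\Bigl(\tfrac{M^{p}}{p}e^{(p-q\gamma_{q})s}-\tfrac{\mu\delta_{0}}{q}\Bigr).
\]
Choosing $s=s_{0}$ sufficiently negative makes the bracket $\leq-\frac{\mu\delta_{0}}{2q}$ and also guarantees $e^{s_{0}}M\leq R_{0}$ (with $R_{0}$ from Lemma~\ref{hfunction}), which forces $\tau\big(\lVert\nabla(s_{0}\star u)\rVert_{p}\big)=1$ and hence $E_{\tau}(s_{0}\star u)=E_{\mu}(s_{0}\star u)\leq-\varepsilon(n)$, where $\varepsilon(n):=\frac{\mu\delta_{0}}{2q}e^{q\gamma_{q}s_{0}}>0$, uniformly on $\mathcal{S}$. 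Thus $s_{0}\star\mathcal{S}\subset E_{\tau}|_{S_{a,r}}^{-\varepsilon(n)}$, and since $E_{\tau}|_{S_{a,r}}^{-\varepsilon(n)}$ is closed, symmetric and bounded away from $0$ in $W_{rad}^{1,p}(\mathbb{R}^N)$, Lemma~\ref{genus}(ii) yields $\gamma\big(E_{\tau}|_{S_{a,r}}^{-\varepsilon(n)}\big)\geq\gamma(s_{0}\star\mathcal{S})=n$.

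I do not expect a serious obstacle: the construction is standard once one observes that subcriticality $q\gamma_{q}<p$ makes $E_{\mu}(s\star u)$ negative (and tending to $0^{-}$) as $s\to-\infty$. The only points requiring care are the uniformity of the estimate over the compact set $\mathcal{S}$, which is what lets one pick a single $s_{0}$, and verifying that $s_{0}$ can be taken negative enough both to dominate the gradient term and to land in the region $\{\lVert\nabla\cdot\rVert_{p}\leq R_{0}\}$ where $E_{\tau}$ agrees with $E_{\mu}$.
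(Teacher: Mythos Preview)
Your argument is correct, and it reaches the same conclusion as the paper's proof via a genuinely different construction. The paper, following Garc\'{\i}a Azorero--Peral \cite{gajpai}, builds the $n$-dimensional space out of explicit radial bumps $u_{k}=A_{k,R}(1+|x|^{2})^{k}\varphi_{k,R}$ supported on disjoint annuli of scale $R$; the disjointness gives $\lVert v\rVert_{p}^{p}=\sum|a_{k}|^{p}\lVert u_{k}\rVert_{p}^{p}$ for $v=\sum a_{k}u_{k}$, and direct computation yields $\lVert\nabla v\rVert_{p}^{p}\leqslant C R^{-p}$ while a H\"older argument on the (bounded) support gives $\lVert v\rVert_{q}^{q}\geqslant C R^{-q\gamma_{q}}$, so $E_{\tau}(v)\leqslant C R^{-p}-C R^{-q\gamma_{q}}<0$ for $R$ large. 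You instead fix an arbitrary $n$-dimensional radial subspace once and for all and then apply the fiber scaling $s\star(\cdot)$, which is already the paper's workhorse: this replaces the parameter $R$ by $e^{-s}$ and turns the key comparison $q\gamma_{q}<p$ into the elementary fact that $e^{ps}$ dies faster than $e^{q\gamma_{q}s}$ as $s\to-\infty$. Your route is cleaner---no asymptotic computations with $A_{k,R}$ are needed, and compactness of $\mathcal{S}$ immediately delivers the uniform bounds $M,\delta_{0}$---while the paper's explicit construction has the minor advantage of giving a concrete handle on the size of $\varepsilon(n)$. Both proofs hinge on exactly the same exponent inequality $q\gamma_{q}<p$, and both land the test set in $\{\lVert\nabla\cdot\rVert_{p}\leqslant R_{0}\}$ so that $E_{\tau}=E_{\mu}$ there.
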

    \begin{proof}
    	The main idea of this proof comes from \cite{gajpai}. For every $n\in\mathbb{N}_{+}$ and $R>1$, let
    	    \[u_{k}(x)=A_{k,R}(1+\lvert x\rvert^2)^k\varphi_{k,R}(x)\in S_{a}\quad \mbox{for}\quad k=1,...,n,\]
    	where $A_{k,R}$ is a constant and $\varphi_{k,R}\in C_{c}^{\infty}(\mathbb{R}^N)$ is radial cut-off function satisfies $0\leqslant\varphi_{k,R}\leqslant 1$,
    	    \[\varphi_{k,R}=1\ \mbox{in}\ B_{(2k+\frac{1}{2})R}\backslash B_{(2k-\frac{1}{2})R}^c,\quad\varphi_{k,R}=0\ \mbox{in}\ B_{(2k-1)R}\cup B_{(2k+1)R}^c\quad\mbox{and}\quad\lvert\nabla\varphi_{k,R}\rvert\leqslant\frac{4}{R}.\]
    	Since $u_{k}\in S_{a}$ and
    	    \begin{align*}
    	    	\lVert u_{k}\rVert_{p}^p&=A_{k,R}^p\int_{\mathbb{R}^N}(1+\lvert x\rvert^2)^{kp}\varphi_{k,R}^p(x)dx\sim A_{k,R}^p\int_{0}^{+\infty}(1+r^2)^{kp}r^{N-1}\varphi_{k,R}^p(r)dr\\
    	    	&\sim A_{k,R}^p\int_{(2k-\frac{1}{2})R}^{(2k+\frac{1}{2})R}(1+r^2)^{kp}r^{N-1}dr\sim A_{k,R}^pR^{2kp+N}
    	    \end{align*}
        as $R\rightarrow+\infty$, we have $A_{k,R}\sim R^{-2k-\frac{N}{p}}$ as $R\rightarrow+\infty$. we know
            \[\nabla u_{k}(x)=A_{k,R}(1+\lvert x\rvert^2)^k\nabla\varphi_{k,R}(x)+2kA_{k,R}(1+\lvert x\rvert^2)^{k-1}\varphi_{k,R}(x)x.\]
        Then, direct calculations show that
            \begin{equation}\label{infnab}
            	\lVert\nabla u_{k}\rVert_{p}^p\leqslant\frac{C}{R^p}\quad\mbox{for}\quad k=1,...,n
            \end{equation}
        as $R\rightarrow+\infty$.

        It is clear that $u_{1},...,u_{n}$ are linearly independent in $W_{rad}^{1,p}(\mathbb{R}^N)$. Thus, we can define a $n$-dimensional subspace of $W_{rad}^{1,p}(\mathbb{R}^N)$ by
            \[E_{n}=\mbox{span}\{u_{1},...,u_{n}\}.\]
         For every $v_{n}\in S_{a,r}\cap E_{n}$, there exists $a_{1},...,a_{n}$ such that $v_{n}=a_{1}u_{1}+...+a_{n}u_{n}$. Since $v_{n}\in S_{a,r}$ and
            \[\lVert a_{1}u_{1}+...+a_{n}u_{n}\rVert_{p}^p=\lVert a_{1}u_{1}\rVert_{p}^p+...+\lVert a_{n}u_{n}\rVert_{p}^p=(\lvert a_{1}\rvert^p+...+\lvert a_{n}\rvert^p)a^p,\]
        we have
            \[\lvert a_{1}\rvert^p+...+\lvert a_{n}\rvert^p=1.\]
        Therefore, by (\ref{infnab}),
            \begin{align}\label{infetau}
            	E_{\tau}(v_{n})&=\frac{1}{p}\lVert\nabla v_{n}\rVert_{p}^p-\frac{\mu}{q}\lVert v_{n}\rVert_{q}^q-\frac{\tau(\lVert\nabla v_{n}\rVert_{p})}{p^*}\lVert v_{n}\rVert_{p^*}^{p^*}\nonumber\\
            	&\leqslant\frac{1}{p}(\lvert a_{1}\rvert^p\lVert\nabla u_{1}\rVert_{p}^p+...+\lvert a_{n}\rvert^p\lVert\nabla u_{n}\rVert_{p}^p)-\frac{\mu}{q}\lVert v_{n}\rVert_{q}^q\nonumber\\
            	&\leqslant\frac{C}{R^p}-\frac{\mu}{q}\lVert v_{n}\rVert_{q}^q
            \end{align}
        as $R\rightarrow+\infty$. By the H\"older inequality
            \begin{align*}
            	a^p&=\lVert v_{n}\rVert_{p}^p=\int_{B_{(2n+1)R}}\lvert v_{n}\rvert^pdx\leqslant\Big(\int_{B_{(2n+1)R}}\lvert v_{n}\rvert^qdx\Big)^{\frac{p}{q}}\Big(\int_{B_{(2n+1)R}}dx\Big)^{\frac{q-p}{p}}\\
            	&=(2n+1)^{\frac{N(q-p)}{q}}\omega_{N}^{\frac{q-p}{p}}R^{\frac{N(q-p)}{q}}\lVert v_{n}\rVert_{q}^p,
            \end{align*}
        which implies
            \begin{equation}\label{infq}
            	\lVert v_{v}\rVert_{q}^q\geqslant\frac{C}{R^{q\gamma_{q}}}
            \end{equation}
        as $R\rightarrow+\infty$. Now, combining (\ref{infetau}) and (\ref{infq}), we obtain
            \[E_{\tau}(v_{n})\leqslant\frac{C-R^{p-q\gamma_{q}}}{R^p}<-\varepsilon,\]
        by taking $R$ sufficiently large and $\varepsilon$ sufficiently small. Since $v_{n}\in S_{a,r}\cap E_{n}$ is arbitrary, this means that $S_{a,r}\cap E_{n}\subset E_{\tau}^{-\varepsilon}$. We know $E_{n}$ is a space of finite dimension, so all the norms in $E_{n}$ are equivalent. Then, by Lemma \ref{genus},
            \[\gamma(E_{\tau}^{-\varepsilon})\geqslant\gamma(S_{a,r}\cap E_{n})=\gamma(\mathbb{S}^{n-1})=n.\]
    \end{proof}

    Now, we can use Proposition \ref{minimax} to prove our multiplicity result.\\

    \noindent\textbf{Proof of Theorem \ref{th7}} Let $\varphi=E_{\tau}$, $X=W_{rad}^{1,p}(\mathbb{R}^N)$. Since $p>2$, by \cite[Proposition 1.12]{wm},
        \[\psi(u)=\frac{1}{a^p}\int_{\mathbb{R}^N}\lvert u\rvert^pdx\in C^2(W_{rad}^{1,p},\mathbb{R}),\]
    which implies we can set $V=S_{a,r}$. By Lemma \ref{infps}, $E_{\tau}|_{S_{a,r}}$ is bounded from below and satisfies $(PS)_{c}$ condition for all $c<0$. Moreover, by lemma \ref{gengeqn}, $\Sigma_{k}\neq\emptyset$ and $c_{k}<0$ for all $k\in\mathbb{N}_{+}$. Thus, Proposition \ref{minimax} implies $E_{\tau}|_{S_{a,r}}$ has infinitely many solutions at negative level. Using Lemma \ref{infps} again, we know $E_{\tau}|_{S_{a,r}}=E_{\mu}|_{S_{a,r}}$ at negative level. Thus, $E_{\mu}|_{S_{a,r}}$ has infinitely many solutions. Finally, by the principle of symmetric criticality(see \cite[Theorem 2.2]{kjom}), we know $E_{\mu}|_{S_{a}}$ has infinitely many solutions.

\appendix
\section{\textbf{Some useful estimates}}\label{A}
    For every $\varepsilon>0$, we define
        \[u_{\varepsilon}(x)=\varphi(x)U_{\varepsilon}(x)=\varphi(x)d_{N,p}\varepsilon^{\frac{N-p}{p(p-1)}}\big(\varepsilon^{\frac{p}{p-1}}+\lvert x\rvert^{\frac{p}{p-1}}\big)^{\frac{p-N}{p}},\]
    where $\varphi\in C_{c}^\infty(\mathbb{R}^N)$ is a radial cut off function with $\varphi=1$ in $B_{1}$, $\varphi=0$ in $B_{2}^c$, and $\varphi$ radially decreasing. Then, we have following estimates for $u_{\varepsilon}$.
    \begin{lemma}
    	Let $N\geqslant 2$, $1<p<N$, $1\leqslant r<p^*$. Then, we have
    	    \[\lVert\nabla u_{\varepsilon}\rVert_{p}^p=S^{\frac{N}{p}}+O(\varepsilon^{\frac{N-p}{p-1}}),\quad\lVert u_{\varepsilon}\rVert_{p^*}^{p^*}=S^{\frac{N}{p}}+O(\varepsilon^{\frac{N}{p-1}}),\]
    	    \begin{align*}
    	    	\lVert\nabla u_{\varepsilon}\rVert_{r}^{r}\sim\left\{\begin{array}{ll}
    	    		\varepsilon^{\frac{N(p-r)}{p}}&\frac{N(p-1)}{N-1}<r<p\\
    	    		\varepsilon^{\frac{N(N-p)}{(N-1)p}}\lvert\log\varepsilon\rvert&r=\frac{N(p-1)}{N-1}\\
    	    		\varepsilon^{\frac{(N-p)r}{p(p-1)}}&1\leqslant r<\frac{N(p-1)}{N-1},
    	    	\end{array}\right.
    	    \end{align*}
        and
            \begin{align*}
            	\lVert u_{\varepsilon}\rVert_{r}^r\sim\left\{\begin{array}{ll}
            		\varepsilon^{N-\frac{(N-p)r}{p}}&\frac{N(p-1)}{N-p}<r<p^*\\
            		\varepsilon^{\frac{N}{p}}\lvert\log\varepsilon\rvert&r=\frac{N(p-1)}{N-p}\\
            		\varepsilon^{\frac{(N-p)r}{p(p-1)}}&1\leqslant r<\frac{N(p-1)}{N-p},
            	\end{array}\right.
            \end{align*}
        as $\varepsilon\rightarrow 0$.
    \end{lemma}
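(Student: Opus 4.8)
Write $U=U_{1}$, so that from the explicit formula one has the exact scaling $U_{\varepsilon}(x)=\varepsilon^{-\frac{N-p}{p}}U(x/\varepsilon)$, together with the known decay $U(y)\sim|y|^{-\frac{N-p}{p-1}}$, $|\nabla U(y)|\sim|y|^{-\frac{N-1}{p-1}}$ as $|y|\to\infty$, and $\lVert\nabla U\rVert_{p}^{p}=\lVert U\rVert_{p^*}^{p^*}=S^{\frac{N}{p}}$. The entire proof rests on one device: the substitution $x=\varepsilon y$, which turns each integral of a fixed power of $U_{\varepsilon}$ or $|\nabla U_{\varepsilon}|$ over a fixed ball into $\varepsilon^{\text{(an explicit power)}}$ times the integral of a fixed profile over the dilating ball $B_{2/\varepsilon}$; the asymptotics of that last integral are then read off from the integrability (or not) of its power-law tail. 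Everything else is exponent bookkeeping.

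\emph{The two sharp identities.} Since $\varphi\equiv1$ on $B_{1}$, one has $u_{\varepsilon}=U_{\varepsilon}$ and $\nabla u_{\varepsilon}=\nabla U_{\varepsilon}$ there, hence $\int_{B_{1}}|\nabla u_{\varepsilon}|^{p}=S^{\frac{N}{p}}-\int_{\{|x|\geqslant1\}}|\nabla U_{\varepsilon}|^{p}$ and $\int_{B_{1}}|u_{\varepsilon}|^{p^*}=S^{\frac{N}{p}}-\int_{\{|x|\geqslant1\}}|U_{\varepsilon}|^{p^*}$. It remains to control these tails and the annular pieces on $\{1\leqslant|x|\leqslant2\}$. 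For $|x|\geqslant1$ and $\varepsilon$ small the scaling and the decay of $U$ give $U_{\varepsilon}(x)\leqslant C\varepsilon^{\frac{N-p}{p(p-1)}}|x|^{-\frac{N-p}{p-1}}$ and $|\nabla U_{\varepsilon}(x)|\leqslant C\varepsilon^{\frac{N-p}{p(p-1)}}|x|^{-\frac{N-1}{p-1}}$; applying $(a+b)^{p}\leqslant C(a^{p}+b^{p})$ to $\nabla u_{\varepsilon}=\varphi\nabla U_{\varepsilon}+U_{\varepsilon}\nabla\varphi$ with $|\nabla\varphi|$ bounded and supported in $\{1\leqslant|x|\leqslant2\}$, a direct integration (using $\frac{(N-1)p}{p-1}>N$, which holds since $N>p$, and $\frac{Np}{p-1}>N$ for convergence at infinity) yields $\int_{\{|x|\geqslant1\}}|\nabla U_{\varepsilon}|^{p}+\int_{\{1\leqslant|x|\leqslant2\}}|\nabla u_{\varepsilon}|^{p}=O(\varepsilon^{\frac{N-p}{p-1}})$ and $\int_{\{|x|\geqslant1\}}|U_{\varepsilon}|^{p^*}+\int_{\{1\leqslant|x|\leqslant2\}}|u_{\varepsilon}|^{p^*}=O(\varepsilon^{\frac{N}{p-1}})$. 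This gives the two displayed identities.

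\emph{The $L^{r}$ and $W^{1,r}$ asymptotics.} Since $0\leqslant\varphi\leqslant1$, $\varphi\equiv1$ on $B_{1}$ and $\varphi$ is supported in $B_{2}$, one has $\lVert u_{\varepsilon}\rVert_{r}^{r}\sim\int_{B_{2}}U_{\varepsilon}^{r}\,dx$; inserting the formula for $U_{\varepsilon}$ and substituting $x=\varepsilon y$ gives, after checking the identity $\frac{(N-p)r}{p(p-1)}+\frac{p}{p-1}\cdot\frac{(p-N)r}{p}=-\frac{(N-p)r}{p}$, that $\lVert u_{\varepsilon}\rVert_{r}^{r}\sim\varepsilon^{N-\frac{(N-p)r}{p}}\int_{B_{2/\varepsilon}}(1+|y|^{p/(p-1)})^{-(N-p)r/p}\,dy$. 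In radial coordinates the integrand is comparable to $\rho^{N-1-\frac{(N-p)r}{p-1}}$, so the last integral is bounded, $\sim|\log\varepsilon|$, or $\sim\varepsilon^{-(N-\frac{(N-p)r}{p-1})}$ according as $r>$, $=$, or $<\frac{N(p-1)}{N-p}$; multiplying by the prefactor produces exactly the three branches claimed for $\lVert u_{\varepsilon}\rVert_{r}^{r}$ (the threshold value $N-\frac{(N-p)r}{p}$ equals $\frac{N}{p}$ there). For the gradient the same reduction $\lVert\nabla u_{\varepsilon}\rVert_{r}^{r}\sim\int_{B_{2}}|\nabla U_{\varepsilon}|^{r}\,dx$ holds (the annular contribution being again of lower order by the triangle inequality), one computes $|\nabla U_{\varepsilon}(x)|=C\varepsilon^{\frac{N-p}{p(p-1)}}|x|^{1/(p-1)}(\varepsilon^{p/(p-1)}+|x|^{p/(p-1)})^{-N/p}$, and the same substitution gives $\lVert\nabla u_{\varepsilon}\rVert_{r}^{r}\sim\varepsilon^{\frac{N(p-r)}{p}}\int_{B_{2/\varepsilon}}|y|^{1/(p-1)}(1+|y|^{p/(p-1)})^{-Nr/p}\,dy$, whose tail integrand behaves like $\rho^{N-1-\frac{(N-1)r}{p-1}}$; the trichotomy at the threshold $r=\frac{N(p-1)}{N-1}$, combined with the exponent arithmetic $\frac{N(p-r)}{p}-N+\frac{(N-1)r}{p-1}=\frac{(N-p)r}{p(p-1)}$, yields the three branches for $\lVert\nabla u_{\varepsilon}\rVert_{r}^{r}$.

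\emph{Where the care is needed.} No part of this is deep; the only places to be attentive are (i) the exponent identities above, which must be verified to pin down both the powers of $\varepsilon$ and the location of the logarithmic thresholds, and (ii) the sharp identities: to claim the corrections are genuinely $O(\varepsilon^{(N-p)/(p-1)})$ and $O(\varepsilon^{N/(p-1)})$ — rather than merely $o(1)$ — one must keep the precise decay rates of $U$ and $\nabla U$ at infinity and verify the integrability exponents $\frac{(N-1)p}{p-1}>N$ and $\frac{Np}{p-1}>N$, plus the harmless bound of the cutoff region $\{1\leqslant|x|\leqslant2\}$. With these in hand all eight estimates fall out of the single rescaling computation.
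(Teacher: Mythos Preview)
Your proposal is correct and is exactly the standard argument for these Talenti-profile estimates; the paper in fact does not supply a proof of this appendix lemma at all, so there is nothing further to compare. One small slip to fix: in the gradient computation after substituting $x=\varepsilon y$ you wrote $|y|^{1/(p-1)}$ in the integrand, but since you are computing the $r$-th power it should be $|y|^{r/(p-1)}$ --- your subsequent tail exponent $\rho^{N-1-\frac{(N-1)r}{p-1}}$ and the final arithmetic are already based on the correct power, so this is only a typo.
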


\subsection*{Acknowledgments}

The authors were  supported by National Natural Science Foundation of China 11971392.

\end{document}